\newacronym{SVD}{SVD}{Singular Value Decomposition}
\newacronym{SBM}{SBM}{Stochastic Block Model}
\newacronym{LSBM}{LSBM}{Labeled Stochastic Block Model}
\newacronym{BMC}{BMC}{Block Markov Chain}
\newtheorem{corollary}{Corollary}
\newtheorem*{definition}{Definition}
\newtheorem{lemma}{Lemma}
\newtheorem{proposition}{Proposition}
\newtheorem{theorem}{Theorem}
\setlist[enumerate]{label={\upshape(\roman*)}}
\let\c@author\relax 
\let\chapter\section
\let\section\subsection
\let\subsection\subsubsection
\let\subsubsection\paragraph
\newcommand{\stateSpace}{ \Omega }
\newcommand{\cardinality}[1]{ | #1 | }
\renewcommand{\d}[1]{\ensuremath{\operatorname{d}\!{#1}}}
\newcommand{\e}[1]{ {\mathrm{e}}^{ #1 } }
\newcommand{\expectation}[1]{ \mathbb{E} [ #1 ] }
\newcommand{\expectationWrt}[2]{ \mathbb{E}_{#2} [ #1 ] }
\newcommand{\expectationBig}[1]{ \mathbb{E} \Bigl[ #1 \Bigr] }
\newcommand{\expectationBigWrt}[2]{ \mathbb{E}_{#2} \Bigl[ #1 \Bigr] }
\newcommand{\vect}[1]{ #1 }
\newcommand{\vectOnes}[1]{ \vect{1}_{#1} }
\newcommand{\vectInLine}[1]{ ( #1 )^{\mathrm{T}} }
\newcommand{\vectElementary}[2]{ \vect{e}_{#1,#2} }
\newcommand{\indicator}[1]{ \mathds{1} [ #1 ] }
\newcommand{\process}[2]{ \{ #1 \}_{ #2 } }
\newcommand{\smallO}[1]{ o(#1) }
\newcommand{\smallObig}[1]{ o\Bigl(#1\Bigr) }
\newcommand{\bigO}[1]{ O(#1) }
\newcommand{\bigObig}[1]{ O\Bigl(#1\Bigr) }
\newcommand{\smallOP}[1]{ o_{\mathbb{P}}(#1) }
\newcommand{\bigOP}[1]{ O_{\mathbb{P}}(#1) }
\newcommand{\bigOPbig}[1]{ O_{\mathbb{P}}\Bigl(#1\Bigr) }
\newcommand{\OmegaP}[1]{ \Omega_{\mathbb{P}}(#1) }
\newcommand{\OmegaPbig}[1]{ \Omega_{\mathbb{P}}\Bigl(#1\Bigr) }
\newcommand{\pnorm}[2]{ \| #1 \|{}_{#2} }
\newcommand{\pnormBig}[2]{ \bigpnorm{#1}{#2} }
\newcommand{\bigpnorm}[2]{ \Bigl\| #1 \Bigr\|_{#2} }
\newcommand{\probability}[1]{ \mathbb{P} [ #1 ] }
\newcommand{\probabilityWrt}[2]{ \mathbb{P}_{#2} [ #1 ] }
\newcommand{\probabilityBig}[1]{ \mathbb{P} \Bigl[ #1 \Bigr] }
\newcommand{\probabilityBigWrt}[2]{ \mathbb{P}_{#2} \Bigl[ #1 \Bigr] }
\newcommand{\transpose}[1]{ #1{}^{\mathrm{T}} }
\newcommand{\variance}[1]{ \mathrm{Var} [ #1 ] }
\newcommand{\varianceWrt}[2]{ \mathrm{Var}_{#2} [ #1 ] }
\newcommand{\varianceBigWrt}[2]{ \mathrm{Var}_{#2} \Bigl[ #1 \Bigr] }
\newcommand{\covarianceWrt}[3]{ \mathrm{Cov}_{#3} [ #1, #2 ] }
\newcommand{\naturalNumbersPlus}{ \mathbb{N}_{+} }
\newcommand{\naturalNumbersZero}{ \mathbb{N}_{0} }
\newcommand{\realNumbers}{ \mathbb{R} }
\newcommand{\positiveRealNumbers}{ [0,\infty) }
\newcommand{\strictlyPositiveRealNumbers}{ (0,\infty) }
\newcommand{\criticalpoint}[1]{  #1^{\textnormal{opt}} }
\newcommand{\iterand}[2]{ #1^{[#2]} }
\newcommand{\refFigure}[1]{{\textrm{Figure~\ref{#1}}}}
\newcommand{\refEquation}[1]{{\textrm{\eqref{#1}}}}
\newcommand{\refTheorem}[1]{{\textrm{Theorem~\ref{#1}}}}
\newcommand{\refCorollary}[1]{{\textrm{Corollary~\ref{#1}}}}
\newcommand{\refProposition}[1]{{\textrm{Proposition~\ref{#1}}}}
\newcommand{\refLemma}[1]{{\textrm{Lemma~\ref{#1}}}}
\newcommand{\refChapter}[1]{{\textrm{Chapter~\ref{#1}}}}
\newcommand{\refSection}[1]{{\textrm{Section~\ref{#1}}}}
\newcommand{\refSupplementaryMaterial}[1]{{\S{}\textrm{\ref{#1}}}}
\newcommand{\refAppendixSection}[1]{\S{}\textrm{\ref{#1}}}
\def\eqcom#1{\overset{\textnormal{(#1)}}}
\newcommand{\itr}[2]{ \iterand{#1}{#2} }
\def\({{\Bigl(}}
\def\){{\Bigr)}}
\newcommand{\ba}{\begin{array}}
\newcommand{\ea}{\end{array}}
\newcommand{\xdeleted}[1]{\deleted{}} 
\newcommand{\vectInLineTransposed}[1]{ ( #1 ) }
\def\jaron#1{\textcolor{blue!50!black}{#1}}
\def\revisedPartBegin{\color{blue!50!black}}
\def\revisedPartEnd{\color{black}}
\def\EHset{\mathcal{E}_{\mathcal{H}}}
\def\jaron#1{#1}
\def\revisedPartBegin{}
\def\revisedPartEnd{}
\begin{document}

\begin{frontmatter}
\title{Clustering in Block Markov Chains}
\runtitle{Clustering in Block Markov Chains}

\begin{aug}
\author{\fnms{Jaron} \snm{Sanders}\thanksref{m1,m2}\ead[label=e1]{j.sanders@tudelft.nl}\ead[label=e2b]{jarons@kth.se}}
\author{\fnms{Alexandre} \snm{Prouti\`{e}re}\thanksref{m1}\ead[label=e2a]{alepro@kth.se}}
\and
\author{\fnms{Se-Young} \snm{Yun}\thanksref{m3}\ead[label=e3]{yunseyoung@kaist.ac.kr}}

\runauthor{J.\ Sanders, A.\ Prouti\`{e}re and S.Y. Yun}

\affiliation{KTH Royal Institute of Technology\thanksmark{m1}, Sweden\\ Delft University of Technology\thanksmark{m2}, The Netherlands\\ Korea Advanced Institute of Science and Technology\thanksmark{m3}, South Korea}

\address{KTH Royal Institute of Technology\\
School of Electrical Engineering\\
Dept.\ of Automatic Control\\
Osquldasv\"{a}g 10, Stockholm 10044, Sweden\\
\printead{e2a}\\
\\
Delft University of Technology\\
Faculty of Electrical Engineering, Mathematics \& Computer Science\\
Dept.\ of Quantum \& Computer Engineering\\
Mekelweg 4, 2628CD Delft, The Netherlands\\
\printead{e1}\\
\\
Korea Advanced Institute of Science and Technology\\
Dept.\ of Industrial \& Systems Engineering\\
291 Daehak-ro, Eoeun-dong, Yuseong-gu, South Korea\\
\printead{e3}}
\end{aug}

\glsreset{SBM}
\glsreset{BMC}
\begin{abstract}
This paper considers cluster detection in \glspl{BMC}. These Markov chains are characterized by a block structure in their transition matrix. More precisely, the $n$ possible states are divided into a finite number of $K$ groups or clusters, such that states in the same cluster exhibit the same transition rates to other states. One observes a trajectory of the Markov chain, and the objective is to recover, from this observation only, the (initially unknown) clusters. In this paper we devise a clustering procedure that accurately, efficiently, and provably detects the clusters. We first derive a fundamental information-theoretical lower bound on the detection error rate satisfied under any clustering algorithm. This bound identifies the parameters of the \gls{BMC}, and trajectory lengths, for which it is possible to accurately detect the clusters. We next develop two clustering algorithms that can together accurately recover the cluster structure from the shortest possible trajectories, whenever the parameters allow detection. These algorithms thus reach the fundamental detectability limit, and are optimal in that sense. 
\end{abstract}

\begin{keyword}[class=MSC]
\kwd[Primary ]{62H30} 
\kwd{60J10} 
\kwd{60J20} 
\end{keyword}

\begin{keyword}
\kwd{clustering}
\kwd{Markov chains}
\kwd{mixing times}
\kwd{community detection}
\kwd{change of measure}
\kwd{asymptotic analysis}
\kwd{information theory}
\end{keyword}

\end{frontmatter}

\chapter{Introduction}

The ability to accurately discover all hidden relations between items that share similarities is of paramount importance to a wide range of disciplines. Clustering algorithms in particular are employed throughout social sciences, biology, computer science, economics, and physics. The reason these techniques have become prevalent is that once clusters of similar items have been identified, any subsequent analysis or optimization procedure benefits from a powerful reduction in dimensionality.

\glsreset{SBM}
The canonical \gls{SBM}, originally introduced in \cite{holland_stochastic_1983}, has become the benchmark to investigate the performance of cluster detection algorithms. This model generates random graphs that contain groups of similar vertices. Vertices within the same group are similar in that they share the same average edge densities to the other vertices. More precisely, if the set of $n$ vertices $\mathcal{V}$ is for example partitioned into two groups $\mathcal{V}_1$ and $\mathcal{V}_2$, an edge is drawn between two vertices $x,y \in \mathcal{V}$ with probability $p \in (0,1)$ if they belong to the same group, and with probability $q \in (0,1)$, $p \neq q$, if they belong to different groups. Edges are drawn independently of all other edges. Within the context of the \gls{SBM} and its generalizations, the problem of cluster detection is to infer the clusters from observations of a realization of the random graph with the aforementioned structure.

\glsreset{BMC}
This paper deviates by considering the problem of cluster detection when the observation is instead the sample path of a Markov chain over the set of vertices. Specifically, we introduce the \gls{BMC}, which is a Markov chain characterized by a block structure in its transition matrix. States that are in the same cluster are similar in the sense that they have the same transition rates. The goal is to detect the clusters from an observed sample path $X_0, X_1, \ldots, X_T$ of the Markov chain (\refFigure{fig:Infer_the_hidden_cluster_structure_from_a_MC}). This new clustering problem is mathematically more challenging because consecutive samples of the random walk are \emph{not} independent: besides noise, there is bias in a sample path. Intuitively though there is hope for accurate cluster detection if the Markov chain can get close to stationarity within $T$ steps. Indeed, as we will show, the mixing time \cite{levin_markov_2009} of the \gls{BMC} plays a crucial role in the detectability of the clusters.

\begin{figure}[!hbtp]
\centering
\begin{tikzpicture}[scale=3.5]
\tikzstyle{vertex}=[circle, draw, thick, fill=black!0, inner sep=0pt, minimum width=4pt]
\tikzstyle{selected vertex} = [circle, draw, thick, fill=black!100, inner sep=0pt, minimum width=4pt]
\tikzstyle{visited vertex} = [circle, draw, thick, fill=red!50, inner sep=0pt, minimum width=4pt]
\tikzstyle{revisited vertex} = [circle, draw, thick, fill=red!100, inner sep=0pt, minimum width=4pt]
\tikzstyle{edge} = [draw,thick,->]
\tikzstyle{weight} = [font=\small]
\tikzstyle{selected edge} = [draw,line width=5pt,-,red!50]

\foreach \pos/\name in {{(1.3598,-0.51594)/A1}, {(1.08392,0.0930052)/A2}, {(0.501281,0.0295443)/A3}, {(-0.46591,0.067417)/A4}, {(-1.67351,-0.254816)/A5}, {(-1.92225,0.413082)/A6}, {(1.06916,-0.0580674)/A7}, {(-0.0310976,-0.101057)/A8}, {(-0.598362,0.323275)/A9}, {(-0.105402,-0.488388)/A10}, {(-0.342471,0.00274053)/A11}, {(-0.13477,0.153304)/A12}, {(1.00977,0.303578)/A13}, {(-0.638966,0.477291)/A14}, {(-0.0818126,0.462008)/A15}, {(0.827614,0.031649)/A16}, {(-1.68586,0.104984)/A17}, {(1.49428,-0.0177128)/A18}, {(0.126462,-0.254585)/A19}, {(1.2249,-0.422183)/A20}, {(-1.34723,0.394874)/A21}, {(-0.418778,-0.574002)/A22}, {(1.37497,-0.458331)/A23}, {(-0.254659,-0.0291145)/A24}, {(1.0664,0.600568)/A25}, {(-1.43921,-0.221509)/A26}, {(-1.73224,-0.569778)/A27}, {(-1.77642,0.162335)/A28}, {(-0.399194,-0.534274)/A29}, {(0.283448,-0.413682)/A30}, {(-1.69618,-0.523518)/A31}, {(1.57535,-0.591451)/A32}}
	\node[vertex] (\name) at \pos {}; 

\node[vertex, label=right:{\small $X_0$}] (X0) at (-1,0) {};
\node[vertex, label=above:{\small $X_1$}] (X1) at (-1.2,-0.1) {};
\node[vertex, label=above:{\small $\ldots$}] (X2) at (-1.8,0.5) {};
\node[selected vertex, label=left:{\small $X_T$}] (XT) at (-0.75,-0.5) {};
\path[edge] (X0) edge (X1) {}; 
\path[edge] (X1) edge (X2) {}; 
\path[edge] (X2) edge (A21) {}; 
\path[edge] (A21) edge (A4) {}; 
\path[edge] (A4) edge (A15) {};
\path[edge] (A15) edge [bend left=10] (A3) {};
\path[edge] (A3) edge [bend left=10] (A15) {};
\path[edge] (A15) edge (A25) {};
\path[edge] (A25) edge (A30) {};
\path[edge] (A30) edge (A20) {};
\path[edge] (A20) edge (A7) {};
\path[edge] (A7) edge (A19) {};
\path[edge] (A19) edge (A26) {};
\path[edge] (A26) edge [bend left=10] (A5) {};
\path[edge] (A5) edge [bend left=10] (A26) {};
\path[edge] (A26) edge (A17) {};
\path[edge] (A17) edge (A21)  {};
\path[edge] (A21) edge (A14) {};
\path[edge] (A14) edge (A9) {};
\path[edge] (A9) edge (XT) {};

\node[visited vertex] at (X0) {};
\node[visited vertex] at (X1) {};
\node[visited vertex] at (X2) {};
\node[revisited vertex] at (A21) {};
\node[visited vertex] at (A4) {};
\node[revisited vertex] at (A15) {};
\node[visited vertex] at (A3) {};
\node[visited vertex] at (A25) {};
\node[visited vertex] at (A30) {};
\node[visited vertex] at (A20) {};
\node[visited vertex] at (A7) {};
\node[visited vertex] at (A19) {};
\node[revisited vertex] at (A26) {};
\node[visited vertex] at (A5) {};
\node[visited vertex] at (A17) {};
\node[visited vertex] at (A14) {};
\node[visited vertex] at (A9) {};

\end{tikzpicture}
\caption{The goal of this paper is to infer the hidden cluster structure underlying a Markov chain $\process{X_t}{t \geq 0}$, from one observation of a sample path $X_0, X_1, \ldots, X_T$ of length $T$.}
\label{fig:Infer_the_hidden_cluster_structure_from_a_MC}
\end{figure}
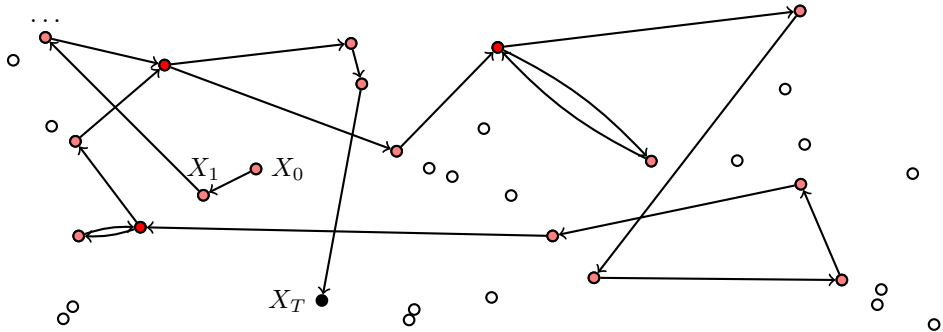

Clustering in \glspl{BMC} is motivated by Reinforcement Learning (RL) problems \cite{Sutton} with large state spaces. These problems have received substantial attention recently due to the wide spectrum of their applications in robotics, games, medicine, finance, etc. RL problems are concerned with the control of dynamical systems modeled as Markov chains whose transition kernels are initially unknown. The objective is to identify an optimal control policy as early as possible by observing the trajectory of a Markov chain generated under some known policy. The time it takes to learn efficient policies using standard algorithms such as Q-learning dramatically increases with the number of possible states, so that these algorithms become useless when the state space is prohibitively large. In most practical problems however, different states may yield similar reward and exhibit similar transition probabilities to other states, i.e., states can be grouped into clusters. In this scenario it becomes critical to learn and leverage this structure in order to speed up the learning process. In this paper we consider \emph{uncontrolled} Markov chains, and we aim to identify clusters of states as quickly as possible. In the future we hope to extend the techniques developed here for an uncontrolled \gls{BMC} to the more general case of controlled Markov chains, and hence to devise reinforcement learning algorithms that will efficiently exploit an underlying cluster structure. The idea of clustering states in reinforcement learning to speed up the learning process has been investigated in \cite{singh1995} and \cite{ortner2013}, but no theoretical guarantees were provided in these early papers.

This paper answers two important questions for the problem of cluster detection on \glspl{BMC}. First, we derive a fundamental information-theoretical clustering error lower bound. The latter allows us to identify the parameters of the \gls{BMC} and the sample path lengths $T$ for which it is theoretically impossible to accurately detect the underlying cluster structure. Second, we develop two clustering algorithms that when combined, are able to accurately detect the underlying cluster structure from the shortest possible sample paths, whenever the parameters of the \gls{BMC} allow detection, and that provably work as $n \to \infty$. These algorithms thus reach the fundamental detectability limit, and are optimal in that sense.

\section{Related work}

Clustering in the \gls{SBM} and the \glspl{BMC} may be seen as similar problems: the objective in both cases is to infer the cluster structure from random observations made on the relationships between pairs of vertices. However, the way these observations are gathered differ significantly in the \gls{SBM} and the \glspl{BMC}. In the \gls{SBM}, these observations are independent random variables, which allows the use of theoretical developments in random matrices with independent entries. In the \glspl{BMC} on the contrary, observations are successive states of a Markov chain and hence are not independent. Furthermore, observed edges in the \gls{SBM} are scattered and undirected, whereas in a \gls{BMC}, the observed path is a concatenation of directed edges. Generally the probabilities to move from state $x$ to state $y$ and from $y$ to $x$ are different. Finally, the sparsity of the observations in the \gls{BMC} is controlled by the length $T$ of the observed sample path, while it is hard-coded in the \gls{SBM}. For all these reasons, it is difficult to quantitatively compare or relate the recovery rates in the two models. Nevertheless, techniques as those used in the \gls{SBM} can be exploited in the analysis of the \gls{BMC} if they are properly extended to handle the differences between the two models. For this reason, we now provide a brief survey of the techniques and results available for the \gls{SBM}.  

Significant advances have been made on cluster recovery within the context of the \gls{SBM} and its generalizations. We defer the reader to \cite{gao_achieving_2015} for an extensive overview. Substantial focus has in particular been on characterizing the set of parameters for which some recovery objectives can be met. 

In the {\it sparse} regime, i.e., when the average degree of vertices is $\bigO{1}$, necessary and sufficient conditions on the parameters have been identified under which it is possible to extract clusters that are positively correlated with the true clusters \cite{decelle_inference_2011,massoulie_community_2014,mossel_reconstruction_2015}. More precisely, for example if $p = a / n$ and $q = b / n$ and in the case of two clusters of equal sizes, it was conjectured in \cite{decelle_inference_2011} that $a-b \ge \sqrt{2(a+b)}$ is a necessary and sufficient condition for the existence of algorithms that can {\it detect} the clusters (in the sense that they perform better than a random assignment of items to clusters). This result was established in \cite{mossel_reconstruction_2015} (necessary condition) and in \cite{massoulie_community_2014} (sufficient condition).

In the {\it dense} regime, i.e., when the average degree is $\omega(1)$, it is possible to devise algorithms under which the proportion of misclassified vertices vanishes as the size of the graph grows large \cite{yun_community_2014}. In this case, one may actually characterize the \emph{minimal} asymptotic (as $n$ grows large) classification error, and develop clustering algorithms achieving this fundamental limit \cite{yun_optimal_2016}. We may further establish conditions under which asymptotic \emph{exact} cluster recovery is possible \cite{yun_accurate_2014,abbe_community_2015,abbe_recovering_2015,jog_information-theoretic_2015,mossel_consistency_2015,abbe_exact_2016,hajek_achieving_2016, yun_optimal_2016}. 

This paper draws considerable inspiration from \cite{yun_community_2014,yun_accurate_2014,yun_optimal_2016}. Over the course of these papers, the authors consider the problem of clustering in the \gls{LSBM}, which is a generalization of the \gls{SBM}. They identify the set of \gls{LSBM}-parameters for which the clusters can be detected using change-of-measure arguments, and develop algorithms based on spectral methods that achieve this fundamental performance limit. Our contributions in this paper include the extension of the approaches to the context of Markov chains. This required us in particular to design novel changes-of-measure, carefully incorporate the effect of mixing, deal with new and non-convex log-likelihood functions, and widen the applicability of spectral methods to random matrices with bias. Note that we restrict the analysis in this paper to the case that the number of clusters $K$ is known. This reduces the complexity of the analysis. Based on the findings in \cite{yun_community_2014,yun_accurate_2014,yun_optimal_2016} however, we are confident that this assumption can be relaxed in future work.

\section{Methodology}

Similar to the extensive efforts for the \gls{SBM}, we will first identify parameters of the \gls{BMC} for which it is theoretically possible to detect the clusters. To this aim, we use techniques from information theory to derive a lower bound on the number of misclassified states that holds for any classification algorithm. This relies on a powerful change-of-measure argument, originally explored in \cite{lai_asymptotically_1985} in the context of online stochastic optimization. First, we relate the probability of misclassifying a state in the \gls{BMC} to a log-likelihood ratio that the sample path was generated by a perturbed Markov chain instead. Then, given any \gls{BMC}, we show how to construct a perturbed Markov chain that assigns a nonzero probability to the event that all clustering algorithms misclassify at least one particular state. Finally, we maximize over all possible perturbations to get the best possible lower bound that holds for any algorithm. 

We will further provide a clustering algorithm that achieves this fundamental limit. Specifically, the algorithm consists of two steps. The first step consists in applying a classical \emph{Spectral Clustering Algorithm}. This algorithm essentially creates a rank-$K$ approximation of a random matrix corresponding to the empirical transition rates between any pair of states, and then uses a $K$-means algorithm \cite{lloyd_least_1982} to cluster all states. We show that this first step clusters the majority of states roughly correctly. Next, we introduce the \emph{Cluster Improvement Algorithm}. This algorithm uses the rough structure learned from the Spectral Clustering Algorithm, together with the sample path, to move each individual state into the cluster the state most likely belongs to. This is achieved through a recursive, local maximization of a log-likelihood ratio.

The key difference between clustering in \glspl{SBM} and clustering in \glspl{BMC} is that instead of observing (the edges of) a random graph, we here try to infer the cluster structure from an as short as possible sample path of the Markov chain. This necessitates a careful analysis of the mixing time of the Markov chain \cite{levin_markov_2009}, for which we use a rate of convergence result in terms of Dobrushin's ergodicity coefficient \cite{bremaud_markov_1999}. The observed sample path will be inherently noisy and biased by construction. The noise and bias within the sample path have to first be related to the spectrum of the random matrix recording the number of times transitions between any two states have been observed. This is done by using techniques from \cite{halko_finding_2011}. The spectrum of this random matrix has then to be analyzed which constitutes a major challenge. Indeed, most results investigating the spectrum of random matrices hold for matrices with independent and weakly dependent entries \cite{wigner_distribution_1958,tao_topics_2012,tropp_introduction_2015,hochstattler_semicircle_2016,kirsch_sixty_2016,kirsch_semicircle_2017}, or when the transition matrix of the Markov chain itself is random \cite{bordenave_spectrum_2010,bordenave_spectrum_2011}. Our random matrix has dependent entries, but by taking proof inspiration from \cite{feige_spectral_2005}, using concentration results from \cite{paulin_concentration_2015}, and smartly leveraging the way it is constructed from the observed sample path and Markov property, the analysis of its spectrum can be conducted.

\section{Overview}

This paper is structured as follows. We introduce the \gls{BMC} in \refSection{sec:The_block_Markov_chain}. \refSection{sec:Summary_of_classifiability_results} provides an overview of our results and our algorithms. We assess the performance of both algorithms, i.e., we quantify their asymptotic error rates. \refSection{sec:Numerical_experiments} discusses several numerical experiments designed to test the algorithms. We subsequently prove our results by first deriving an information lower bound and developing an optimal change-of-measure in \refSection{sec:The_information_bound_and_the_change_of_measure}, and then by developing the Spectral Clustering Algorithm in \refSection{sec:The_SVD_clustering_algorithm} and the Cluster Improvement Algorithm in \refSection{sec:The_cluster_improvement_algorithm}.

\chapter*{Notation}

\newcommand{\DDelta}{\Delta\!\!\!\!\Delta}
For any two sets $\mathcal{A}, \mathcal{B} \subseteq \mathcal{V} \triangleq \{ 1, \ldots, n \}$ we define their symmetric difference by $\mathcal{A} \triangle \mathcal{B} = \{ \mathcal{A} \backslash \mathcal{B} \} \cup \{ \mathcal{B} \backslash \mathcal{A} \}$. For any two numbers $a, b \in \realNumbers$ we introduce the shorthand notations $a \wedge b = \min\{a,b\}$ and $a \vee b = \max\{a,b\}$. For any $n$-dimensional vector $\vect{x} = \vectInLine{ x_1, \ldots, x_n } \in \realNumbers^n$, we define its $l_p$ norms by
\begin{equation}
\pnorm{ \vect{x} }{p} = \Bigl( \sum_{r=1}^n | x_i |^p \Bigr)^{1/p}
\quad
\textrm{where}
\quad
p \in [1,\infty).
\end{equation}
The $n$-dimensional unit vector of which the $r$-th component equals $1$ will be denoted by $\vectElementary{n}{r}$, and the $n$-dimensional vector for which all elements $r \in \mathcal{A} \subseteq \{ 1, \ldots, n \}$ equal $1$ will be denoted by $\vectOnes{\mathcal{A}}$. For any $m \times n$ matrix $A \in \realNumbers^{m \times n}$, we indicate its rows by ${A}_{r,\cdot}$ for $r = 1, \ldots, m$ and its columns by ${A}_{\cdot,c}$ for $c = 1, \ldots, n$. We also introduce the short-hand notation $A_{\mathcal{A},\mathcal{B}} = \sum_{x \in \mathcal{A}} \sum_{y \in \mathcal{B}} A_{x,y}$ for all subsets $\mathcal{A}, \mathcal{B} \subseteq \mathcal{V}$. Its Frobenius norm and spectral norm are defined by
\begin{equation}
\pnorm{A}{\mathrm{F}} 
= \sqrt{ \sum_{r = 1}^m \sum_{c = 1}^n A_{r,c}^2 },
\quad
\pnorm{A}{} 
= \sup_{ \vect{b} \in \mathbb{S}^{n-1} } \{ \pnorm{ A \vect{b} }{2} \},
\end{equation}
respectively. Here, $\mathbb{S}^{n-1} = \{ \vect{x}=(x_1, \ldots, x_{n}) \in (0,1)^{n} : \pnorm{ \vect{x} }{2} = 1 \}$ denotes the $n$-dimensional unit sphere. We define the probability simplex of dimension $n-1$ by
$
\Delta^{n-1} 
= \bigl\{ \vect{x} \in (0,1)^{n} : \pnorm{ \vect{x} }{1} = 1 \bigr\}
$
as well as the set of left-stochastic matrices by
$
\DDelta^{n \times (n-1)}  
= \bigl\{ ( (x_{1,1},\ldots,x_{1,n}), \ldots, \allowbreak (x_{n,1},\ldots,x_{n,n}) ) \in \allowbreak [0,1]^{n \times n} : \sum_{c=1}^n x_{r,c} = 1 \textrm{ for } r = 1, \ldots, n \bigr\}
$
similarly.

In our asymptotic analyses, we write $f(n) \sim g(n)$ if $\lim_{n \to \infty} f(n) / \allowbreak g(n) = 1$, $f(n) = \smallO{g(n)}$ if $\lim_{n \to \infty} f(n) / \allowbreak g(n) = 0$ and $f(n) = \bigO{g(n)}$ if $\limsup_{n \to \infty} \allowbreak f(n) / \allowbreak g(n) < \infty$. Whenever $\{ X_n \}_{n=1}^\infty$ is a sequence of real-valued random variables and $\{ a_n \}_{n=1}^\infty$ a deterministic sequence, we write 
\begin{gather}
X_n = \smallOP{a_n} 
\Leftrightarrow \probabilityBig{ \Bigl| \frac{ X_n }{ a_n } \Bigr| \geq \delta } \rightarrow 0 \, \forall_{ \delta > 0 }
\Leftrightarrow \forall_{\varepsilon, \delta} \exists_{N_{\varepsilon,\delta}} : \probabilityBig{ \Bigl| \frac{ X_n }{ a_n } \Bigr| \geq \delta } \leq \varepsilon \, \forall_{ n > N_{\varepsilon,\delta} },
\\
\textrm{and} \quad
X_n = \bigOP{a_n} 
\Leftrightarrow \forall_{\varepsilon} \exists_{\delta_\varepsilon,N_\varepsilon} : \probabilityBig{ \Bigl| \frac{ X_n }{ a_n } \Bigr| \geq \delta_\varepsilon } \leq \varepsilon \, \forall_{ n > N_\varepsilon }.
\nonumber
\end{gather}
Similarly, $X_n = \OmegaP{a_n}$ denotes $\forall_{\varepsilon} \exists_{\delta_\varepsilon,N_\varepsilon} : \probability{ | X_n / a_n | \leq \delta_\varepsilon } \leq \varepsilon \, \forall_{ n > N_\varepsilon }$, and $X_n \asymp_{\mathbb{P}}(a_n)$ means $\forall_{\varepsilon} \exists_{\delta_\varepsilon^-,\delta_\varepsilon^+,N_\varepsilon} : \probability{ \delta_\varepsilon^- \leq | X_n / a_n | \leq \delta_\varepsilon^+ } \geq 1 - \varepsilon \, \forall_{ n > N_\varepsilon }$.

\chapter{Block Markov Chains (BMCs)}
\label{sec:The_block_Markov_chain}

We assume that we have $n$ states $\mathcal{V} = \{ 1, \ldots, n \}$, each of which is associated to one of $K$ clusters. This means that the set of states is partitioned so that $\mathcal{V} = \cup_{k=1}^K \mathcal{V}_k$ with $\mathcal{V}_k \cap \mathcal{V}_l = \emptyset$ for all $k \neq l$. Let $\sigma(v)$ denote the cluster of a state $v \in \mathcal{V}$. We also assume that there exist constants $\alpha \in \Delta^{K-1}$ so that $\lim_{n \to \infty} \cardinality{ \mathcal{V}_k } / ( n \alpha_k ) = 1$.

For any $\alpha \in \Delta^{K-1}$ and $p \in \DDelta^{K \times (K-1)}$, we define the \gls{BMC} $\process{X_t}{t \geq 0}$ as follows. Its transition matrix $P \in \DDelta^{n \times (n-1)}$ will be defined as
\begin{equation}
P_{x,y} 
\triangleq \frac{ p_{\sigma(x),\sigma(y)} }{ \cardinality{ \mathcal{V}_{\sigma(y)} } - \indicator{ \sigma(x) = \sigma(y) } } \indicator{ x \neq y }
\quad
\textrm{for all }
\quad
x, y \in \mathcal{V}.
\label{eqn:Definition_of_P}
\end{equation}
Note that this Markov chain is not necessarily reversible. Furthermore, note that in this paper we assume that $K, \alpha, p$ are fixed, and that we study the asymptotic regime $n \to \infty$. We assume that the smallest cluster has a size linearly growing with $n$: $\alpha_{\min}\triangleq \min_k\alpha_k >0$. Finally, since we are interested in clustering of the states, we will assume that $\exists_{1 < \eta} : \max_{a,b,c} \{ p_{b,a} / p_{c,a}, \allowbreak p_{a,b} / p_{a,c} \} \leq \eta$, which guarantees a minimum level of separability of the parameters.

\section{Equilibrium behavior}

We assume that the stochastic matrix $p$ is such that the equilibrium distribution of $\process{ X_t }{ t \geq 0}$ exists, and we will denote it by $\Pi_x$ for $x \in \mathcal{V}$. By symmetry, $\Pi_x = \Pi_y \triangleq \bar{\Pi}_k$ for any two states $x,y \in \mathcal{V}_k$ for all $k = 1, \ldots, K$. Consider the scaled quantity
\begin{equation}
\pi_k 
\triangleq \lim_{n \to \infty} \sum_{x \in \mathcal{V}_k} \Pi_x 
= \lim_{n \to \infty} \cardinality{ \mathcal{V}_{k} } \bar{\Pi}_k
\quad 
\textrm{for}
\quad
k = 1, \ldots, K.
\end{equation}
\refProposition{prop:Equilibrium_behavior_of_pi}'s proof can be found in \refSupplementaryMaterial{suppl:Asymptotic_equilibrium_behavior_of_a_BMC}, and follows from the symmetries between the states within the same clusters and the specific scalings of $P$'s elements.

\begin{proposition}
\label{prop:Equilibrium_behavior_of_pi}
The quantity $\vect{\pi}$ solves $\transpose{ \vect{\pi} } p = \transpose{ \vect{\pi} }$, and is therefore the equilibrium distribution of a Markov chain with transition matrix $p$ and state space $\stateSpace = \{ 1, \ldots, K \}$.
\end{proposition}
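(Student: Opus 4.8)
The plan is to push the global balance equations of the BMC through the block structure and then let $n\to\infty$. Fix $n$ and recall that, by definition of the equilibrium distribution, $\transpose{\vect{\Pi}} P = \transpose{\vect{\Pi}}$, i.e.\ $\Pi_y = \sum_{x \in \mathcal{V}} \Pi_x P_{x,y}$ for every $y \in \mathcal{V}$. First I would fix a cluster index $l \in \{1,\dots,K\}$, sum this identity over all $y \in \mathcal{V}_l$, and swap the two finite sums to obtain
\[
\sum_{y \in \mathcal{V}_l} \Pi_y = \sum_{x \in \mathcal{V}} \Pi_x \Bigl( \sum_{y \in \mathcal{V}_l} P_{x,y} \Bigr).
\]

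The crux is the inner row-sum $\sum_{y \in \mathcal{V}_l} P_{x,y}$, which I claim equals $p_{\sigma(x),l}$ for every $x$. Using $\refEquation{eqn:Definition_of_P}$ I would distinguish two cases. If $x \notin \mathcal{V}_l$, then every $y \in \mathcal{V}_l$ has $\sigma(y) = l \neq \sigma(x)$ and $y \neq x$, so $P_{x,y} = p_{\sigma(x),l} / \cardinality{\mathcal{V}_l}$, and summing the $\cardinality{\mathcal{V}_l}$ such terms gives $p_{\sigma(x),l}$. If $x \in \mathcal{V}_l$, then the term $y = x$ vanishes because of the factor $\indicator{x \neq y}$, while each of the remaining $\cardinality{\mathcal{V}_l} - 1$ terms equals $p_{l,l}/(\cardinality{\mathcal{V}_l} - 1)$ — here the subtraction of $\indicator{\sigma(x) = \sigma(y)}$ in the denominator of $\refEquation{eqn:Definition_of_P}$ is exactly what makes this cancel — so the sum is again $p_{l,l} = p_{\sigma(x),l}$. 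This case analysis is the only place the precise normalization of $P$ is used, and it is the only mildly delicate point; the rest is bookkeeping.

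Substituting back and grouping states by cluster via the within-cluster symmetry $\Pi_x = \bar{\Pi}_{\sigma(x)}$, and writing $\pi^{(n)}_k \triangleq \cardinality{\mathcal{V}_k} \bar{\Pi}_k = \sum_{x \in \mathcal{V}_k} \Pi_x$, I obtain the exact finite-$n$ identity $\pi^{(n)}_l = \sum_{k=1}^K p_{k,l}\, \pi^{(n)}_k$ for all $l$, that is $\transpose{(\vect{\pi}^{(n)})} p = \transpose{(\vect{\pi}^{(n)})}$; moreover $\sum_{k=1}^K \pi^{(n)}_k = \sum_{x \in \mathcal{V}} \Pi_x = 1$. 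Finally I would let $n \to \infty$: since $\pi^{(n)}_k \to \pi_k$ by the very definition of $\vect{\pi}$, continuity gives $\transpose{\vect{\pi}} p = \transpose{\vect{\pi}}$ together with $\sum_k \pi_k = 1$, so $\vect{\pi}$ is an equilibrium distribution of the $K$-state Markov chain with transition matrix $p$ — the unique one, since $p$ inherits irreducibility from the BMC, whose equilibrium distribution is assumed to exist. (Note that the limit is really only needed to land in the normalization $\Delta^{K-1}$ used elsewhere; the stationarity relation already holds verbatim at every finite $n$, so no genuine analysis beyond continuity enters.)
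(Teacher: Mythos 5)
Your proof is correct, and it takes a genuinely different (and arguably cleaner) route than the paper's. The paper fixes a single state $y \in \mathcal{V}_l$, applies global balance to that one row of $P$, and then passes to the limit $n\to\infty$ inside the resulting sum, using the asymptotics $\bar{\Pi}_k(\cardinality{\mathcal{V}_k}-\indicator{k=l})\to\pi_k$ and $\cardinality{\mathcal{V}_l}-\indicator{k=l}\sim\cardinality{\mathcal{V}_l}$ to arrive at $\pi_l=\sum_k\pi_k p_{k,l}$. You instead sum global balance over the whole cluster $\mathcal{V}_l$, compute the block row-sum $\sum_{y\in\mathcal{V}_l}P_{x,y}=p_{\sigma(x),l}$ exactly (the case split correctly isolates how the $-\indicator{\sigma(x)=\sigma(y)}$ in the denominator of \refEquation{eqn:Definition_of_P} is engineered to cancel against the missing $y=x$ term), and thereby obtain the exact finite-$n$ identity $\transpose{(\vect{\pi}^{(n)})}p=\transpose{(\vect{\pi}^{(n)})}$ with $\sum_k\pi^{(n)}_k=1$. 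This is a strictly stronger intermediate statement than the paper proves: it shows the block-aggregated stationary measure is a stationary vector of $p$ at every finite $n$, not only in the limit, so that (when $p$ is irreducible) $\pi^{(n)}$ is in fact independent of $n$ and the limit defining $\pi$ is trivial. Your closing aside is right on that score: no asymptotic estimate is ever needed, only the definition of $\vect{\pi}$ as the limit of $\vect{\pi}^{(n)}$. Both proofs use the same ingredients (within-cluster symmetry of $\Pi$, global balance, the normalization structure of $P$); the paper's version is shorter but does its bookkeeping in the limit, yours front-loads an exact combinatorial identity and defers the limit to a formality.
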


\section{Mixing time}

\refProposition{prop:Mixing_times_of_Markov_chains_with_transition_matrices_P_and_Q} gives a bound on the mixing time $t_{\mathrm{mix}} \in \strictlyPositiveRealNumbers$, which is defined by
$
d(t) 
\triangleq \sup_{x \in \mathcal{V}} \bigl\{ d_{\mathrm{TV}}( {P}^t_{x,\cdot}, \vect{\Pi} ) \bigr\}
$
and
$
t_{\mathrm{mix}}(\varepsilon) 
\triangleq \min \{ t \geq 0 : d(t) \leq \varepsilon \},
$
where
\begin{equation}
d_{\mathrm{TV}}( \vect{\mu}, \vect{\nu} )
\triangleq \tfrac{1}{2} \sum_{x \in \mathcal{V}} | \mu_x - \nu_x | .
\label{eqn:Definition_of_total_variation_distance}
\end{equation}
The proof of \refProposition{prop:Mixing_times_of_Markov_chains_with_transition_matrices_P_and_Q} is deferred to \refAppendixSection{sec:Proof_of_Mixing_times_of_Markov_chains_with_transition_matrices_P_and_Q}. The result follows after bounding Dobrushin's ergodicity coefficient \cite{bremaud_markov_1999} using $P$'s structure, and invoking a convergence rate result in terms of Dobrushin's coefficient.

\begin{proposition}
\label{prop:Mixing_times_of_Markov_chains_with_transition_matrices_P_and_Q}
For any BMC with $n\ge 4/\alpha_{\min}$, $t_{\mathrm{mix}}(\varepsilon) \leq - c_{\mathrm{mix}} \ln{\varepsilon}$, where $c_{\mathrm{mix}}=-1/\ln(1-1/2\eta)$.
\end{proposition}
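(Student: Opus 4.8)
The plan is to bound the mixing time through Dobrushin's ergodicity coefficient. Recall that for a transition matrix $P$, Dobrushin's coefficient is
\begin{equation}
\delta(P) = \max_{x,x' \in \mathcal{V}} d_{\mathrm{TV}}(P_{x,\cdot}, P_{x',\cdot}),
\end{equation}
and the standard contraction result (see \cite{bremaud_markov_1999}) gives $d(t) \leq \delta(P)^t$ for all $t \geq 1$. Hence it suffices to show that $\delta(P) \leq 1 - 1/(2\eta)$, since then $d(t) \leq \varepsilon$ as soon as $t \geq \ln \varepsilon / \ln \delta(P) \geq \ln\varepsilon / \ln(1 - 1/(2\eta)) = -c_{\mathrm{mix}} \ln\varepsilon$, which is exactly the claimed bound. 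So the whole argument reduces to a lower bound on the overlap $1 - \delta(P) = \min_{x,x'} \sum_{y} (P_{x,y} \wedge P_{x',y})$.

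The key step is therefore to lower bound $\sum_{y \in \mathcal{V}} (P_{x,y} \wedge P_{x',y})$ uniformly over $x, x' \in \mathcal{V}$, using the block structure \eqref{eqn:Definition_of_P}. Fix any $x, x'$. For a target state $y$ with $y \neq x$ and $y \neq x'$, we have $P_{x,y} = p_{\sigma(x),\sigma(y)} / (\cardinality{\mathcal{V}_{\sigma(y)}} - \indicator{\sigma(x)=\sigma(y)})$ and similarly for $x'$. The denominators differ by at most $1$, so up to a factor $1 + O(1/\cardinality{\mathcal{V}_{\sigma(y)}})$ they agree; the essential comparison is between $p_{\sigma(x),\sigma(y)}$ and $p_{\sigma(x'),\sigma(y)}$, and the separability assumption $\max_{a,b,c}\{p_{b,a}/p_{c,a}\} \leq \eta$ gives $p_{\sigma(x),\sigma(y)} \wedge p_{\sigma(x'),\sigma(y)} \geq (1/\eta)\, p_{\sigma(x),\sigma(y)}$. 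Summing the minimum over all valid $y$ in each cluster $\mathcal{V}_k$ and using that $\sum_{k} \cardinality{\mathcal{V}_k} \bar\Pi$-type normalizations make the row sums essentially $1$, one obtains $\sum_y (P_{x,y} \wedge P_{x',y}) \geq 1/\eta$ minus a correction term of order $O(1/(n\alpha_{\min}))$ coming from (i) the at-most-two excluded diagonal-type states $y \in \{x, x'\}$ and (ii) the $\pm 1$ discrepancies in the denominators. The hypothesis $n \geq 4/\alpha_{\min}$ is exactly what is needed to absorb this correction and conclude $\sum_y (P_{x,y}\wedge P_{x',y}) \geq 1/(2\eta)$, i.e. $\delta(P) \leq 1 - 1/(2\eta)$.

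The main obstacle I expect is the careful bookkeeping of the lower-order corrections: one must track how the $\indicator{\sigma(x) = \sigma(y)}$ terms in the denominators, together with the removal of the states $x$ and $x'$ from the sum, perturb the would-be clean bound $1/\eta$, and verify that $n \geq 4/\alpha_{\min}$ (so that every $\cardinality{\mathcal{V}_k} \geq n\alpha_{\min} \geq 4$, at least asymptotically, and more carefully for finite $n$ using the exact relation between $\cardinality{\mathcal{V}_k}$ and $n\alpha_k$) makes each correction at most a constant fraction of $1/\eta$, losing only the factor $2$. The rest — invoking the Dobrushin contraction bound and inverting $d(t) \leq \delta(P)^t \leq \varepsilon$ — is routine. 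One should also note that $\delta(P) < 1$ combined with the standing assumption that the equilibrium distribution exists guarantees $P$ is ergodic, so $t_{\mathrm{mix}}(\varepsilon)$ is well-defined and finite.
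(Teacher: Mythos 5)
Your proposal matches the paper's proof: both go through Dobrushin's ergodic coefficient $\delta(P)$, lower-bound the overlap $\inf_{x,x'}\sum_y(P_{x,y}\wedge P_{x',y})$ via the separability constant $\eta$ together with a finite-size correction of order $1/(\alpha_{\min}n)$ coming from excluded states and denominator discrepancies, and then invoke the Dobrushin contraction to invert $d(t)\le\delta(P)^t\le\varepsilon$. The paper packages the correction as the explicit factor $(\alpha_{\min}n-2)/(\alpha_{\min}n)\ge 1/2$ under $n\ge 4/\alpha_{\min}$, which is exactly the "lose only a factor 2" you describe.
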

%

\refProposition{prop:Mixing_times_of_Markov_chains_with_transition_matrices_P_and_Q} implies that the mixing times are short enough so that our results will hold \emph{irrespective} of whether we assume that the Markov chain is initially in equilibrium. We will show in \refSection{sec:Asymptotic_negligibilit_of_VarLwrtQ} that what is important is that the chain reaches stationarity \emph{within} $T$ steps (the length of the observed trajectory), and consequentially, $T$ needs to be chosen sufficiently large with respect to $n$ to ensure that this occurs. Throughout this paper we therefore assume for simplicity that the chain is started from equilibrium. This eliminates the need of tracking higher order correction terms.

\paragraph{Examples} \refFigure{fig:Example_BMC_with_K2} illustrates the structure of a \gls{BMC} when there are $K=2$ groups. We find after solving the balance equations that the limiting equilibrium behavior is given by $\pi_1 = p_{21} / ( p_{12} + p_{21} )$ and $\pi_2 = p_{12} / ( p_{12} + p_{21} )$.

\begin{figure}[!hbtp]
\centering
\begin{tikzpicture}[scale=2.4]
\tikzstyle{vertex}=[circle, draw, thick, fill=black!0, inner sep=0pt, minimum width=4pt]
\tikzstyle{selected vertex} = [circle, draw, thick, fill=black!100, inner sep=0pt, minimum width=4pt]
\tikzstyle{edge} = [draw,thick,->]
\tikzstyle{weight} = [font=\small]
\tikzstyle{selected edge} = [draw,line width=5pt,-,red!50]

\node[draw=none] (V1) at (-1.7,0.3) {Cluster $\mathcal{V}_1$}; 
\node[draw=none] (V2) at (1.4,-0.125) {Cluster $\mathcal{V}_2$}; 

\node[selected vertex, label={$X_t$}] (X) at (-0.5,0.1) {};
\node[vertex] (A) at (-0.6,-0.4) {};
\node[vertex] (B) at (0.25,0.25) {};
\path[edge] (X) -- node[label=left:{\small $\frac{1-p_{1,2}}{ \cardinality{ \mathcal{V}_1 } - 1 }$}] {} (A);
\path[edge] (X) -- node[label=above:{\small $\frac{p_{1,2}}{ \cardinality{ \mathcal{V}_2 } }$}] {} (B);

\node[vertex] (Y) at (0.4,-0.2) {};
\node[vertex] (C) at (-0.7,0.05) {};
\node[vertex] (D) at (0.6,0.4) {};
\path[edge] (Y) -- node[label=below:{\small $\frac{p_{2,1}}{ \cardinality{ \mathcal{V}_1 } }$}] {} (C);
\path[edge] (Y) -- node[label=right:{\small $\frac{1-p_{2,1}}{ \cardinality{ \mathcal{V}_2 } - 1 }$}] {} (D);

\foreach \pos/\name in {{(-0.720098,-0.416295)/A1}, {(-1.14936,0.424865)/A3}, {(-1.63295,-0.0786896)/A4}, {(-2.23676,0.329519)/A5}, {(-2.36113,-0.444716)/A6}, {(-0.865418,-0.535262)/A7}, {(-1.41555,-0.548913)/A8}, {(-1.69918,-0.123351)/A9}, {(-1.4527,0.0875855)/A10}, {(-1.57124,-0.52412)/A11}, {(-1.46739,0.486784)/A12}, {(-0.895113,-0.51594)/A13}, {(-1.71948,0.0930052)/A14}, {(-1.44091,0.0295443)/A15}, {(-0.986193,0.067417)/A16}, {(-2.24293,-0.254816)/A17}, {(-0.652858,0.413082)/A18}, {(-1.33677,-0.0580674)/A19}}
	\node[vertex] (\name) at \pos {};

\foreach \pos/\name in {{(1.9231,-0.534274)/B1}, {(0.609729,-0.413682)/B2}, {(1.40443,-0.523518)/B3}, {(1.64806,-0.591451)/B4}, {(1.89123,-0.36334)/B5}, {(2.17232,-0.534266)/B6}, {(2.14759,-0.0435297)/B7}, {(1.45121,-0.407763)/B8}, {(1.56988,0.311601)/B9}, {(1.37134,0.481318)/B10}, {(0.988049,0.23501)/B11}, {(0.716856,-0.0812623)/B12}}
	\node[vertex] (\name) at \pos {};

\draw[thick,dashed] (-1.5,0) ellipse (1.2 and 0.8);
\draw[thick,dashed] (1.3,-0.1) ellipse (1.3 and 0.75);
\end{tikzpicture}
\caption{In the \gls{BMC} with $K = 2$ groups $\mathcal{V}_1 \cup \mathcal{V}_2 = \mathcal{V}$, whenever the Markov chain is at some state $X_t \in \mathcal{V}_1$, it will next jump with probability $p_{1,2}$ to cluster $\mathcal{V}_2$, and with probability $1-p_{1,2}$ to some other state in cluster $\mathcal{V}_1$. Similarly, if $X_t \in \mathcal{V}_2$, it would next jump to cluster $\mathcal{V}_1$ with probability $p_{2,1}$, or stay within its own cluster with probability $1-p_{2,1}$.}
\label{fig:Example_BMC_with_K2}
\end{figure}
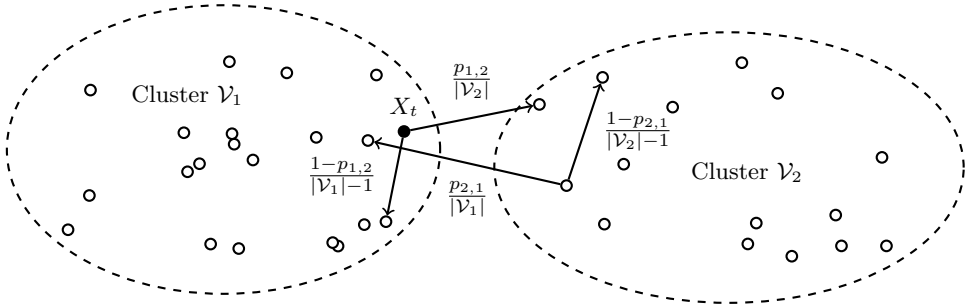

For $K = 3$, we find after solving the balance equations that the limiting equilibrium behavior is given by
\begin{equation}
\pi_1 = \frac{ p_{23} p_{31} + p_{21} ( p_{31} + p_{32} ) }{Z(p)},
\quad
\pi_2 = \frac{ p_{13} p_{32} + p_{12} ( p_{31} + p_{32} ) }{Z(p)},
\end{equation}
$\pi_3 = 1 - \pi_1 - \pi_2$, with $Z(p) = ( p_{21} + p_{23} ) ( p_{13} + p_{31} ) + ( p_{13} + p_{21} ) p_{32} + p_{12} ( p_{23} + p_{31} + p_{32} )$. Let us also illustrate the structure of the transition matrix when $\alpha = ( 2/10, 3/10, 5/10 )$ and $n = 10$:
\begin{equation}
P
=
\left(
\begin{array}{cc;{2pt/2pt}ccc;{2pt/2pt}ccccc}
0 & p_{1,1} & \frac{p_{1,2}}{3} & \frac{p_{1,2}}{3} & \frac{p_{1,2}}{3} & \frac{p_{1,3}}{5} & \frac{p_{1,3}}{5} & \frac{p_{1,3}}{5} & \frac{p_{1,3}}{5} & \frac{p_{1,3}}{5} \\
p_{1,1} & 0 & \frac{p_{1,2}}{3} & \frac{p_{1,2}}{3} & \frac{p_{1,2}}{3} & \frac{p_{1,3}}{5} & \frac{p_{1,3}}{5} & \frac{p_{1,3}}{5} & \frac{p_{1,3}}{5} & \frac{p_{1,3}}{5} \\ \hdashline[2pt/2pt]
\frac{p_{2,1}}{2} & \frac{p_{2,1}}{2} & 0 & \frac{p_{2,2}}{2} & \frac{p_{2,2}}{2} & \frac{p_{2,3}}{5} & \frac{p_{2,3}}{5} & \frac{p_{2,3}}{5} & \frac{p_{2,3}}{5} & \frac{p_{2,3}}{5} \\
\frac{p_{2,1}}{2} & \frac{p_{2,1}}{2} & \frac{p_{2,2}}{2} & 0 & \frac{p_{2,2}}{2} & \frac{p_{2,3}}{5} & \frac{p_{2,3}}{5} & \frac{p_{2,3}}{5} & \frac{p_{2,3}}{5} & \frac{p_{2,3}}{5} \\
\frac{p_{2,1}}{2} & \frac{p_{2,1}}{2} & \frac{p_{2,2}}{2} & \frac{p_{2,2}}{2} & 0 & \frac{p_{2,3}}{5} & \frac{p_{2,3}}{5} & \frac{p_{2,3}}{5} & \frac{p_{2,3}}{5} & \frac{p_{2,3}}{5} \\ \hdashline[2pt/2pt]
\frac{p_{3,1}}{2} & \frac{p_{3,1}}{2} & \frac{p_{3,2}}{3} & \frac{p_{3,2}}{3} & \frac{p_{3,2}}{3} & 0 & \frac{p_{3,3}}{4} & \frac{p_{3,3}}{4} & \frac{p_{3,3}}{4} & \frac{p_{3,3}}{4} \\
\frac{p_{3,1}}{2} & \frac{p_{3,1}}{2} & \frac{p_{3,2}}{3} & \frac{p_{3,2}}{3} & \frac{p_{3,2}}{3} & \frac{p_{3,3}}{4} & 0 & \frac{p_{3,3}}{4} & \frac{p_{3,3}}{4} & \frac{p_{3,3}}{4} \\
\frac{p_{3,1}}{2} & \frac{p_{3,1}}{2} & \frac{p_{3,2}}{3} & \frac{p_{3,2}}{3} & \frac{p_{3,2}}{3} & \frac{p_{3,3}}{4} & \frac{p_{3,3}}{4} & 0 & \frac{p_{3,3}}{4} & \frac{p_{3,3}}{4} \\
\frac{p_{3,1}}{2} & \frac{p_{3,1}}{2} & \frac{p_{3,2}}{3} & \frac{p_{3,2}}{3} & \frac{p_{3,2}}{3} & \frac{p_{3,3}}{4} & \frac{p_{3,3}}{4} & \frac{p_{3,3}}{4} & 0 & \frac{p_{3,3}}{4} \\
\frac{p_{3,1}}{2} & \frac{p_{3,1}}{2} & \frac{p_{3,2}}{3} & \frac{p_{3,2}}{3} & \frac{p_{3,2}}{3} & \frac{p_{3,3}}{4} & \frac{p_{3,3}}{4} & \frac{p_{3,3}}{4} & \frac{p_{3,3}}{4} & 0 \\
\end{array}
\right)
\label{eqn:Example_P_matrix}
\end{equation}

\chapter{Main results}
\label{sec:Summary_of_classifiability_results}

In this paper we obtain quantitative statements on the set of misclassified states,
\begin{equation}
\mathcal{E} 
\triangleq \bigcup_{k=1}^K \hat{\mathcal{V}}_{\criticalpoint{\gamma}(k)} \backslash \mathcal{V}_k
\quad
\textrm{where}
\quad
\criticalpoint{\gamma}
\in \arg \min_{ \gamma \in \textrm{Perm}(K) } \Bigl| \bigcup_{k=1}^K \hat{\mathcal{V}}_{\gamma(k)} \backslash \mathcal{V}_k \Bigr|.
\end{equation}
Here, the sets $\hat{\mathcal{V}}_1, \ldots, \hat{\mathcal{V}}_K$ will always denote an approximate cluster assignment obtained from some clustering algorithm. For notational convenience we will always number the approximate clusters so as to minimize the number of misclassifications, allowing us to forego defining it formally via a permutation.

\section{Information theoretical lower bound}

Our results identify an important information quantity $I(\alpha,p) \geq 0$ that measures how difficult it is to cluster in a \gls{BMC}. Its role will become clear in \refTheorem{thm:Information_bound}. The reason we call it an information quantity stems from fact that we have derived it as the leading coefficient in an asymptotic expansion of a log-likelihood function. Note that while it resembles one, this information quantity is not a Kullback--Leibler divergence. The individual terms are weighted according to the equilibrium distribution $\pi$, and there are two extra terms.

\begin{definition}
For $\alpha \in \Delta^{K-1}$ and $p \in \DDelta^{(K-1) \times K}$, let
\begin{equation}
I(\alpha,p) 
\triangleq \min_{ a \neq b }I_{a,b}(\alpha,p),
\label{eqn:Ialphabetap}
\end{equation}
where 
$
I_{a,b}(\alpha,p) \triangleq \Bigl\{ \sum_{k=1}^K \frac{1}{\alpha_a} \Bigl( \pi_a p_{a,k} \ln{ \frac{ p_{a,k} }{ p_{b,k} } } + \pi_k p_{k,a} \ln{ \frac{ p_{k,a} \alpha_b }{ p_{k,b} \alpha_a } } \Bigr) + \Bigl( \frac{ \pi_b }{ \alpha_b } - \frac{ \pi_a }{ \alpha_a } \Bigr) \Bigr\}.
$
Here $\pi$ denotes the solution to $\transpose{\pi} p = \transpose{\pi}$.
\end{definition}

\begin{theorem}
\label{thm:Information_bound}
\jaron{An algorithm is $(\varepsilon,c)$-locally good at $(\alpha,p)$ if it satisfies $\expectationWrt{ \cardinality{\mathcal{E}} }{P} \leq \varepsilon$ for all BMC models constructed from the given $p$ and partitions satisfying $| \cardinality{ \mathcal{V}_k } - \alpha_k n | \leq c$ for all $k$.}
Assume that $T=\omega(n)$. Then there exists a strictly positive and finite constant $C$ independent of $n$ such that: \jaron{there exists no $(\varepsilon,1)$-locally good} clustering algorithm \jaron{at $(\alpha,p)$} when
\begin{equation}
\varepsilon 
< C n \exp{ \Big( - I(\alpha,p) \frac{T}{n}\big( 1+ \smallO{ 1 }\big) \Bigr) }.
\end{equation}

\end{theorem}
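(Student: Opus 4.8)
The plan is to use a change-of-measure argument in the spirit of Lai--Robbins \cite{lai_asymptotically_1985} and its adaptation to clustering in \cite{yun_community_2014,yun_accurate_2014}. Fix a BMC with transition matrix $P$ built from $(\alpha,p)$, and fix a pair of clusters $a \neq b$ achieving the minimum in \refEquation{eqn:Ialphabetap}. Pick one state $v \in \mathcal{V}_a$ and construct a \emph{perturbed} block Markov chain $Q$ that is identical to $P$ except that the single state $v$ is ``moved'' to cluster $b$: that is, under $Q$ the row $Q_{v,\cdot}$ and the column $Q_{\cdot,v}$ are those of a state whose cluster is $b$ rather than $a$ (the denominators $|\mathcal V_{\sigma(\cdot)}|$ are adjusted accordingly so $Q$ is still left-stochastic; note that by the $(\varepsilon,1)$-local goodness hypothesis the algorithm is required to be good on both the unperturbed partition and this perturbed one, since they differ by moving one state). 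First I would show that any clustering algorithm which is $(\varepsilon,1)$-locally good at $(\alpha,p)$ must, with probability $\geq 1/2$ say, place $v$ in cluster $a$ under $P$ and in cluster $b$ under $Q$ — otherwise it misclassifies $v$ under one of the two models and $\expectationWrt{\cardinality{\mathcal E}}{}$ would exceed $\varepsilon$ on that model. Combining this over the $|\mathcal V_a| \sim \alpha_a n$ possible choices of $v$, a union/averaging argument over states produces the factor $n$ in the bound.

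The heart of the argument is the log-likelihood ratio. Let $L = \ln( \d{\mathbb P_P} / \d{\mathbb P_Q} )$ evaluated on the observed trajectory $X_0,\ldots,X_T$. Because $P$ and $Q$ differ only in transitions \emph{into} $v$ and \emph{out of} $v$, $L$ is a sum over the (random) set of time steps at which the chain is at $v$ or jumps to $v$, of terms $\ln(P_{X_t,X_{t+1}}/Q_{X_t,X_{t+1}})$. Starting from equilibrium (justified by \refProposition{prop:Mixing_times_of_Markov_chains_with_transition_matrices_P_and_Q}, whose short mixing time means equilibrium is essentially reached within $T$ steps), the expected number of visits to $v$ in $T$ steps is $\sim T \bar\Pi_a \sim T\pi_a/(\alpha_a n)$, and likewise for transitions $v \to k$ and $k \to v$. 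Taking expectations of $L$ under $P$ and carefully collecting the per-visit and per-transition contributions — the $\pi_a p_{a,k}\ln(p_{a,k}/p_{b,k})$ terms from rows, the $\pi_k p_{k,a}\ln(p_{k,a}\alpha_b/(p_{k,b}\alpha_a))$ terms from columns, and the $(\pi_b/\alpha_b - \pi_a/\alpha_a)$ correction from the shift in normalization constants and stationary mass — gives $\expectationWrt{L}{P} = (T/n)\, I_{a,b}(\alpha,p)(1+\smallO 1) = (T/n)\, I(\alpha,p)(1+\smallO 1)$ by the choice of $(a,b)$. One then needs a concentration statement: $L$ should not be much larger than its mean with non-vanishing probability, so I would bound $\varianceWrt{L}{P}$ and show it is $\smallO{ ( (T/n) I(\alpha,p) )^2 }$ (or at least that $L \leq (T/n)I(\alpha,p)(1+\smallO1)$ on an event of probability bounded away from $0$), using Chebyshev together with the Markov-chain concentration tools of \cite{paulin_concentration_2015}; the dependence structure here is mild because each contributing term is localized around a visit to the single state $v$.

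Finally I would assemble the pieces via the standard change-of-measure inequality: for the event $\mathcal A_v = \{\text{algorithm puts } v \text{ in } a\}$ (which has $\mathbb P_P[\mathcal A_v] \geq 1/2$ and on which, under $Q$, $v$ is misclassified), and the ``typical likelihood'' event $\mathcal B = \{L \leq (T/n)I(\alpha,p)(1+\smallO1)\}$ with $\mathbb P_P[\mathcal B]$ bounded away from $0$, we get
\begin{equation}
\mathbb P_Q[\mathcal A_v] \;\geq\; \expectationBigWrt{ \e{-L}\indicator{\mathcal A_v \cap \mathcal B} }{P} \;\geq\; \e{-(T/n)I(\alpha,p)(1+\smallO1)}\,\mathbb P_P[\mathcal A_v \cap \mathcal B].
\end{equation}
Summing over $v \in \mathcal V_a$ and using that the misclassification sets for distinct $v$ contribute additively to $\expectationWrt{\cardinality{\mathcal E}}{Q}$ on the perturbed models, one obtains $\expectationWrt{\cardinality{\mathcal E}}{} \gtrsim n\,\e{-(T/n)I(\alpha,p)(1+\smallO1)}$ for some model in the allowed family; hence no $(\varepsilon,1)$-locally good algorithm can exist once $\varepsilon < Cn\exp(-I(\alpha,p)(T/n)(1+\smallO1))$, which is the claim. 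I expect the main obstacle to be the computation of $\expectationWrt{L}{P}$ with the correct constant — in particular getting the column terms (transitions \emph{into} $v$ from states in other clusters) and the normalization-constant correction term $\pi_b/\alpha_b - \pi_a/\alpha_a$ exactly right, since these are where the non-KL structure of $I_{a,b}$ enters — together with the attendant control of error terms showing the $\smallO 1$ is genuinely negligible given only $T = \omega(n)$.
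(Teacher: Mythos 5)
Your high-level plan (change of measure on a one‑state perturbation, Chebyshev to control the log‑likelihood, then accumulate over states to produce the factor $n$) is the right family of argument, but there are two interlocking gaps in the execution that would prevent the proof from closing.

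\paragraph{The change-of-measure direction loses the factor $n$.}
You apply the change of measure in the direction
$\mathbb{P}_{Q_v}[\mathcal A_v] \geq \e{-\ell}\,\mathbb{P}_P[\mathcal A_v\cap\mathcal B]$,
with $\mathcal A_v = \{\text{algorithm puts } v \text{ in cluster } a\}$ and $\mathcal B = \{L\le\ell\}$, $L=\ln(\d{\mathbb P_P}/\d{\mathbb P_{Q_v}})$, and $\ell\approx \expectationWrt{L}{P}$. The right-hand side is $\Theta(1)$ (say $\ge1/4$) for typical $v$, so this gives $\mathbb{P}_{Q_v}[\mathcal A_v]\ge C\e{-\ell}$ and hence $\expectationWrt{\cardinality{\mathcal E}}{Q_v}\ge C\e{-\ell}$ — but this is a lower bound on the error of a \emph{single} state in a \emph{single} perturbed model $Q_v$. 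There is no factor $n$ here. Your closing ``sum over $v\in\mathcal V_a$'' then adds lower bounds that each hold for a \emph{different} model $Q_v$; these do not add up to a lower bound on $\expectationWrt{\cardinality{\mathcal E}}{\cdot}$ under any one model in the allowed family. The correct way to make the factor $n$ appear is to reverse the change of measure: lower-bound $\mathbb{P}_P[v\in\mathcal E]$ (a quantity under the \emph{true} model, which is common to all $v$) by
$\mathbb{P}_P[\text{classify $v$ as }b] = \expectationWrt{\indicator{\cdot}\e{L}}{Q_v} \ge \e{-\ell}\,\mathbb{P}_{Q_v}[\text{classify $v$ as }b, L\geq-\ell]$,
note $\{\text{classify }v\text{ as }b\}\subseteq\{v\in\mathcal E_P\}$, and then sum $\mathbb P_P[v\in\mathcal E]$ over $v$ to get $\expectationWrt{\cardinality{\mathcal E}}{P}\gtrsim n\,\e{-\ell}$. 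This is essentially what the paper does, except they keep $V^*$ random and use the Markov-type bound $\probabilityWrt{V^*\in\mathcal E}{\Phi}\le \expectationWrt{\cardinality{\mathcal E}}{\Phi}/((\alpha_a+\alpha_b)n)$, which is where their factor $n$ enters.

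\paragraph{The expectation is taken under the wrong measure and gives the wrong constant.}
Once the change of measure is pointed the right way, the threshold $\ell$ must be set so that $L\geq-\ell$ holds with constant probability under $Q_v$, i.e.\ $\ell \approx -\expectationWrt{L}{Q_v} = \mathrm{KL}(\mathbb P_{Q_v}\|\mathbb P_P)$, \emph{computed under $Q_v$}. This is \emph{not} $\expectationWrt{L}{P}=\mathrm{KL}(\mathbb P_P\|\mathbb P_{Q_v})$, which is what you compute; the two KL divergences differ in this problem. Concretely, if $v$ is moved from $a$ to $b$, $\expectationWrt{\ln(\d{\mathbb P_{Q_v}}/\d{\mathbb P_P})}{Q_v}=(T/n)\,I_{b,a}(\alpha,p)(1+\smallO1)$, not $(T/n)\,I_{a,b}(\alpha,p)$. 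So to land on $I(\alpha,p)=\min_{a\ne b}I_{a,b}$ you must move $v$ \emph{into} the cluster indexed by the first argument of the minimizing pair, not out of it, i.e.\ if $(a^\star,b^\star)$ achieves the minimum, move a state from $\mathcal V_{b^\star}$ into $\mathcal V_{a^\star}$. Getting both the measure and the transposition right is not cosmetic: with the direction you chose, you both miss the $n$ and, even after fixing the $n$, you would land on an exponent $\geq I(\alpha,p)$.

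\paragraph{Relation to the paper's construction.}
The paper sidesteps these bookkeeping pitfalls by a slightly more elaborate perturbation: $V^*$ is drawn \emph{uniformly from $\mathcal V_a\cup\mathcal V_b$} and placed in a new cluster ``$0$'' whose transition rates $q$ are free parameters; $q$ is then chosen in $\mathcal Q(a,b)=\{q: I_a(q\|p)=I_b(q\|p)\}$. That balance kills the inter-cluster component of $\varianceWrt{L}{\Psi}$, makes the likelihood ratio genuinely ambiguous (leading to the fixed $\delta>0$ in Proposition~5(i)), and allows them to optimize over $q$ to show $J(\alpha,p)=\min_{a\neq b}\min_{q\in\mathcal Q(a,b)}I_a(q\|p)\le I(\alpha,p)$ (Lemma~6). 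Your cruder ``move $v$ into a neighboring cluster'' perturbation can also be made to work (and is the standard device in the SBM literature), but you need the corrections above, and you should check that the moved partition still satisfies $|\,\cardinality{\mathcal V_k}-\alpha_k n\,|\le 1$, which requires choosing the starting partition appropriately.
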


\refTheorem{thm:Information_bound} allows us to state necessary conditions for the existence of \jaron{$(\varepsilon,1)$-locally good} clustering algorithms \jaron{at $(\alpha,p)$} that either detect clusters {\it asymptotically accurately}, namely with $\expectationWrt{ \cardinality{\mathcal{E}} }{P}=\smallO{n}$, or recover clusters {\it asymptotically exactly}, i.e., with $\expectationWrt{ \cardinality{\mathcal{E}} }{P}=\smallO{1}$. 

\paragraph{Conditions for asymptotically accurate detection} In view of the lower bound in \refTheorem{thm:Information_bound}, there may exist asymptotically accurate \jaron{$(\varepsilon,1)$-locally good} clustering algorithms \jaron{at $(\alpha,p)$} only if $I(\alpha,p)>0$ and $T=\omega(n)$.

\paragraph{Conditions for asymptotically exact detection} Necessary conditions for the existence of an asymptotically exact algorithm are $I(\alpha,p)>0$ and $T-{n\ln(n)\over I(\alpha,p)}=\omega(1)$. In particular, $T$ must be larger than $n\ln(n)$. We refer to the scenario where $T$ is of the order $n\ln(n)$ as the {\it critical regime}. In this regime when $T=n\ln(n)$, the necessary condition for exact recovery is $I(\alpha,p)>1$. 

Note that qualitatively, the above conditions on the number of observations for accurate and exact recovery are similar to those in the \gls{SBM}. In the latter, the average degrees of vertices should be such that the average total number of edges is $\omega(n)$ \cite{yun_community_2014} for accurate detection, whereas this average must be at least $cn\ln(n)$ for exact recovery\cite{abbe_community_2015}. Here, $c$ is known and depends on the parameters of the \gls{SBM}.   

\paragraph{The information quantity $I(\alpha,p)$ for $K = 2$ clusters} In the case of two clusters, we study the set of parameters $(\alpha,p)$ of the \glspl{BMC} for which $I(\alpha,p)>0$ and $I(\alpha,p)>1$, the latter condition being necessary in the critical regime when $T=n\ln(n)$. 

A system with two clusters can be specified entirely with three parameters: $\alpha_2$, $p_{1,2}$, and $p_{2,1}$. Examining the explicit expression for \refEquation{eqn:Ialphabetap} in this case, we can conclude that $I(\alpha,p) = 0$ if and only if $\alpha_2 = p_{1,2} = 1 - p_{2,1}$. Asymptotic accurate (resp. exact) recovery seems thus possible as soon as $T=\omega(n)$ (resp. $T=\omega(n\ln(n))$) for almost any \gls{BMC} with two clusters -- the only exception are \glspl{BMC} with parameters on this line. Note that if we did not have the information quantity at our disposal, it would be challenging to give a heuristic argument whether a specific \gls{BMC} allows for asymptotic exact recovery. Consider for instance a \gls{BMC} with $\alpha_1 = \alpha_2 = \tfrac{1}{2}$ and $p_{1,2} = 1 - p_{2,1} \neq \tfrac{1}{2}$ and $p_{1,2} > p_{2,1}$ w.l.o.g. In this scenario, $P_{x,z} = P_{y,z}$ for all $x,y,z \in \mathcal{V}$, that is, every row of the kernel is identical to any other row. Looking at this kernel, we would not expect to be able to cluster. However here $\pi_2 > \pi_1$, and we could cluster based on the equilibrium distribution as $T \to \infty$. The information quantity takes the fact that we are dealing with a Markov chain appropriately into account, and correctly asserts for this case that asymptotic recovery is possible.

\refFigure{fig:Clusterable_regions_on_critical_threshold} illustrates for which parameters one can possibly recover the two clusters asymptotically exactly when $T = n \ln{n}$. Specifically, it depicts all parameters $\alpha_2, p_{1,2}, p_{2,1} \in (0,1)$ for which $I(\alpha,p) > 1$. If we fix $\alpha_2$, note that when $p_{1,2}, p_{2,1} \downarrow 0$ (bottom left), the Markov chain tends to stay within the current cluster for a substantial time. Similarly when $p_{1,2}, p_{2,1} \uparrow 1$ (top right), the Markov chain tends to jump into the other cluster every time. In both scenarios, the states are relatively easy to cluster. This draws parallels with the \gls{SBM}. When either $p_{1,2} \downarrow 0$ (left) or $p_{2,1} \downarrow 0$ (bottom), clustering is again doable: in these scenarios, the Markov chain tends to stay in the cluster of the starting state -- and the fact that you never see the other vertices suggests that they have other transitions rates and therefore belong to the other cluster.

\begin{figure}[!hbtp]
\captionsetup[subfigure]{labelformat=empty}
\centering
\begin{subfigure}[t]{0.30\textwidth}
\centering
\includegraphics[width=\linewidth]{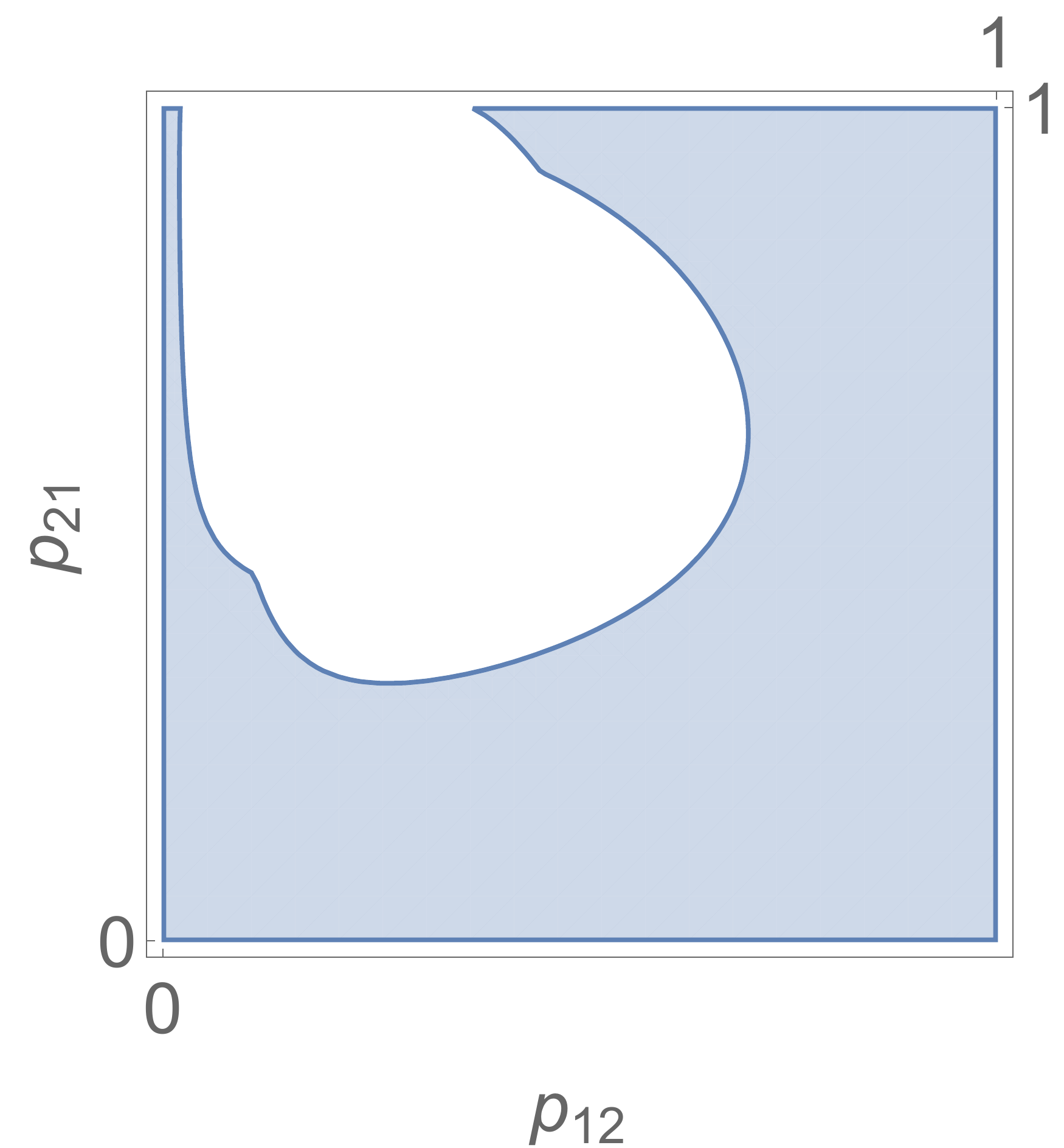}
\caption{$\alpha_2 = 1/4$}
\end{subfigure}
~
\begin{subfigure}[t]{0.30\textwidth}
\centering
\includegraphics[width=\linewidth]{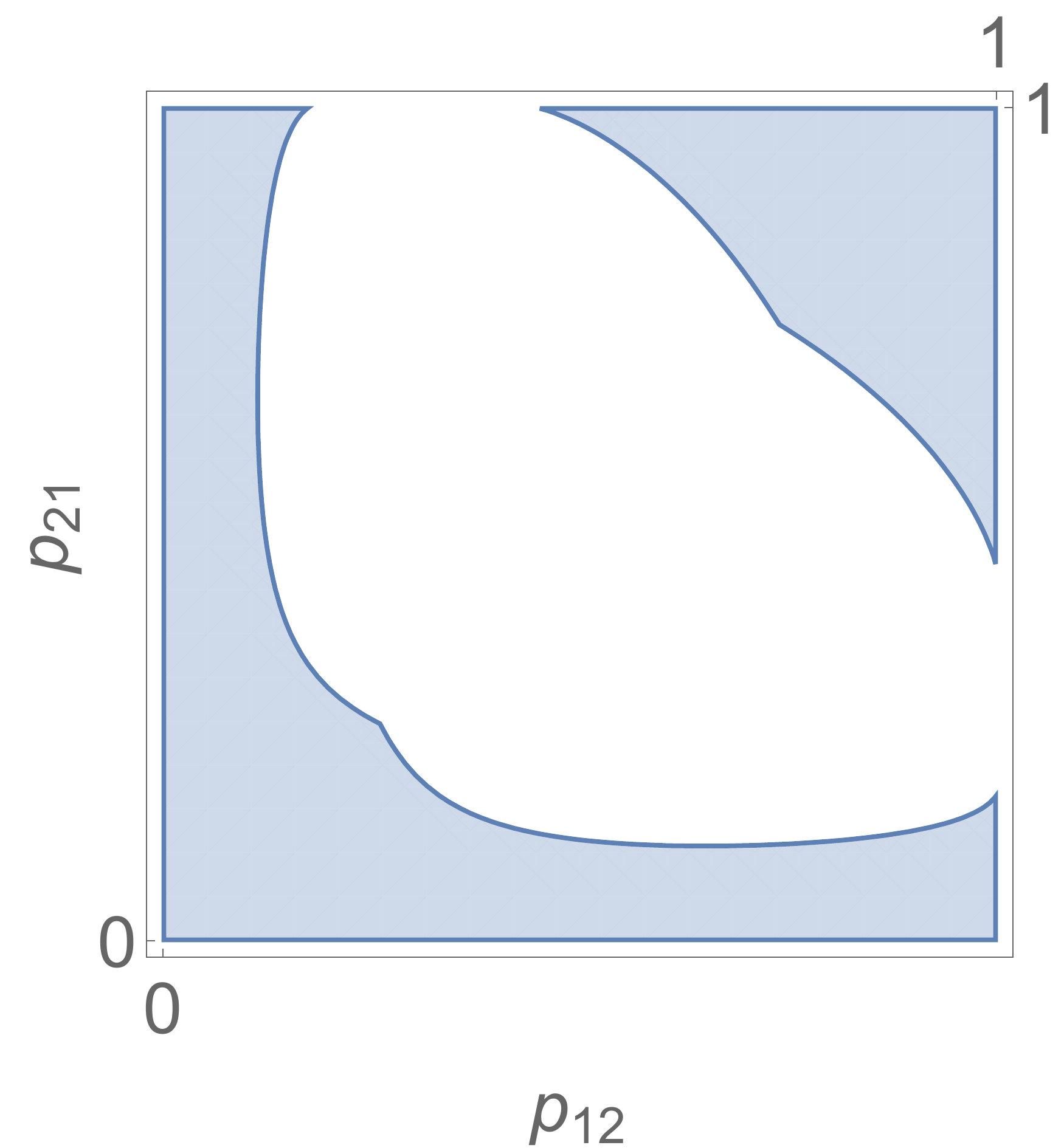}
\caption{$\alpha_2 = 1/2$}
\end{subfigure}
~
\begin{subfigure}[t]{0.33\textwidth}
\centering
\includegraphics[width=\linewidth]{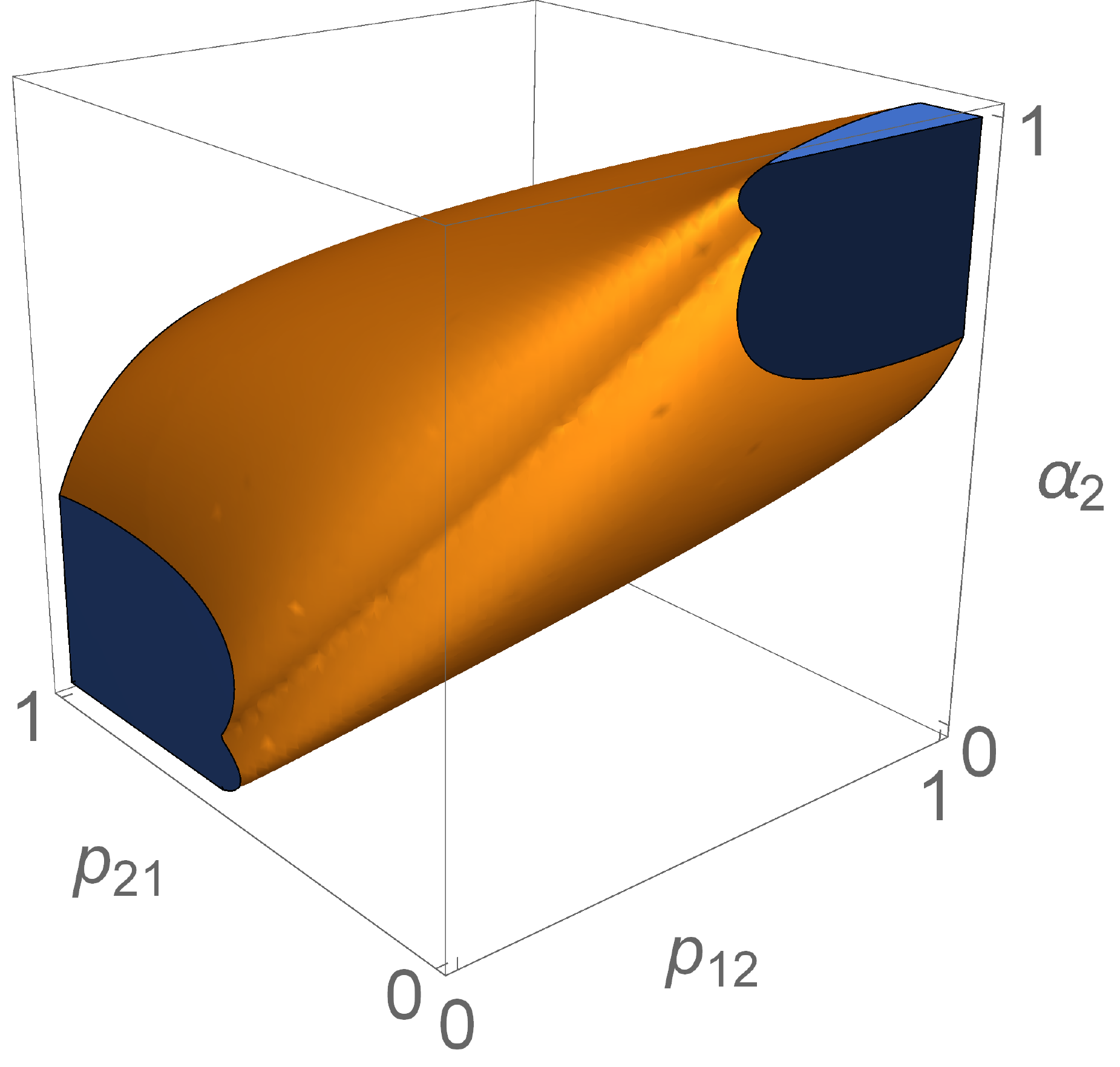}
\caption{Region where $I(\alpha,p)<1$.}
\end{subfigure}
\caption{(left, middle) The parameters $(p_{1,2}, p_{2,1})$ in blue for which asymptotic exact recovery should be possible in the critical regime $T = n \ln{n}$ for $K = 2$ clusters. (right) The parameters $( \alpha_2, p_{1,2}, p_{2,1} )$ for which asymptotic exact recovery is likely not possible, i.e., $I(\alpha,p)<1$.}
\label{fig:Clusterable_regions_on_critical_threshold}
\end{figure}

\section{Procedure for cluster recovery}

\refTheorem{thm:Information_bound} has established necessary conditions for asymptotically accurate and exact recovery, and has identified performance limits satisfied by any \jaron{$(\varepsilon,1)$-locally good} clustering algorithm \jaron{at $(\alpha,p)$}. In this section, we devise a \jaron{$(\varepsilon,1)$-locally good} clustering algorithm \jaron{at $(\alpha,p)$} that reaches these limits order-wise. The proposed algorithm proceeds in two steps: The first step performs a spectral decomposition of the random matrix $\hat{N}$ corresponding to the empirical transition rates between any pair of states, and defined by
\begin{equation}
\hat{N}_{x,y} \triangleq \sum_{t=0}^{T-1} \indicator{ X_t = x, X_{t+1} = y}
\quad
\textrm{for} 
\quad
x, y \in \mathcal{V}.
\label{eqn:Definitions_of_Nhatxy_and_Nxy}
\end{equation}
The rank-$K$ approximation of $\hat{N}$ is used to get initial estimates of the clusters. The second step sequentially improves the cluster estimates. In each iteration, the parameters of the \glspl{BMC} are inferred from the previous cluster estimates, and states are re-assigned to clusters based on these estimated parameters and the observed trajectory (by maximizing a log-likelihood).

\subsection{Spectral Clustering Algorithm}

The first step of our procedure is the Spectral Clustering Algorithm, presented in Algorithm~\ref{pseudo:The_SVD_Clustering_Algorithm}. It leverages the spectral decomposition of $\hat{N}$ to estimate the clusters. 

Before applying a singular value decomposition (SVD) to $\hat{N}$, we first need to trim the matrix so as to remove states that have been visited abnormally often. These states would namely perturb the spectral decomposition of $\hat{N}$. More precisely, we define the set $\Gamma$ of states obtained from ${\cal V}$ by removing the $\lfloor n\exp{ ( -(T/n) \ln{ (T/n) } ) } \rfloor$ states with the highest numbers of visits in the observed sample path of length $T$. The spectral decomposition is applied to the matrix $\hat{N}_\Gamma$ obtained from $\hat{N}$ by setting all entries on the rows and columns corresponding to states not in $\Gamma$ to zero. 

The SVD of $\hat{N}_\Gamma$ is $U \Sigma \transpose{V}$, from which we deduce $\hat{R}$ the best rank-$K$ approximation of $\hat{N}_\Gamma$: $\hat{R} \triangleq \sum_{k=1}^K \sigma_k \vect{U}_{\cdot,k} \transpose{ \vect{V}_{\cdot,k} }$,  where the values $\sigma_1 \geq \sigma_2 \geq \cdots \geq \sigma_n \geq 0$ denote the singular values of $\hat{N}_\Gamma$ in decreasing order. We apply a clustering algorithm to the rows and columns of $\hat{R}$ to determine the clusters. While in practice you may choose to use a different algorithm, for the analysis we use the following: first we calculate the \emph{neighborhoods}
\begin{equation}
\mathcal{N}_x 
\triangleq \Bigl\{ y \in \mathcal{V} \Big| \sqrt{ \pnorm{ {\hat{R}}_{x,\cdot} - {\hat{R}}_{y,\cdot} }{2}^2 + \pnorm{ {\hat{R}}_{\cdot,x} - {\hat{R}}_{\cdot,y} }{2}^2} \leq \frac{1}{n} \cdot \Bigl( \frac{T}{n} \Bigr)^{3/2} \Bigl( \ln{ \frac{T}{n} } \Bigr)^{4/3} \Bigr\}
\label{eqn:Definition_of_neighborhoods}
\end{equation}
for $x \in \mathcal{V}$. Then we initialize $\hat{\mathcal{V}}_k \gets \emptyset$ for $k = 1, \ldots, K$ and sequentially select $K$ \emph{centers} $z_1^\ast, \ldots, z_K^\ast \in \mathcal{V}$ from which we construct approximate clusters. Specifically, we iterate for $k = 1, \ldots K$:
\begin{equation}
\hat{\mathcal{V}}_k
\gets \mathcal{N}_{z_k^\ast} \backslash \bigl\{ \cup_{l=1}^{k-1} \hat{\mathcal{V}}_l \bigr\}
\quad
\textrm{where}
\quad
z_k^\ast 
\triangleq \arg \max_{x \in \mathcal{V}} \cardinality{ \mathcal{N}_x \backslash \bigl\{ \cup_{l=1}^{k-1} \hat{\mathcal{V}}_l \bigr\} }.
\label{eqn:SVD_centers}
\end{equation}
Any remaining state is finally associated to the center closest to it, i.e., we iterate for $y \in \{ \cup_{k=1}^K \hat{\mathcal{V}}_k \}^{\mathrm{c}}$
\begin{equation}
\hat{\mathcal{V}}_{k_y^\ast}
\gets \hat{\mathcal{V}}_{k_y^\ast} \cup \{ y \}
\,\,\,
\textrm{with}
\,\,\,
k_y^\ast 
\triangleq \arg \min_{k = 1, \ldots, K} \sqrt{ \pnorm{ {\hat{R}}_{z_k^\ast,\cdot} - {\hat{R}}_{y,\cdot} }{2}^2 +\pnorm{ {\hat{R}}_{\cdot,z_k^\ast} - {\hat{R}}_{\cdot,y} }{2}^2}.
\label{eqn:SVD_remainders}
\end{equation}
Finally, the Spectral Clustering Algorithm outputs $\hat{\mathcal{V}}_k$ for $k = 1, \ldots, K$. \refTheorem{thm:Upper_bound_on_SVDCAs_performance} provides an upper bound on the number of misclassified states after executing the algorithm.

\begin{algorithm}[!hbtp]
\KwIn{$n,K$, and a trajectory $X_0, X_1, \ldots, X_T$}
\KwOut{An approximate cluster assignment $\itr{\hat{\mathcal{V}}_1}{0}, \ldots, \itr{\hat{\mathcal{V}}_K}{0}$, and matrix $\hat{N}$}
\Begin{
\For{$x \leftarrow 1$ \KwTo $n$}{
\For{$y \leftarrow 1$ \KwTo $n$}{
$\hat{N}_{x,y} \leftarrow \sum_{t=0}^{T-1} \indicator{ X_{t} = x, X_{t+1} = y }$\;
}
}
Calculate the trimmed matrix $\hat{N}_{\Gamma}$\;
Calculate the \gls{SVD} $U \Sigma \transpose{V}$ of $\hat{N}_\Gamma$\;
Order $U, \Sigma, V$ s.t.\ the singular values $\sigma_1 \geq \sigma_2 \geq \ldots \geq \sigma_n \geq 0$ are in descending order\;
Construct the rank-$K$ approximation $\hat{R} = \sum_{k=1}^K \sigma_k \vect{U}_{\cdot,k} \transpose{ \vect{V}_{\cdot,k} }$\;
Apply a $K$-means algorithm to $[\hat{R},\hat{R}^\top ]$ to determine $\itr{\hat{\mathcal{V}}_1}{0}, \ldots, \itr{\hat{\mathcal{V}}_K}{0}$\;
}
\caption{Pseudo-code for the Spectral Clustering Algorithm.}
\label{pseudo:The_SVD_Clustering_Algorithm}
\end{algorithm}

\begin{theorem}
\label{thm:Upper_bound_on_SVDCAs_performance}
Assume that $T=\omega(n)$ and $I(\alpha,p)>0$. Then the proportion of misclassified states after the Spectral Clustering Algorithm satisfies:
\begin{equation}
\frac{ \cardinality{\mathcal{E}} }{n}
= \bigOPbig{ \frac{ n }{ T } \ln{ \frac{T}{n} } }=\smallOP{1}.
\label{eqn:DNalphabetap}
\end{equation}
\end{theorem}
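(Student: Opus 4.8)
The plan is to follow the standard spectral-recovery template: prove a sharp operator-norm concentration bound for $\hat N_\Gamma$ around its mean, push it through to the rank-$K$ projection $\hat R$, convert an $\ell_2$ row/column error bound into a count of misclassified states, and control that count using a quantitative separation of the cluster ``centres'' guaranteed by $I(\alpha,p)>0$. Since the chain starts from $\Pi$, one has $\mathbb{E}[\hat N_{x,y}]=T\,\Pi_x P_{x,y}$, and by the block symmetries together with $|\mathcal V_k|\bar\Pi_k\to\pi_k$ this equals $\tfrac{T\pi_{\sigma(x)}p_{\sigma(x),\sigma(y)}}{\alpha_{\sigma(x)}\alpha_{\sigma(y)}n^2}(1+O(1/n))\indicator{x\ne y}$. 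I would introduce the exact rank-$K$ matrix $R$ with $R_{x,y}=\tfrac{T\pi_{\sigma(x)}p_{\sigma(x),\sigma(y)}}{\alpha_{\sigma(x)}\alpha_{\sigma(y)}n^2}$; then $\pnorm{\mathbb{E}[\hat N]-R}{}=O(T/n^2)$ (deleted diagonal plus lower-order corrections) and $\pnorm{\mathbb{E}[\hat N]-\mathbb{E}[\hat N_\Gamma]}{}$ is negligible, since only $\lfloor n\,e^{-(T/n)\ln(T/n)}\rfloor$ rows/columns get zeroed and each population row/column of $R$ has norm $O(T/n^{3/2})$; both corrections are $o$ of the threshold in \refEquation{eqn:Definition_of_neighborhoods}.

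The crux is the operator-norm bound
\begin{equation}
\pnorm{\hat N_\Gamma-\mathbb{E}[\hat N]}{}=\bigOPbig{\sqrt{\tfrac{T}{n}\ln\tfrac{T}{n}}},
\label{eqn:plan-concentration}
\end{equation}
which the paper develops in \refSection{sec:The_SVD_clustering_algorithm}. I would prove it by combining three ingredients: \refProposition{prop:Mixing_times_of_Markov_chains_with_transition_matrices_P_and_Q}, which forces $t_{\mathrm{mix}}=O(1)$ so that the trajectory decomposes into $\Theta(T)$ almost-independent blocks; Markov-chain concentration à la \cite{paulin_concentration_2015} to control individual entries and linear functionals of $\hat N$ despite the dependence between entries (here the Markov property is used to decouple the count structure); and a Feige--Ofek-type $\varepsilon$-net/combinatorial argument in the spirit of \cite{feige_spectral_2005} applied to $\hat N_\Gamma$ --- this is where the trimming of the $\lfloor n\,e^{-(T/n)\ln(T/n)}\rfloor$ most-visited states is essential, as it removes exactly the heavy-tailed rows that would otherwise blow up the operator norm. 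This is the main obstacle: a random matrix with dependent entries, heavy-tailed marginals, and a nontrivial rank-$K$ mean, all at once.

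Given \refEquation{eqn:plan-concentration}, the rest is comparatively routine. Since $\hat R$ is a best rank-$K$ approximation of $\hat N_\Gamma$ and $R$ has rank $\le K$, $\pnorm{\hat R-\hat N_\Gamma}{}\le\pnorm{R-\hat N_\Gamma}{}$, hence $\pnorm{\hat R-R}{}\le 2\pnorm{\hat N_\Gamma-R}{}=\bigOPbig{\sqrt{(T/n)\ln(T/n)}}$; as $\hat R-R$ has rank $\le 2K$, $\pnorm{\hat R-R}{\mathrm F}^2\le 2K\pnorm{\hat R-R}{}^2=\bigOPbig{(T/n)\ln(T/n)}$, and likewise for the transpose. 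A Markov inequality then bounds, for any $\rho>0$, the number of states $x$ with $\pnorm{\hat R_{x,\cdot}-R_{x,\cdot}}{2}^2+\pnorm{\hat R_{\cdot,x}-R_{\cdot,x}}{2}^2>\rho^2$ by $(\pnorm{\hat R-R}{\mathrm F}^2+\pnorm{\hat R^\top-R^\top}{\mathrm F}^2)/\rho^2$. Separately, a direct computation gives, for $x\in\mathcal V_a$, $y\in\mathcal V_b$, $a\ne b$, that $\pnorm{R_{x,\cdot}-R_{y,\cdot}}{2}^2+\pnorm{R_{\cdot,x}-R_{\cdot,y}}{2}^2=\tfrac{T^2}{n^3}\Delta_{a,b}(1+o(1))$ with $\Delta_{a,b}=\sum_c\tfrac1{\alpha_c}\bigl(\tfrac{\pi_a p_{a,c}}{\alpha_a}-\tfrac{\pi_b p_{b,c}}{\alpha_b}\bigr)^2+\sum_d\tfrac{\pi_d^2}{\alpha_d}\bigl(\tfrac{p_{d,a}}{\alpha_a}-\tfrac{p_{d,b}}{\alpha_b}\bigr)^2$; I would show that $\Delta_{a,b}=0$ forces the rescaled row- and column-profiles of clusters $a,b$ to coincide, which in turn forces $I_{a,b}(\alpha,p)=0$ (for $K=2$, the explicit check $\Delta_{1,2}=0\iff\alpha_2=p_{1,2}=1-p_{2,1}\iff I(\alpha,p)=0$). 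Hence $I(\alpha,p)>0$ yields a constant $\Delta_{\min}>0$ so that the combined centre gap is $\ge\Delta_{\min}^{1/2}T/n^{3/2}$, which exceeds the threshold in \refEquation{eqn:Definition_of_neighborhoods} and dominates the typical row/column error coming from \refEquation{eqn:plan-concentration}.

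Finally, calling a state \emph{good} if its combined row/column error is below a fixed small fraction of both the threshold in \refEquation{eqn:Definition_of_neighborhoods} and $\Delta_{\min}^{1/2}T/n^{3/2}$: two good states of the same cluster are mutual neighbours, two good states of different clusters are not, and --- using $\alpha_{\min}>0$ and the fact that the number of non-good states is $\smallOP{n}$ --- the greedy selection \refEquation{eqn:SVD_centers} picks one good centre per cluster, each $\hat{\mathcal V}_k$ absorbs every good state of cluster $k$ through its neighbourhood, and every remaining moderate-error state is assigned by \refEquation{eqn:SVD_remainders} to the centre of its true cluster (strictly the closest one). Thus $\mathcal E$ is contained in the set of non-good states; taking $\rho$ a constant fraction of $\Delta_{\min}^{1/2}T/n^{3/2}$ and dividing the Frobenius bound by this squared gap gives $|\mathcal E|/n=\bigOPbig{\tfrac nT\ln\tfrac Tn}=\smallOP{1}$ since $T=\omega(n)$. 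The delicate points are, in order: the concentration bound \refEquation{eqn:plan-concentration}, where the trimming set $\Gamma$ and the exponents $3/2,4/3$ in \refEquation{eqn:Definition_of_neighborhoods} must be calibrated so that the Feige--Ofek bound, the Markov concentration, and the mixing estimate of \refProposition{prop:Mixing_times_of_Markov_chains_with_transition_matrices_P_and_Q} fit together (and so that trimming discards enough mass without removing too many states); and the algebra linking the purely spectral separation $\Delta_{a,b}$ to the information quantity $I_{a,b}(\alpha,p)$.
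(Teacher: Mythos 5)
Your proposal follows essentially the same approach as the paper: spectral-norm concentration of $\hat N_\Gamma$ around its mean at scale $\sqrt{(T/n)\ln(T/n)}$ (Proposition~6), conversion to a Frobenius bound on $\hat R - N$ via the rank-$K$ structure and Weyl's inequality (Lemma~6), a cluster-separability estimate of order $T\sqrt{D(\alpha,p)}/n^{3/2}$ with $D(\alpha,p)>0$ iff $I(\alpha,p)>0$ (Lemma~5), a careful argument that the greedy centre selection picks one ``core'' representative per cluster so that misclassified states inherit row errors of that same order (Lemma~7), and a Markov-inequality count of bad rows against the Frobenius budget (Step~4). One small, arguably cleaner presentational choice on your side: you compare to an exactly rank-$K$ reference matrix $R$ with no zeroed diagonal, paying an $O(T/n^2)$ operator-norm correction, whereas the paper works directly with $N=\mathbb E[\hat N]$ (whose diagonal is zeroed) and asserts $\operatorname{rank}(N)\le K$ in the Weyl step; your version sidesteps that technicality at negligible cost. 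Your separation functional $\Delta_{a,b}$ and its scaling $T^2/n^3$ match the paper's Lemma~5 computation, and your reduction of $\Delta_{a,b}=0$ to $I_{a,b}(\alpha,p)=0$ is exactly the role the paper's conditions (C1)--(C3) play. This is the same argument, not a genuinely different route.
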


From the above theorem, we conclude that the first step of our recovery procedure (i.e., the Spectral Clustering Algorithm) alone achieves an asymptotically accurate detection whenever this is at all possible, say when $I(\alpha,p)>0$ and $T=\omega(n)$. However, it fails at ensuring asymptotic exact recovery, even in certain cases of $T=\omega(n\ln(n))$, and we cannot guarantee that its recovery rate approaches the fundamental limit identified in \refTheorem{thm:Information_bound}.

\subsection{Cluster Improvement Algorithm}

The second step of our clustering procedure, referred to as the Cluster Improvement Algorithm, aims at sequentially improving the cluster estimates obtained from the Spectral Clustering Algorithm until the recovery rate approaches the limits predicted in \refTheorem{thm:Information_bound}. The pseudo-code of the Cluster Improvement Algorithm is presented in Algorithm \ref{pseudo:The_Cluster_Improvement_Algorithm}.

The Cluster Improvement Algorithm works as follows. Given a cluster assignment $\{ \itr{\hat{\mathcal{V}}_k}{t} \}_{k=1,\ldots,K}$ obtained after the $t$-th iteration, it first calculates the estimates
\begin{gather}
\hat{p}_{a,b} 
= \hat{N}_{ \itr{\hat{\mathcal{V}}_a}{t}, \itr{\hat{\mathcal{V}}_b}{t} } / \hat{N}_{ \itr{\hat{\mathcal{V}}_a}{t}, \mathcal{V} }
\quad
\textrm{for}
\quad
a, b = 1, \ldots, K,
\nonumber \\
\hat{\pi}_k 
= \frac{1}{T} \sum_{ x \in \itr{\hat{\mathcal{V}}_k}{t} } \sum_{ y \in \mathcal{V} } \hat{N}_{x,y}
\quad
\textrm{and}
\quad
\hat{\alpha}_k 
= \frac{ \cardinality{ \itr{\hat{\mathcal{V}}_k}{t} } }{n}
\quad
\textrm{for}
\quad
k = 1, \ldots, K.
\label{eqn:Definitions_for_phat_pihat_alphahat}
\end{gather}
It then initializes $\itr{\hat{\mathcal{V}}_k}{t+1} = \emptyset$ for $k = 1, \ldots, K$, and assigns each state $x = 1, \ldots, n$ to $\itr{\mathcal{V}_{\criticalpoint{c_x}}}{t+1} \gets \itr{\mathcal{V}_{\criticalpoint{c_x}}}{t+1} \cup \{ x \}$, where $\criticalpoint{c_x} \triangleq \arg \max_{c = 1, \ldots, K} \itr{u_x}{t}( c )$, and
\begin{equation}
\itr{u_x}{t}(c) 
\triangleq \Bigl\{ \sum_{k=1}^K \bigl( \hat{N}_{x,\itr{\hat{\mathcal{V}}_k}{t}} \ln{ \hat{p}_{c,k} } + \hat{N}_{\itr{\hat{\mathcal{V}}_k}{t},x} \ln{ \frac{ \hat{p}_{k,c} }{ \hat{\alpha}_c } } \bigr) 
- \frac{T}{n} \cdot \frac{ \hat{\pi}_c }{ \hat{\alpha}_c } \Bigr\}.
\label{eqn:Objective_function_for_the_improvement_algorithm}
\end{equation}
This results in a new cluster assignment $\{ \itr{\hat{\mathcal{V}}_k}{t+1} \}_{k=1,\ldots,K}$. Note that the algorithm works by placing each state in the cluster it most likely belongs to, based on the known structure and the sample path. This can be seen by noting that the objective function in \refEquation{eqn:Objective_function_for_the_improvement_algorithm} is the difference between two log-likelihood functions. 

The second step of our clustering procedure applies the Cluster Improvement Algorithm several times, using as the initial input the cluster assignment $\{ \itr{\hat{\mathcal{V}}_k}{0} \}_{k=1,\ldots,K}$ obtained from the Spectral Clustering Algorithm. We denote by $\itr{{\cal E}}{t}$ the set of misclassified state after the $t$-th iteration of the Clustering Improvement Algorithm. The overall performance of the clustering procedure is quantified in \refTheorem{thm:Conditions_for_improvement_algorithm}.

\begin{algorithm}[!hbtp]
\KwIn{An approximate assignment $\itr{\hat{\mathcal{V}}_1}{t}, \ldots, \itr{\hat{\mathcal{V}}_K}{t}$, and matrix $ \hat{N}$}
\KwOut{A revised assignment $\itr{\hat{\mathcal{V}}_1}{t+1}, \ldots, \itr{\hat{\mathcal{V}}_K}{t+1}$}
\Begin{
$n \gets \mathrm{dim}(\hat{N})$, $\mathcal{V} \gets \{ 1, \ldots, n \}$, $T \gets \sum_{x \in \mathcal{V}} \sum_{y \in \mathcal{V}} \hat{N}_{x,y}$\;
\For{$a \leftarrow 1$ \KwTo $K$}{
$\hat{\pi}_a \leftarrow \hat{N}_{ \itr{\hat{\mathcal{V}}_a}{t}, \mathcal{V} } / T$, $\hat{\alpha}_a \leftarrow \cardinality{ \itr{\hat{\mathcal{V}}_a}{t} } / n$, $\itr{\hat{\mathcal{V}}_a}{t+1} \leftarrow \emptyset$\;
\For{$b \leftarrow 1$ \KwTo $K$}{
$\hat{p}_{a,b} \leftarrow \hat{N}_{ \itr{\hat{\mathcal{V}}_a}{t}, \itr{\hat{\mathcal{V}}_b}{t} } / \hat{N}_{ \itr{\hat{\mathcal{V}}_a}{t}, \mathcal{V} }$\;
}
}
\For{$x \leftarrow 1$ \KwTo $n$}{
$\criticalpoint{c_x} \leftarrow \arg \max_{c = 1, \ldots, K} \Bigl\{ \sum_{k=1}^K \bigl( \hat{N}_{x,\itr{\hat{\mathcal{V}}_k}{t}} \ln{ \hat{p}_{c,k} } + \hat{N}_{\itr{\hat{\mathcal{V}}_k}{t},x} \ln{ \frac{ \hat{p}_{k,c} }{ \hat{\alpha}_c } } \bigr) 
- \frac{T}{n} \cdot \frac{ \hat{\pi}_c }{ \hat{\alpha}_c } \Bigr\}$\;
$\itr{ \hat{\mathcal{V}}_{\criticalpoint{c_{x}}} }{t+1} \gets \itr{ \hat{\mathcal{V}}_{\criticalpoint{c_{x}}} }{t+1} \cup \{ x \}$\;
}
}
\caption{Pseudo-code for the Cluster Improvement Algorithm.}
\label{pseudo:The_Cluster_Improvement_Algorithm}
\end{algorithm}

\begin{theorem}
\label{thm:Conditions_for_improvement_algorithm}
Assume that $T=\omega(n)$ and $I(\alpha,p) > 0$. Then for any $t\ge 1$, after $t$ iterations of the Clustering Improvement Algorithm, initially applied to the output of the Spectral Clustering Algorithm, we have: 
\begin{equation}
{\cardinality{ \itr{\mathcal{E}}{t} }\over n}
=  \bigOPbig{ 
\e{ - t \bigl( \ln{ \frac{T}{n} } - \ln{ \ln{ \frac{T}{n} } } \bigr) } 
+ \e{ - \frac{\alpha_{\min}^2}{720 \eta^3 \alpha_{\max}^2} \frac{T}{n} I(\alpha, p) } 
}.
\end{equation}
\end{theorem}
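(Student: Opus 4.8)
Let me think about how to prove that the Cluster Improvement Algorithm has the stated error bound.

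The setup: We start with the output of the Spectral Clustering Algorithm, which by Theorem~\ref{thm:Upper_bound_on_SVDCAs_performance} has $|\mathcal{E}^{(0)}|/n = O_{\mathbb{P}}((n/T)\ln(T/n))$. Then we run the improvement algorithm which recomputes parameter estimates and reassigns each state to maximize $u_x^{(t)}(c)$.

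The key idea is a contraction/one-step improvement argument. At each iteration $t$:
1. Show that if $|\mathcal{E}^{(t)}|/n$ is small (say $o(1)$), then the estimated parameters $\hat{p}_{a,b}, \hat{\pi}_k, \hat{\alpha}_k$ are close to the true values $p_{a,b}, \pi_k, \alpha_k$ — with the error in these estimates controlled by $|\mathcal{E}^{(t)}|/n$ plus concentration of $\hat{N}$ around its mean.
2. Show that with these nearly-correct parameters, the reassignment step correctly classifies all but a small fraction of states — and crucially, the new error fraction is a product of a geometric contraction term (from the residual error feeding into parameter estimates) and a floor term coming from the information-theoretic difficulty $I(\alpha,p)$.

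Let me be more careful. The objective $u_x^{(t)}(c)$ is (up to the $-\frac{T}{n}\frac{\hat\pi_c}{\hat\alpha_c}$ correction) the log-likelihood of the transitions in/out of state $x$ under the hypothesis $\sigma(x)=c$. Define $u_x^*(c)$ to be the same quantity computed with the *true* parameters and *true* clusters. A state $x$ with true cluster $a$ is misclassified only if $u_x^{(t)}(b) \ge u_x^{(t)}(a)$ for some $b\neq a$. We decompose:
$$u_x^{(t)}(b) - u_x^{(t)}(a) = \big(u_x^*(b) - u_x^*(a)\big) + \text{(estimation error terms)}.$$
The first term, in expectation over the Markov chain, is $\approx -\frac{T}{n} I_{a,b}(\alpha,p) \le -\frac{T}{n} I(\alpha,p)$ — a large negative drift (using the definition of $I_{a,b}$ and the equilibrium/mixing result Proposition~\ref{prop:Mixing_times_of_Markov_chains_with_transition_matrices_P_and_Q}). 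So for misclassification we need the fluctuations + estimation error to overcome this drift.

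**Main steps:**

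*Step 1 (Parameter estimation error).* Condition on $\mathcal{E}^{(t)}$. The estimates $\hat{p}_{a,b}$ etc. built from $\hat{N}_{\hat{\mathcal{V}}_a^{(t)},\hat{\mathcal{V}}_b^{(t)}}$ differ from those built from the true clusters by a relative error $O(|\mathcal{E}^{(t)}|/(n\alpha_{\min}))$ (since at most $|\mathcal{E}^{(t)}|$ rows/columns are swapped, and each carries mass $\asymp T/n$). Separately, the true-cluster empirical quantities concentrate around $p,\pi,\alpha$ with deviations that are $o_{\mathbb{P}}(1)$ relative errors, using concentration for Markov chains (the kind of bound referenced from \cite{paulin_concentration_2015}) together with $T=\omega(n)$. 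Hence $\ln \hat{p}_{c,k} = \ln p_{c,k} + O_{\mathbb{P}}(|\mathcal{E}^{(t)}|/n + (n/T)^{1/2}\cdots)$.

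*Step 2 (Likelihood gap concentration).* For a fixed state $x$ with true label $a$, write $u_x^{(t)}(b)-u_x^{(t)}(a)$ as a sum over $k$ of terms linear in $\hat{N}_{x,\hat{\mathcal{V}}_k^{(t)}}$ and $\hat{N}_{\hat{\mathcal{V}}_k^{(t)},x}$, which count visits to/from $x$. These are sums of (weakly dependent) indicators along the trajectory; each has mean $\Theta(T/n)$. Using a Chernoff/Bernstein bound for such functionals, the probability that the gap exceeds its (negative) mean by more than half is $\exp(-\Theta((T/n) I(\alpha,p)))$ — more precisely $\exp(-\frac{\alpha_{\min}^2}{c\,\eta^3\alpha_{\max}^2}\frac{T}{n}I(\alpha,p))$, which is where the explicit constant $720$ emerges (from optimizing the Chernoff exponent against the separation $\eta$ and the cluster-size ratio). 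Summing over $x$ and $b\neq a$ and taking expectations gives a contribution $n\exp(-\frac{\alpha_{\min}^2}{720\eta^3\alpha_{\max}^2}\frac{T}{n}I(\alpha,p))$ to $\mathbb{E}|\mathcal{E}^{(t+1)}|$.

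*Step 3 (Propagation of the residual error — the geometric term).* The estimation error from Step 1 is itself proportional to $|\mathcal{E}^{(t)}|/n$. A state is misclassified at step $t+1$ *because of* this residual estimation error only if the residual perturbation, of size $O_{\mathbb{P}}(\frac{T}{n}\cdot\frac{|\mathcal{E}^{(t)}|}{n})$, overcomes the drift $\frac{T}{n}I(\alpha,p)$ — but more relevantly, by a counting argument one shows the number of such states is at most a fraction $(n/T)\cdot(\text{polylog})$ times $|\mathcal{E}^{(t)}|$... Actually, the cleaner route: the geometric factor $e^{-t(\ln(T/n) - \ln\ln(T/n))} = ((n/T)\ln(T/n))^t$ suggests that one iteration improves the error from $\epsilon$ to roughly $\epsilon \cdot (n/T)\ln(T/n) + (\text{floor})$. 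So I would prove the one-step recursion
$$\frac{|\mathcal{E}^{(t+1)}|}{n} = O_{\mathbb{P}}\Big( \frac{|\mathcal{E}^{(t)}|}{n}\cdot\frac{n}{T}\ln\frac{T}{n} + \e{-\frac{\alpha_{\min}^2}{720\eta^3\alpha_{\max}^2}\frac{T}{n}I(\alpha,p)} \Big),$$
then unroll $t$ times using the base case $|\mathcal{E}^{(0)}|/n = O_{\mathbb{P}}((n/T)\ln(T/n))$ from Theorem~\ref{thm:Upper_bound_on_SVDCAs_performance}. Unrolling gives $|\mathcal{E}^{(t)}|/n = O_{\mathbb{P}}(((n/T)\ln(T/n))^{t+1} + (\text{floor})\sum_{j<t}((n/T)\ln(T/n))^j)$, and since $(n/T)\ln(T/n)=o(1)$ the geometric series is bounded, yielding exactly the stated $e^{-t(\ln(T/n)-\ln\ln(T/n))}$ plus floor. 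The $\ln\ln$ correction and the precise form of the first term come from the exponent $(T/n)$-scaling in the base case and careful bookkeeping of the polylog factors.

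*Step 4 (From fixed-$x$ bounds to the set $\mathcal{E}^{(t+1)}$; handling the conditioning).* The subtlety is that $\hat{\mathcal{V}}_k^{(t)}$ is random and correlated with the trajectory, so $u_x^{(t)}$ is not independent of the quantities used to bound it. This is handled by (i) first replacing $\hat{\mathcal{V}}_k^{(t)}$ by the true $\mathcal{V}_k$ at the cost of the Step-1 error, and (ii) using that the contribution of the transitions adjacent to $x$ to the global empirical counts is negligible — i.e. removing row/column $x$ changes $\hat{p}$ by $O(1/(n\alpha_{\min}))$ — so one can condition on "everything except the $O(T/n)$ transitions touching $x$" and apply a union bound over $x$. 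This decoupling is the standard device (used e.g. in \cite{yun_optimal_2016}) for analyzing greedy local-improvement steps.

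**Main obstacle.** The hard part is Step 2 together with the decoupling in Step 4: getting a *sharp* Chernoff exponent for the log-likelihood gap — sharp enough that the leading constant matches $I(\alpha,p)$ and the explicit prefactor $\frac{\alpha_{\min}^2}{720\eta^3\alpha_{\max}^2}$ — while the gap depends on estimated parameters that are themselves functions of the trajectory. One must carefully separate the "signal" (drift $\asymp -\frac{T}{n}I_{a,b}$), the "noise" (fluctuation of visit counts, requiring Markov-chain concentration with the right variance proxy, where $\eta$ controls the ratio of transition probabilities and hence the Bernstein variance term), and the "bias from estimation" (Step 1, proportional to $|\mathcal{E}^{(t)}|/n$), and show the last two cannot conspire to beat the signal except on an event of probability $\exp(-\Theta((T/n)I(\alpha,p)))$. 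Tracking the non-convexity of $u_x^{(t)}(\cdot)$ — it is a difference of log-likelihoods, not concave in the natural parametrization — across the $K$ competing hypotheses, and verifying the constant survives the union bound over $b\neq a$ and over $x$, is where most of the technical work lies.
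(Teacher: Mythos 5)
Your high-level plan -- a one-step contraction plus an information-theoretic floor, unrolled from the base case given by Theorem~\ref{thm:Upper_bound_on_SVDCAs_performance} -- is the right shape, and your Step 2 correctly identifies the source of the floor term (Paulin-type concentration for the per-state likelihood gap, with the constant coming from the variance proxy controlled by $\eta$ and $\alpha_{\min}/\alpha_{\max}$). But the proposal is missing the structural device that actually makes the recursion provable: the paper fixes, once and for all, a \emph{trajectory-dependent} set $\mathcal{H}$ of ``well-behaved'' states and only proves contraction for $\itr{\mathcal{E}_{\mathcal{H}}}{t} = \itr{\mathcal{E}}{t} \cap \mathcal{H}$; all of $\mathcal{H}^{\mathrm{c}}$ is written off as permanently misclassified and bounded once by $n\exp(-C\,\frac{T}{n}I(\alpha,p))$. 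This split is essential for two reasons you do not address. First, your ``floor'' is not a fresh random contribution at each step: the trajectory is fixed, so the event $\hat{I}_{i,j}(x) < \frac{T}{2n}I(\alpha,p)$ either holds for $x$ or it doesn't, and the set of states for which it fails is a \emph{fixed} set, not an independent $\exp(-\cdot)$-probability draw at each iteration. Without isolating these states up front (and so guaranteeing $|\itr{\mathcal{E}_\mathcal{H}}{t}|$ is non-increasing in the relevant sense), the per-iteration union bound you sketch does not yield the clean recursion you write in Step 3. Second, the contraction argument (summing $\itr{u_x}{t}(\itr{\sigma}{t+1}(x))-\itr{u_x}{t}(\sigma(x)) \geq 0$ over misclassified states and decomposing into signal $E_1$, cluster-error $E_2$, and parameter-error $U$) needs the signal term $E_1$ to be strongly negative \emph{for every state in the sum}; that is exactly condition (H1). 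Summing over all of $\itr{\mathcal{E}}{t+1}$, as you implicitly do, gives no lower bound on $-E_1$ for the atypical states, and the counting argument collapses.

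A second gap: the paper's definition of $\mathcal{H}$ also includes condition (H2) -- each well-behaved $x$ has few observed transitions to/from $\mathcal{H}^{\mathrm{c}}$ -- and this is not captured by your Step~1 parameter-error analysis. (H2) is needed to bound the cluster-estimation term $E_2$: even for a state $x$ satisfying (H1), the quantity $\hat{N}_{x,\itr{\hat{\mathcal{V}}_k}{t}} - \hat{N}_{x,\mathcal{V}_k}$ can blow up if $x$ is heavily coupled to forlorn states whose labels oscillate. Establishing that such an $\mathcal{H}$ exists and that $|\mathcal{H}^{\mathrm{c}}|$ is small requires its own argument (the paper uses an iterative peeling construction $Z(0)\subset Z(1)\subset\cdots$ combined with a no-dense-subgraph concentration bound), which has no analogue in your sketch. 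Finally, your Step~4 leave-one-out decoupling is a genuinely different device from what the paper uses; the paper never conditions out the row/column of a single $x$, because the $\mathcal{H}$-restriction already removes the circularity you worry about. So: correct outline, correct identification of the floor's Chernoff origin, but the core mechanism (the fixed $\mathcal{H}/\mathcal{H}^{\mathrm{c}}$ split with conditions (H1)--(H2)) is missing and the recursion as stated in Step~3 is not established.
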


Observe that for $t = \ln{n}$, the number of misclassified states after applying $t$ times the Clustering Improvement Algorithm is at most of the order $n \exp{ ( - C (T/n) I(\alpha,p) ) }$ with $C\triangleq \frac{\alpha_{\min}^2}{720 \eta^3 \alpha_{\max}^2}$. Up to the constant $C$, this corresponds to the fundamental recovery rate limit identified in \refTheorem{thm:Information_bound}. In particular, our clustering procedure achieves asymptotically exact detection under the following nearly tight sufficient condition: $I(\alpha,p)>0$ and $T-{ n \ln{n} \over C\cdot I(\alpha,p)}=\omega(1)$.

\chapter{Numerical experiments}
\label{sec:Numerical_experiments}

In this section, we numerically assess the performance of our algorithms. We first investigate a simple illustrative example. Then we study the sensitivity of the error rate of the Spectral Clustering Algorithm w.r.t.\ the number of states and the length of the observed trajectory. Finally we show the performance of the Cluster Improvement Algorithm depending on the number of times it is applied to the output of the Spectral Clustering Algorithm. 

\section{An example}
\label{sec:Numerical_experiments__Verification_of_results}

Consider $n =300$ states grouped into three clusters of respective relative sizes $\vect{\alpha} = (0.15, 0.35, 0.5)$, i.e., the cluster sizes are cluster sizes $\cardinality{ {\cal V}_1 } = 48$, $\cardinality{ {\cal V}_2 } = 93$ and $\cardinality{ {\cal V}_3 } = 159$. The transition rates between these clusters are defined by: 
$
p = \bigl( 0.9200, 0.0450, 0.0350; \, \allowbreak 0.0125, 0.8975, 0.0900; \, \allowbreak 0.0175, 0.0200, 0.9625 \bigr)
$.
 
We generate a sample path of the Markov chain of length $T = n^{1.025} \ln{n} \approx 1973$ and calculate $\hat{N}$. A density plot of a typical sample of $\hat{N}$ is shown in \refFigure{fig:Simulation__K3__Observation_images__Nhat_unsorted}. The same density plot is presented in \refFigure{fig:Simulation__K3__Observation_images__Nhat_sorted} where the states have been sorted so as states in the same cluster are neighbors. It is important to note that the algorithms are of course not aware of the structure initially -- sorting states constitutes their objective. Next in \refFigure{fig:Simulation__K3__Observation_images__P}, we show a color representation of the kernel $P$ with sorted rows and columns, in which we can clearly see the groups. Note that the specific colors have no meaning, except for the fact that within the same image two entries with the same color have the same numerical value.

\begin{figure}[!hbtp]
\centering
\begin{subfigure}[t]{0.3\textwidth}
\includegraphics[width=\linewidth]{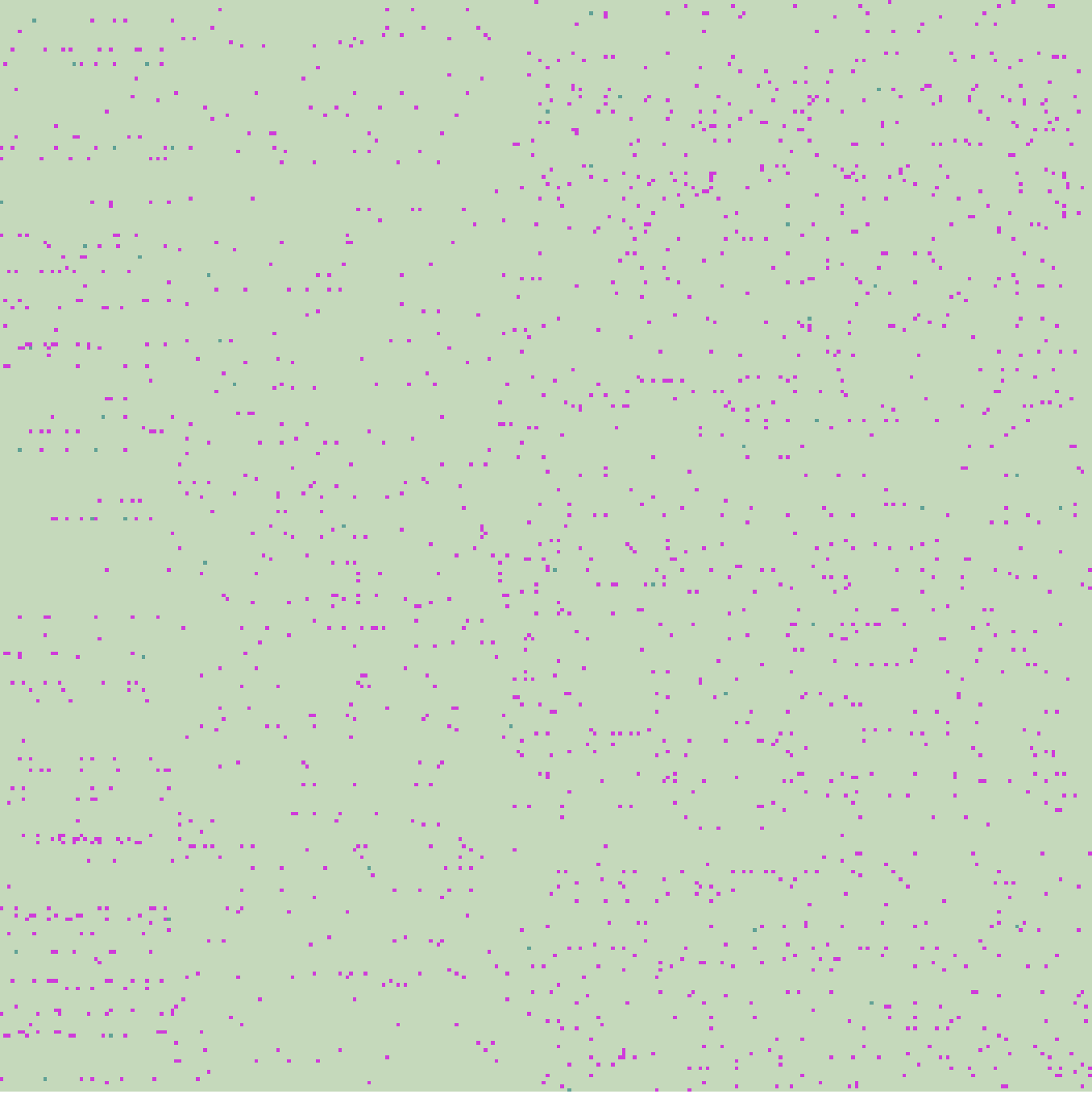}
\caption{$\hat{N}$, unsorted}
\label{fig:Simulation__K3__Observation_images__Nhat_unsorted}
\end{subfigure}
~
\begin{subfigure}[t]{0.3\textwidth}
\includegraphics[width=\linewidth]{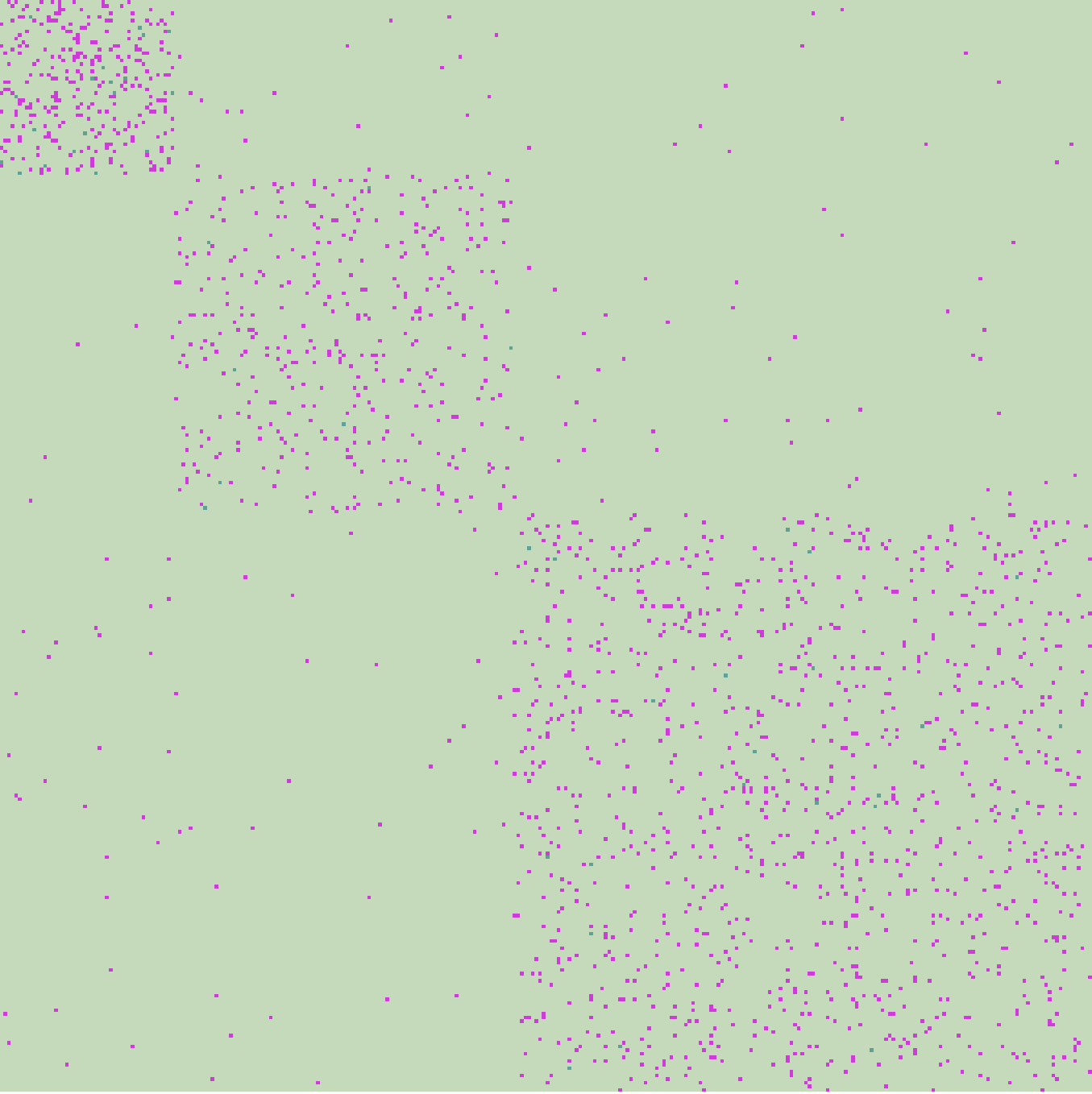}
\caption{$\hat{N}$, sorted}
\label{fig:Simulation__K3__Observation_images__Nhat_sorted}
\end{subfigure}
~
\begin{subfigure}[t]{0.3\textwidth}
\includegraphics[width=\linewidth]{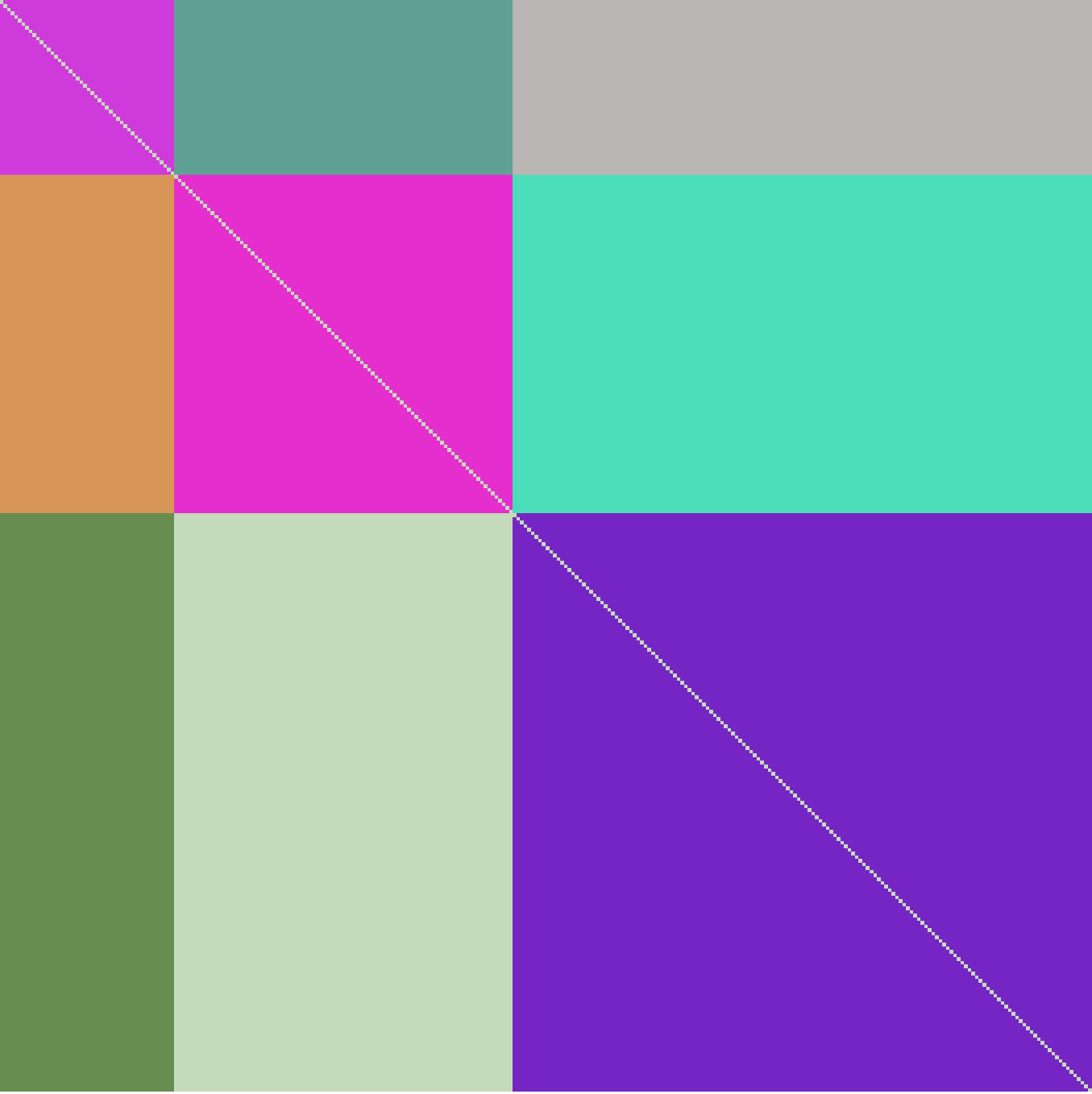}
\caption{$P$, sorted}
\label{fig:Simulation__K3__Observation_images__P}
\end{subfigure}
\caption{A sample path of length $T = n^{1.025} \ln{n} \approx 1973$ was generated, from which $\hat{N}$ is calculated. If we sort the states according to the clusters they belong to, we can see that states within the same cluster share similar dynamics.}
\label{fig:Simulation__K3__Observation_images}
\end{figure}

Next we apply the Spectral Clustering Algorithm. This generates an initial approximate clustering $\itr{ \hat{\mathcal{V}}_1 }{0}, \itr{ \hat{\mathcal{V}}_2 }{0}, \itr{ \hat{\mathcal{V}}_3 }{0}$ of the states. We generate a visual representation of this clustering by constructing $\itr{\hat{P}}{0}$ from the approximate cluster structure and the estimate $\itr{\hat{p}}{0}$. This represents the belief that the algorithm has at this point of the true \gls{BMC} kernel $P$. A color representation of this kernel is shown in \refFigure{fig:Simulation__K3__Clustering_images__Pinitial}. We finally execute the Cluster Improvement Algorithm. After $3$ iterations, it has settled on a final clustering. We generate a color representation of the clustering similar to before, resulting in \refFigure{fig:Simulation__K3__Clustering_images__Pfinal}. The algorithms achieved a $99.7\%$ accuracy: all but one state have been accurately clustered.

\begin{figure}[!hbtp]
\centering
\begin{subfigure}[t]{0.45\textwidth}
\includegraphics[width=\linewidth]{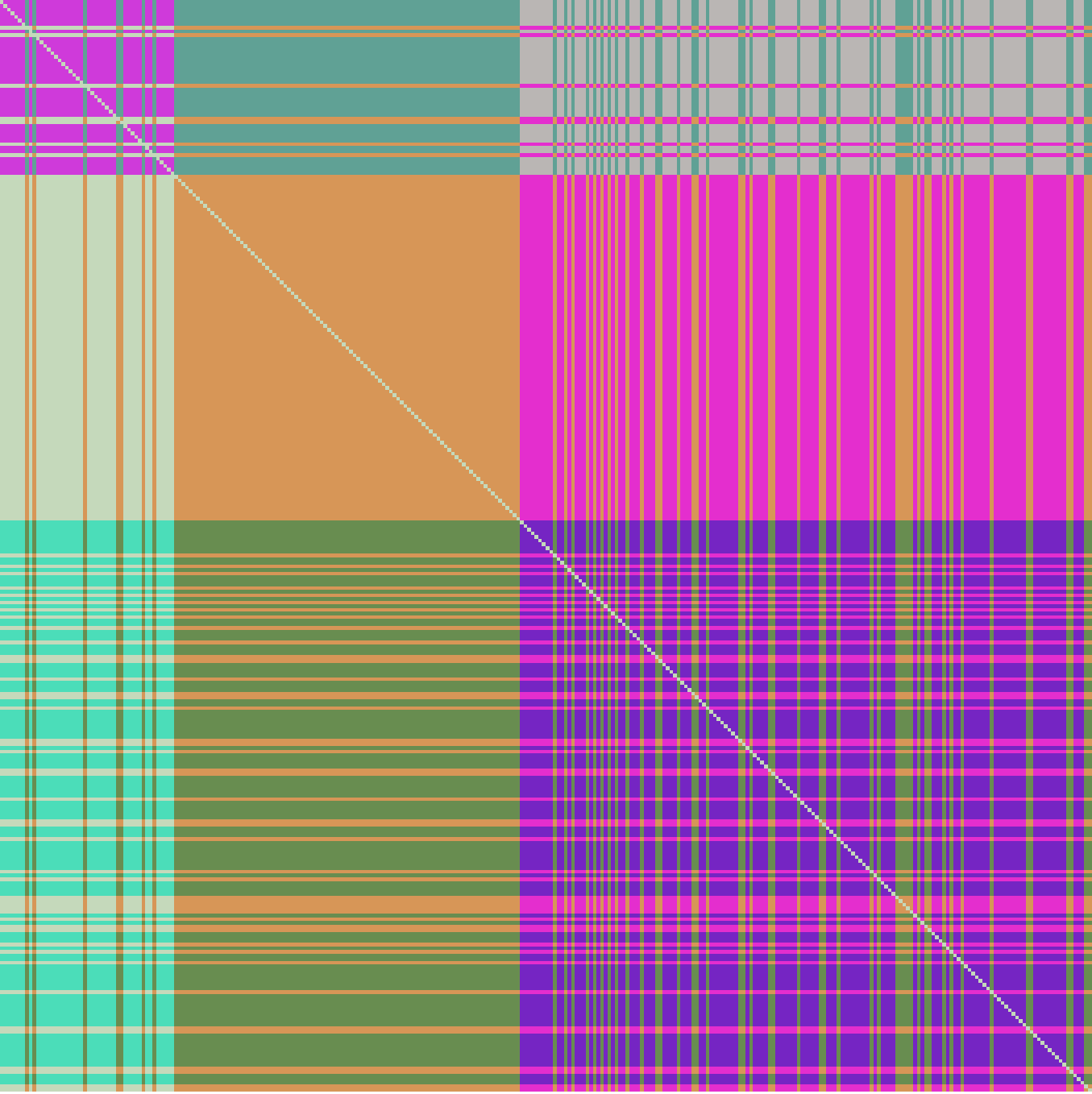}
\caption{Initial clustering.}
\label{fig:Simulation__K3__Clustering_images__Pinitial}
\end{subfigure}
~
\begin{subfigure}[t]{0.45\textwidth}
\includegraphics[width=\linewidth]{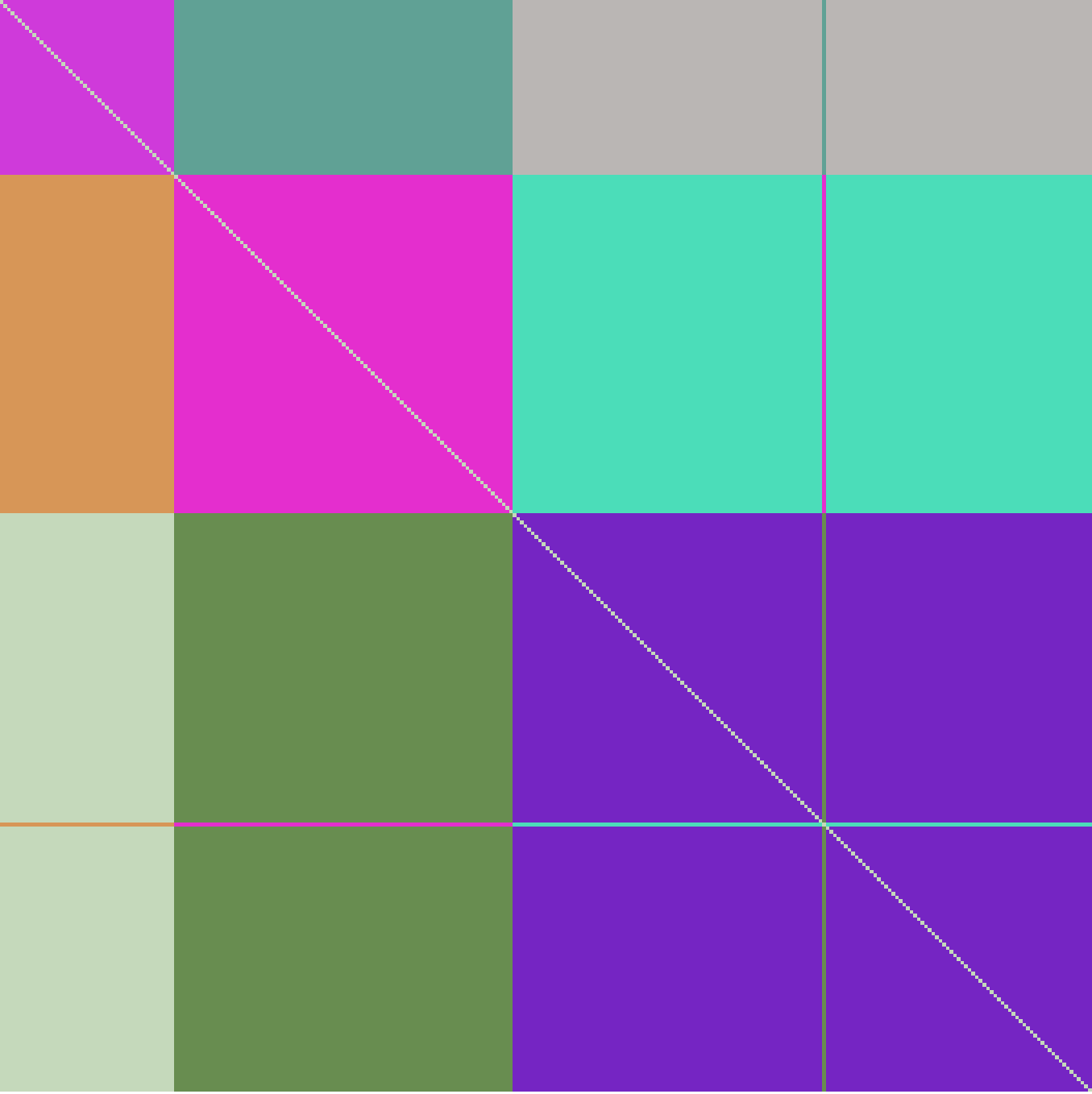}
\caption{Final clustering.}
\label{fig:Simulation__K3__Clustering_images__Pfinal}
\end{subfigure}
\caption{(a) Result after applying the Spectral Clustering Algorithm to the approximation $\hat{N}$. (b) Result after applying 3 iterations of the Cluster Improvement Algorithm. $99.7\%$ of all states were accurately clustered.}
\label{fig:Simulation__K3__Clustering_images}
\end{figure}

\section{Performance sensitivity of the Spectral Clustering Algorithm}
\label{sec:Error_rate_of_the_SVD_Clustering_Algorithm}

In this section, we examine the dependency of the number of misclassified states on the size of the kernel $n$, when we only apply the Spectral Clustering Algorithm. We choose $\vect{\alpha} = (0.15, 0.35, 0.5)$, and set
$
p = \bigl( 0.50, 0.20, 0.30; \, \allowbreak 0.10, 0.70, 0.20; \, \allowbreak 0.35, 0.05, 0.60 \bigr)
$.
These parameters imply that $I(\alpha,p) \approx 0.88 \allowbreak > 0$. This value for $I(\alpha,p)$ is lower than in the case examined in \refSection{sec:Numerical_experiments__Verification_of_results}, so we expect clustering to be more difficult. We have selected a more challenging model so that the initial number of misclassified states will be large and the asymptotics clear.

\refFigure{fig:Error_rate_of_the_SVD_Clustering_Algorithm_as_function_of_n} displays the error rate of the Spectral Clustering algorithm as a function of $n$, for different trajectory lengths $T$. As benchmarks, we include a dashed line that indicates the error rate obtained by assigning states to clusters uniformly at random, i.e., $\probability{ v \not\in {\cal V}_{\sigma(v)} } = \sum_{k=1}^K \probability{ v \not\in {\cal V}_k | \sigma(v) = k } \alpha_k = 1 - 1/K$, as well as a dotted line that indicates the error rate when assigning all states to the smallest cluster, i.e., $1 - \min_{k} \{ \alpha_k \}$. For the $K$-means step of the algorithms, we use Mathematica's default implementation for convenience. Observe that when $T = n \ln{n}$, the fraction of misclassified states hardly decrease as a function of $n$. This is in line with our lower bound. When $T$ gets larger, the error converges to zero faster. Note that the Spectral Clustering Algorithm recovers the clusters exactly when the sample path is sufficiently long.

\begin{figure}[!hbtp]
\centering
\begin{subfigure}[t]{0.3\textwidth}
\centering
\begin{tikzpicture}[scale=0.475]
\begin{axis}[
	ymin = 0, ymax = 1,
	xlabel=$n$, ylabel={Fraction of misclassified vertices},
	label style={font=\Large},
	tick label style={font=\Large}
]
\addplot[mark=none,dotted] plot coordinates { (50,0.85) (750,0.85) };
\addplot[mark=none,dashed] plot coordinates { (50,0.67) (750,0.67) };
\addplot[mark=x,only marks,error bars/.cd, y dir=both, y explicit] plot coordinates {
	(50.,0.3975) += (50.,0.0267251) -= (50.,0.0267251)
	(75.,0.442333) += (75.,0.0200714) -= (75.,0.0200714)
	(100.,0.44775) += (100.,0.0252016) -= (100.,0.0252016)
	(125.,0.4304) += (125.,0.0210899) -= (125.,0.0210899)
	(150.,0.445833) += (150.,0.0188935) -= (150.,0.0188935)
	(175.,0.451571) += (175.,0.0190072) -= (175.,0.0190072)
	(200.,0.44725) += (200.,0.01283) -= (200.,0.01283)
	(225.,0.452889) += (225.,0.012231) -= (225.,0.012231)
	(250.,0.4529) += (250.,0.012029) -= (250.,0.012029)
	(275.,0.450909) += (275.,0.0122723) -= (275.,0.0122723)
	(300.,0.446833) += (300.,0.0114004) -= (300.,0.0114004)
	(325.,0.452) += (325.,0.0136182) -= (325.,0.0136182)
	(350.,0.457929) += (350.,0.00991431) -= (350.,0.00991431)
	(375.,0.451467) += (375.,0.011987) -= (375.,0.011987)
	(400.,0.453938) += (400.,0.0108425) -= (400.,0.0108425)
	(425.,0.448882) += (425.,0.0100245) -= (425.,0.0100245)
	(450.,0.444667) += (450.,0.00997344) -= (450.,0.00997344)
	(475.,0.447474) += (475.,0.00822297) -= (475.,0.00822297)
	(500.,0.44375) += (500.,0.00879182) -= (500.,0.00879182)
	(525.,0.44519) += (525.,0.00788072) -= (525.,0.00788072)
	(550.,0.454591) += (550.,0.00887242) -= (550.,0.00887242)
	(575.,0.447) += (575.,0.00895271) -= (575.,0.00895271)
	(600.,0.448292) += (600.,0.00817759) -= (600.,0.00817759)
	(625.,0.45044) += (625.,0.00926881) -= (625.,0.00926881)
	(650.,0.446308) += (650.,0.00995441) -= (650.,0.00995441)
	(675.,0.441963) += (675.,0.00804083) -= (675.,0.00804083)
	(700.,0.456857) += (700.,0.00940801) -= (700.,0.00940801)
	(725.,0.45031) += (725.,0.00713299) -= (725.,0.00713299)
	(750.,0.436367) += (750.,0.00710811) -= (750.,0.00710811)
};
\end{axis}
\end{tikzpicture}
\caption{$T = n \ln n$}
\end{subfigure}
~
\begin{subfigure}[t]{0.3\textwidth}
\centering
\begin{tikzpicture}[scale=0.475]
\begin{axis}[
	ymin = 0, ymax = 1,
	xlabel=$n$, 
	label style={font=\Large},
	tick label style={font=\Large}
]
\addplot[mark=none,dotted] plot coordinates { (50,0.85) (750,0.85) };
\addplot[mark=none,dashed] plot coordinates { (50,0.67) (750,0.67) };
\addplot[mark=x,only marks,error bars/.cd, y dir=both, y explicit] plot coordinates {
	(50.,0.3335) += (50.,0.0391287) -= (50.,0.0391287)
	(75.,0.371667) += (75.,0.0426922) -= (75.,0.0426922)
	(100.,0.375) += (100.,0.0394903) -= (100.,0.0394903)
	(125.,0.3606) += (125.,0.0402877) -= (125.,0.0402877)
	(150.,0.329833) += (150.,0.050178) -= (150.,0.050178)
	(175.,0.334429) += (175.,0.0500808) -= (175.,0.0500808)
	(200.,0.32475) += (200.,0.0520453) -= (200.,0.0520453)
	(225.,0.299222) += (225.,0.052103) -= (225.,0.052103)
	(250.,0.3119) += (250.,0.0512958) -= (250.,0.0512958)
	(275.,0.319091) += (275.,0.0547952) -= (275.,0.0547952)
	(300.,0.327167) += (300.,0.0517492) -= (300.,0.0517492)
	(325.,0.351154) += (325.,0.049967) -= (325.,0.049967)
	(350.,0.281071) += (350.,0.059145) -= (350.,0.059145)
	(375.,0.3348) += (375.,0.0530062) -= (375.,0.0530062)
	(400.,0.28825) += (400.,0.0576737) -= (400.,0.0576737)
	(425.,0.240588) += (425.,0.0619748) -= (425.,0.0619748)
	(450.,0.300389) += (450.,0.05623) -= (450.,0.05623)
	(475.,0.259263) += (475.,0.0598241) -= (475.,0.0598241)
	(500.,0.2931) += (500.,0.0565529) -= (500.,0.0565529)
	(525.,0.250857) += (525.,0.0600143) -= (525.,0.0600143)
	(550.,0.312818) += (550.,0.0540754) -= (550.,0.0540754)
	(575.,0.275522) += (575.,0.0602867) -= (575.,0.0602867)
	(600.,0.267292) += (600.,0.0618255) -= (600.,0.0618255)
	(625.,0.25768) += (625.,0.0619659) -= (625.,0.0619659)
	(650.,0.234769) += (650.,0.062146) -= (650.,0.062146)
	(675.,0.205556) += (675.,0.062828) -= (675.,0.062828)
	(700.,0.176857) += (700.,0.0617906) -= (700.,0.0617906)
	(725.,0.217103) += (725.,0.0639663) -= (725.,0.0639663)
	(750.,0.2354) += (750.,0.0629804) -= (750.,0.0629804)
};
\end{axis}
\end{tikzpicture}
\caption{$T = n ( \ln n )^{3/2}$}
\end{subfigure}
~
\begin{subfigure}[t]{0.3\textwidth}
\centering
\begin{tikzpicture}[scale=0.475]
\begin{axis}[
	ymode = log, ymin = 10^(-4), ymax = 1,
	xlabel=$n$, 
	label style={font=\Large},
	tick label style={font=\Large}	
]
\addplot[mark=none,dotted] plot coordinates { (50,0.85) (750,0.85) };
\addplot[mark=none,dashed] plot coordinates { (50,0.67) (750,0.67) };
\addplot[mark=x,only marks,error bars/.cd, y dir=both, y explicit] plot coordinates {
	(50.,0.2405) += (50.,0.0590955) -= (50.,0.0590955)
	(75.,0.164333) += (75.,0.0543274) -= (75.,0.0543274)
	(100.,0.07475) += (100.,0.0388619) -= (100.,0.0388619)
	(125.,0.0412) += (125.,0.0315015) -= (125.,0.0315015)
	(150.,0.0175) += (150.,0.00818152) -= (150.,0.00818152)
	(175.,0.0161429) += (175.,0.0113616) -= (175.,0.0113616)
	(200.,0.032) += (200.,0.0303226) -= (200.,0.0303226)
	(225.,0.00444444) += (225.,0.00114755) -= (225.,0.00114755)
	(250.,0.0038) += (250.,0.00194171) -= (250.,0.00194171)
	(275.,0.00309091) += (275.,0.0012637) -= (275.,0.0012637)
	(300.,0.0025) += (300.,0.00118754) -= (300.,0.00118754)
	(325.,0.00207692) += (325.,0.00127318) -= (325.,0.00127318)
	(350.,0.00171429) += (350.,0.000732014) -= (350.,0.000732014)
	(375.,0.00173333) += (375.,0.000906828) -= (375.,0.000906828)
	(400.,0.0010625) += (400.,0.00061726) -= (400.,0.00061726)
	(425.,0.00111765) += (425.,0.000558526) -= (425.,0.000558526)
	(450.,0.00133333) += (450.,0.000569345) -= (450.,0.000569345)
	(475.,0.00110526) += (475.,0.00047645) -= (475.,0.00047645)
	(500.,0.0011) += (500.,0.000623472) -= (500.,0.000623472)
	(525.,0.000904762) += (525.,0.000431074) -= (525.,0.000431074)
	(550.,0.000863636) += (550.,0.0004508) -= (550.,0.0004508)
	(575.,0.000913043) += (575.,0.000465784) -= (575.,0.000465784)
	(600.,0.00104167) += (600.,0.000487559) -= (600.,0.000487559)
	(625.,0.00036) += (625.,0.000213973) -= (625.,0.000213973)
	(650.,0.000730769) += (650.,0.000330285) -= (650.,0.000330285)
	(675.,0.00103704) += (675.,0.000521899) -= (675.,0.000521899)
	(700.,0.000392857) += (700.,0.000289128) -= (700.,0.000289128)
	(725.,0.000344828) += (725.,0.000236853) -= (725.,0.000236853)
	(750.,0.000366667) += (750.,0.000213237) -= (750.,0.000213237)
};
\end{axis}
\end{tikzpicture}
\caption{$T = n ( \ln n )^2$}
\end{subfigure}
\caption{The error rate of the Spectral Clustering Algorithm as function of $n$, for different scalings of $T$. Every point is the average result of $40$ simulations, and the bars indicate a $95\%$-confidence interval.}
\label{fig:Error_rate_of_the_SVD_Clustering_Algorithm_as_function_of_n}
\end{figure}
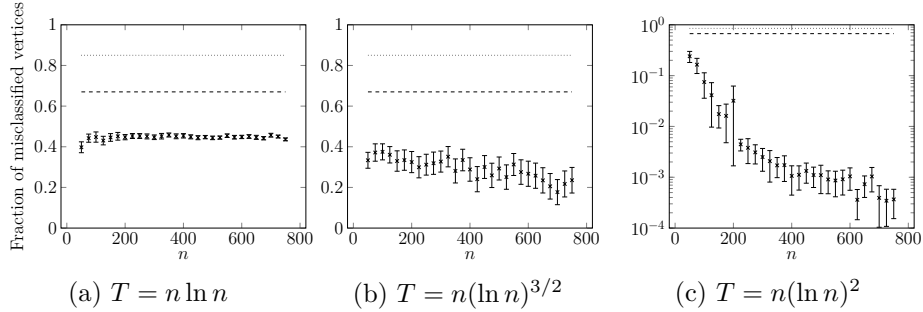

\section{Performance sensitivity of the Cluster Improvement algorithm}

We now examine the number of misclassified states as a function of $T$, when we apply the Spectral Clustering Algorithm and a certain number of iterations of the Cluster Improvement Algorithm. We choose $\vect{\alpha} = (1/3, 1/3, 1/3)$, and set
$
p = \bigl( 0.1, 0.4, 0.5; \, \allowbreak 0.7, 0.1, 0.2; \, \allowbreak 0.6, 0.3, 0.1 \bigr)
$.
Different from the previous experiments, the clusters are now of equal size and the off-diagonal entries of $p$ are dominant. These parameters imply that $I(\alpha,p) \approx 0.27 > 0$, so the cluster algorithms should work, but the situation is again more challenging than in  \refSection{sec:Numerical_experiments__Verification_of_results} and \refSection{sec:Error_rate_of_the_SVD_Clustering_Algorithm}.

\refFigure{fig:Error_after_SVD_and_Cluster_Improvement_as_a_function_of_T} depicts the error after applying the Spectral Clustering Algorithm and subsequently the Cluster Improvement Algorithm up to two times, as a function of $T$. We have chosen both $n,T$ relatively small so that the inputs are significantly noisy. For short sample paths, $T \lesssim 15000$, the data is so noisy that the Cluster Improvement Algorithm does not provide any improvement over the Spectral Clustering Algorithm. For $T \gtrsim 15000$, the Spectral Clustering Algorithm provides a sufficiently accurate initial clustering for the Cluster Improvement Algorithm to work. Because marks $1$ and $2$ overlap in almost all cases, we can conclude that there is (on average, and in the present situation) no benefit in running the Clustering Improvement Algorithm more than once. There is no mark $2$ at $T = 30000$ in this logarithmic plot, because the Cluster Improvement Algorithm achieved $100\%$ accurate detection after $2$ iterations in \emph{all} $200$ simulations.

\section{Critical regime where $T=n\ln(n)$}
\label{sec:Numerical_feasibility_region}

We now study how well our clustering procedure performs in the critical regime $T = n \ln{n}$. Here, we will consider $K = 2$ clusters of equal size: $\alpha_1 = \alpha_2 = \tfrac{1}{2}$. Recall that every such \gls{BMC} can then be completely parameterized by $(p_{1,2}, p_{2,1}) \in (0,1)^2$. Our goal in this section is to numerically evaluate
$
\hat{\mathcal{F}}_1(\varepsilon) 
= \bigl\{ (p_{1,2}, p_{2,1}) \in (0,1)^2 | \expectationWrt{ { \cardinality{ \itr{ \mathcal{E} }{t} } } / {n} }{P} \geq \varepsilon \bigr\}
$
as a proxy estimate to the region $\{ (p_{1,2},p_{2,1}) | I(\alpha,p) \allowbreak \leq 1 \}$. We rasterized $(0,1)^2$ and ran our clustering procedure for $n = 300$ with $t = 6$ improvement steps for each parameter pair $(p_{1,2}, p_{2,1})$. The results are shown in \refFigure{fig:Feasibility_region_of_our_procedure}. Note that the sample mean at each rasterpoint was calculated from $10$ independent runs.

\begin{figure}[!p]
\centering
\begin{tikzpicture}[scale=0.95]
\footnotesize
\begin{axis}[
	width=0.9*\linewidth, height=0.9*0.618\linewidth,
	xmin = 0, xmax = 32500,
	ymode = log, ymin = 10^(-4.5), ymax = 1,
	xlabel=$T$, ylabel={Fraction of misclassified vertices},
	xtick = {0, 2500, 10000, 20000, 30000},
	scaled x ticks=false
]
\addplot[mark=none,dotted] plot coordinates { (2500,0.67) (30000,0.67) };
\addplot[mark=none,dashed] plot coordinates { (2500,0.67) (30000,0.67) };
\addplot[mark=square*, mark size=0.5em, mark options={fill=white},
		x filter/.code={\pgfmathparse{\pgfmathresult+0}},
		nodes near coords={0},
		every node near coord/.style = {font=\scriptsize\sffamily\bfseries, text=black, anchor=center},
		error bars/.cd, y dir=both, y explicit] plot coordinates {
	(2500.,0.383354) += (2500.,0.00885146) -= (2500.,0.00885146)
	(5000.,0.346313) += (5000.,0.0120713) -= (5000.,0.0120713)
	(7500.,0.34275) += (7500.,0.0131798) -= (7500.,0.0131798)
	(10000.,0.305833) += (10000.,0.0157152) -= (10000.,0.0157152)
	(12500.,0.271938) += (12500.,0.0177027) -= (12500.,0.0177027)
	(15000.,0.247708) += (15000.,0.0193041) -= (15000.,0.0193041)
	(17500.,0.194417) += (17500.,0.019671) -= (17500.,0.019671)
	(20000.,0.127063) += (20000.,0.015532) -= (20000.,0.015532)
	(22500.,0.100938) += (22500.,0.0147924) -= (22500.,0.0147924)
	(25000.,0.0747917) += (25000.,0.0111) -= (25000.,0.0111)
	(27500.,0.046875) += (27500.,0.00755765) -= (27500.,0.00755765)
	(30000.,0.0328333) += (30000.,0.00426048) -= (30000.,0.00426048)
};
\addplot[mark=square*, mark size=0.5em, mark options={fill=white},
		x filter/.code={\pgfmathparse{\pgfmathresult+75}},
		nodes near coords={1},
		every node near coord/.style = {font=\scriptsize\sffamily\bfseries, text=black, anchor=center},
		error bars/.cd, y dir=both, y explicit] plot coordinates {
	(2500.,0.318188) += (2500.,0.00810409) -= (2500.,0.00810409)
	(5000.,0.288417) += (5000.,0.00971182) -= (5000.,0.00971182)
	(7500.,0.273729) += (7500.,0.0102353) -= (7500.,0.0102353)
	(10000.,0.230208) += (10000.,0.0145643) -= (10000.,0.0145643)
	(12500.,0.187063) += (12500.,0.0172433) -= (12500.,0.0172433)
	(15000.,0.161625) += (15000.,0.0184153) -= (15000.,0.0184153)
	(17500.,0.103396) += (17500.,0.0180017) -= (17500.,0.0180017)
	(20000.,0.0491667) += (20000.,0.0137575) -= (20000.,0.0137575)
	(22500.,0.0355833) += (22500.,0.0126826) -= (22500.,0.0126826)
	(25000.,0.0177917) += (25000.,0.00865939) -= (25000.,0.00865939)
	(27500.,0.00591667) += (27500.,0.00497494) -= (27500.,0.00497494)
	(30000.,0.000854167) += (30000.,0.000842283) -= (30000.,0.000842283)
};
\addplot[mark=square*, mark size=0.5em, mark options={fill=white},
		x filter/.code={\pgfmathparse{\pgfmathresult-75}},
		nodes near coords={2},
		every node near coord/.style = {font=\scriptsize\sffamily\bfseries, text=black, anchor=center},
		error bars/.cd, y dir=both, y explicit] plot coordinates {
	(2500.,0.3095) += (2500.,0.0143065) -= (2500.,0.0143065)
	(5000.,0.334792) += (5000.,0.0259375) -= (5000.,0.0259375)
	(7500.,0.342688) += (7500.,0.0309807) -= (7500.,0.0309807)
	(10000.,0.283188) += (10000.,0.0356428) -= (10000.,0.0356428)
	(12500.,0.210063) += (12500.,0.03551) -= (12500.,0.03551)
	(15000.,0.182438) += (15000.,0.0354482) -= (15000.,0.0354482)
	(17500.,0.124542) += (17500.,0.0334648) -= (17500.,0.0334648)
	(20000.,0.0525208) += (20000.,0.0225795) -= (20000.,0.0225795)
	(22500.,0.040875) += (22500.,0.0209185) -= (22500.,0.0209185)
	(25000.,0.0156667) += (25000.,0.012663) -= (25000.,0.012663)
	(27500.,0.00633333) += (27500.,0.00887812) -= (27500.,0.00887812)
	(30000.,0.) += (30000.,0.) -= (30000.,0.)
};
\end{axis}
\end{tikzpicture}
\caption{The error after applying the Spectral Clustering Algorithm (mark $0$), and subsequently the Cluster Improvement Algorithm (marks $1, 2$) several times, as a function of $T$. Each number represents the number of improvement steps. Here, $n = 240$. Every point is the average result of $200$ simulations, and the bars indicate a $95\%$-confidence interval. We have minorly offset marks $1,2$ to the right and left for readability, respectively. At $T = 30000$, the Cluster Improvement Algorithm achieved $100\%$ accurate detection after $2$ iterations in \emph{all} $200$ instances.}
\label{fig:Error_after_SVD_and_Cluster_Improvement_as_a_function_of_T}
\end{figure}
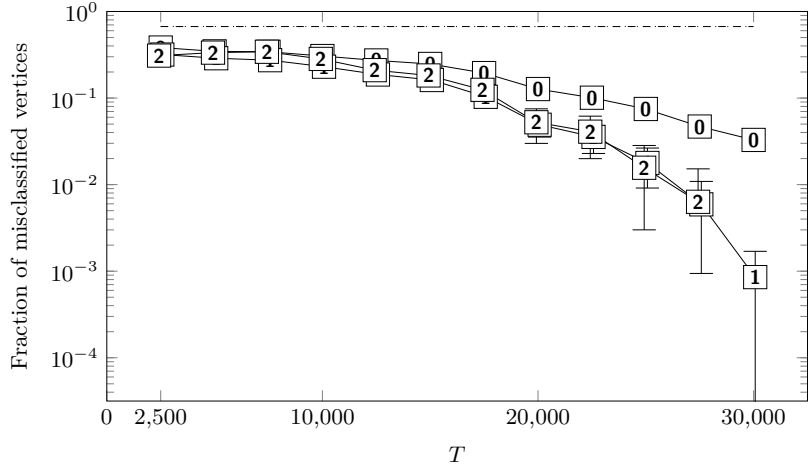

\begin{figure}[!p]
\captionsetup[subfigure]{labelformat=empty}
\centering
\begin{subfigure}[t]{0.315\textwidth}
\centering
\includegraphics[width=\linewidth]{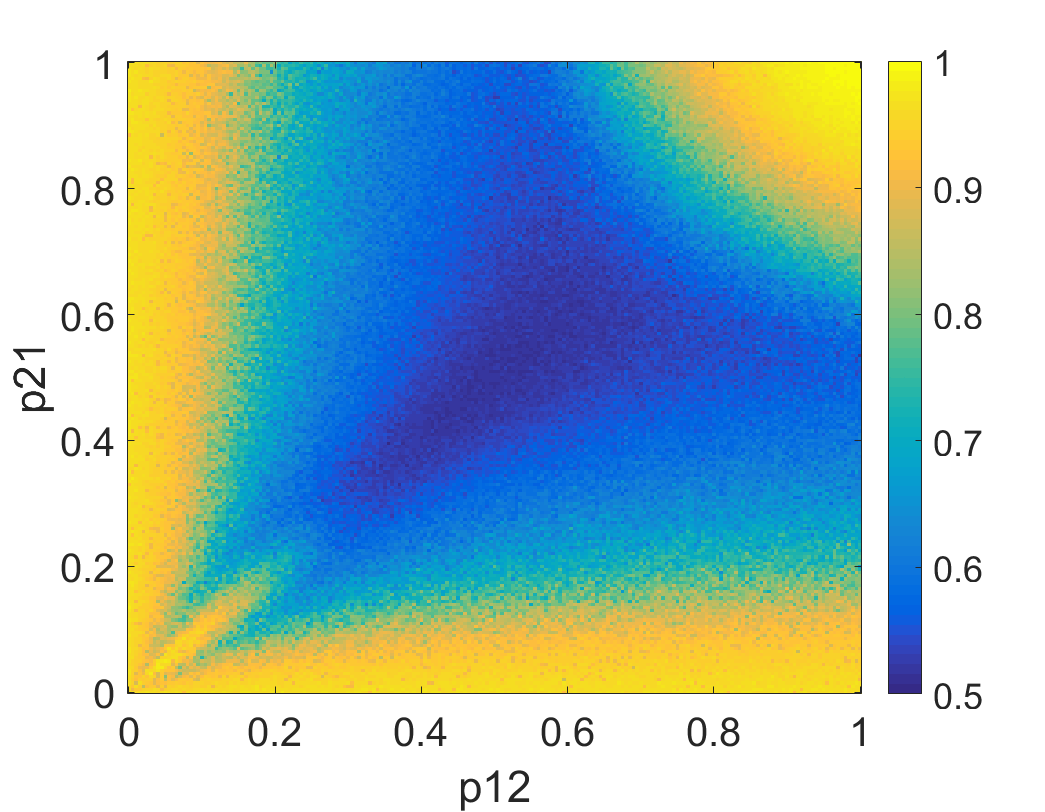}
\caption{After the SCA.}
\end{subfigure}
~
\begin{subfigure}[t]{0.315\textwidth}
\centering
\includegraphics[width=\linewidth]{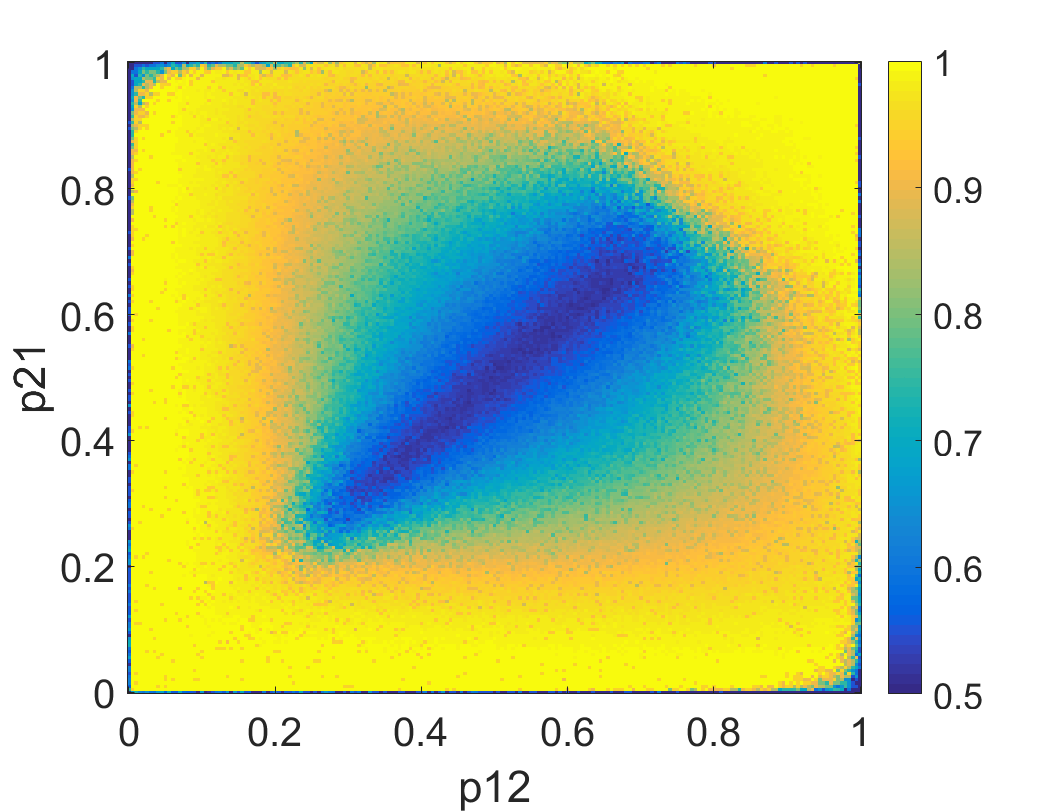}
\caption{After the CIA.}
\end{subfigure}
~
\begin{subfigure}[t]{0.30\textwidth}
\centering
\includegraphics[width=\linewidth]{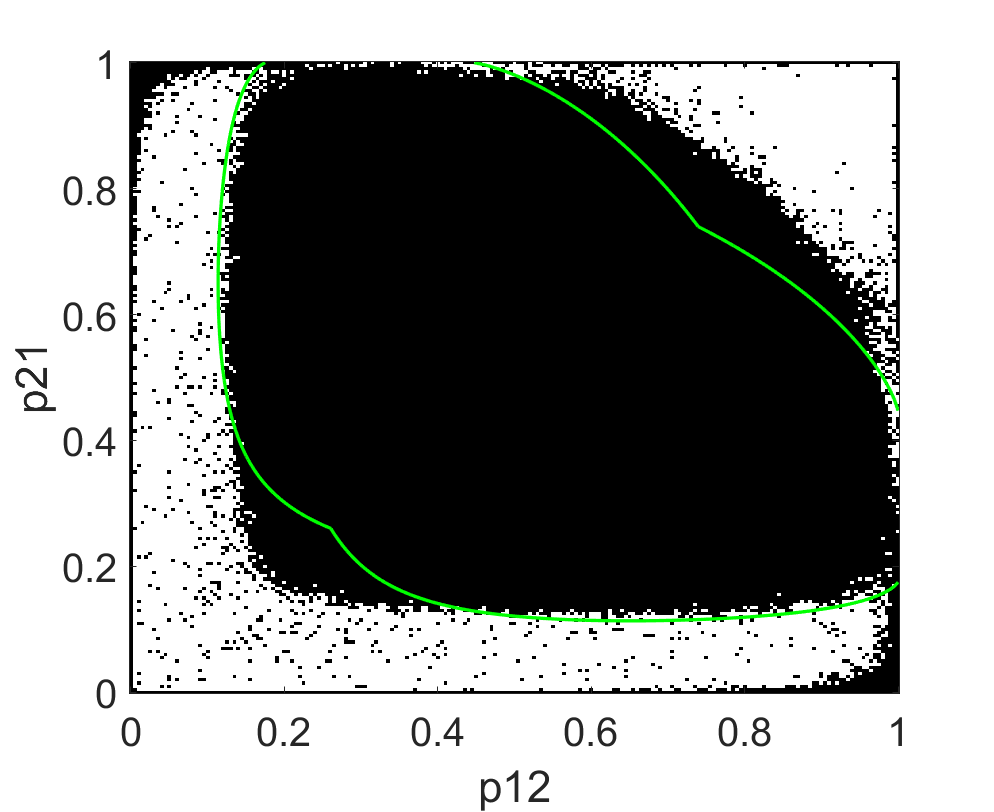}
\caption{$\hat{\mathcal{F}}_1( \varepsilon = 0.027 )$}
\end{subfigure}
\caption{The average proportion of well-classified states for each rasterpoint $(p_{1,2}, p_{2,1}) \in (0,1)^2$ after the Spectral Clustering Algorithm (left) and Cluster Improvement Algorithm (middle), and numerical feasibility region of our clustering procedure (right), both in the critical regime $T = n \ln{n}$. The line outlines the theoretical region $I(\alpha,p) \leq 1$ within which no algorithm exists able to asymptotically recover the clusters exactly.}
\label{fig:Feasibility_region_of_our_procedure}
\end{figure}

\def\Vstar{V^\ast}
\def\sigmaVstar{0}
\let\star\sigmaVstar

\chapter{Lower bounds via change-of-measure arguments}
\label{sec:The_information_bound_and_the_change_of_measure}

In this section, we prove \refTheorem{thm:Information_bound} using {change-of-measure} arguments such as those frequently used to identify information-theoretical limits in online stochastic optimization \cite{lai_asymptotically_1985}. We sketch the proof below, and provide a detailed description of its steps later in this section. We distinguish two cases:

$\mathrm{(i)}$ If $I(\alpha,p)=0$, we prove that there are two clusters whose states cannot be distinguished from any set of observations. Hence, the number of misclassified states grows linearly with $n$, which corresponds to the statement of the theorem when $I(\alpha,p)=0$. 

$\mathrm{(ii)}$ If $I(\alpha,p)>0$, we use a change-of-measure argument. We denote by $\Phi$ the true stochastic model under which the observations $X_0,\ldots,X_T$ are generated, and by $\mathbb{P}_\Phi$ (resp.\ $\mathbb{E}_\Phi$) the corresponding probability measure (resp.\ expectation). The true model is characterized by the transition matrix $P$. In a change-of-measure argument, we pretend that the observations are instead generated under a slightly different stochastic model $\Psi$ (denote by $\mathbb{P}_\Psi$ and $\mathbb{E}_\Psi$ the corresponding probability measure and expectation). The modified stochastic model $\Psi$ is constructed as follows. We pick a state $\Vstar$ randomly from clusters ${\cal V}_a$ and ${\cal V}_b$ with $a\neq b$, and place this state in its own cluster, indexed by 0. We further construct a transition matrix $Q$ depending on $\Vstar$ and slightly different than $P$. $\Psi$ is hence characterized by both $\Vstar$ and the transition matrix $Q$. Next, we introduce the log-likelihood ratio of the observations $X_0,\ldots,X_T$ under the transition matrices $P$ and $Q$:
 \begin{equation}
L
\triangleq \ln{ \frac{ \probabilityWrt{ X_0, X_1, \ldots, X_T }{Q} }{ \probabilityWrt{ X_0, X_1, \ldots, X_T }{P} } },
\label{eqn:Definition_of_log_likelihood_L}
\end{equation}
where
\begin{equation}
\probabilityWrt{ X_0, X_1, \ldots, X_T }{P}
= \prod_{t=1}^T P_{X_{t-1},X_t},
\textrm{ and }
\probabilityWrt{ X_0, X_1, \ldots, X_T }{Q}
= \prod_{t=1}^T Q_{X_{t-1},X_t}.
\label{eqn:Probability_of_a_path_when_kernel_P_is_used}
\end{equation}
Note that $L$ is random because it depends on the observations, but also on $\Vstar$. We then establish that there exist constants $\delta, C>0$ (independent of $n$) such that under any clustering algorithm the expected number of misclassified states satisfies:
\begin{equation}
\expectationWrt{ \cardinality{\mathcal{E}} }{\Phi} 
\geq C n\exp{ \Bigl( - \expectationWrt{L}{\Psi} - \sqrt{ \frac{2}{\delta} } \sqrt{ \varianceWrt{L}{\Psi} } \Bigr) }.
\label{eqn:Change_of_measure_lower_bound}
\end{equation}  
Finally for particular choices of $a$, $b$ and the transition matrix $Q$, the analysis of $\expectationWrt{L}{\Psi}$ and $ \varianceWrt{L}{\Psi}$ reveals that: for $T=\omega(n)$,
$
\expectationWrt{L}{\Psi} 
\leq (T/n) I(\alpha,p) + \smallO{ {T} / {n} }
$
and
$
\varianceWrt{L}{\Psi} = \smallO{ {T^2} / {n^2} }
$.
Combining this with \refEquation{eqn:Change_of_measure_lower_bound} completes the proof of \refTheorem{thm:Information_bound}.

\section{Necessary and sufficient condition for $I(\alpha,p)=0$}

\begin{lemma}
\label{lem:noinformation}
For any \gls{BMC}, $I(\alpha,p) = 0$ if and only if there exists $i \neq j$ such that $p_{i,c} = p_{j,c}$ and $p_{c,i} / \alpha_i = p_{c,j} / \alpha_j$ for all $c\in \{ 1,\dots, K\}$.
\end{lemma}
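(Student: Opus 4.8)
The strategy is to exhibit $\alpha_a I_{a,b}(\alpha,p)$ as a sum of manifestly nonnegative quantities and to read off exactly when they all vanish; the lemma then follows because $I(\alpha,p)=\min_{a\ne b}I_{a,b}(\alpha,p)$ is a minimum over a finite set, so it equals $0$ iff some pair $a\ne b$ achieves $I_{a,b}(\alpha,p)=0$. Throughout, $\pi$ denotes the solution of $\transpose{\pi}p=\transpose{\pi}$, which by \refProposition{prop:Equilibrium_behavior_of_pi} is the equilibrium distribution of the $K$-state chain with kernel $p$. The separability assumption $\max_{a,b,c}\{p_{b,a}/p_{c,a},\,p_{a,b}/p_{a,c}\}\le\eta<\infty$ forces every entry of $p$ to be strictly positive (fix a row; since it sums to $1$ some entry is positive, and the row-ratio bound then bounds all entries of that row below by a positive number, so every column has a positive entry, and the column-ratio bound makes every column strictly positive). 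Hence the chain is irreducible and $\pi_k>0$ for all $k$; this positivity is what makes the logarithmic expressions well defined and, at the end, lets us divide them out.

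Next I would multiply through by $\alpha_a>0$ and pull $\pi_a$ out of the first inner sum, obtaining
\begin{align*}
\alpha_a I_{a,b}(\alpha,p)
&= \pi_a\sum_{k=1}^K p_{a,k}\ln\frac{p_{a,k}}{p_{b,k}}
\;+\;\sum_{k=1}^K \pi_k p_{k,a}\ln\frac{p_{k,a}\,\alpha_b}{p_{k,b}\,\alpha_a}
\;+\;\Bigl(\frac{\alpha_a\pi_b}{\alpha_b}-\pi_a\Bigr).
\end{align*}
Since $p_{a,\cdot}$ and $p_{b,\cdot}$ are probability vectors on $\{1,\dots,K\}$, the first sum equals $\pi_a\,\KL{p_{a,\cdot}}{p_{b,\cdot}}\ge 0$, with equality iff $p_{a,c}=p_{b,c}$ for all $c$. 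For the second sum, set $\mu_k=\pi_k p_{k,a}$ and $\nu_k=(\alpha_a/\alpha_b)\,\pi_k p_{k,b}$, so that $p_{k,a}\alpha_b/(p_{k,b}\alpha_a)=\mu_k/\nu_k$, and note $\sum_k\mu_k=\pi_a$ by stationarity while $\sum_k\nu_k=(\alpha_a/\alpha_b)\pi_b$. The elementary inequality $x\ln(x/y)\ge x-y$ for $x,y>0$, applied termwise, gives
\begin{align*}
\sum_{k=1}^K\mu_k\ln\frac{\mu_k}{\nu_k}\;\ge\;\sum_{k=1}^K(\mu_k-\nu_k)\;=\;\pi_a-\frac{\alpha_a\pi_b}{\alpha_b},
\end{align*}
with equality iff $\mu_k=\nu_k$ for every $k$, i.e.\ (using $\pi_k>0$) iff $p_{c,a}/\alpha_a=p_{c,b}/\alpha_b$ for all $c$. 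Substituting this lower bound, the bracket $\alpha_a\pi_b/\alpha_b-\pi_a$ cancels exactly, leaving $\alpha_a I_{a,b}(\alpha,p)\ge\pi_a\,\KL{p_{a,\cdot}}{p_{b,\cdot}}\ge0$. In particular $I_{a,b}(\alpha,p)=0$ if and only if \emph{both} equality conditions hold, namely $p_{a,c}=p_{b,c}$ and $p_{c,a}/\alpha_a=p_{c,b}/\alpha_b$ for all $c\in\{1,\dots,K\}$.

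Finally, taking the minimum over $a\ne b$: $I(\alpha,p)=0$ iff $I_{a,b}(\alpha,p)=0$ for some $a\ne b$, which by the previous step is exactly the existence of $i\ne j$ with $p_{i,c}=p_{j,c}$ and $p_{c,i}/\alpha_i=p_{c,j}/\alpha_j$ for all $c$ --- the statement of the lemma. I do not expect a genuine obstacle; the only points needing care are the bookkeeping in the regrouping of $\alpha_a I_{a,b}(\alpha,p)$ (the cancellation of the $\alpha_a\pi_b/\alpha_b-\pi_a$ term against the bound on the generalized-divergence term is the crux) and confirming $\pi_k>0$, which underpins both the well-definedness of the logarithms and the division in the equality case.
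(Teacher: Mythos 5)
Your proof is correct, and it follows a genuinely different route from the paper's. The paper proves this lemma by interpreting $I_c(q\|p)$ as the asymptotic leading order of the log-likelihood ratio $\expectationWrt{L\mid\sigma(\Vstar)}{\Psi}$ (a KL divergence between path measures), so that $I_c(q\|p)=0$ forces $Q=P$, hence $q=q_c$; combined with the existence of a tie point $\bar q$ satisfying $I_{a^\star}(\bar q\|p)=I_{b^\star}(\bar q\|p)=0$ (from the $J(\alpha,p)$ machinery), one concludes $q_{a^\star}=q_{b^\star}$, which is exactly the stated condition. That argument is natural given the apparatus of \refSection{sec:The_information_bound_and_the_change_of_measure}, but it threads through Propositions~\ref{prop:Leading_order_behavior_of_expectation_of_L_wrt_Q} and~\ref{lem:Existence_of_point_qbar}, the definition of $q_c$, and the $J\le I$ bound. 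Your proof is far more self-contained: multiplying $I_{a,b}$ by $\alpha_a$, you rewrite it as
\begin{equation}
\alpha_a I_{a,b}(\alpha,p)
=\pi_a\,\KL{p_{a,\cdot}}{p_{b,\cdot}}
+\sum_{k=1}^K\Bigl(\mu_k\ln\frac{\mu_k}{\nu_k}-\mu_k+\nu_k\Bigr),
\end{equation}
with $\mu_k=\pi_k p_{k,a}$, $\nu_k=(\alpha_a/\alpha_b)\pi_k p_{k,b}$, the stationarity of $\pi$ ensuring $\sum_k\mu_k=\pi_a$ and $\sum_k\nu_k=\alpha_a\pi_b/\alpha_b$ so that the residual bracket cancels exactly. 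Each summand is nonnegative by $x\ln(x/y)\ge x-y$, so both directions of the equivalence drop out in one stroke. Two bonuses of your route: you get $I_{a,b}(\alpha,p)\ge0$ for every pair (the paper asserts $I(\alpha,p)\ge0$ but never proves it in this algebraic form), and the forward implication does not need the change-of-measure apparatus at all. The only supporting fact you use from the model is $\pi_k>0$, which you correctly derive from the separability assumption $\eta<\infty$; that step is essential for concluding $\mu_k=\nu_k\Leftrightarrow p_{k,a}/\alpha_a=p_{k,b}/\alpha_b$ and for the logarithms being defined.
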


\refLemma{lem:noinformation} implies that observing a trajectory of the Markov chain does not provide any information allowing us to distinguish between states in ${\cal V}_i$ and ${\cal V}_j$. As a consequence, any clustering algorithm will misclassify at least a fixed proportion of states. The proof of Lemma \ref{lem:noinformation} is provided in \textsection \ref{ch5:proofs}.

\section{Change-of-measure argument}
\label{sec:Change_of_measure_argument}

In the remainder of this section, we assume that $I(\alpha,p)>0$. The argument consists in considering that the observations $X_0, \allowbreak \ldots, \allowbreak X_T$ are generated by a slightly different stochastic model than the true model defined by the clusters and the transition matrix $P$. As mentioned above, the modified model is obtained by randomly choosing a state $\Vstar$ from two clusters ${\cal V}_a$ and ${\cal V}_b$, and by constructing a transition matrix $Q$ depending on $\Vstar$ that is slightly different from $P$. Under the new model, $\Vstar$ is placed in its own cluster indexed by $0$. The matrix $Q$ is parametrized by $q$, a $2(K+1)$-dimensional vector in 
\begin{equation}
\mathcal{Q} 
\triangleq \bigl\{ ( q_{k,0}, q_{0,k} )_{k=0,\ldots,K} \in (0,\infty)
\big| 
q_{0,0} = 0, \sum_{l=1}^K q_{0,l} = 1
\bigr\},
\end{equation}
where $q_{k,0}$ (resp. $q_{0,k}$) will define the probability to move from a state in cluster ${\cal V}_k$ to $\Vstar$ (resp. from state $\Vstar$ to a state in ${\cal V}_k$) under the new model $\Psi$. We also define:
\begin{equation}
q_{k,l} 
\triangleq p_{k,l} - \frac{q_{k,\star}}{ K n }
\quad
\textrm{for}
\quad
k, l = 1, \ldots, K,
\label{eqn:Coupling_of_the_parameters_within_q_to_p}
\end{equation}
and assume that $n > \lceil \max_{ k,l = 1, \ldots,K } \{ q_{k,\star} / ( K p_{k,l} ) \} \rceil$ so that the entries of \refEquation{eqn:Coupling_of_the_parameters_within_q_to_p} are strictly positive. Note that the collection $\{ q_{k,l} \}_{ k,l \in \{ \star, 1, \ldots, K \} }$ does \emph{not} constitute a stochastic matrix, but does resemble the transition matrix $p$ for sufficiently large $n$. We are now ready to define $Q$ component-wise:
\begin{equation}
Q_{x,y} 
\triangleq \frac{ q_{ \omega(x), \omega(y) } \indicator{ x \neq y } }{ \cardinality{ \mathcal{W}_{ \omega(y) } } - \indicator{ \omega(x) = \omega(y) } },
\quad
Q_{x,\Vstar} 
\triangleq \frac{q_{\omega(x),\star}}{ n }
\quad
\textrm{for}
\quad
x \in \mathcal{V}, y \neq \Vstar,
\label{eqn:Definition_of_Qs_entries}
\end{equation}
where
\begin{equation}
\omega(x)
\triangleq
\begin{cases}
\star & \textrm{if } x = \Vstar, \\
\sigma(x) & \textrm{if } x \neq \Vstar, \\
\end{cases}
\quad
\textrm{and}
\quad
\mathcal{W}_k 
\triangleq 
\begin{cases}
\{ \Vstar \} & \textrm{if } k = \star, \\
\mathcal{V}_k \backslash \{ \Vstar \} & \textrm{if } k = 1, \ldots, K, \\ 
\end{cases}
\end{equation}
for notational convenience. This has the added benefit of giving \refEquation{eqn:Definition_of_Qs_entries} a similar form as \refEquation{eqn:Definition_of_P}. 

$Q$ is by construction a stochastic matrix (see \refAppendixSection{sec:Q_is_a_stochastic_matrix}). Note furthermore that because $Q$ is constructed from $P$, which by assumption describes an irreducible Markov chain, and because the entries $\{ q_{k,\star}, q_{\star,k} \}_{k=1,\ldots,K}$ are all strictly positive, $Q$ also describes an irreducible Markov chain. Next, we highlight other basic properties of this chain, and provide an example of matrix $Q$.

\paragraph{Equilibrium distribution}

Let $\Pi^{(Q)}$ denote the equilibrium distribution of a Markov chain with transition matrix $Q$, i.e., the solution to $\transpose{ \Pi^{(Q)} } Q = \transpose{ \Pi^{(Q)} }$. By symmetry of states in the same cluster $\Pi_x^{(Q)} = \Pi_y^{(Q)} \triangleq \bar{\Pi}^{(Q)}_k$ for any two states $x,y \in \mathcal{W}_k$ and all $k \in \{ \star, 1, \ldots, K \}$. Define
\begin{equation}
\itr{\gamma_{k}}{0}
\triangleq \lim_{n \to \infty} \sum_{x \in \mathcal{W}_k} \Pi_x^{(Q)}
= \lim_{n \to \infty} \cardinality{ \mathcal{W}_k } \bar{\Pi}_{k}^{(Q)} 
\quad
\textrm{for}
\quad
k \in \{ \star, 1,\ldots,K \}.
\label{eqn:Definition_of_gamma_k_0}
\end{equation}
We can expect $\itr{\gamma_{\star}}{0}$ to be zero, because by our construction of $Q$ we can expect that $\Pi_{x}^{(Q)} = \bigO{1/n}$ for all $x \in \mathcal{V}$ (including $\Vstar$). We therefore also define its higher order statistic
$
\itr{\gamma_{\star}}{1}
\triangleq \lim_{n \to \infty} n \Pi_{\Vstar}^{(Q)}
$.
The following proposition relates these scaled quantities to the parameters of our \gls{BMC} $\process{ X_t }{t \geq 0}$. The proof is deferred to \refAppendixSection{sec:Proof_of_Equilibrium_behavior_of_piQ}, and relies on several applications of the balance equations and a subsequent asymptotic analysis.

\begin{proposition} 
\label{prop:Equilibrium_behavior_of_piQ}
For $k = 1, \ldots, K$, $\itr{\gamma_k}{0} = \pi_k$. Furthermore $\itr{\gamma_{\star}}{0} = 0$ and $\itr{\gamma_{\star}}{1} = \sum_{k=1}^K \pi_k q_{k,\star}$.
\end{proposition}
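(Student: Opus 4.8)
The plan is to exploit the block symmetry of $Q$ to collapse the $n$-state chain onto the $K+1$ macro-states $\{\star,1,\ldots,K\}$, solve the resulting finite system of balance equations for the aggregate masses $m_k\triangleq\sum_{x\in\mathcal{W}_k}\Pi^{(Q)}_x$, and then let $n\to\infty$. \emph{Symmetry reduction.} Since $Q_{x,y}$ depends on $(x,y)$ only through $(\omega(x),\omega(y))$ and the indicator $\mathds{1}[x=y]$, for each $k\in\{1,\ldots,K\}$ the transposition of two states of $\mathcal{W}_k$ is an automorphism of $Q$; as $Q$ is irreducible its stationary distribution is unique and hence invariant under these automorphisms, so $\Pi^{(Q)}_x=\bar\Pi^{(Q)}_k$ for all $x\in\mathcal{W}_k$, $k\in\{\star,1,\ldots,K\}$. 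Writing $m_k=|\mathcal{W}_k|\bar\Pi^{(Q)}_k$ for $k=1,\ldots,K$ and $m_\star=\Pi^{(Q)}_{\Vstar}$, the partition $\mathcal{V}=\{\Vstar\}\cup\bigcup_k\mathcal{W}_k$ gives $m_\star+\sum_{k=1}^K m_k=1$.

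Next I would derive the aggregate balance equations. Inserting $\Pi^{(Q)}_x=\bar\Pi^{(Q)}_k$ into $\transpose{\Pi^{(Q)}}Q=\transpose{\Pi^{(Q)}}$ and summing the equation over all $y$ in a fixed $\mathcal{W}_l$: since $\sum_{y\in\mathcal{W}_l}Q_{x,y}=q_{\omega(x),l}$ for $l\in\{1,\ldots,K\}$, while $Q_{x,\Vstar}=q_{\omega(x),\star}/n$ and $q_{\star,\star}=0$, one obtains
\[
m_l=\sum_{k=1}^K m_k q_{k,l}+m_\star q_{\star,l}\ \ (l=1,\ldots,K),
\qquad
n\,m_\star=\sum_{k=1}^K m_k q_{k,\star}.
\]
(These are exactly the balance equations of the genuine stochastic matrix on $\{\star,1,\ldots,K\}$ with blocks $q_{k,l}$, $q_{k,\star}/n$, $q_{\star,l}$, $0$; stochasticity follows from \refEquation{eqn:Coupling_of_the_parameters_within_q_to_p} as in \refAppendixSection{sec:Q_is_a_stochastic_matrix}.) The second identity yields $m_\star\le n^{-1}\sum_k q_{k,\star}=\bigO{1/n}$, so $\itr{\gamma_\star}{0}=\lim_n m_\star=0$.

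It then remains to pass to the limit. Normalising, $\hat m_k\triangleq m_k/(1-m_\star)$ is a probability vector on $\{1,\ldots,K\}$; substituting $q_{k,l}=p_{k,l}-q_{k,\star}/(Kn)$ into the first identity and using the second to replace $\sum_k m_k q_{k,\star}$ by $n m_\star$ gives $\hat m_l=\sum_k\hat m_k p_{k,l}+\bigO{1/n}$ uniformly in $l$. The simplex is compact, and by \refProposition{prop:Equilibrium_behavior_of_pi} the equation $\transpose{\gamma}p=\transpose{\gamma}$ with $\sum_k\gamma_k=1$ has $\pi$ as its unique solution ($p$ being irreducible, since all $p_{k,l}>0$ under the separability assumption); hence every subsequential limit of $\hat m$ equals $\pi$, so $\hat m\to\pi$ and thus $m_k\to\pi_k$, i.e., $\itr{\gamma_k}{0}=\pi_k$. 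Finally $\itr{\gamma_\star}{1}=\lim_n n\,m_\star=\lim_n\sum_{k=1}^K m_k q_{k,\star}=\sum_{k=1}^K\pi_k q_{k,\star}$, the $q_{k,\star}$ being fixed constants.

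The main obstacle is the last step: one cannot simply invoke continuity of the map ``transition matrix $\mapsto$ stationary distribution'', because the limiting macro-chain is reducible ($\star$ becomes transient as $q_{k,\star}/n\to 0$), so the stationary distribution is discontinuous there. The remedy is the order of operations above — first extract $m_\star=\bigO{1/n}$ from the \emph{exact} balance equation, then renormalise the remaining mass and identify all its limit points with $\pi$ by uniqueness of the stationary distribution of the irreducible macro-chain $p$. The remaining ingredients (the automorphism claim, the identity $\sum_{y\in\mathcal{W}_l}Q_{x,y}=q_{\omega(x),l}$, and the elementary algebra discarding the $q_{k,\star}/(Kn)$ corrections) are routine.
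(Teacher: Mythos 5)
Your proof is correct and follows essentially the same route as the paper: exploit block symmetry of $Q$ to reduce to a macro-chain on $\{\star,1,\dots,K\}$, aggregate the balance equations over clusters, show $m_\star=\bigO{1/n}$, and identify the limit of the remaining mass with $\pi$ via $\transpose{\pi}p=\transpose{\pi}$. Where you improve on the paper's presentation is in the last step: the paper passes to the limit inside the balance equations and implicitly presupposes that the limits $\itr{\gamma_k}{0}$ exist, while you work with the exact finite-$n$ identities, extract $m_\star=\bigO{1/n}$ first, renormalise, and then invoke compactness of the simplex plus uniqueness of the stationary distribution of the irreducible macro-chain $p$ to pin down every subsequential limit — which, as you correctly note, is the right way to handle the discontinuity of the map ``kernel $\mapsto$ stationary distribution'' at the reducible limit where $\star$ becomes transient.
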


\paragraph{Mixing time} 

It is important to note that \refProposition{prop:Mixing_times_of_Markov_chains_with_transition_matrices_P_and_Q} holds for a Markov chain with $Q$ as its transition matrix as well. This follows when applying the exact same proof.

\paragraph{Example} 

It is illustrative to explicitly write down at least one example kernel $Q$. For $K = 3$, $\alpha = ( 2/10, 3/10, 5/10 )$ and $n = 10$, $\Vstar = 7$, it is given by
\begin{align}
Q
&
=
\left(
\begin{array}{cc;{2pt/2pt}ccc;{2pt/2pt}c>{\columncolor{black!10}}cccc}
0 & p_{1,1} & \frac{p_{1,2}}{3} & \frac{p_{1,2}}{3} & \frac{p_{1,2}}{3} & \frac{p_{1,3}}{4} & \frac{q_{1,\star}}{10} & \frac{p_{1,3}}{4} & \frac{p_{1,3}}{4} & \frac{p_{1,3}}{4} \\
p_{1,1} & 0 & \frac{p_{1,2}}{3} & \frac{p_{1,2}}{3} & \frac{p_{1,2}}{3} & \frac{p_{1,3}}{4} & \frac{q_{1,\star}}{10} & \frac{p_{1,3}}{4} & \frac{p_{1,3}}{4} & \frac{p_{1,3}}{4} \\ \hdashline[2pt/2pt]
\frac{p_{2,1}}{2} & \frac{p_{2,1}}{2} & 0 & \frac{p_{2,2}}{2} & \frac{p_{2,2}}{2} & \frac{p_{2,3}}{4} & \frac{q_{2,\star}}{10} & \frac{p_{2,3}}{4} & \frac{p_{2,3}}{4} & \frac{p_{2,3}}{4} \\
\frac{p_{2,1}}{2} & \frac{p_{2,1}}{2} & \frac{p_{2,2}}{2} & 0 & \frac{p_{2,2}}{2} & \frac{p_{2,3}}{4} & \frac{q_{2,\star}}{10} & \frac{p_{2,3}}{4} & \frac{p_{2,3}}{4} & \frac{p_{2,3}}{4} \\
\frac{p_{2,1}}{2} & \frac{p_{2,1}}{2} & \frac{p_{2,2}}{2} & \frac{p_{2,2}}{2} & 0 & \frac{p_{2,3}}{4} & \frac{q_{2,\star}}{10} & \frac{p_{2,3}}{4} & \frac{p_{2,3}}{4} & \frac{p_{2,3}}{4} \\  \hdashline[2pt/2pt]
\frac{p_{3,1}}{2} & \frac{p_{3,1}}{2} & \frac{p_{3,2}}{3} & \frac{p_{3,2}}{3} & \frac{p_{3,2}}{3} & 0 & \frac{q_{3,\star}}{10} & \frac{p_{3,3}}{3} & \frac{p_{3,3}}{3} & \frac{p_{3,3}}{3} \\
\rowcolor{black!10}
\frac{q_{\star,1}}{2} & \frac{q_{\star,1}}{2} & \frac{q_{\star,2}}{3} & \frac{q_{\star,2}}{3} & \frac{q_{\star,2}}{3} & \frac{q_{\star,3}}{4} & 0 & \frac{q_{\star,3}}{4} & \frac{q_{\star,3}}{4} & \frac{q_{\star,3}}{4} \\
\frac{p_{3,1}}{2} & \frac{p_{3,1}}{2} & \frac{p_{3,2}}{3} & \frac{p_{3,2}}{3} & \frac{p_{3,2}}{3} & \frac{p_{3,3}}{3} & \frac{q_{3,\star}}{10} & 0 & \frac{p_{3,3}}{3} & \frac{p_{3,3}}{3} \\
\frac{p_{3,1}}{2} & \frac{p_{3,1}}{2} & \frac{p_{3,2}}{3} & \frac{p_{3,2}}{3} & \frac{p_{3,2}}{3} & \frac{p_{3,3}}{3} & \frac{q_{3,\star}}{10} & \frac{p_{3,3}}{3} & 0 & \frac{p_{3,3}}{3} \\
\frac{p_{3,1}}{2} & \frac{p_{3,1}}{2} & \frac{p_{3,2}}{3} & \frac{p_{3,2}}{3} & \frac{p_{3,2}}{3} & \frac{p_{3,3}}{3} & \frac{q_{3,\star}}{10} & \frac{p_{3,3}}{3} & \frac{p_{3,3}}{3} & 0 \\
\end{array}
\right)
\nonumber \\ &
- \frac{1}{3 \cdot 10}
\left(
\begin{array}{cc;{2pt/2pt}ccc;{2pt/2pt}ccccc}
0 & q_{1,\star} & \frac{q_{1,\star}}{3} & \frac{q_{1,\star}}{3} & \frac{q_{1,\star}}{3} & \frac{q_{1,\star}}{4} & 0 & \frac{q_{1,\star}}{4} & \frac{q_{1,\star}}{4} & \frac{q_{1,\star}}{4} \\ 
q_{1,\star} & 0 & \frac{q_{1,\star}}{3} & \frac{q_{1,\star}}{3} & \frac{q_{1,\star}}{3} & \frac{q_{1,\star}}{4} & 0 & \frac{q_{1,\star}}{4} & \frac{q_{1,\star}}{4} & \frac{q_{1,\star}}{4} \\ \hdashline[2pt/2pt]
\frac{q_{2,\star}}{2} & \frac{q_{2,\star}}{2} & 0 & \frac{q_{2,\star}}{2} & \frac{q_{2,\star}}{2} & \frac{q_{2,\star}}{4} & 0 & \frac{q_{2,\star}}{4} & \frac{q_{2,\star}}{4} & \frac{q_{2,\star}}{4} \\ 
\frac{q_{2,\star}}{2} & \frac{q_{2,\star}}{2} & \frac{q_{2,\star}}{2} & 0 & \frac{q_{2,\star}}{2} & \frac{q_{2,\star}}{4} & 0 & \frac{q_{2,\star}}{4} & \frac{q_{2,\star}}{4} & \frac{q_{2,\star}}{4} \\ 
\frac{q_{2,\star}}{2} & \frac{q_{2,\star}}{2} & \frac{q_{2,\star}}{2} & \frac{q_{2,\star}}{2} & 0 & \frac{q_{2,\star}}{4} & 0 & \frac{q_{2,\star}}{4} & \frac{q_{2,\star}}{4} & \frac{q_{2,\star}}{4} \\ \hdashline[2pt/2pt]
\frac{q_{3,\star}}{2} & \frac{q_{3,\star}}{2} & \frac{q_{3,\star}}{3} & \frac{q_{3,\star}}{3} & \frac{q_{3,\star}}{3} & 0 & 0 & \frac{q_{3,\star}}{3} & \frac{q_{3,\star}}{3} & \frac{q_{3,\star}}{3} \\ 
0 & 0 & 0 & 0 & 0 & 0 & 0 & 0 & 0 & 0 \\
\frac{q_{3,\star}}{2} & \frac{q_{3,\star}}{2} & \frac{q_{3,\star}}{3} & \frac{q_{3,\star}}{3} & \frac{q_{3,\star}}{3} & \frac{q_{3,\star}}{3} & 0 & 0 & \frac{q_{3,\star}}{3} & \frac{q_{3,\star}}{3} \\ 
\frac{q_{3,\star}}{2} & \frac{q_{3,\star}}{2} & \frac{q_{3,\star}}{3} & \frac{q_{3,\star}}{3} & \frac{q_{3,\star}}{3} & \frac{q_{3,\star}}{3} & 0 & \frac{q_{3,\star}}{3} & 0 & \frac{q_{3,\star}}{3} \\ 
\frac{q_{3,\star}}{2} & \frac{q_{3,\star}}{2} & \frac{q_{3,\star}}{3} & \frac{q_{3,\star}}{3} & \frac{q_{3,\star}}{3} & \frac{q_{3,\star}}{3} & 0 & \frac{q_{3,\star}}{3} & \frac{q_{3,\star}}{3} & 0 \\ 
\end{array}
\right).
\label{eqn:Example_Q_matrix}
\end{align}
Here, we have indicated the original cluster structure in dashed lines, and we have colored the row and column corresponding to the modified cluster behavior of state $\Vstar$. Comparing \refEquation{eqn:Example_Q_matrix} to \refEquation{eqn:Example_P_matrix} helps understanding how $Q$ is constructed and how $Q$ compares to $P$. Note in particular the minor changes to the normalizations of all entries.

\section{Log-likelihood ratio and its connection to the error rate}

To describe how the log-likelihood ratio $L$ relates to the error rate, we first introduce the following notations. For any $q\in {\cal Q}$, and any cluster index $a$, we define:
\begin{equation}
I_a(q||p) 
\triangleq \sum_{k=1}^K \Bigl( \bigl( \sum_{l=1}^K \pi_l q_{l,0} \bigr) q_{0,k} \ln{ \frac{ q_{0,k} }{ p_{a,k} } } + \pi_k q_{k,0} \ln{ \frac{ q_{k,0} \alpha_a }{ p_{k,a} } } \Bigr) + \Bigl( \frac{ \pi_a }{ \alpha_a } - \sum_{k=1}^K \pi_k q_{k,0} \Bigr).
\label{eqn:Definition_of_Icqp}
\end{equation}
As shown later in this section, $I_a(q||p)$ is the leading order of the expected log-likelihood ratio $L$ under $\Psi$ and given that $\Vstar$ is extracted from ${\cal V}_a$. We further define for any pair of cluster indexes $a$, $b$: 
$
\mathcal{Q}(a,b) 
\triangleq \bigl\{ q \in \mathcal{Q} \big| I_a(q||p) = I_b(q||p) \bigr\}.
$ 
These sets are not empty as stated below.

\begin{lemma}
\label{lem:Existence_of_point_qbar}
For any two cluster indexes $a \neq b$, $\mathcal{Q}(a,b)\neq\emptyset$. 
\end{lemma}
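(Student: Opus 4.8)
The plan is to exhibit an explicit point $q^{(a)}\in\mathcal{Q}$ at which $I_a(q\|p)$ attains its global minimum $0$, to show $I_a(\cdot\|p)\geq 0$ on all of $\mathcal{Q}$, and then to run an intermediate–value argument for the continuous map $f(q)\triangleq I_a(q\|p)-I_b(q\|p)$ along the segment joining $q^{(a)}$ to the analogous point $q^{(b)}$; since $\mathcal{Q}$ is convex this segment stays in $\mathcal{Q}$, and because $f(q^{(a)})=-I_b(q^{(a)}\|p)\leq 0$ while $f(q^{(b)})=I_a(q^{(b)}\|p)\geq 0$, $f$ must vanish somewhere on it, producing an element of $\mathcal{Q}(a,b)$.

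\emph{Step 1: non-negativity of $I_a(\cdot\|p)$ and its minimizer.} I would write $I_a(q\|p)=A+B+C$, where $A=\bigl(\sum_{l=1}^{K}\pi_l q_{l,0}\bigr)\sum_{k=1}^{K}q_{0,k}\ln\frac{q_{0,k}}{p_{a,k}}$, $B=\sum_{k=1}^{K}\pi_k q_{k,0}\ln\frac{q_{k,0}\alpha_a}{p_{k,a}}$ and $C=\frac{\pi_a}{\alpha_a}-\sum_{k=1}^{K}\pi_k q_{k,0}$. Using the elementary inequality $u\ln(u/v)\geq u-v$ for $u,v>0$ (equality iff $u=v$) termwise, together with $\sum_{k}q_{0,k}=1=\sum_{k}p_{a,k}$, gives $\sum_{k}q_{0,k}\ln\frac{q_{0,k}}{p_{a,k}}\geq\sum_{k}(q_{0,k}-p_{a,k})=0$, and since the prefactor $\sum_{l}\pi_l q_{l,0}$ is strictly positive (all $\pi_l,q_{l,0}>0$), $A\geq 0$ with equality iff $q_{0,k}=p_{a,k}$ for all $k$. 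Applying the same inequality to $B=\sum_{k}\pi_k\,q_{k,0}\ln\frac{q_{k,0}}{p_{k,a}/\alpha_a}$ gives $B\geq\sum_{k}\pi_k q_{k,0}-\frac{1}{\alpha_a}\sum_{k}\pi_k p_{k,a}$, and because $\transpose{\pi}p=\transpose{\pi}$ by \refProposition{prop:Equilibrium_behavior_of_pi} we have $\sum_{k}\pi_k p_{k,a}=\pi_a$, so $B\geq\sum_{k}\pi_k q_{k,0}-\frac{\pi_a}{\alpha_a}=-C$, with equality iff $q_{k,0}=p_{k,a}/\alpha_a$ for all $k$. Hence $I_a(q\|p)=A+(B+C)\geq 0$, with equality exactly at the point $q^{(a)}$ defined by $q^{(a)}_{0,k}=p_{a,k}$ and $q^{(a)}_{k,0}=p_{k,a}/\alpha_a$ (and $q^{(a)}_{0,0}=0$). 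This point lies in $\mathcal{Q}$: all the listed entries are strictly positive, $q^{(a)}_{0,0}=0$, and $\sum_{l=1}^{K}q^{(a)}_{0,l}=\sum_{l}p_{a,l}=1$.

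\emph{Step 2: the intermediate–value step.} Define $q^{(b)}$ by the same recipe with $a$ replaced by $b$; it likewise lies in $\mathcal{Q}$, satisfies $I_b(q^{(b)}\|p)=0$, and by Step~1 also $I_a(q^{(b)}\|p)\geq 0$. The set $\mathcal{Q}$ is convex — it is, up to the pinned coordinate $q_{0,0}=0$, the product of the positive orthant in the coordinates $(q_{k,0})_{k=1}^{K}$ with the open simplex $\Delta^{K-1}$ in the coordinates $(q_{0,k})_{k=1}^{K}$ — so the segment $\{t q^{(a)}+(1-t)q^{(b)}:t\in[0,1]\}$ is contained in $\mathcal{Q}$, and along it every coordinate stays strictly positive, so $f=I_a(\cdot\|p)-I_b(\cdot\|p)$ is continuous there. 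Since $f(q^{(a)})\leq 0\leq f(q^{(b)})$, the intermediate value theorem yields $t^{\star}\in[0,1]$ with $f\bigl(t^{\star}q^{(a)}+(1-t^{\star})q^{(b)}\bigr)=0$; this point belongs to $\mathcal{Q}(a,b)$, so $\mathcal{Q}(a,b)\neq\emptyset$.

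\emph{Main obstacle.} The only genuine content is Step~1 — the global non-negativity of $I_a(\cdot\|p)$ on $\mathcal{Q}$ and the exact identification of its zero — which hinges on spotting the decomposition $I_a=A+(B+C)$, applying $u\ln(u/v)\geq u-v$ termwise, and invoking stationarity $\transpose{\pi}p=\transpose{\pi}$ to collapse $B+C$. Everything else (convexity of $\mathcal{Q}$, continuity of $f$, the IVT) is routine; an alternative would be to minimize $f$ over a suitable closed subset by compactness, but the segment argument is shorter and avoids any boundary issues with the $u\ln u$ terms.
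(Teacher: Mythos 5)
Your proof is correct and follows essentially the same route as the paper's: the paper defines precisely the same points $q_a = q^{(a)}$, $q_b = q^{(b)}$, records $I_a(q_a\|p)=I_b(q_b\|p)=0$, and applies the intermediate value theorem to $g(\lambda)=I_a(\lambda q_a+(1-\lambda)q_b\|p)-I_b(\lambda q_a+(1-\lambda)q_b\|p)$ on the convex segment joining them. The only minor departure is in your Step~1: you derive global non-negativity of $I_a(\cdot\|p)$ on all of $\mathcal{Q}$ from $u\ln(u/v)\geq u-v$ together with stationarity $\transpose{\pi}p=\transpose{\pi}$, whereas the paper instead reads the endpoint signs off the identity $I_a(q_b\|p)=I_{b,a}(\alpha,p)$ combined with the standing assumption $I(\alpha,p)>0$ (in force throughout that section), giving strict inequalities $g(0)>0>g(1)$; your variant is self-contained and also covers the degenerate case $I(\alpha,p)=0$, at the cost of only obtaining $f(q^{(a)})\leq 0\leq f(q^{(b)})$, which is still enough for the IVT.
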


Next, \refProposition{prop:Lower_bound_on_the_number_of_misclassifications_for_a_class_of_Q_transition_matrices} states that choosing $q\in \mathcal{Q}(a,b)$, one may obtain lower bounds on the error rate by analyzing the log-likelihood ratio $L$.

\begin{proposition}
\label{prop:Lower_bound_on_the_number_of_misclassifications_for_a_class_of_Q_transition_matrices}
Assume that $\Vstar$ is chosen uniformly at random from two different clusters ${\cal V}_a$ and ${\cal V}_b$, that $Q$ is constructed from $q\in {\cal Q}(a,b)$, \jaron{and that there exists a $(\varepsilon,1)$-locally good clustering algorithm at $(\alpha,p)$. Then:} \\
(i) There exists a constant $\delta>0$ independent of $n$ s.t.\ $\probabilityWrt{ \Vstar \in \mathcal{E} }{\Psi} \geq \delta > 0$.\\
(ii) There exists a constant $C>0$ independent of $n$ such that
\begin{equation}
\expectationWrt{ \cardinality{\mathcal{E}} }{\Phi} 
\geq C n \exp{ \Bigl( - \expectationWrt{L}{\Psi} - \sqrt{ \frac{2}{\delta} } \sqrt{ \varianceWrt{L}{\Psi} } \Bigr) }.
\label{eqn:Change_of_measure_lower_bound}
\end{equation}
\end{proposition}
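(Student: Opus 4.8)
I would prove the proposition by a change-of-measure argument of Lai--Robbins type: (ii) is the master inequality, transporting one misclassification probability from $\Phi$ to $\Psi$ at the cost of the density $\e{-L}$, and (i) furnishes the dimension-free constant that keeps (ii) from being vacuous. Proving (ii) from (i) is then routine. Since $\Vstar$ is drawn uniformly from $\mathcal{V}_a\cup\mathcal{V}_b$, a set of size $\Omega(n)$, one has $\expectationWrt{\cardinality{\mathcal{E}}}{\Phi}\ge\sum_{v\in\mathcal{V}_a\cup\mathcal{V}_b}\mathbb{P}_\Phi[v\in\mathcal{E}]=|\mathcal{V}_a\cup\mathcal{V}_b|\,\mathbb{P}_\Phi[\Vstar\in\mathcal{E}]$. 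Because the trajectory likelihood factorizes as in \refEquation{eqn:Probability_of_a_path_when_kernel_P_is_used} and $\Vstar$ is generated identically under both measures, the Radon--Nikodym derivative of $\Phi$ w.r.t.\ $\Psi$ along $X_0,\dots,X_T$ is $\e{-L}$, so $\mathbb{P}_\Phi[\Vstar\in\mathcal{E}]=\expectationWrt{\e{-L}\indicator{\Vstar\in\mathcal{E}}}{\Psi}$. On the event $G=\{L\le\expectationWrt{L}{\Psi}+\sqrt{2/\delta}\,\sqrt{\varianceWrt{L}{\Psi}}\}$ one has $\e{-L}\ge\e{-\expectationWrt{L}{\Psi}-\sqrt{2/\delta}\sqrt{\varianceWrt{L}{\Psi}}}$, while Chebyshev gives $\mathbb{P}_\Psi[G^{\mathrm{c}}]\le\delta/2$; combined with (i) this yields $\mathbb{P}_\Psi[\{\Vstar\in\mathcal{E}\}\cap G]\ge\delta/2$, hence $\mathbb{P}_\Phi[\Vstar\in\mathcal{E}]\ge\tfrac{\delta}{2}\e{-\expectationWrt{L}{\Psi}-\sqrt{2/\delta}\sqrt{\varianceWrt{L}{\Psi}}}$. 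Multiplying by $|\mathcal{V}_a\cup\mathcal{V}_b|$ gives \refEquation{eqn:Change_of_measure_lower_bound}, with $C$ absorbing $\delta/2$ and the $\Omega(1)$ factor $|\mathcal{V}_a\cup\mathcal{V}_b|/n$. Completing this step additionally needs the two moment estimates $\expectationWrt{L}{\Psi}\le(T/n)I(\alpha,p)(1+\smallO{1})$ and $\varianceWrt{L}{\Psi}=\smallO{T^2/n^2}$, established by a separate asymptotic expansion of $L$ that uses the Markov property and the mixing-time bound of \refProposition{prop:Mixing_times_of_Markov_chains_with_transition_matrices_P_and_Q}.

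\textbf{Part (i): the two-point idea.} The key is a symmetry hidden in \refEquation{eqn:Definition_of_Qs_entries}--\refEquation{eqn:Coupling_of_the_parameters_within_q_to_p}: the matrix $Q$ depends on $\Vstar$ only through ``$\Vstar$ versus not-$\Vstar$'' — the normalizations use $|\mathcal{W}_k|$ with $\mathcal{W}_k=\mathcal{V}_k\backslash\{\Vstar\}$, and the coupling $q_{k,l}=p_{k,l}-q_{k,\star}/(Kn)$ is partition-free — so the \emph{same} law $\Psi$ is produced whether one declares $\Vstar$'s true cluster to be $a$, with reference partition $\mathcal{V}$, or $b$, with reference partition $\mathcal{V}'$ obtained from $\mathcal{V}$ by relocating $\Vstar$; both references can be kept within the $(\varepsilon,1)$-window, which is exactly what the tolerance $c=1$ is for. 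Since the algorithm commits $\Vstar$ to a single one of its $K$ output clusters, the events ``$\Vstar$ lands in the relabel-image of $a$'' and ``$\ldots$ of $b$'' are disjoint whenever the two references induce the same relabeling, so $\mathbb{P}_\Psi[\Vstar\notin\mathcal{E}_{\mathcal{V}}]+\mathbb{P}_\Psi[\Vstar\notin\mathcal{E}_{\mathcal{V}'}]\le 1+\mathbb{P}_\Psi[\gamma^{\mathrm{opt}}_{\mathcal{V}}\ne\gamma^{\mathrm{opt}}_{\mathcal{V}'}]$; thus for the (admissible) reference maximizing the left-hand probabilities, $\mathbb{P}_\Psi[\Vstar\in\mathcal{E}]\ge\tfrac12(1-\mathbb{P}_\Psi[\gamma^{\mathrm{opt}}_{\mathcal{V}}\ne\gamma^{\mathrm{opt}}_{\mathcal{V}'}])$, and we let that be $\Phi$. (This is the block-Markov analogue of the Le Cam two-point reduction used for the LSBM; the hypothesis $q\in\mathcal{Q}(a,b)$, nonempty by \refLemma{lem:Existence_of_point_qbar}, is not needed for this step per se, but it guarantees that the leading term of $\expectationWrt{L}{\Psi}$ is the common value $(T/n)I_a(q\|p)=(T/n)I_b(q\|p)$ under either reference, which is what makes the downstream optimization over $q$ — and then over pairs $a\ne b$ — meaningful.)

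\textbf{Main obstacle.} I expect (i) to be where the real work is. The delicate point is bounding $\mathbb{P}_\Psi[\gamma^{\mathrm{opt}}_{\mathcal{V}}\ne\gamma^{\mathrm{opt}}_{\mathcal{V}'}]$ — and more broadly securing a constant $\delta$ \emph{uniformly} over the $\Omega(n)$ choices of $\Vstar$ under one admissible $\Phi$, so that the prefactor $Cn$ is retained — under the perturbed law $\Psi$, which is \emph{not} controlled by the local-goodness hypothesis since $Q$ and $P$ differ by $\Omega(T/n)$ in log-likelihood. The natural device is a dichotomy: either $\expectationWrt{\cardinality{\mathcal{E}}}{\Psi}$ fails to be $\smallO{n}$, in which case the desired lower bound on $\expectationWrt{\cardinality{\mathcal{E}}}{\Phi}$ already follows from the change of measure alone; or $\expectationWrt{\cardinality{\mathcal{E}}}{\Psi}=\smallO{n}$, in which case the output agrees with the reference on all but $\smallO{n}$ states with $\Psi$-probability $1-\smallO{1}$, which forces $\gamma^{\mathrm{opt}}_{\mathcal{V}}=\gamma^{\mathrm{opt}}_{\mathcal{V}'}$ on that event (the two references differ by one state while every cluster has size $\Omega(n)$), so (i) holds with $\delta=\tfrac12-\smallO{1}$. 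Carrying this dichotomy out carefully, together with the Markov-type step that converts $\expectationWrt{\cardinality{\mathcal{E}}}{\Psi}=\smallO{n}$ into a high-probability statement and the verification that $\mathcal{V}$ and $\mathcal{V}'$ are simultaneously admissible, is the crux.
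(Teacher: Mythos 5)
Your Part (ii) matches the paper's proof in all essentials: you apply the Radon--Nikodym derivative $\e{-L}$ to transport $\probabilityWrt{\Vstar\in\mathcal{E}}{\Phi}$ to $\Psi$, restrict to the good Chebyshev event, invoke (i) for the constant $\delta$, and multiply by $\cardinality{\mathcal{V}_a\cup\mathcal{V}_b}$. The paper phrases it slightly differently — it bounds $\probabilityWrt{L\le f}{\Psi}$ by splitting on $\{\Vstar\in\mathcal{E}\}$ versus its complement, then uses the one-sided Chebyshev inequality from the $\Psi$-side — but the two rearrangements are algebraically equivalent and yield the same constant $C=(\alpha_a+\alpha_b)\delta/2$.

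Your Part (i) is built on the same key observation as the paper's — that the matrix $Q$ produced by relocating $\Vstar$ from $\mathcal{V}_a$ to $\mathcal{V}_b$ (and shifting cluster sizes by one, hence the tolerance $c=1$) is identical, so the law of the trajectory does not reveal $\sigma(\Vstar)$ — but the technical execution diverges from the paper's, and this is worth noting. The paper circumvents the relabeling issue you flag by a genie argument: it lets $T\to\infty$ so the algorithm sees $Q$ exactly, hence (by local goodness applied to perfect data) recovers the bulk clusters $\mathcal{W}_k$ and pins down the permutation; a lower bound for this strengthened algorithm then transfers to the finite-$T$ algorithm. The paper then reads off $\delta=(\alpha_a\wedge\alpha_b)/(\alpha_a+\alpha_b)$ from the prior over the two hypotheses. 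Your route avoids the genie entirely, instead bounding $\probabilityWrt{\Vstar\notin\mathcal{E}_{\mathcal{V}}}{\Psi}+\probabilityWrt{\Vstar\notin\mathcal{E}_{\mathcal{V}'}}{\Psi}$ by the disjointness-modulo-$\gamma^{\mathrm{opt}}$ inequality and targeting $\delta=\tfrac12-\smallO{1}$ — this is cleaner in that it works directly at finite $T$ and is more explicit about why $\gamma^{\mathrm{opt}}$ might fail to be common to $\mathcal{V}$ and $\mathcal{V}'$, a point the paper leaves implicit.

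However, your dichotomy has a gap that should be closed. In the branch where $\expectationWrt{\cardinality{\mathcal{E}}}{\Psi}$ is not $\smallO{n}$, you claim the bound ``already follows from the change of measure alone,'' but this is not immediate: a large $\Psi$-error does not directly produce a lower bound on $\expectationWrt{\cardinality{\mathcal{E}}}{\Phi}$ of the required form, since the density $\e{-L}$ concentrates around $\e{-(T/n)I(\alpha,p)}$ and you still need a strictly positive $\delta=\probabilityWrt{\Vstar\in\mathcal{E}}{\Psi}$ (not just $\expectationWrt{\cardinality{\mathcal{E}}}{\Psi}=\Omega(n)$, because under $\Psi$ the trajectory depends on $\Vstar$ so the two quantities are not simply proportional) to make the Chebyshev step non-vacuous. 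Either make that implication explicit (e.g., by a pigeonhole over the $\Omega(n)$ choices of $\Vstar$) or adopt the paper's genie device, which sidesteps the branch entirely. Also, the phrase ``we let that be $\Phi$'' should be rephrased to say explicitly that you are free to designate whichever of $\mathcal{V},\mathcal{V}'$ achieves the $\max$ as the ground-truth instance used to falsify local goodness — that freedom is real (the theorem is an impossibility claim quantified over admissible partitions) but it deserves a sentence.
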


\section{Analysis of the log-likelihood ratio}
\label{sec:Asymptotic_negligibilit_of_VarLwrtQ}

\refProposition{prop:Leading_order_behavior_of_expectation_of_L_wrt_Q} characterizes the leading term of $\expectationWrt{L}{\Psi}$ when the cluster from which $\Vstar$ originates is fixed. 

\begin{proposition}
\label{prop:Leading_order_behavior_of_expectation_of_L_wrt_Q}
For any given cluster index $a$, and $q \in \mathcal{Q}$, 
$
\expectationWrt{ L | \sigma(\Vstar) = a }{\Psi} 
= (T/n) I_a(q||p) + \smallO{ T/n }
$.
\end{proposition}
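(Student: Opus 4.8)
The plan is to reduce $\expectationWrt{ L \mid \sigma(\Vstar)=a }{\Psi}$ to a single per-transition expectation under the stationary $Q$-chain, and then to expand that expectation block by block.

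\textbf{Step 1 (reduction).} Write $L = \sum_{t=1}^T \ln( Q_{X_{t-1},X_t} / P_{X_{t-1},X_t} )$. Under $\Psi$, conditionally on $\Vstar$, the chain is stationary with kernel $Q$ and initial law $\Pi^{(Q)}$, so all summands have the same expectation and
\begin{equation}
\expectationWrt{ L \mid \Vstar }{\Psi} = T \sum_{x,y \in \mathcal{V}} \Pi^{(Q)}_x Q_{x,y} \ln \frac{ Q_{x,y} }{ P_{x,y} };
\end{equation}
both $P_{x,y}$ and $Q_{x,y}$ vanish exactly when $x=y$, so every ratio is well defined. By the symmetry of the construction the right-hand side does not depend on which state of $\mathcal{V}_a$ is singled out as $\Vstar$, hence it also equals $\expectationWrt{ L \mid \sigma(\Vstar)=a }{\Psi}$. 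It then suffices to prove $S_a \triangleq \sum_{x,y} \Pi^{(Q)}_x Q_{x,y} \ln ( Q_{x,y} / P_{x,y} ) = I_a(q\|p)/n + \smallO{1/n}$, since multiplying by $T$ sends $\smallO{1/n}$ to $\smallO{T/n}$. (If one does not wish to start $\Psi$ from equilibrium, the same leading order follows because each one-step term is $\bigO{1/n}$ except when $X_{t-1}=\Vstar$, while $\probabilityWrt{ X_{t-1}=\Vstar }{\Psi} = \bigO{1/n}$ for every $t$ and the chain mixes in $\bigO{1}$ steps by \refProposition{prop:Mixing_times_of_Markov_chains_with_transition_matrices_P_and_Q}.)

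\textbf{Step 2 (transitions touching $\Vstar$).} Split $S_a$ according to whether $x=\Vstar$, $y=\Vstar$, or $x,y\neq\Vstar$; the term $x=y=\Vstar$ vanishes. For transitions \emph{out of} $\Vstar$, the normalizations cancel exactly — $Q_{\Vstar,y}/P_{\Vstar,y} = q_{0,\sigma(y)}/p_{a,\sigma(y)}$ because $|\mathcal{W}_k| = |\mathcal{V}_k| - \indicator{\sigma(y)=a}$ — so summing over $y$ collapses this block to $\Pi^{(Q)}_{\Vstar}\sum_{k=1}^K q_{0,k}\ln ( q_{0,k}/p_{a,k} )$, and $n\Pi^{(Q)}_{\Vstar}\to\itr{\gamma_{\star}}{1}=\sum_l\pi_l q_{l,0}$ by \refProposition{prop:Equilibrium_behavior_of_piQ}; this yields the first group of terms of \refEquation{eqn:Definition_of_Icqp}. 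For transitions \emph{into} $\Vstar$, $Q_{x,\Vstar}=q_{\sigma(x),0}/n$ and $P_{x,\Vstar}=p_{\sigma(x),a}/(|\mathcal{V}_a|-\indicator{\sigma(x)=a})$, whence $\ln ( Q_{x,\Vstar}/P_{x,\Vstar} ) = \ln ( q_{\sigma(x),0}\alpha_a/p_{\sigma(x),a} ) + \smallO{1}$ since $|\mathcal{V}_a|/n\to\alpha_a$; summing over $x$ with $\sum_{x\in\mathcal{W}_k}\Pi^{(Q)}_x=\itr{\gamma_k}{0}+\smallO{1}=\pi_k+\smallO{1}$ gives $\frac1n\sum_k\pi_k q_{k,0}\ln ( q_{k,0}\alpha_a/p_{k,a} ) + \smallO{1/n}$, the second group of terms.

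\textbf{Step 3 (bulk transitions and the balance equation).} For $x\in\mathcal{W}_k$, $y\in\mathcal{W}_l$, $x\neq y$, combining \refEquation{eqn:Coupling_of_the_parameters_within_q_to_p}, \refEquation{eqn:Definition_of_Qs_entries} and $|\mathcal{W}_l|=|\mathcal{V}_l|-\indicator{l=a}$ gives $Q_{x,y}/P_{x,y} = ( 1 - q_{k,0}/(Kn\,p_{k,l}) )( 1 + \indicator{l=a}/(|\mathcal{V}_a|-\indicator{k=a}-1) )$, whose logarithm is $-q_{k,0}/(Kn\,p_{k,l}) + \indicator{l=a}/(\alpha_a n) + \smallO{1/n}$. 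Since this does not depend on the particular $y$ and $\sum_{y\in\mathcal{W}_l,\,y\neq x}Q_{x,y}=q_{k,l}$, summing over $y$ and $l$ (using $\sum_{l=1}^K 1 = K$ to cancel the $1/K$, and $q_{k,l}=p_{k,l}+\bigO{1/n}$) produces $-q_{k,0}/n + q_{k,a}/(\alpha_a n) + \bigO{1/n^2}$ per $x\in\mathcal{W}_k$; weighting by $\sum_{x\in\mathcal{W}_k}\Pi^{(Q)}_x=\pi_k+\smallO{1}$ and summing over $k$, the bulk block equals $\frac1n( \sum_k\pi_k q_{k,a}/\alpha_a - \sum_k\pi_k q_{k,0} ) + \smallO{1/n}$. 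Now $\sum_k\pi_k q_{k,a} = \sum_k\pi_k p_{k,a} + \bigO{1/n} = \pi_a + \bigO{1/n}$ by $\transpose{\pi}p=\transpose{\pi}$ (\refProposition{prop:Equilibrium_behavior_of_pi}), giving exactly the last bracket $\pi_a/\alpha_a - \sum_k\pi_k q_{k,0}$ of \refEquation{eqn:Definition_of_Icqp}. Adding the three blocks yields $S_a = I_a(q\|p)/n + \smallO{1/n}$, and the $\bigO{1/n^2}$ Taylor remainders become $\bigO{T/n^2}=\smallO{T/n}$ after multiplication by $T$, completing the argument.

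\textbf{Main obstacle.} The delicate step is Step 3: the $\Theta(n^2)$ bulk transitions each carry a \emph{doubly} small log-ratio — one $\bigO{1/n}$ piece from the $-q_{k,\star}/(Kn)$ perturbation of the $q_{k,l}$'s and one from the off-by-one shift $|\mathcal{W}_a|=|\mathcal{V}_a|-1$ — yet their aggregate is $\Theta(1/n)$ and must reproduce $\pi_a/\alpha_a-\sum_k\pi_k q_{k,0}$ exactly, which forces one to retain both perturbation sources, count the $l$-sum precisely, and invoke $\transpose{\pi}p=\transpose{\pi}$. A secondary nuisance is that cluster sizes are controlled only to $\smallO{n}$ and the $Q$-equilibrium masses only to $\smallO{1}$ (resp. $\smallO{1/n}$ for $\Vstar$), so every approximation must be carried at the precision needed to keep the final error $\smallO{T/n}$.
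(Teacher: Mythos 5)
Your proof is correct and follows essentially the same route as the paper's: reduce to a stationary per-transition expectation, split into the three blocks (out of $\Vstar$, into $\Vstar$, bulk), expand each log-ratio to the relevant order, and identify the result with $I_a(q\|p)/n$ via \refProposition{prop:Equilibrium_behavior_of_piQ} and the balance equations. The only differences are cosmetic — you translate $\gamma_k^{(0)} \to \pi_k$ inline and note that the out-of-$\Vstar$ cardinality factor cancels exactly, whereas the paper carries the $\gamma$'s and a spare $\bigO{1/n}$ to the end.
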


With \refProposition{prop:Variance_is_small}, we establish that $\sqrt{ \varianceWrt{L|\sigma(\Vstar)=a}{\Psi} }$ is asymptotically negligible when compared to $\expectationWrt{ L | \sigma(\Vstar) }{\Psi}$ whenever $T = \omega(n)$. The proof relies on relating the covariances \emph{between} the $T$ transitions in the sample path $\{ X_0,X_1, X_2, \ldots, X_T \}$ to the mixing time of the underlying Markov chain. \refProposition{prop:Mixing_times_of_Markov_chains_with_transition_matrices_P_and_Q} then allows us to bound variance. 

\begin{proposition}
\label{prop:Variance_is_small}
For any given cluster index $a$, and $q \in \mathcal{Q}$, if $T = \omega(1)$, then 
$
\varianceWrt{ L | \sigma(\Vstar)=a }{\Psi} 
= \bigO{ T/n }
$.
As a consequence if $T = \omega(n)$, then $\varianceWrt{L | \sigma(\Vstar)=a }{\Psi} = \smallO{ T^2 / n^2 }$.
\end{proposition}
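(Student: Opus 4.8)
The plan is to write $L$ as a sum of per-transition increments, classify each increment by whether it touches the special state $\Vstar$, and then control $\varianceWrt{L}{\Psi}$ by combining two ingredients: under $\Psi$ the chain occupies $\Vstar$ with probability only $\bigO{1/n}$ at any fixed time, and correlations between increments decay geometrically at the rate provided by \refProposition{prop:Mixing_times_of_Markov_chains_with_transition_matrices_P_and_Q}, which (as already noted) holds verbatim for the chain with transition matrix $Q$. First I would observe that, since $\Psi$ is invariant under relabelings of the states inside ${\cal V}_a$, the conditional law of $L$ given $\sigma(\Vstar)=a$ coincides with its law given $\Vstar=v$ for any fixed $v\in{\cal V}_a$, so it suffices to bound the variance with $\Vstar$ fixed and the chain started from $\Pi^{(Q)}$. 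From \refEquation{eqn:Definition_of_log_likelihood_L}--\refEquation{eqn:Probability_of_a_path_when_kernel_P_is_used} we have $L=\sum_{t=1}^{T}g_t$ with $g_t\triangleq f(X_{t-1},X_t)$ and $f(x,y)\triangleq\ln(Q_{x,y}/P_{x,y})$. Comparing \refEquation{eqn:Definition_of_Qs_entries} with \refEquation{eqn:Definition_of_P} and using \refEquation{eqn:Coupling_of_the_parameters_within_q_to_p}, the $\eta$-separability of $p$, and the fact that $K,\alpha,p$ and the perturbation vector $q$ are fixed, I would check that $Q_{x,y}/P_{x,y}=1+\bigO{1/n}$ whenever $x\neq\Vstar$ and $y\neq\Vstar$, hence $f(x,y)=\bigO{1/n}$ there, while $f(x,y)=\bigO{1}$ uniformly when $x=\Vstar$ or $y=\Vstar$. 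Using $\Pi^{(Q)}_{\Vstar}=\bigO{1/n}$ (\refProposition{prop:Equilibrium_behavior_of_piQ}) and $Q_{x,\Vstar}=\bigO{1/n}$, stationarity under $\Psi$ then gives $\expectationWrt{g_t^2}{\Psi}=\bigO{1/n}$ and $\expectationWrt{|g_t|}{\Psi}=\bigO{1/n}$ (split the expectation according to whether $\Vstar\in\{X_{t-1},X_t\}$), so that $\sum_{t=1}^{T}\varianceWrt{g_t}{\Psi}=\bigO{T/n}$.

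Next I would bound the cross-covariances. Fix $s<t$ and set $m=t-1-s\ge0$. Conditioning on $X_s$ and using the Markov property under $Q$, $\expectationWrt{g_t\mid X_s=x}{\Psi}=\sum_z(Q^m)_{x,z}h(z)$ with $h(z)\triangleq\expectationWrt{g_1\mid X_0=z}{\Psi}$; subtracting the stationary mean $\bar h\triangleq\expectationWrt{g_1}{\Psi}=\sum_z\Pi^{(Q)}_z h(z)$ leaves $\sum_z((Q^m)_{x,z}-\Pi^{(Q)}_z)h(z)$. As before $|h(z)|=\bigO{1/n}$ for $z\neq\Vstar$ while $|h(\Vstar)|=\bigO{1}$, so splitting the sum at $z=\Vstar$: the $z\neq\Vstar$ part is $\le\bigO{1/n}\sum_z|(Q^m)_{x,z}-\Pi^{(Q)}_z|\le\bigO{1/n}\cdot 2d^{(Q)}(m)$, and the $z=\Vstar$ part is $\le|h(\Vstar)|\cdot|(Q^m)_{x,\Vstar}-\Pi^{(Q)}_{\Vstar}|\le\bigO{1}\cdot 2d^{(Q)}(m)$, where $d^{(Q)}$ is the total-variation distance to equilibrium of the $Q$-chain. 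Since \refProposition{prop:Mixing_times_of_Markov_chains_with_transition_matrices_P_and_Q} applied to $Q$ gives $d^{(Q)}(m)\le\rho^m$ with $\rho\triangleq 1-1/(2\eta)\in(0,1)$, I get $|\expectationWrt{g_t\mid X_s=x}{\Psi}-\bar h|=\bigO{\rho^m}$ uniformly in $x$; combining this with $\expectationWrt{|g_s|}{\Psi}=\bigO{1/n}$ and the identity $\covarianceWrt{g_s}{g_t}{\Psi}=\expectationBigWrt{g_s\bigl(\expectationWrt{g_t\mid X_s}{\Psi}-\bar h\bigr)}{\Psi}$ yields $|\covarianceWrt{g_s}{g_t}{\Psi}|=\bigO{\rho^{t-s}/n}$.

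Finally, summing, $\varianceWrt{L}{\Psi}=\sum_t\varianceWrt{g_t}{\Psi}+2\sum_{s<t}\covarianceWrt{g_s}{g_t}{\Psi}$, which by the above is $\bigO{T/n}+\bigO{T/n}\sum_{m\ge1}\rho^m=\bigO{T/n}$; this is the first assertion, and since $T=\omega(n)$ forces $(T/n)/(T^2/n^2)=n/T=\smallO{1}$, it also yields $\varianceWrt{L}{\Psi}=\smallO{T^2/n^2}$. The main obstacle is the covariance estimate: the increments touching $\Vstar$ are only $\bigO{1}$, not $\bigO{1/n}$, so an $\ell_\infty$ bound on $h$ (or a crude bound on $g_s$) would give merely $\bigO{T^2/n}$ and is useless; the decisive move is to pair the coarse $\bigO{1}$ bound on the $\Vstar$-coordinate of $h$ with the geometric contraction $d^{(Q)}(m)\le\rho^m$, and then to use that $\expectationWrt{|g_s|}{\Psi}=\bigO{1/n}$ because visiting $\Vstar$ at any given time has probability $\bigO{1/n}$, i.e.\ $\Pi^{(Q)}_{\Vstar}=\bigO{1/n}$ and $Q_{\cdot,\Vstar}=\bigO{1/n}$.
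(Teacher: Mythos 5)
Your proof is correct, and it uses the same two ingredients as the paper's (the chain visits $\Vstar$ only with probability $\bigO{1/n}$, and the $Q$-chain mixes geometrically via Dobrushin's coefficient), but it packages the covariance estimate differently. The paper expands $\covarianceWrt{L_t}{L_s}{\Psi}$ as an explicit four-fold sum $\sum_{x,y,u,v}\Pi^{(Q)}_x Q_{x,y}\bigl(Q^{|t-s|-1}_{y,u}-\Pi^{(Q)}_u\bigr)Q_{u,v}S_{x,y,u,v}$, bounds $\Pi^{(Q)}_xQ_{x,y}Q_{u,v}\le c/n^3$ uniformly, invokes a separate ``filter effect'' lemma asserting $\sum_{x,y,u,v}|S_{x,y,u,v}|=\bigO{n^2}$, and then treats $|t-s|\ge 2$, $|t-s|=1$ and $|t-s|=0$ as three distinct cases. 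You instead factor the covariance via the Markov property as $\covarianceWrt{g_s}{g_t}{\Psi}=\expectationBigWrt{g_s\bigl(\expectationWrt{g_t\mid X_s}{\Psi}-\bar h\bigr)}{\Psi}$, bound $\sup_x\bigl|\expectationWrt{g_t\mid X_s=x}{\Psi}-\bar h\bigr|=\bigO{\rho^{t-1-s}}$ by pairing $d^{(Q)}(m)\le\rho^m$ with the observation that $|h(z)|=\bigO{1/n}$ for $z\neq\Vstar$ and $|h(\Vstar)|=\bigO{1}$, and close with $\expectationWrt{|g_s|}{\Psi}=\bigO{1/n}$. This is a genuine, if modest, simplification: the same $\bigO{\rho^{t-s}/n}$ per-pair bound emerges, but your version handles every $t>s$ (including $t=s+1$, where $m=0$) by one estimate, avoids introducing the auxiliary ``filter effect'' lemma as a separate statement, and makes the structural roles of the two $1/n$-producing mechanisms — one for $\expectationWrt{|g_s|}{\Psi}$, one for $h$ on $\{z\neq\Vstar\}$ — more visible. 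Two small points worth tightening if you wrote this up formally: the covariance identity uses the tower property with $\mathcal{F}_s$ and the Markov property (so you do need $g_s$ to be $\mathcal{F}_s$-measurable and $\expectationWrt{g_t\mid\mathcal{F}_s}{\Psi}=\expectationWrt{g_t\mid X_s}{\Psi}$ for $t>s$), and your exponent should read $\rho^{t-1-s}$ rather than $\rho^{t-s}$ — this is of course irrelevant to the order of the bound.
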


Combining Propositions \ref{prop:Lower_bound_on_the_number_of_misclassifications_for_a_class_of_Q_transition_matrices}, \ref{prop:Leading_order_behavior_of_expectation_of_L_wrt_Q}, and \ref{prop:Variance_is_small}, we obtain \refCorollary{lem:Specification_of_lower_bound_before_bound_optimization}.

\begin{corollary}
\label{lem:Specification_of_lower_bound_before_bound_optimization}
There exists a constant $C>0$ such that for any two different cluster indexes $a$ and $b$, and for any $q\in {\cal Q}(a,b)$, if $T = \omega(n)$, then   
$
\expectationWrt{ \cardinality{\mathcal{E}} }{\Psi} 
\geq C n \exp{} \bigl( - (T/n) I_{a}({q}||p) + \smallO{ T/n } \bigr)
$.
\end{corollary}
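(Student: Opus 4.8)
The plan is to substitute the estimates of Propositions \ref{prop:Leading_order_behavior_of_expectation_of_L_wrt_Q} and \ref{prop:Variance_is_small} into the lower bound \refEquation{eqn:Change_of_measure_lower_bound} supplied by Proposition \ref{prop:Lower_bound_on_the_number_of_misclassifications_for_a_class_of_Q_transition_matrices}, after first removing the conditioning on the cluster $\sigma(\Vstar)$ of the randomly extracted state. Since $\Vstar$ is drawn uniformly from $\mathcal{V}_a \cup \mathcal{V}_b$, the label $\sigma(\Vstar)$ is a two-valued random variable with $\probabilityWrt{ \sigma(\Vstar) = a }{\Psi} = \cardinality{\mathcal{V}_a}/(\cardinality{\mathcal{V}_a}+\cardinality{\mathcal{V}_b})$, which is bounded away from $0$ and $1$ as $n \to \infty$. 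Proposition \ref{prop:Lower_bound_on_the_number_of_misclassifications_for_a_class_of_Q_transition_matrices}(i) furnishes a constant $\delta > 0$, independent of $n$, with $\probabilityWrt{\Vstar \in \mathcal{E}}{\Psi} \geq \delta$, and part (ii) then yields $\expectationWrt{\cardinality{\mathcal{E}}}{\Phi} \geq C n \exp(-\expectationWrt{L}{\Psi} - \sqrt{2/\delta}\sqrt{\varianceWrt{L}{\Psi}})$; the remaining work is to control the two moments of $L$.

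First I would handle the mean. By the tower property, $\expectationWrt{L}{\Psi}$ is a convex combination of $\expectationWrt{L \mid \sigma(\Vstar) = a}{\Psi}$ and $\expectationWrt{L \mid \sigma(\Vstar) = b}{\Psi}$. Proposition \ref{prop:Leading_order_behavior_of_expectation_of_L_wrt_Q} identifies these as $(T/n) I_a(q\|p) + \smallO{T/n}$ and $(T/n) I_b(q\|p) + \smallO{T/n}$ respectively; since $q \in \mathcal{Q}(a,b)$ means precisely $I_a(q\|p) = I_b(q\|p)$, every convex combination is again $(T/n) I_a(q\|p) + \smallO{T/n}$.

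Next the variance, via the law of total variance $\varianceWrt{L}{\Psi} = \expectationWrt{ \varianceWrt{L \mid \sigma(\Vstar)}{\Psi} }{\Psi} + \varianceWrt{ \expectationWrt{L \mid \sigma(\Vstar)}{\Psi} }{\Psi}$. The first summand is $\bigO{T/n}$ by Proposition \ref{prop:Variance_is_small} applied with $\sigma(\Vstar) = a$ and with $\sigma(\Vstar) = b$. The second summand is the variance of a two-valued random variable whose values are $(T/n) I_a(q\|p) + \smallO{T/n}$ and $(T/n) I_b(q\|p) + \smallO{T/n}$; this variance equals $p_a p_b$ times the square of their difference, and because the difference is $\smallO{T/n}$ (again using $I_a(q\|p) = I_b(q\|p)$) it is $\smallO{T^2/n^2}$. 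Hence $\varianceWrt{L}{\Psi} = \bigO{T/n} + \smallO{T^2/n^2}$, which under the hypothesis $T = \omega(n)$ is $\smallO{T^2/n^2}$, so $\sqrt{\varianceWrt{L}{\Psi}} = \smallO{T/n}$. Plugging the mean and this variance bound into \refEquation{eqn:Change_of_measure_lower_bound}, and using that $\delta$ does not depend on $n$, the exponent becomes $-(T/n) I_a(q\|p) + \smallO{T/n}$, which is the assertion of \refCorollary{lem:Specification_of_lower_bound_before_bound_optimization}.

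The only delicate point — the main obstacle — is guaranteeing that the between-cluster term $\varianceWrt{\expectationWrt{L\mid\sigma(\Vstar)}{\Psi}}{\Psi}$ is genuinely $\smallO{T^2/n^2}$ rather than merely $\bigO{T^2/n^2}$; if it were only $\bigO{T^2/n^2}$, the $\sqrt{\varianceWrt{L}{\Psi}}$ correction would be of the same order $T/n$ as the leading term and the bound would degrade to a non-matching constant. This is exactly the reason the construction restricts $q$ to $\mathcal{Q}(a,b)$, so that the two conditional means of $L$ agree to leading order. The rest is routine bookkeeping of $\smallO{\cdot}$ and $\bigO{\cdot}$ terms and does not require any new ingredient beyond the three cited results.
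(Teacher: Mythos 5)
Your proposal is correct and follows essentially the same route as the paper's own proof: decondition the mean via the tower property, apply the law of total variance, and observe that the between-cluster contribution is $\smallO{T^2/n^2}$ precisely because $q\in\mathcal{Q}(a,b)$ forces the two conditional means to agree to leading order. The closing discussion of why the restriction to $\mathcal{Q}(a,b)$ is essential is the right intuition and matches what the paper is implicitly relying on.
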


By varying $a$, $b$, and $q\in  \mathcal{Q}(a,b)$ in \refCorollary{lem:Specification_of_lower_bound_before_bound_optimization}, we conclude that if $T=\omega(n)$:
$
\expectationWrt{ \cardinality{\mathcal{E}} }{\Phi} 
\geq C n \exp{} \big( - (T/n) J(\alpha,p) + \smallO{ T/n } \bigr)
$,
where $J(\alpha,p) \triangleq \min_{a\neq b} \min_{ q \in \mathcal{Q}(a,b) } I_a(q||p)$. 

\section{Connecting $I(\alpha,p)$ and $J(\alpha,p)$}

Finally, to complete the proof of \refTheorem{thm:Information_bound}, we relate $I(\alpha,p)$ to $J(\alpha,p)$ by proving that:
\begin{lemma}
\label{lem:Jalphap_is_bounded_by_Ialphap}
For any BMC, we have $J(\alpha,p) \leq I(\alpha,p)$. 
\end{lemma}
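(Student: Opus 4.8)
The plan is to establish the pointwise statement that for \emph{every} pair of distinct cluster indices $a\neq b$ there exists $q\in\mathcal{Q}(a,b)$ with $I_a(q\|p)\le I_{a,b}(\alpha,p)$; since $J(\alpha,p)=\min_{a\neq b}\min_{q\in\mathcal{Q}(a,b)}I_a(q\|p)$ and $I(\alpha,p)=\min_{a\neq b}I_{a,b}(\alpha,p)$, minimizing over $a\neq b$ then gives the claim. Fix $a\neq b$ and introduce the two ``extreme'' perturbations $q^{a},q^{b}\in\mathcal{Q}$ defined by $q^{a}_{0,k}=p_{a,k}$, $q^{a}_{k,0}=p_{k,a}/\alpha_a$ and $q^{b}_{0,k}=p_{b,k}$, $q^{b}_{k,0}=p_{k,b}/\alpha_b$. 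Using $\transpose{\pi}p=\transpose{\pi}$ (\refProposition{prop:Equilibrium_behavior_of_pi}) one checks directly from \refEquation{eqn:Definition_of_Icqp} that $I_a(q^{a}\|p)=0$ and $I_b(q^{a}\|p)=I_{a,b}(\alpha,p)$, and symmetrically $I_b(q^{b}\|p)=0$ and $I_a(q^{b}\|p)=I_{b,a}(\alpha,p)$. The elementary inequality $x\ln(x/y)\ge x-y$ applied termwise in \refEquation{eqn:Definition_of_Icqp}, together with $\sum_k\pi_k p_{k,c}=\pi_c$, also shows $I_c(q\|p)\ge 0$ for every cluster index $c$ and every $q\in\mathcal{Q}$; in particular $I_{a,b}(\alpha,p)=I_b(q^a\|p)\ge0$ and $I_{b,a}(\alpha,p)\ge0$.

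The crux is a change of variables that makes $q\mapsto I_c(q\|p)$ convex. For $q\in\mathcal{Q}$ put $r_k\triangleq\pi_k q_{k,0}$ and $s_k\triangleq\bigl(\sum_{l=1}^K\pi_l q_{l,0}\bigr)q_{0,k}$ for $k=1,\dots,K$; since the chain is irreducible we have $\pi_k>0$, so this is a bijection from $\mathcal{Q}$ onto the convex set $\mathcal{D}\triangleq\{(r,s)\in(0,\infty)^K\times(0,\infty)^K:\sum_k r_k=\sum_k s_k\}$ (with inverse $q_{k,0}=r_k/\pi_k$, $q_{0,k}=s_k/\sum_l s_l$). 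Writing $\mu\triangleq\sum_k r_k=\sum_k s_k$, the definition \refEquation{eqn:Definition_of_Icqp} rewrites as
\begin{equation}
I_c(q\|p)=\sum_{k=1}^K s_k\ln\frac{s_k}{p_{c,k}}+\sum_{k=1}^K r_k\ln\frac{r_k\alpha_c}{\pi_k p_{k,c}}-\mu\ln\mu+\frac{\pi_c}{\alpha_c}-\mu .
\end{equation}
On $\mathcal{D}$ the first sum is convex in $s$; the term $\sum_k r_k\ln r_k-\mu\ln\mu=\sum_k r_k\ln\bigl(r_k/\sum_l r_l\bigr)$ is convex in $r$, since $(r,t)\mapsto\sum_k r_k\ln(r_k/t)$ is the perspective of the convex function $r\mapsto\sum_k r_k\ln r_k$ and hence jointly convex on $(0,\infty)^K\times(0,\infty)$, while $t=\sum_l r_l$ is affine in $r$ (equivalently, a Cauchy--Schwarz computation shows the Hessian is positive semidefinite); the remaining terms are affine. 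Hence $I_a(\cdot\|p)$ and $I_b(\cdot\|p)$ are convex on $\mathcal{D}$.

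Finally, interpolate. Let $(r^{a},s^{a}),(r^{b},s^{b})\in\mathcal{D}$ be the images of $q^{a},q^{b}$, and set $(r^{\theta},s^{\theta})\triangleq(1-\theta)(r^{a},s^{a})+\theta(r^{b},s^{b})$ for $\theta\in[0,1]$; this segment lies in $\mathcal{D}$ (all coordinates stay strictly positive) and corresponds to a curve $q^{\theta}\in\mathcal{Q}$. Convexity of $I_b(\cdot\|p)$ gives $I_b(q^{\theta}\|p)\le(1-\theta)I_{a,b}(\alpha,p)+\theta\cdot0\le I_{a,b}(\alpha,p)$ for all $\theta\in[0,1]$. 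On the other hand $\theta\mapsto I_a(q^{\theta}\|p)-I_b(q^{\theta}\|p)$ is continuous (the displayed expression is smooth wherever all $r_k,s_k>0$), equals $0-I_{a,b}(\alpha,p)\le0$ at $\theta=0$, and equals $I_{b,a}(\alpha,p)-0\ge0$ at $\theta=1$, so by the intermediate value theorem there is $\theta^{\ast}\in[0,1]$ with $I_a(q^{\theta^{\ast}}\|p)=I_b(q^{\theta^{\ast}}\|p)$, i.e.\ $q^{\theta^{\ast}}\in\mathcal{Q}(a,b)$. For this point $I_a(q^{\theta^{\ast}}\|p)=I_b(q^{\theta^{\ast}}\|p)\le I_{a,b}(\alpha,p)$, which is exactly the pointwise statement. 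The step I expect to be the obstacle is the change of variables: in the original coordinates $I_c(q\|p)$ contains the product $\bigl(\sum_l\pi_l q_{l,0}\bigr)\cdot\KL{q_{0,\cdot}}{p_{c,\cdot}}$ of an affine factor and a convex factor and is \emph{not} jointly convex on $\mathcal{Q}$, so a direct interpolation argument there fails; one has to notice that rescaling $q_{k,0}$ by $\pi_k$ and $q_{0,k}$ by $\sum_l\pi_l q_{l,0}$ redistributes this product into a convex function of $s$ plus (after absorbing $-\mu\ln\mu$) a convex function of $r$. The remaining computations — the identities at $q^{a}$ and $q^{b}$, the rewriting above, and the convexity of $\sum_k r_k\ln(r_k/\sum_l r_l)$ — are routine.
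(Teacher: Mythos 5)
Your proof is correct and takes a genuinely different route from the paper's, so a comparison is worthwhile. Both arguments start from the same extreme points $q^a, q^b$ (the paper's $q_{a^\star}, q_{b^\star}$, with the identities $I_a(q^a\|p)=0$, $I_a(q^b\|p)=I_{a,b}(\alpha,p)$, etc.) and close by intermediate-value reasoning on $I_a - I_b$, but they control the objective along the interpolating path very differently. The paper fixes the minimizing pair $(a^\star,b^\star)$ and constructs a \emph{two-stage} piecewise-linear path — $q^{(1)}(\lambda)$ moving only the $q_{0,\cdot}$-block, then $q^{(2)}(\eta)$ moving only the $q_{\cdot,0}$-block — chosen precisely so that in each stage only one of the additively separate pieces of $I_{a^\star}$ actually varies; each such piece is convex in the single moving block and minimized at the endpoint of its stage, so $I_{a^\star}$ decreases monotonically along the concatenated path, and at the crossing with $I_{b^\star}$ its value is therefore at most $I(\alpha,p)$. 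You instead exhibit a change of variables $(r_k,s_k)=\bigl(\pi_k q_{k,0},\ (\sum_l\pi_l q_{l,0})q_{0,k}\bigr)$ that renders $I_c(\cdot\|p)$ \emph{jointly} convex on the convex domain $\mathcal{D}$ — the perspective-function observation absorbs the non-convex product $(\sum_l\pi_l q_{l,0})\,\mathrm{KL}(q_{0,\cdot}\|p_{c,\cdot})$ into $\sum_k s_k\ln(s_k/p_{c,k})-\mu\ln\mu$ — so that a single straight-line interpolation and the endpoint bound $I_b(q^{\theta}\|p)\le(1-\theta)I_{a,b}(\alpha,p)$ suffice, with no monotonicity argument needed. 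Your route is conceptually tighter (one coordinate change, one segment, no stage-by-stage monotonicity checks) and yields the pointwise bound $\min_{q\in\mathcal{Q}(a,b)}I_a(q\|p)\le I_{a,b}(\alpha,p)$ uniformly over all pairs $(a,b)$, of which the lemma is the minimum; the paper's route is more elementary in that it stays in the original $\mathcal{Q}$-coordinates, avoids the perspective-function fact, and only needs one-dimensional convexity of the blocks that actually move, at the cost of a more bespoke two-piece construction.
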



\chapter{Performance of the Spectral Clustering Algorithm}
\label{sec:The_SVD_clustering_algorithm}

This section is devoted to the proof of \refTheorem{thm:Upper_bound_on_SVDCAs_performance}. The Spectral Clustering Algorithm relies on a spectral decomposition of the trimmed matrix $\hat{N}_\Gamma$, gathering the empirical transition rates between states. The proof of \refTheorem{thm:Upper_bound_on_SVDCAs_performance} hence leverages concentration inequalities for Markov chains, provided in Appendix \refAppendixSection{sec:Concentration_inequalities_for_BMCs}, and a spectral analysis of $\hat{N}$ or $\hat{N}_\Gamma$. 

\section{Spectral analysis}

The main ingredient in the proof of \refTheorem{thm:Upper_bound_on_SVDCAs_performance} is a concentration bound on the spectral norm of the matrix $\hat{N}_\Gamma$ centered around its mean, or more precisely on $\pnorm{ \hat{N}_\Gamma - N }{}$. The tighter such a bound is, the tighter our performance analysis of the Spectral Clustering Algorithm will be. Note that the concentration of the spectral norm holds for the trimmed matrix, i.e., \jaron{a matrix based on $\hat{N}$ in which the rows and columns that correspond to states that are visited too often are set to $0$}.



\begin{proposition}
\label{prop:Spectral_concentration_bound_for_BMCs}
For any \gls{BMC}, $\pnorm{ \hat{N}_\Gamma - N }{} = \bigOPbig{ \sqrt{ \frac{T}{n} \ln{ \frac{T}{n} } } }$.
\end{proposition}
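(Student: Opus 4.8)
The plan is to follow the Feige--Ofek strategy for bounding the spectral norm of a centered random matrix, adapted to the Markovian dependence via the concentration inequalities of \cite{paulin_concentration_2015} together with the $O(1)$ mixing time from \refProposition{prop:Mixing_times_of_Markov_chains_with_transition_matrices_P_and_Q}; throughout I work in the regime $T=\omega(n)$ (under which the proposition is used, and which is also what makes $\ln(T/n)$ meaningful). First I would pin down the centering: since the chain starts from stationarity, $N_{x,y}=\expectationWrt{\hat{N}_{x,y}}{\Phi}=T\Pi_x P_{x,y}$, so every row and column of $N$ sums to $\Theta(T/n)$, $\pnorm{N}{}=\Theta(T/n)$, and $N$ has rank $K$. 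Because the trimming only zeroes out the $m\triangleq\lfloor n\exp(-(T/n)\ln(T/n))\rfloor$ rows and columns corresponding to states outside $\Gamma$, and each such row/column of $N$ has Euclidean norm $\Theta(T/n^{3/2})$, one checks $\pnorm{N-N_\Gamma}{}=\smallO{\sqrt{(T/n)\ln(T/n)}}$ surely (the $\exp(-(T/n)\ln(T/n))$ factor is overkill). Hence it suffices to bound $\pnorm{\hat{N}_\Gamma-N_\Gamma}{}$, which by a $1/4$-net argument over $\mathbb{S}^{n-1}$ reduces, with a constant loss, to showing that with probability $1-o(1)$ one has $|\transpose{u}(\hat{N}_\Gamma-N_\Gamma)v|=\bigOP{\sqrt{(T/n)\ln(T/n)}}$ simultaneously for all $u,v$ in a fixed net $\mathcal{M}$ of size $\le 9^n$.

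Fix $u,v\in\mathcal{M}$ and split $\mathcal{V}\times\mathcal{V}$ into light pairs $\mathcal{L}\triangleq\{(x,y):|u_xv_y|\le\tfrac1n\sqrt{(T/n)\ln(T/n)}\}$ and heavy pairs $\mathcal{H}$, so that $\transpose{u}(\hat{N}_\Gamma-N_\Gamma)v$ equals the light-pair sum $\sum_{(x,y)\in\mathcal{L}\cap\Gamma^2}u_xv_y(\hat{N}_{x,y}-N_{x,y})$, plus the heavy-pair sum $\sum_{(x,y)\in\mathcal{H}\cap\Gamma^2}u_xv_y\hat{N}_{x,y}$, plus the deterministic term $-\sum_{(x,y)\in\mathcal{H}\cap\Gamma^2}u_xv_yN_{x,y}$; the last term is $O(\sqrt{(T/n)\ln(T/n)})$ because $|u_xv_y|\le |u_xv_y|^2 n/\sqrt{(T/n)\ln(T/n)}$ on $\mathcal{H}$, $\max_{x,y}N_{x,y}=\Theta(T/n^2)$, and $\sum_{x,y}u_x^2v_y^2=1$. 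For the light-pair sum I would work with the untrimmed quantity $\sum_{t=0}^{T-1}f(X_t,X_{t+1})$, where $f(x,y)\triangleq u_xv_y\indicator{(x,y)\in\mathcal{L}}$, viewed as an additive functional of the stationary chain $Z_t=(X_t,X_{t+1})$ on $\mathcal{V}^2$; its mean is exactly $\sum_{(x,y)\in\mathcal{L}}u_xv_yN_{x,y}$, $\|f\|_\infty\le\tfrac1n\sqrt{(T/n)\ln(T/n)}$, and $\mathrm{Var}_\pi(f)\le\sum_{(x,y)\in\mathcal{L}}u_x^2v_y^2\Pi_xP_{x,y}=O(1/n^2)$ since $\Pi_x,P_{x,y}=O(1/n)$. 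By \refProposition{prop:Mixing_times_of_Markov_chains_with_transition_matrices_P_and_Q}, $Z_t$ mixes in $O(1)$ steps uniformly in $n$, so its pseudo-spectral gap is bounded below by an absolute constant, and the Bernstein inequality for Markov chains of \cite{paulin_concentration_2015} gives $\mathbb{P}(|\sum_{t<T}f(Z_t)-T\mathbb{E}_\pi f|\ge\tau)\le 2\exp(-c\tau^2/(T\,\mathrm{Var}_\pi(f)+\|f\|_\infty\tau))$. Taking $\tau=C_0\sqrt{(T/n)\ln(T/n)}$, both $\tau^2/(T\,\mathrm{Var}_\pi f)\asymp n\ln(T/n)$ and $\tau/\|f\|_\infty\asymp n$ exceed $2n$ for $C_0$ large (using $T=\omega(n)$), so the failure probability is $\le e^{-2n}$; a union bound over $\mathcal{M}^2$ ($\le 81^n$ pairs) controls all light-pair sums at once.

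It remains to bound the heavy-pair sums, which I would handle deterministically on a high-probability ``regularity event'' $\mathcal{R}$ built from two ingredients: (a) a discrepancy/counting bound stating that for all $\mathcal{A},\mathcal{B}\subseteq\mathcal{V}$ the number $\hat{N}_{\mathcal{A},\mathcal{B}}$ of observed $\mathcal{A}\to\mathcal{B}$ transitions is either $O(\tfrac{T}{n^2}|\mathcal{A}||\mathcal{B}|)$ or satisfies the Feige--Ofek-type bound $\hat{N}_{\mathcal{A},\mathcal{B}}\ln\bigl(\tfrac{\hat{N}_{\mathcal{A},\mathcal{B}}n^2}{T|\mathcal{A}||\mathcal{B}|}\bigr)=O\bigl(\sqrt{(T/n)\ln(T/n)}\max\{|\mathcal{A}|,|\mathcal{B}|\}\bigr)$; and (b) the fact that after removing the $m$ most-visited states every remaining state is visited $O((T/n)\ln(T/n)/\ln\ln(T/n))$ times, so each row and column of $\hat{N}_\Gamma$ sums to at most that much. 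Property (a) holds with probability $1-e^{-\omega(n)}$ by applying the Markov Bernstein inequality to each of the $\le 4^n$ pairs $(\mathcal{A},\mathcal{B})$ (the log-correction absorbing the atypical pairs), and (b) holds because the expected number of states with more than $\kappa T/n$ visits is $\le n e^{-(T/n)g(\kappa)}$ with $g(\kappa)\to\infty$, so for $\kappa\asymp\ln(T/n)/\ln\ln(T/n)$ this is $\le m$ and the top-$m$ trimming removes all of them with probability $1-o(1)$. Granting $\mathcal{R}$, the standard Feige--Ofek summation by parts over dyadic ranges of $|u_x|$ and $|v_y|$, fed by the bounded row/column sums from (b) and the counting bound from (a), bounds each heavy-pair sum by $O(\sqrt{(T/n)\ln(T/n)})$ uniformly over $\mathcal{M}$. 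Combining the light and heavy bounds with the net-to-sphere transfer and $\pnorm{N-N_\Gamma}{}=\smallO{\sqrt{(T/n)\ln(T/n)}}$ yields $\pnorm{\hat{N}_\Gamma-N}{}=\bigOPbig{\sqrt{(T/n)\ln(T/n)}}$.

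The hard part will be the Markovian dependence between the entries of $\hat{N}$: the summands $\indicator{X_t=x,X_{t+1}=y}$ are not independent, so neither matrix Bernstein for independent sums nor the classical Feige--Ofek proof applies verbatim. The resolution is to recast every relevant quantity as an additive functional of the lifted chain $(X_t,X_{t+1})$ and to exploit that \refProposition{prop:Mixing_times_of_Markov_chains_with_transition_matrices_P_and_Q} makes the mixing time $O(1)$ uniformly in $n$, so that Paulin's inequality delivers essentially i.i.d.-strength tails with effective sample size $\Theta(T)$ — this is precisely what makes the exponentially many union bounds (over the $9^n$-point net and the $4^n$ subset pairs) affordable. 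A secondary difficulty is that $\Gamma$ is itself path-dependent, so $\hat{N}_\Gamma$ cannot be concentrated directly; this is sidestepped by proving the light-pair concentration for the untrimmed matrix (the trimmed-off light cells being absorbed harmlessly into the heavy analysis) and by controlling the heavy part purely through deterministic consequences of $\mathcal{R}$. A final bookkeeping point is to verify that the logarithmic factor in the trimming level $\exp(-(T/n)\ln(T/n))$ is exactly what is needed in step (b) so that the heavy-pair bound comes out at the claimed order $\sqrt{(T/n)\ln(T/n)}$ and not larger.
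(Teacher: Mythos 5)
Your overall plan follows the Feige--Ofek template that the paper also uses (net argument, light/heavy split, discrepancy property for the heavy part), and your treatment of the light couples via Paulin's Markov--Bernstein inequality applied to the lifted chain $(X_t,X_{t+1})$ is a genuinely cleaner alternative to the paper's hands-on MGF folding: with $\|f\|_\infty\le\frac1n\sqrt{(T/n)\ln(T/n)}$, $V_f=O(1/n^2)$ and $\gamma_{\mathrm{ps}}=\Omega(1)$, the exponent is $\Theta(n)$ for $C_0$ large, which beats the $81^n$ net-pair count. But there is a real gap in your treatment of the discrepancy property.

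The issue is the type of tail you need. For the Feige--Ofek counting bound, condition (ii) is an upper tail for $\hat N_{\mathcal{A},\mathcal{B}}$ with the threshold $\eta$ defined implicitly by $\eta\ln\bigl(\eta n^2/(|\mathcal{A}||\mathcal{B}|T)\bigr)=c_3\max\{|\mathcal{A}|,|\mathcal{B}|\}\ln(n/\max\{|\mathcal{A}|,|\mathcal{B}|\})$; to survive the union bound over $\binom{n}{a}\binom{n}{b}$ pairs you need failure probability $\exp\bigl(-\Omega(b\ln(n/b))\bigr)$. That requires a Poisson-type tail $\exp\bigl(-\eta\ln(\eta/\mu)\bigr)$, because on the branch where (i) fails we have $\ln(\eta/\mu)=\omega(1)$ and hence $\eta=o\bigl(b\ln(n/b)\bigr)$. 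Paulin's inequality, applied to $f(\tilde X_t)=\indicator{X_{t-1}\in\mathcal{A},X_t\in\mathcal{B}}$ with $C=1$, gives at best $\exp\bigl(-c\,\eta\bigr)$ in the sub-exponential regime; with $\eta=o(b\ln(n/b))$ this is \emph{not} enough. Concretely, for singleton $\mathcal{A},\mathcal{B}$ at $T=n\ln n$ you have $\mu=\Theta(\ln n/n)$, the critical $\eta$ is $O(1)$, and you need probability $\le n^{-(3+o(1))}$; a Bernstein bound with $C=1$ gives only $\exp(-O(1))$. So ``applying the Markov Bernstein inequality to each of the $\le 4^n$ pairs'' does not deliver the discrepancy property, and ``the log-correction absorbing the atypical pairs'' is precisely where the argument breaks.

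The paper's fix is the even/odd decomposition $\hat N=\hat N^{(\mathrm{even})}+\hat N^{(\mathrm{odd})}$, which makes the conditional expectations $\bar N(2t)=\expectationWrt{\hat N(2t)}{\mathcal{F}_{2t-2}}=\mathrm{diag}(P_{X_{2t-1},\cdot})P$ uniformly $O(1/n^2)$ entry-wise. This is what lets the Chernoff/MGF folding in the proof of Lemma~\ref{lem:Discrepancy_property_holds_whp} produce the Poisson tail $\exp\bigl(-\eta(\ln(2n^2\eta/(abp_{\max}T))-1)\bigr)$ needed for (ii). Conditioning only one step back would give $\expectation{\indicator{X_t\in\mathcal{B}}\mid\mathcal{F}_{t-1}}\le bp_{\max}/n$ without the companion factor $a/n$, and the resulting MGF bound $\exp(O(Tb e^h/n))$ is too weak. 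Your proposal never introduces this even/odd split, and without it (or some substitute delivering a Chernoff-strength upper tail for $\hat N_{\mathcal{A},\mathcal{B}}$) the heavy-pair regularity cannot be established. As a smaller point, the bound you wrote in branch (ii) of your ``(a)'' has $\sqrt{(T/n)\ln(T/n)}$ where it should have $\ln\bigl(n/\max\{|\mathcal{A}|,|\mathcal{B}|\}\bigr)$ --- the $\sqrt{(T/n)\ln(T/n)}$ is the final heavy-part bound $\sqrt{d}$ coming out of Keshavan--Montanari--Oh with $d=O((T/n)\ln(T/n))$, not a term that appears inside the discrepancy condition itself.
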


The above concentration result, proved in \refAppendixSection{sec:Spectral_analysis_for_BMCs}, is sufficiently tight for the purposes of this paper, but can be improved up to logarithmic terms. The primary challenge one encounters in establishing this bound is that $\hat{N}$ is a random matrix with stochastically \emph{dependent} entries, as explained in the introduction. The concentration of the entire spectrum of $\hat{N}_\Gamma$ would be an intriguing topic for future study.

\section{Proof of \refTheorem{thm:Upper_bound_on_SVDCAs_performance}}

Throughout the proof, we use the following notation: \jaron{$N_{x,y} \triangleq \expectationWrt{ \hat{N}_{x,y} }{\Phi}= T \Pi_x P_{x,y}$} for $x,y \in \mathcal{V}$, $\hat{R}^{\star} = [\hat{R} , \hat{R}^\top]$, $N^\star = [N, N^\top]$, and $\hat{N}^\star = [\hat{N}_\Gamma,\hat{N}^\top_\Gamma]$. We further introduce the quantity 
\begin{equation}
D(\alpha,p) 
\triangleq \min_{a,b: a\neq b} \sum_{k=1}^K \Bigl( \bigl( \frac{ \pi_{a} p_{a,k} }{\alpha_k \alpha_{a} } - \frac{ \pi_{b} p_{b,k} }{\alpha_k \alpha_{b} } \bigr)^2 + \bigl( \frac{ \pi_{k} p_{k,a} }{\alpha_k \alpha_{a} } - \frac{ \pi_{k} p_{k,b} }{\alpha_k \alpha_{b} } \bigl)^2 \Bigr).
\end{equation}
Note that $D(\alpha,p) = 0$ if and only if there exist $a, b$ such that 
(C1) $p_{a,k} = p_{b,k}$ for all $k$,
(C2) $p_{k,a} / \alpha_a = p_{k,b} / \alpha_{b}$ for all $k$, and
(C3) $\pi_{a} / \alpha_a = \pi_{b} / \alpha_{b}$.
Under (C1)--(C3), $I(\alpha, p) = 0$. Thus, $D(\alpha,p) > 0$ when $I(\alpha, p) >0$.

The proof of \refTheorem{thm:Upper_bound_on_SVDCAs_performance} consists of four steps. 

\begin{itemize}
\item[Step 1.] We show that $N^\star$ satisfies a \emph{separability property}: i.e., if two states $x, y \in \mathcal{V}$ do not belong to the same cluster, the $l_2$-distance between their respective rows ${N}^\star_{x,\cdot}$, ${N}^\star_{y,\cdot}$ is at least $\Omega( T \sqrt{D(\alpha,p)} / n^{3/2} )$. 
\item[Step 2.] We upper bound the error $\pnorm{ \hat{R}^\star - N^\star }{\mathrm{F}}$ using $\pnorm{ \hat{N}_\Gamma - N }{}$.
\item[Step 3.] We prove that if $\pnorm{ \hat{N}_\Gamma - N }{}$ is small enough as $n$ grows large, then for any state $x$ misclassified under the Spectral Clustering Algorithm, $\pnorm{ {\hat{R}}^\star_{x,\cdot} - {N}^\star_{x,\cdot} }{2}$ is $\OmegaP{ { T \sqrt{D(\alpha,p)} } / { n^{3/2} } }$. In other words, ${\hat{R}}^\star$ inherites the separability property.
\item[Step 4.] Proposition \ref{prop:Spectral_concentration_bound_for_BMCs} ensures that the statement derived in Step 3 holds. From there and using the result of Step 2, we conclude that the number of misclassified states satisfies \refTheorem{thm:Upper_bound_on_SVDCAs_performance}. 
\end{itemize}

\noindent
Step 1. This is formalized in \refLemma{lem:N_vector_error_scales_minimally_with_this_bound_if_x_y_differ_clusters}, and proven in \refSupplementaryMaterial{suppl:Proof__Separability_property_of_Nhat}. It is a consequence of the block structure of matrix $N$.

\begin{lemma}
\label{lem:N_vector_error_scales_minimally_with_this_bound_if_x_y_differ_clusters}
For $x,y \in \mathcal{V}$ s.t.\ $\sigma(x) \neq \sigma(y)$,
$
\pnorm{ {N}^\star_{x,\cdot} - {N}^\star_{y,\cdot} }{2}
= \Omega \Bigl( \frac{ T \sqrt{D(\alpha,p)} }{ n^{3/2} } \Bigr).
$
\end{lemma}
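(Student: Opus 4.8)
The plan is to evaluate the rows of $N^\star$ using the identity $N_{x,y} = T \Pi_x P_{x,y}$ together with the block structure of $P$, and then to read off a lower bound proportional to $D(\alpha,p)$. Write $c(i,j) \triangleq \pi_i p_{i,j} / (\alpha_i \alpha_j)$ for $i,j \in \{1,\dots,K\}$. Since $\Pi_x = \bar{\Pi}_{\sigma(x)}$ with $\bar{\Pi}_k = (\pi_k/(n\alpha_k))(1+\smallO{1})$, since $\cardinality{\mathcal{V}_k} = n\alpha_k(1+\smallO{1})$, and since all $p_{k,l}$ are bounded away from $0$ and $\infty$ by the separability assumption, the definition of $P$ gives, for every $x \neq z$,
\[
N_{x,z} = \frac{T}{n^2}\bigl( c(\sigma(x),\sigma(z)) + \smallO{1} \bigr),
\]
where the $\smallO{1}$ is uniform in $x,z$ (the $\pm 1$ corrections in the denominators and the exclusion $x=z$ contribute only relative errors of order $1/n$, and there are finitely many clusters). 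Note that this value is the same for every $z \in \mathcal{V}_k \setminus \{x\}$, and symmetrically $N_{z,x} = (T/n^2)(c(\sigma(z),\sigma(x)) + \smallO{1})$ for every $z \in \mathcal{V}_k \setminus \{x\}$.

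Since $N^\star_{x,\cdot}$ is the concatenation of the $x$-th row and the $x$-th column of $N$, we have $\pnorm{ N^\star_{x,\cdot} - N^\star_{y,\cdot} }{2}^2 = \pnorm{ N_{x,\cdot} - N_{y,\cdot} }{2}^2 + \pnorm{ N_{\cdot,x} - N_{\cdot,y} }{2}^2$, so it suffices to bound the two pieces separately. Fix $x,y$ with $\sigma(x) = a \neq b = \sigma(y)$. Restricting the first sum to indices $z \notin \{x,y\}$ (which only decreases it) and grouping by $\sigma(z) = k$, the previous display yields
\[
\pnorm{ N_{x,\cdot} - N_{y,\cdot} }{2}^2 \geq \sum_{k=1}^K \bigl( \cardinality{\mathcal{V}_k} - 2 \bigr)\frac{T^2}{n^4}\bigl( c(a,k) - c(b,k) + \smallO{1} \bigr)^2 = \frac{T^2}{n^3}\Bigl( \sum_{k=1}^K \alpha_k \bigl( c(a,k) - c(b,k) \bigr)^2 + \smallO{1} \Bigr),
\]
using $\cardinality{\mathcal{V}_k} - 2 = n\alpha_k(1+\smallO{1})$ and boundedness of the $c(\cdot,\cdot)$, so that $(c(a,k)-c(b,k)+\smallO{1})^2 = (c(a,k)-c(b,k))^2 + \smallO{1}$. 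The column term is bounded analogously by $(T^2/n^3)(\sum_k \alpha_k (c(k,a)-c(k,b))^2 + \smallO{1})$. Adding the two, using $\alpha_k \geq \alpha_{\min}$, and recognising that $\sum_k [ (c(a,k)-c(b,k))^2 + (c(k,a)-c(k,b))^2 ] \geq D(\alpha,p)$ by definition of $D$, we get
\[
\pnorm{ N^\star_{x,\cdot} - N^\star_{y,\cdot} }{2}^2 \geq \frac{T^2}{n^3}\bigl( \alpha_{\min} D(\alpha,p) + \smallO{1} \bigr).
\]

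If $D(\alpha,p)=0$ the claimed bound is vacuous; otherwise, for $n$ large the right-hand side is at least $\tfrac12 \alpha_{\min} D(\alpha,p) T^2 / n^3$, and taking square roots gives $\pnorm{ N^\star_{x,\cdot} - N^\star_{y,\cdot} }{2} \geq \sqrt{\alpha_{\min}/2}\, T \sqrt{D(\alpha,p)} / n^{3/2}$, uniformly over all pairs $x,y$ with $\sigma(x)\neq\sigma(y)$ since $K$ is fixed and hence all $\smallO{1}$ terms are uniform over the finitely many cluster pairs. The only delicate point is that when $c(a,k)$ and $c(b,k)$ are nearly equal the $\smallO{1}$ correction in $N_{x,z}-N_{y,z}$ can dominate, or even flip the sign of, that individual summand; this is harmless because we only ever lower bound the \emph{sum} over $k$ of the squared differences, and $D(\alpha,p)>0$ guarantees at least one coordinate contributes a positive amount in the limit. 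The remaining point to verify carefully is that the entrywise asymptotics of the first display hold with a \emph{single} $\smallO{1}$ uniform in $x,z$, which follows from $K$ being fixed together with the separability assumption keeping all transition parameters bounded away from $0$ and $\infty$.
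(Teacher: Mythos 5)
Your proof is correct and follows essentially the same route as the paper's: evaluate $N$ entrywise via $N_{x,z}=T\Pi_x P_{x,z}$, exploit the block structure to reduce each row/column difference to $K$ distinct values, and sum over the $|\mathcal{V}_k|\approx n\alpha_k$ states in each block. One small but real difference: you correctly carry the factor $\alpha_k$ that arises from the number of states per block, obtaining $\pnorm{N^\star_{x,\cdot}-N^\star_{y,\cdot}}{2}^2 \sim (T^2/n^3)\sum_k\alpha_k[\cdots]$ and then lower-bounding $\alpha_k\ge\alpha_{\min}$, whereas the paper's displayed asymptotic drops the $\alpha_k$ factor and writes $\sim(T^2/n^3)\sum_k[\cdots]\ge(T^2/n^3)D(\alpha,p)$; since $\alpha_{\min}>0$ and the lemma only asserts an $\Omega$ bound, both yield the stated conclusion, but your accounting is the tighter one.
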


\noindent
Step 2. \refLemma{lem:Concentration_between_Rhat_and_P} shows that the error $\pnorm{\hat{R}^\star - N^\star}{\mathrm{F}}$ is asymptotically bounded by $\pnorm{\hat{N}_\Gamma-N}{}$, and is proven in \refSupplementaryMaterial{suppl:Proof__Concentration_bound_relating_Rhat_and_P}. The proof relies on a powerful bound relating decompositions of random matrices and their spectra \cite{halko_finding_2011}.

\begin{lemma}
\label{lem:Concentration_between_Rhat_and_P}
$\pnorm{ \hat{R}^\star - N^\star }{\mathrm{F}} \leq \sqrt{16K} \pnorm{\hat{N}_\Gamma-N}{}$.
\end{lemma}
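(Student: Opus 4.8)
The plan is to exploit three elementary facts: $\hat R$ is the best rank-$K$ approximation of $\hat N_\Gamma$ in both the spectral and the Frobenius norm; the mean matrix $N$ is, up to a negligible correction, of rank at most $K$; and a matrix of rank at most $2K$ has Frobenius norm at most $\sqrt{2K}$ times its spectral norm. The whole argument is deterministic, and the only quantitative ingredient is an Eckart--Young/Weyl pair, which is exactly what the bound of \cite{halko_finding_2011} packages in the randomized low-rank approximation setting; the scale $\sqrt{(T/n)\ln(T/n)}$ from \refProposition{prop:Spectral_concentration_bound_for_BMCs} only enters later (in Steps~3--4), not in this lemma.

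First I would record the low-rank structure of $N$. Since $N_{x,y}=T\Pi_x P_{x,y}$ depends on $x,y$ only through $\sigma(x),\sigma(y)$, except that $N_{x,x}=0$ because $P_{x,x}=0$, we may write $N = N_0 - \mathrm{diag}(N_0)$ where $N_0$ is block-constant and hence $\mathrm{rank}(N_0)\le K$, while $\pnorm{\mathrm{diag}(N_0)}{\mathrm F}=O(T/n^{3/2})$ using $\Pi_x=\Theta(1/n)$ and $\cardinality{\mathcal V_k}=\Theta(n)$. I would run the argument with $N_0$ and add this correction back at the end (equivalently, keep $\sigma_{K+1}(N)=O(T/n^2)$ as a lower-order error term). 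Next, for any matrix $B$ with $\mathrm{rank}(B)\le K$, Eckart--Young--Mirsky gives $\pnorm{\hat N_\Gamma-\hat R}{}=\sigma_{K+1}(\hat N_\Gamma)$, and Weyl's inequality gives $\sigma_{K+1}(\hat N_\Gamma)\le\sigma_{K+1}(B)+\pnorm{\hat N_\Gamma-B}{}=\pnorm{\hat N_\Gamma-B}{}$; applying this with $B=N_0$ and the triangle inequality yields $\pnorm{\hat R-N_0}{}\le\pnorm{\hat R-\hat N_\Gamma}{}+\pnorm{\hat N_\Gamma-N_0}{}\le 2\pnorm{\hat N_\Gamma-N_0}{}$. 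Since $\hat R$ and $N_0$ both have rank at most $K$, their difference has rank at most $2K$, so $\pnorm{\hat R-N_0}{\mathrm F}\le\sqrt{2K}\,\pnorm{\hat R-N_0}{}\le 2\sqrt{2K}\,\pnorm{\hat N_\Gamma-N_0}{}$. Finally, because $\pnorm{\hat R^\star-N^\star}{\mathrm F}^2=\pnorm{\hat R-N}{\mathrm F}^2+\pnorm{(\hat R-N)^{\mathrm T}}{\mathrm F}^2=2\pnorm{\hat R-N}{\mathrm F}^2$, collecting constants ($\sqrt 2\cdot 2\sqrt{2K}=4\sqrt K=\sqrt{16K}$) and substituting $N_0$ back by $N$ gives the stated inequality.

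The hard part — and essentially the only non-routine point — is the low-rank bookkeeping around $N$: it is not literally of rank $K$, so one must split off the zero diagonal as above and verify it is asymptotically negligible relative to $\pnorm{\hat N_\Gamma-N}{}$, or invoke the form of the \cite{halko_finding_2011} bound that tolerates an ``almost low rank'' target and absorb $\sigma_{K+1}(N)$ into the error. Everything else (Eckart--Young, Weyl, the rank-to-Frobenius conversion, and the block-concatenation identity) is standard, so I expect no further difficulty.
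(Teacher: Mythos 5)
Your argument follows the paper's step for step: Eckart--Young--Mirsky gives $\pnorm{\hat N_\Gamma - \hat R}{} = \sigma_{K+1}(\hat N_\Gamma)$, Weyl's inequality gives $\sigma_{K+1}(\hat N_\Gamma) \le \pnorm{\hat N_\Gamma - B}{}$ for any $B$ of rank at most $K$, then the triangle inequality, the rank-to-Frobenius conversion $\pnorm{C}{\mathrm F} \le \sqrt{2K}\,\pnorm{C}{}$ for $\mathrm{rank}(C)\le 2K$, and the doubling identity $\pnorm{\hat R^\star - N^\star}{\mathrm F}^2 = 2\pnorm{\hat R - N}{\mathrm F}^2$; the allusion to \cite{halko_finding_2011} is equally decorative in both versions. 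You are also right to question $\mathrm{rank}(N)$. Because $P_{x,x}=0$ forces $N_{x,x}=0$, the matrix $N$ is a block-constant rank-$\le K$ matrix $N_0$ minus a diagonal, and generically has rank close to $n$ rather than $K$. The paper's appendix proof simply asserts ``both $\hat R$ and $N$ are of rank $K$'' and leans on this twice (once for $\mathrm{rank}(\hat R - N)\le 2K$, once to discard $\sigma_{K+1}(N)$ in the Weyl step), so read literally the paper carries exactly the lacuna you notice. Your decomposition $N = N_0 - \mathrm{diag}(N_0)$ is the right repair.

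What you do not actually carry through is the final ``substituting $N_0$ back by $N$''. Running your chain with $N_0$ yields $\pnorm{\hat R^\star - N_0^\star}{\mathrm F} \le \sqrt{16K}\,\pnorm{\hat N_\Gamma - N_0}{}$, and swapping $N_0$ for $N$ on both sides by the triangle inequality produces
\[
\pnorm{\hat R^\star - N^\star}{\mathrm F}
\le \sqrt{16K}\,\pnorm{\hat N_\Gamma - N}{}
+ \sqrt{16K}\,\pnorm{\mathrm{diag}(N_0)}{}
+ \sqrt{2}\,\pnorm{\mathrm{diag}(N_0)}{\mathrm F},
\]
an additive error of order $T/n^{3/2}$ that no deterministic argument removes. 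So the lemma as stated, with the bare constant $\sqrt{16K}$ and no error term, is slightly overclaimed --- in your proposal and in the paper alike. The fix is harmless: carry the additive $O(T/n^{3/2})$ term into Step~4 of the proof of \refTheorem{thm:Upper_bound_on_SVDCAs_performance}, where it contributes at most $O_{\mathbb{P}}(1/n)$ to $\cardinality{\mathcal{E}}/n$ and is absorbed. You diagnosed the issue correctly; just do not let ``substitute back'' hide it, since that phrase is doing exactly the work the paper's implicit ``$\mathrm{rank}(N)\le K$'' was doing.
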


\noindent
Step 3. The result obtained in this step is stated in \refLemma{lem:Misclassification_separation_in_Rhat_and_P}. Its proof, presented  in \refSupplementaryMaterial{suppl:Proof__Separability_property_of_Rhat}, requires a bound on the spectral concentration rate of $\hat{N}_\Gamma$'s noise matrix, relies on Lemmas~\ref{lem:N_vector_error_scales_minimally_with_this_bound_if_x_y_differ_clusters}, \ref{lem:Concentration_between_Rhat_and_P}, and exploits the design of the $K$-means algorithm used after the spectral decomposition.  

\begin{lemma}
\label{lem:Misclassification_separation_in_Rhat_and_P}
If $\pnorm{ \hat{N}_\Gamma - N }{} = \smallOP{ f_n }$ for some sequence $f_n = \smallO{ T/n }$ and there exists a sequence $h_n$ such that $\omega\bigl( f_n / \sqrt{n} \bigr) = h_n = \smallO{ T \sqrt{D(\alpha,p)} / n^{3/2} }$, then
\begin{equation}
\pnorm{ {\hat{R}}^\star_{x,\cdot} - {N}^\star_{x,\cdot} }{2} 
= \OmegaPbig{ \frac{ T D^{1/2}(\alpha,p) }{ n^{3/2} } }
\quad
\textrm{for any misclassified state}
\quad
x \in \mathcal{E}.
\end{equation}
\end{lemma}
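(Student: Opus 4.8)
The idea is to transfer the inter-cluster separation of \refLemma{lem:N_vector_error_scales_minimally_with_this_bound_if_x_y_differ_clusters} from $N^\star$ to $\hat{R}^\star$, isolate a large ``core'' of well-approximated states on which the clustering rule \refEquation{eqn:Definition_of_neighborhoods}--\refEquation{eqn:SVD_remainders} is exact, and then argue by a triangle inequality that a misclassified state is forced to sit essentially midway between two cluster representatives. First, by \refLemma{lem:Concentration_between_Rhat_and_P} and the hypothesis $\pnorm{\hat{N}_\Gamma-N}{}=\smallOP{f_n}$ we get $\pnorm{\hat{R}^\star-N^\star}{\mathrm{F}}=\smallOP{f_n}$. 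Set $\mathcal{G}\triangleq\{x\in\mathcal{V}:\pnorm{\hat{R}^\star_{x,\cdot}-N^\star_{x,\cdot}}{2}\le h_n\}$. Since $|\mathcal{G}^{\mathrm{c}}|\,h_n^2\le\pnorm{\hat{R}^\star-N^\star}{\mathrm{F}}^2=\smallOP{f_n^2}$ and $h_n=\omega(f_n/\sqrt{n})$, we obtain $|\mathcal{G}^{\mathrm{c}}|=\smallOPbig{f_n^2/h_n^2}=\smallOP{n}$, so every cluster still contributes $\OmegaPbig{n}$ states to $\mathcal{G}$.

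Second, I record the geometry of $N^\star$. Two rows $N^\star_{x,\cdot},N^\star_{y,\cdot}$ with $\sigma(x)=\sigma(y)=k$ differ only in the $\bigO{1}$ coordinates involving $x$ or $y$, each of size $\bigO{T/n^2}$, so all rows of cluster $k$ lie within $\bigO{T/n^2}$ of a common vector $\mu_k^\star$, while \refLemma{lem:N_vector_error_scales_minimally_with_this_bound_if_x_y_differ_clusters} gives $\pnorm{\mu_a^\star-\mu_b^\star}{2}=\Omega\bigl(TD^{1/2}(\alpha,p)/n^{3/2}\bigr)$ for $a\neq b$. Because $h_n$, $T/n^2$, and the neighbourhood radius in \refEquation{eqn:Definition_of_neighborhoods} are all $\smallObig{TD^{1/2}(\alpha,p)/n^{3/2}}$, the images $\{\hat{R}^\star_{x,\cdot}\}_{x\in\mathcal{G}}$ decompose into $K$ groups of diameter $\smallObig{TD^{1/2}/n^{3/2}}$ around $\mu_1^\star,\ldots,\mu_K^\star$, with pairwise gaps $\Omega\bigl(TD^{1/2}/n^{3/2}\bigr)$.

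Third, I feed this configuration into the clustering rule. Using the gap together with $|\mathcal{G}\cap\mathcal{V}_k|=\OmegaPbig{n}$, one shows that the greedily selected centers $z_1^\ast,\ldots,z_K^\ast$ of \refEquation{eqn:SVD_centers} land in $K$ distinct clusters, each with $\pnorm{\hat{R}^\star_{z_k^\ast,\cdot}-\mu_{\sigma(z_k^\ast)}^\star}{2}=\smallObig{TD^{1/2}/n^{3/2}}$: after $l<K$ centers have been placed, the largest remaining neighbourhood must surround the bulk of a not-yet-represented cluster, since every cross-cluster pair lies farther apart than the neighbourhood radius. Consequently $\hat{\mathcal{V}}_1,\ldots,\hat{\mathcal{V}}_K$ is in bijection with $\mathcal{V}_1,\ldots,\mathcal{V}_K$ through the clusters of the centers, every $x\in\mathcal{G}$ is classified correctly, and (any other relabelling would misplace $\OmegaPbig{n}$ states) this bijection realizes the optimal permutation defining $\mathcal{E}$.

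Finally, fix a misclassified $x\in\mathcal{E}$, say $\sigma(x)=a$ while $x$ is placed in $\hat{\mathcal{V}}_b$ whose center $z_b^\ast$ belongs to cluster $b\neq a$. Whether $x$ entered $\hat{\mathcal{V}}_b$ in the center phase (so $x\in\mathcal{N}_{z_b^\ast}$) or in the remainder phase (so $\pnorm{\hat{R}^\star_{x,\cdot}-\hat{R}^\star_{z_b^\ast,\cdot}}{2}\le\pnorm{\hat{R}^\star_{x,\cdot}-\hat{R}^\star_{z_a^\ast,\cdot}}{2}$), in both cases $\hat{R}^\star_{x,\cdot}$ is at least as close to $\mu_b^\star$ as to $\mu_a^\star$ up to an additive $\smallObig{TD^{1/2}/n^{3/2}}$ from the center-row approximations and the neighbourhood radius; since $\pnorm{\mu_a^\star-\mu_b^\star}{2}=\Omega\bigl(TD^{1/2}/n^{3/2}\bigr)$, the triangle inequality gives $\pnorm{\hat{R}^\star_{x,\cdot}-\mu_a^\star}{2}\ge\tfrac12\pnorm{\mu_a^\star-\mu_b^\star}{2}-\smallObig{TD^{1/2}/n^{3/2}}=\Omega\bigl(TD^{1/2}/n^{3/2}\bigr)$, hence $\pnorm{\hat{R}^\star_{x,\cdot}-N^\star_{x,\cdot}}{2}\ge\pnorm{\hat{R}^\star_{x,\cdot}-\mu_a^\star}{2}-\bigO{T/n^2}=\OmegaPbig{TD^{1/2}(\alpha,p)/n^{3/2}}$, the claim. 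The main obstacle is the third step: controlling the greedy center-selection rule \refEquation{eqn:SVD_centers} on the perturbed configuration — in particular excluding two centers from the same cluster — and reconciling the three scales in play (the neighbourhood radius, the threshold $h_n$, and the inter-cluster gap), which is tightest in the critical regime $T\asymp n\ln n$; the other steps are routine given Lemmas \ref{lem:N_vector_error_scales_minimally_with_this_bound_if_x_y_differ_clusters} and \ref{lem:Concentration_between_Rhat_and_P}.
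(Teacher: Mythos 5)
Your proposal is correct and follows essentially the same route as the paper: use the Frobenius bound from \refLemma{lem:Concentration_between_Rhat_and_P} to show that all but $\smallOP{n}$ states have $\hat{R}^\star$-rows within $h_n$ of the (cluster-averaged) truth, deduce that the greedy rule \refEquation{eqn:SVD_centers} places exactly one center per cluster within $\smallO{TD^{1/2}/n^{3/2}}$ of its cluster representative, and finish with a triangle-inequality case split on how a misclassified state was assigned (inside some $\mathcal{N}_{z_c^\ast}$ vs.\ the remainder step). Your set $\mathcal{G}$ plays the role of the paper's union of cores $\cup_k\mathcal{C}_k$, and your final case analysis mirrors the paper's Cases 1 and 2.
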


\noindent
Step 4. In view of \refProposition{prop:Spectral_concentration_bound_for_BMCs}, the conditions of \refLemma{lem:Misclassification_separation_in_Rhat_and_P} are satisfied for e.g.\ $f_n = \bigl( (T/n) \ln{ ( T/n ) } \bigr)^{ 1/2 + \zeta }$ and $h_n = ( f_n / \sqrt{n} ) (T/n)^\zeta$ with $0 < \zeta < 1/4$. The final step is then almost immediate. Indeed, we have because of \refLemma{lem:Misclassification_separation_in_Rhat_and_P}
$
\| \hat{R}^\star - N^\star \|_{\mathrm{F}}^2 \ge \sum_{x\in \mathcal{E} } \pnorm{ {\hat{R}}^\star_{x,\cdot} - {N}^\star_{x,\cdot} }{2}^2 = \cardinality{ \mathcal{E} } \OmegaP{ { T^2 D(\alpha,p) } / { n^{3} } }.
$
Hence, using Lemma \ref{lem:Concentration_between_Rhat_and_P},
$
( 1 / \cardinality{ \mathcal{E} } ) \pnorm{\hat{N}_\Gamma-N}{}^2 
= \OmegaP{ { T^2 D(\alpha,p) } / { n^{3} } }
$,
or equivalently, since $I(\alpha,p)>0$ and hence $D(\alpha,p)>0$,  
$
( n / \cardinality{ \mathcal{E} } ) \pnorm{\hat{N}_\Gamma-N}{}^2 
= \OmegaP{ { T^2 } / { n^{2} } }
$.
We also have from \refProposition{prop:Spectral_concentration_bound_for_BMCs} that $\pnorm{ \hat{N}_\Gamma - N }{}^2 = \bigOP{ \frac{T}{n} \ln{ \frac{T}{n} } }$. Combining the two previous equalities, we conclude that:
$
\cardinality{ \mathcal{E} } / n = \bigOP{ \frac{n}{T} \ln{ \frac{T}{n} } }
$.
This is obtained by applying Lemma \ref{lem:BigOP_ratio} presented in \refAppendixSection{sec:Stochastic_boundedness_properties} (with $X_n = \pnorm{ \hat{N}_\Gamma - N }{}^2$, $Y_n=\cardinality{ \mathcal{E} }/n$, $y_n = T^2/n^2$, and $x_n = (T/n) \ln{ (T/n) }$).

\chapter{Performance of the Cluster Improvement Algorithm}
\label{sec:The_cluster_improvement_algorithm}

\revisedPartBegin

\section{Intuition behind the algorithm}

Let us first briefly explain the intuition behind the Cluster Improvement Algorithm: given a cluster assignment, the algorithm inspects for each state the cluster assignment that makes the observed sample path the most likely. It then assigns the state accordingly.

Note that such a greedy assignment based on maximizing a likelihood function is generally not guaranteed to converge to a good minimum. Indeed; first, if the initial cluster assignment is far from the true assignment, the approximated \gls{BMC} parameters $\hat{p}$, $\hat{\pi}$, and $\hat{\alpha}$ will also be far from the true parameters. The cluster improvement algorithm will then weigh the sample paths according to an \emph{incorrect model} and may assign states to incorrect clusters. Second, the induced \emph{dependencies} between consecutive updates of our improvement algorithm threaten our chances of deriving a performance upper bound that gets tighter with the number of iterations.

\section{Proof of \refTheorem{thm:Conditions_for_improvement_algorithm}}

Our proof tackles both concerns -- the concerns of assigning vertices according to an incorrect model, and of having strong dependencies between consecutive updates -- by considering the precise asymptotic concentration rates of the \gls{BMC}. Tying the first concern to concentration is straightforward: as long as the initial cluster assignment provides sufficiently good estimates of the \gls{BMC} parameters, the sample path will be weighted according to a model that is close to the ground truth. The method with which we overcome the second concern is more refined. Specifically, in the proof of \refTheorem{thm:Conditions_for_improvement_algorithm}, we split the set of all states into a set of \emph{well-behaved} states $\mathcal{H}$ and a set of \emph{forlorn} states $\mathcal{H}^{\mathrm{c}}$. 
\begin{definition}
The set of well-behaved states $\mathcal{H}$ is the largest set of states $x \in \Gamma$ that satisfy the following two properties:
\begin{itemize}
\item[$\mathrm{(H1)}$] When $x\in \mathcal{V}_i$, for all $j\neq i$, 
\begin{equation} 
\sum_{k=1}^K \Bigl( \hat{N}_{x, \mathcal{V}_k} \ln{ \frac{p_{i,k}}{p_{j,k}} } + \hat{N}_{\mathcal{V}_k, x} \ln{ \frac{p_{k,i} \alpha_j}{p_{k,j} \alpha_i} } \Bigr) 
+  \Bigl(\frac{\hat{N}_{ \mathcal{V}_j, \mathcal{V}}}{\alpha_j n} - \frac{\hat{N}_{ \mathcal{V}_i, \mathcal{V}}}{\alpha_i n} \Bigr) 
\geq \frac{T}{2n} I(\alpha,p).
\end{equation}
\item[$\mathrm{(H2)}$] $\hat{N}_{ x, \mathcal{V} \setminus \mathcal{H} } + \hat{N}_{ \mathcal{V} \setminus \mathcal{H}, x } \leq 2 \ln{ ( (T/n)^2 ) }$.
\end{itemize}
\end{definition}

States in $\mathcal{H}$ satisfy properties (H1) and (H2), which guarantee that these states will likely be assigned to their true cluster in our greedy repeated local maximization of the log-likelihood function. To be precise, the set is designed such that the cardinality of the intersection $\itr{\mathcal{E}_{\mathcal{H}}}{t}\triangleq \itr{\mathcal{E}}{t} \cap \mathcal{H}$ of incorrectly classified states and well-behaved states shrinks at each iteration with high probability. Furthermore, as we will show this set converges to the empty set with high probability after $t \approx \ln{n}$ iterations. Because we cannot guarantee that states in $\mathcal{H}^{\mathrm{c}}$ are classified correctly as the algorithm greedily allocates vertices to clusters, we simply treat all of these vertices as being misclassified -- a worst-case upper bound. By then estimating the size of $\mathcal{H}^{\mathrm{c}}$, we are able to bound the total number of misclassified vertices after $t \in \naturalNumbersZero$ improvement steps of the Cluster Improvement Algorithm.

\paragraph{How we prove that $\cardinality{ \itr{\mathcal{E}_{\mathcal{H}}}{t} }$ shrinks} 
\refProposition{prop:Error_on_the_set_of_misclassified_wellbehaved_states} quantifies how $\cardinality{ \itr{\mathcal{E}_{\mathcal{H}}}{t} }$ is reduced in a single iteration. 

\begin{proposition}
\label{prop:Error_on_the_set_of_misclassified_wellbehaved_states}
If $I(\alpha,p) > 0$ and $T = \omega(n)$, and $\cardinality{ \itr{\EHset}{t} } = \bigOP{ \itr{e_n}{t} }$ for some $0 < \itr{e_n}{t} = \smallO{\frac{n}{\ln (T/n)}}$, then 
\begin{equation}
\cardinality{ \itr{\EHset}{t+1} } 
\asymp_{\mathbb{P}} \itr{e_n}{t+1} 
= \bigObig{ \itr{e_n}{t} \frac{n}{T}\ln({T\over n}) }
= \smallO{ \itr{e_n}{t} }.
\label{eqn:Improvement_of_misclassification_statistics_per_time_step}
\end{equation}
\end{proposition}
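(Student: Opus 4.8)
The plan is to show that the Cluster Improvement Algorithm reclassifies a well-behaved state $x\in\mathcal H$ correctly at step $t+1$ \emph{unless} the observed trajectory happens to contain anomalously many transitions between $x$ and the current set $\itr{\EHset}{t}$ of misclassified well-behaved states, and then to count the states for which this happens. Fix $x\in\mathcal H$, say $x\in\mathcal V_i$, and let $c\neq i$. I would write the decision margin $\itr{u_x}{t}(i)-\itr{u_x}{t}(c)$ as the sum of a \emph{population margin} --- the same expression but with the true parameters $p,\pi,\alpha$ and the true clusters $\mathcal V_k$ --- and two error terms: $\mathrm{(E1)}$, coming from replacing $p,\pi,\alpha$ by the plug-in estimates $\hat p,\hat\pi,\hat\alpha$ of \refEquation{eqn:Definitions_for_phat_pihat_alphahat}, and $\mathrm{(E2)}$, coming from replacing the true clusters $\mathcal V_k$ by the current estimates $\itr{\hat{\mathcal V}_k}{t}$ inside the counts $\hat N_{x,\cdot}$ and $\hat N_{\cdot,x}$. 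Property (H1) guarantees (up to lower-order terms absorbed by the ergodic fluctuation of $\hat N_{\mathcal V_k,\mathcal V}$) that the population margin is at least $\tfrac{T}{2n}I(\alpha,p)=\Theta(T/n)$, so $x\in\itr{\EHset}{t+1}$ only if $\mathrm{(E1)}+\mathrm{(E2)}$ overcomes this gap.

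Next I would bound the two error terms. For $\mathrm{(E1)}$: under the hypotheses (and using a separate bound on $\cardinality{\mathcal H^{\mathrm c}}$ from the surrounding proof), $\cardinality{\itr{\mathcal E}{t}}\le\cardinality{\itr{\EHset}{t}}+\cardinality{\mathcal H^{\mathrm c}}=\smallO{n}$, so the plug-in estimates are consistent with vanishing relative error, $\hat p_{a,b}=p_{a,b}(1+\smallOP 1)$ and likewise for $\hat\pi,\hat\alpha$; combined with $\hat N_{x,\mathcal V}=\bigOP{T/n}$ for well-behaved $x$ and with the separability assumption (the constant $\eta$ keeping all log-ratios $\ln(p_{\cdot,k}/p_{\cdot,k'})$ bounded), this gives $\mathrm{(E1)}=\smallOP{T/n}=\smallOP{\tfrac Tn I(\alpha,p)}$. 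For $\mathrm{(E2)}$: moving a single state $y$ between two clusters changes $\itr{u_x}{t}(c)$ by $O(\hat N_{x,y}+\hat N_{y,x})$ (again by $\eta$), and the set of states that differ between $\itr{\hat{\mathcal V}}{t}$ and the truth is exactly $\itr{\mathcal E}{t}$; hence $\mathrm{(E2)}\le C(\hat N_{x,\itr{\mathcal E}{t}}+\hat N_{\itr{\mathcal E}{t},x})\le C(\hat N_{x,\itr{\EHset}{t}}+\hat N_{\itr{\EHset}{t},x})+C(\hat N_{x,\mathcal V\setminus\mathcal H}+\hat N_{\mathcal V\setminus\mathcal H,x})$, and the last parenthesis is $\le 4\ln(T/n)=\smallO{T/n}$ by (H2). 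Collecting, $x\in\itr{\EHset}{t+1}$ forces $\hat N_{x,\itr{\EHset}{t}}+\hat N_{\itr{\EHset}{t},x}\ge c\,T/n$ for some constant $c>0$.

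It remains to bound $\cardinality{\itr{\EHset}{t+1}}\le\#\{x\in\mathcal H:\hat N_{x,\itr{\EHset}{t}}+\hat N_{\itr{\EHset}{t},x}\ge cT/n\}$. Since $\itr{\EHset}{t}\subseteq\mathcal H$ has $\bigOP{\itr{e_n}{t}}=\smallOP{n/\ln(T/n)}$ elements and a BMC spreads transition mass roughly uniformly inside each target cluster, for a \emph{fixed} state $x$ and a \emph{fixed} size-$m$ set $\mathcal A$ one has $\E[\hat N_{x,\mathcal A}+\hat N_{\mathcal A,x}]=\Theta(mT/n^2)=\smallO{(T/n)/\ln(T/n)}$, so $\{\hat N_{x,\mathcal A}+\hat N_{\mathcal A,x}\ge cT/n\}$ is a Poisson-type large-deviation event; using the Markov-chain concentration inequalities of \refAppendixSection{sec:Concentration_inequalities_for_BMCs} together with $t_{\mathrm{mix}}=\bigO 1$ from \refProposition{prop:Mixing_times_of_Markov_chains_with_transition_matrices_P_and_Q}, its probability is suitably small. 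Summing over $x$ and handling the fact that $\itr{\EHset}{t}$ is itself random and adaptively produced by the algorithm --- either by a union bound over candidate sets of each admissible size, or by a peeling argument anchored on the occupation time $\mathrm{occ}(\itr{\EHset}{t})=\bigOP{\cardinality{\itr{\EHset}{t}}\,T/n}$, which is established along the way --- yields $\cardinality{\itr{\EHset}{t+1}}=\bigOP{\itr{e_n}{t}\tfrac nT\ln(T/n)}$; since $T=\omega(n)$ this is $\smallOP{\itr{e_n}{t}}$, and the matching lower order (the $\asymp_{\mathbb P}$ refinement) follows from a reverse anticoncentration count, though only the displayed upper bound is iterated in the proof of \refTheorem{thm:Conditions_for_improvement_algorithm}.

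The main obstacle is this last counting step. The set $\itr{\EHset}{t}$ is not fixed --- it is random and is the output of the very recursion being analysed --- so the Markov-chain concentration bound for $\hat N_{x,\itr{\EHset}{t}}$ cannot be invoked for a frozen set, and one needs a tail estimate uniform over all admissible realisations of $\itr{\EHset}{t}$. This is exactly where the two quantitative ingredients enter: the slack $\itr{e_n}{t}=\smallO{n/\ln(T/n)}$ makes the per-pair mean $\Theta(mT/n^2)$ small enough by a factor $\ln(T/n)$ that the large-deviation cost can absorb the union-bound entropy, and the constant mixing time makes the Markov-chain concentration essentially as sharp as the i.i.d.\ one. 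A secondary technical point is verifying $\hat N_{x,\mathcal V}=\bigOP{T/n}$ uniformly over well-behaved $x$ (needed to control $\mathrm{(E1)}$), where the trimming defining $\Gamma$ and property (H2) play their role.
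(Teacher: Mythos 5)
Your high-level decomposition is exactly the paper's: you split the decision margin into a population margin (driven by (H1)), a parameter-estimation error, and a cluster-estimation error, and you correctly observe that the last one reduces to controlling $\hat N_{x,\itr{\EHset}{t}}+\hat N_{\itr{\EHset}{t},x}$. You also correctly flag the counting step — the set $\itr{\EHset}{t}$ is random and adaptively generated — as the crux. But the resolution you sketch (a union bound over candidate sets of each admissible size, or a peeling argument on occupation time) is not the paper's and is likely to fail or, at best, require a much more delicate calculation, while the missing idea that actually makes this work is absent from your proposal.

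The paper does \emph{not} bound, per state, the probability that $\hat N_{x,\itr{\EHset}{t}}+\hat N_{\itr{\EHset}{t},x}\geq cT/n$ and then count. Instead, it sums the per-state inequality $\itr{u_x}{t}(\itr{\sigma}{t+1}(x)) - \itr{u_x}{t}(\sigma(x)) \geq 0$ over all $x\in\itr{\EHset}{t+1}$, producing $E=E_1+E_2+U\geq 0$, and then bounds the aggregate cross-term $E_2$ (which, after your reduction, is exactly $\hat N_{\itr{\EHset}{t+1},\itr{\EHset}{t}}+\hat N_{\itr{\EHset}{t},\itr{\EHset}{t+1}}$ up to constants and the (H2) correction) via the deterministic spectral-norm inequality of \refLemma{lem:Summation_to_innerproduct_matrix_notation}:
\begin{equation*}
\hat N_{\mathcal A,\mathcal B}-N_{\mathcal A,\mathcal B} = \transpose{\vectOnes{\mathcal A}}(\hat N - N)\vectOnes{\mathcal B} \leq \pnorm{\hat N - N}{}\sqrt{\cardinality{\mathcal A}\cardinality{\mathcal B}}.
\end{equation*}
This holds simultaneously for \emph{all} pairs of subsets with no union bound and no conditioning, which is precisely what dissolves the adaptivity problem you worry about: one then invokes \refProposition{prop:Spectral_concentration_bound_for_BMCs} once, globally, to get $\pnorm{\hat N - N}{}=\bigOP{\sqrt{(T/n)\ln(T/n)}}$ and the bound $F_2=\bigOP{\sqrt{(T/n)\ln(T/n)\,\itr{e_n}{t}\itr{e_n}{t+1}}}$. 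Balancing $-E_1=\Omega_{\mathbb P}((T/n)\itr{e_n}{t+1})$ against $F_2$ immediately gives $\itr{e_n}{t+1}=\bigO{\itr{e_n}{t}(n/T)\ln(T/n)}$.

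Your union-bound route has a concrete difficulty: the entropy $\binom{n}{m}\approx\exp(m\ln(n/m))$ of size-$m$ subsets is of order $\exp(\itr{e_n}{t}\ln n)$, and in the critical regime $T\sim n\ln n$ the per-event large-deviation rate $\exp(-\Theta(T/n))=\exp(-\Theta(\ln n))$ cannot absorb that entropy once $\itr{e_n}{t}$ is more than a constant; moreover even for a fixed $\mathcal A$ the quantity to bound is a random \emph{count} of bad $x$, not a single tail probability, so a Markov-then-union argument is not straightforward and in any case would not obviously produce the factor $\ln(T/n)$ rather than something worse. The spectral-norm route avoids all of this. So the gap is not in your decomposition (which is right), but in not having the aggregate spectral-norm control that replaces the per-state counting altogether.

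One minor additional point: you also claim the $\asymp_{\mathbb P}$ refinement via "a reverse anticoncentration count." The paper does not actually prove a two-sided bound here; the $\asymp_{\mathbb P}$ notation is used because \refLemma{lem:Leading_behavior_of_E1_E2_E3_E4} is stated under the hypothesis $\cardinality{\itr{\EHset}{t+1}}\asymp_{\mathbb P}\itr{e_n}{t+1}$ for some sequence, and the argument establishes that any such sequence must satisfy the displayed $\bigO{\cdot}$; only the upper bound is iterated in \refTheorem{thm:Conditions_for_improvement_algorithm}, as you note.
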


To establish \refProposition{prop:Error_on_the_set_of_misclassified_wellbehaved_states}, observe that after the $(t+1)$-th iteration, for any misclassified state $x$, its true cluster $\sigma(x)$ does not maximize the objective function $\itr{u_x}{t}(c)$. Hence, summing over all misclassified states that also belong to $\mathcal{H}$, we obtain
\begin{equation}
E \triangleq \sum_{ x \in \itr{\EHset}{t+1} } \bigl( \itr{u_x}{t}( \itr{\sigma}{t+1}(x) ) - \itr{u_x}{t}( \sigma(x) ) \bigr) \geq 0.
\label{eq:eee}
\end{equation}
We prove \refProposition{prop:Error_on_the_set_of_misclassified_wellbehaved_states} by analyzing $E$. After substituting $\itr{u_x}{t}$'s definition \refEquation{eqn:Objective_function_for_the_improvement_algorithm} into \refEquation{eq:eee}, we decompose $E$ as $E = E_1 + E_2 + U$, where  
\begin{align}
&
E_1
= 
\sum_{ x \in \itr{\EHset}{t+1} } 
\Bigl\{ 
\sum_{k=1}^K 
\Bigl(
\hat{N}_{x,\mathcal{V}_k} \ln{ \frac{ p_{ \itr{\sigma}{t+1}(x), k } }{ p_{ \sigma(x), k } } }
+ \hat{N}_{\mathcal{V}_k,x} \ln{ \frac{ p_{ k, \itr{\sigma}{t+1}(x) }\alpha_{\sigma(x)} }{ p_{ k, \sigma(x)}\alpha_ {\itr{\sigma}{t+1}(x) } } }
\Bigl)
\nonumber \\ &
\quad \quad \quad  \quad \quad \quad\quad \quad \quad
+ \Bigl( \frac{ \hat{N}_{ \mathcal{V}_{\sigma(x)}, \mathcal{V} } }{ \alpha_{\sigma(x)} n  } - \frac{ \hat{N}_{ \mathcal{V}_{ \itr{\sigma}{t+1}(x) }, \mathcal{V} } }{  \alpha_{ \itr{\sigma}{t+1}(x) } n}  \Bigr)
\Bigr\}
\nonumber \\ &
E_2 
= 
\sum_{ x \in \itr{\EHset}{t+1} } \sum_{k=1}^K 
\Bigl( 
\bigl( \hat{N}_{x,\itr{\hat{\mathcal{V}}}{t}_k} - \hat{N}_{x,\mathcal{V}_k} \bigr) \ln{ \frac{ p_{ \itr{\sigma}{t+1}(x), k } }{ p_{ \sigma(x), k } } }
\nonumber \\ &
\quad \quad \quad  \quad \quad \quad\quad \quad \quad
+ \bigl( \hat{N}_{\itr{\hat{\mathcal{V}}}{t}_k,x} - \hat{N}_{\mathcal{V}_k,x} \bigr) \ln{ \frac{ p_{ k, \itr{\sigma}{t+1}(x) } }{  p_{ k, \sigma(x) } } }
\Bigr), 
\nonumber \\ &
U = E-E_1-E_2.
\end{align}
Importantly, note that $E_1$ and $E_2$ account for the true model parameters $p$ and $\alpha$, whereas in the functions $\itr{u_x}{t}$ used in the algorithm, we replace these parameters by their estimates $\hat{p}$ and $\hat\alpha$. The term $U$ hence captures issues due to estimation errors.
By construction of $\mathcal{H}$, $E_1 \le - (T/n) I(\alpha,p) \cardinality{ \itr{\EHset}{t+1} }$. Using  concentration results for the \gls{BMC}, we can show that $E_2 \approx \pnorm{ \hat{N}_\Gamma - N }{} ( \cardinality{ \itr{\EHset}{t+1} } \cardinality{ \itr{\EHset}{t} } )^{1/2}$ and that $U \approx 0$,  asymptotically. These observations are formalized in  \refLemma{lem:Leading_behavior_of_E1_E2_E3_E4}, proved in \refSupplementaryMaterial{supple:Proof_EEE}. 

\begin{lemma}
\label{lem:Leading_behavior_of_E1_E2_E3_E4}
If $T = \omega(n)$, $\cardinality{ \itr{\EHset}{t} } = \bigOP{ \itr{e_n}{t} }$ for some $0 < \itr{e_n}{t} = \smallO{\frac{n}{\ln (T/n)}}$
and
$\cardinality{ \itr{\EHset}{t+1} } \asymp_{\mathbb{P}} \itr{e_n}{t+1}$,
then:
\begin{itemize}
\item[] $- E_1 = \Omega_{\mathbb{P}} ( I(\alpha,p) \frac{T}{n} \itr{e_n}{t+1} )$,
\item[] $| U | = \bigO{ \itr{e_n}{t+1} \sqrt{\frac{T}{n}} \ln{ \frac{T}{n} } + \itr{e_n}{t+1}\frac{\itr{e_n}{t}}{n}\frac{T}{n} \ln \frac{T}{n} }$, and 
\item[] $| E_2 | \leq F_1+F_2+F_3$ with $F_1 = \bigOPbig{ \frac{T}{n} \frac{ \itr{e_n}{t} }{n} \itr{e_n}{t+1} }$,
\item[] $F_2 = \bigOP{ \sqrt{{T\over n}\ln({T\over n}) \itr{e_n}{t} \itr{e_n}{t+1} } }$, and $F_3 = \bigOP{ \ln{ (T/n)^2 }  \itr{e_n}{t+1} }$.
\end{itemize}

\end{lemma}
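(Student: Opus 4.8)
The plan is to bound the three pieces of the decomposition $E = E_1 + E_2 + U$ separately: for $E_1$ I would invoke the defining property (H1) of well-behaved states, for $E_2$ the block structure of $N$ together with the spectral concentration bound of \refProposition{prop:Spectral_concentration_bound_for_BMCs}, and for $U$ concentration estimates showing that the plug-in parameters $\hat p, \hat\alpha, \hat\pi$ are multiplicatively close to the true $p, \alpha, \pi$. Throughout, all logarithmic factors $\ln(p_{i,k}/p_{j,k})$, $\ln(p_{k,i}\alpha_j/(p_{k,j}\alpha_i))$ appearing in $E_1, E_2$ are bounded by constants depending only on $\eta$ and $\alpha_{\max}/\alpha_{\min}$, by the separability assumption. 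For $E_1$: for $x \in \itr{\EHset}{t+1}$ put $i = \sigma(x)$, $j = \itr{\sigma}{t+1}(x)$; since $x$ is misclassified $j \neq i$, and the summand of $E_1$ is precisely the negative of the left-hand side of (H1) with this $i,j$, hence at most $-\tfrac{T}{2n}I(\alpha,p)$. Summing gives $-E_1 \ge \tfrac{T}{2n}I(\alpha,p)\cardinality{\itr{\EHset}{t+1}}$, and $\cardinality{\itr{\EHset}{t+1}} \asymp_{\mathbb{P}} \itr{e_n}{t+1}$ yields $-E_1 = \OmegaP{I(\alpha,p)\tfrac{T}{n}\itr{e_n}{t+1}}$.

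\textbf{The term $E_2$.} Each summand only sees the symmetric difference $\itr{\hat{\mathcal V}_k}{t} \triangle \mathcal V_k$. I would split this symmetric difference into its intersection with $\mathcal H$, which is contained in $\itr{\EHset}{t}$ (of size $\asymp_{\mathbb{P}} \itr{e_n}{t}$), and its intersection with $\mathcal H^{\mathrm c}$. For the first part, the contribution is controlled by $\sum_{x\in\itr{\EHset}{t+1}}\bigl(\hat N_{x,\itr{\EHset}{t}} + \hat N_{\itr{\EHset}{t},x}\bigr) = \hat N_{\itr{\EHset}{t+1},\itr{\EHset}{t}} + \hat N_{\itr{\EHset}{t},\itr{\EHset}{t+1}}$; writing $\hat N = N + (\hat N_\Gamma - N)$ (legitimate since $\mathcal H \subseteq \Gamma$), the mean part is $\bigO{\tfrac{T}{n}\tfrac{\itr{e_n}{t}}{n}\itr{e_n}{t+1}}$ because $\Pi_x = \bigO{1/n}$ and each entry of $P$ is $\bigO{1/n}$, giving $F_1$, while the fluctuation part is at most $\pnorm{\hat N_\Gamma - N}{}\sqrt{\cardinality{\itr{\EHset}{t}}\cardinality{\itr{\EHset}{t+1}}}$, which is $F_2$ by \refProposition{prop:Spectral_concentration_bound_for_BMCs}. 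For the second part, property (H2) bounds the transitions between any $x \in \itr{\EHset}{t+1} \subseteq \mathcal H$ and $\mathcal V \setminus \mathcal H$ by $2\ln((T/n)^2)$, so summing over $x$ gives $F_3$.

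\textbf{The term $U$.} This collects the discrepancies between the objective built from $p, \alpha, \pi$ and the one actually used, built from $\hat p, \hat\alpha, \hat\pi$. Using a first-order bound $\ln\hat p - \ln p = \bigOP{(\hat p - p)/p}$ and similarly for $\alpha$ and $\pi$, $|U|$ is controlled by $\cardinality{\itr{\EHset}{t+1}}$ times the maximal row/column sum of $\hat N$ over $\mathcal H$ (which is $\bigO{T/n}$ up to the logarithmic correction permitted by (H2)) times the maximal multiplicative error of the plug-in parameters. I would estimate the latter from \refProposition{prop:Spectral_concentration_bound_for_BMCs} — fluctuation contribution $\bigOP{\sqrt{(n/T)\ln(T/n)}}$ coming from $(\hat N_\Gamma - N)_{\itr{\hat{\mathcal V}_a}{t},\itr{\hat{\mathcal V}_b}{t}}$ divided by the $\Theta(T)$-sized mean $N_{\mathcal V_a,\mathcal V_b}$ — and from the clustering error (contribution $\bigO{\cardinality{\itr{\EHset}{t}}/n + \cardinality{\mathcal H^{\mathrm c}}/n}$ for swapping $\itr{\hat{\mathcal V}_k}{t}$ with $\mathcal V_k$). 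The hypothesis $\itr{e_n}{t} = \smallO{n/\ln(T/n)}$ keeps these estimates bounded away from $0$ and $1$ so the logarithms are Lipschitz there, and $\cardinality{\mathcal H^{\mathrm c}}$ is itself controlled in the global proof of \refTheorem{thm:Conditions_for_improvement_algorithm}. Multiplying out then produces the stated bound on $|U|$.

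\textbf{Main obstacle.} The delicate step is the analysis of $U$. One must simultaneously control the estimation error of $\hat p, \hat\alpha, \hat\pi$ — which blends the spectral fluctuations of $\hat N_\Gamma$, the not-yet-small clustering error, and the forlorn set — and ensure these estimates remain in a regime where the logarithms in the objective function are Lipschitz. This requires the concentration inequalities for BMCs from the appendix and careful bookkeeping of which states lie in $\Gamma$, in $\mathcal H$, and in $\mathcal H^{\mathrm c}$; by contrast, $E_1$ is immediate from (H1) and $E_2$ reduces cleanly to \refProposition{prop:Spectral_concentration_bound_for_BMCs} plus the $\bigO{1/n}$ scaling of $P$'s entries.
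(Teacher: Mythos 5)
Your decomposition into the three pieces, and your treatment of $E_1$ and $E_2$, match the paper. For $E_1$ you sum (H1) over $x \in \itr{\EHset}{t+1}$ and invoke $\cardinality{\itr{\EHset}{t+1}} \asymp_{\mathbb{P}} \itr{e_n}{t+1}$ — exactly the paper's argument. For $E_2$ the paper too bounds the logarithmic factors by $\ln\eta$, splits the symmetric difference $\itr{\hat{\mathcal V}_k}{t}\triangle\mathcal V_k$ into its $\mathcal H$-part (contained in $\itr{\EHset}{t}$) and its $\mathcal H^{\mathrm c}$-part, controls the latter by (H2) to get $F_3$, centers the $\mathcal H$-part to get the mean $F_1$ (using $\Pi_x P_{x,y} = \bigO{1/n^2}$), and bounds the fluctuation via $\transpose{\vectOnes{\itr{\EHset}{t+1}}}(\hat N_\Gamma - N)\vectOnes{\itr{\EHset}{t}} \leq \pnorm{\hat N_\Gamma - N}{}\sqrt{\itr{e_n}{t}\itr{e_n}{t+1}}$ with Proposition~\ref{prop:Spectral_concentration_bound_for_BMCs} to get $F_2$. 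You also correctly note that $\mathcal H \subseteq \Gamma$ makes the substitution $\hat N \to \hat N_\Gamma$ legitimate, which the paper glosses over.

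The gap is in your treatment of $U$. You propose to control the multiplicative parameter error $|\hat p_{a,b}/p_{a,b} - 1|$ by Proposition~\ref{prop:Spectral_concentration_bound_for_BMCs}: since $\hat N_{\itr{\hat{\mathcal V}_a}{t},\itr{\hat{\mathcal V}_b}{t}} - N_{\itr{\hat{\mathcal V}_a}{t},\itr{\hat{\mathcal V}_b}{t}}$ is bounded by $\pnorm{\hat N_\Gamma - N}{}$ times $\Theta(n)$ and $N_{\mathcal V_a,\mathcal V_b} = \Theta(T)$, this gives a fluctuation contribution $\bigOP{\sqrt{(n/T)\ln(T/n)}}$, as you wrote. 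Multiplying by the row/column sum $\bigO{(T/n)\ln(T/n)}$ (over $\Gamma$; this comes from the trimming, i.e.\ \refEquation{eqn:Concentration_result_on_the_maximum_row_and_column_sum_over_Nhat_Gamma}, not from (H2), which only controls transitions to $\mathcal H^{\mathrm c}$) and by $\itr{e_n}{t+1}$ produces $\itr{e_n}{t+1}\sqrt{T/n}\bigl(\ln\tfrac{T}{n}\bigr)^{3/2}$, which exceeds the asserted bound $\itr{e_n}{t+1}\sqrt{T/n}\ln\tfrac{T}{n}$ by a factor $\sqrt{\ln(T/n)}$. Your route therefore does not prove the lemma as stated (though the weaker bound would still suffice for the downstream estimate $|U| = o(-E_1)$). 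The paper avoids the extra log by not going through the spectral norm at all for the parameter error: it instead invokes the uniform subset-concentration bound \refEquation{eq:maxAB}, $\max_{\mathcal A,\mathcal B}|\hat N_{\mathcal A,\mathcal B} - N_{\mathcal A,\mathcal B}| < c_6\sqrt{nT}$ with high probability, which directly yields $|\hat N_{\itr{\hat{\mathcal V}_a}{t},\itr{\hat{\mathcal V}_b}{t}}/N_{\itr{\hat{\mathcal V}_a}{t},\itr{\hat{\mathcal V}_b}{t}} - 1| = \bigOP{\sqrt{n/T}}$ without the logarithm. The paper also decomposes $U = E_3 + E_4$ explicitly (the $\hat p$-error versus the $\hat\pi/\hat\alpha$-error) and bounds them with separate chains of telescoping ratios, whereas you treat $U$ monolithically; this is just a difference of bookkeeping style, not of substance. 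If you replace the spectral-norm estimate of the parameter error with \refEquation{eq:maxAB}, your argument recovers the stated bound.
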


Combining \refLemma{lem:Leading_behavior_of_E1_E2_E3_E4} with the fact that $E \geq 0$ yields \refEquation{eqn:Improvement_of_misclassification_statistics_per_time_step}. Indeed, we have $- E_1 \leq | E_2 | + | U |$ almost surely. Now observe that since $\sqrt{T\over n} \ln{T\over n}=o({T\over n})$ and by assumption $\itr{e_n}{t}=o(n/\ln(T/n))$, $|U|$ is negligible compared to $-E_1$, which implies that $-E_1-|U| =   \Omega_{\mathbb{P}} \bigl( I(\alpha,p) \frac{T}{n} \itr{e_n}{t+1} \bigr)$. Similarly, since by assumption $\itr{e_n}{t}=o(n)$ and $\ln{ ((T/n)^2) } = o({T\over n})$, $F_1$ and $F_3$ are also negligible compared to $-E_1$, which implies that $-E_1-|U| -F_1-F_3 =   \Omega_{\mathbb{P}} \bigl( I(\alpha,p) \frac{T}{n} \itr{e_n}{t+1} \bigr)$. Finally, in view of \refLemma{lem:Leading_behavior_of_E1_E2_E3_E4} 
$
\Omega_{\mathbb{P}} \bigl( I(\alpha,p) \frac{T}{n} \itr{e_n}{t+1} \bigr)
= -E_1 - |U| - F_1 - F_3
\leq F_2 
= \bigOP{ \sqrt{ \frac{T}{n} \ln{ \frac{T}{n} } \itr{e_n}{t} \itr{e_n}{t+1} } }
$.
We deduce that $I(\alpha,p) \itr{e_n}{t+1} = \bigO{  \sqrt{ \frac{n}{T} \ln{ \frac{T}{n} } \itr{e_n}{t} \itr{e_n}{t+1} } }$; see \refLemma{lem:BigOP_consistency_argument} for the precise justification. Since $I(\alpha,p) \allowbreak > 0$, we obtain $\itr{e_n}{t+1} = \bigO{ \itr{e_n}{t} \frac{n}{T} \ln{ \frac{T}{n} } }$, which concludes the proof of \refProposition{prop:Error_on_the_set_of_misclassified_wellbehaved_states}.

\paragraph{How we bound the size of $\mathcal{H}^{\mathrm{c}}$} 

\refProposition{prop:Set_of_forlorn_states_shrinks} provides an upper bound of the number of states in $\mathcal{H}^{\mathrm{c}}$, and is proved in \refAppendixSection{suppl:Proof__Size_of_complement_of_H}. 

\begin{proposition}
\label{prop:Set_of_forlorn_states_shrinks}
If $I(\alpha,p) > 0$ and $T = \omega(n)$, and $\cardinality{ \itr{\EHset}{t} } = \bigOP{ \itr{e_n}{t} }$ for some $0 < \itr{e_n}{t} = \smallO{n}$, then 
$
\cardinality{ \itr{ \mathcal{E}_{ \mathcal{H}^{\mathrm{c}} } }{t} }
\leq \cardinality{ \mathcal{H}^{\mathrm{c}} } 
= \bigOP{ 
n \exp{} \bigl( - C \frac{T}{n} I(\alpha, p) \bigr) } 
$,
where $C = {\alpha_{\min}^2} / ( {720 \eta^3 \alpha_{\max}^2} )$, $\alpha_{\max} = \max_{i} \alpha_i$, and $\alpha_{\min} = \min_{i} \alpha_i$.
\end{proposition}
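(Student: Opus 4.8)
The plan is to split $\mathcal{H}^{\mathrm{c}}$ according to the three ways a state can fail to be well-behaved: $x$ lies outside $\mathcal{H}$ only if (a)~$x \notin \Gamma$, i.e.\ $x$ was trimmed; (b)~$x \in \Gamma$ but $x$ violates (H1); or (c)~$x \in \Gamma$ satisfies (H1) but is still excluded by the self-referential condition (H2). Since $\cardinality{ \itr{ \mathcal{E}_{ \mathcal{H}^{\mathrm{c}} } }{t} } \le \cardinality{ \mathcal{H}^{\mathrm{c}} }$, it suffices to bound $\cardinality{ \mathcal{H}^{\mathrm{c}} }$. Case~(a) is immediate: by construction $\cardinality{ \Gamma^{\mathrm{c}} } = \lfloor n \exp( -(T/n) \ln(T/n) ) \rfloor$, and since $I(\alpha,p)$ is a fixed constant while $\ln(T/n) \to \infty$ under $T = \omega(n)$, one has $(T/n)\ln(T/n) = \omega( (T/n) I(\alpha,p) )$, so $\cardinality{ \Gamma^{\mathrm{c}} } = \smallO{ n \exp( -C (T/n) I(\alpha,p) ) }$ is negligible. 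The bulk of the work is case~(b); case~(c) will then be reduced back to (a) and (b) by a peeling argument.

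For case~(b), fix $x \in \mathcal{V}_i$ and $j \neq i$. Rewriting the indicators in $\hat{N}$ in terms of the cluster memberships of consecutive states, the left-hand side of (H1) equals the additive trajectory functional $\sum_{t=0}^{T-1} g_{x,i,j}(X_t, X_{t+1})$, where $g_{x,i,j}(u,v) = \indicator{u=x} \ln \tfrac{ p_{i,\sigma(v)} }{ p_{j,\sigma(v)} } + \indicator{v=x} \ln \tfrac{ p_{\sigma(u),i} \alpha_j }{ p_{\sigma(u),j} \alpha_i } + \tfrac{1}{n} \bigl( \tfrac{ \indicator{ u \in \mathcal{V}_j } }{ \alpha_j } - \tfrac{ \indicator{ u \in \mathcal{V}_i } }{ \alpha_i } \bigr)$, a function bounded in absolute value by a constant depending only on $\eta$ and $\alpha_{\max}/\alpha_{\min}$ thanks to the separability assumption $\max\{ p_{b,a}/p_{c,a}, p_{a,b}/p_{a,c} \} \le \eta$. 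Summing $\expectationWrt{ g_{x,i,j}(X_t,X_{t+1}) }{\Phi} = \sum_u \Pi_u \sum_v P_{u,v} g_{x,i,j}(u,v)$ over $t$ and using $\Pi_x = \bar{\Pi}_i \sim \pi_i / (\alpha_i n)$, $\cardinality{\mathcal{V}_k} \bar{\Pi}_k \to \pi_k$, and the balance equations $\transpose{\pi} p = \transpose{\pi}$, one finds that the mean of the left-hand side of (H1) is $(T/n) I_{i,j}(\alpha,p)( 1 + \smallO{1} ) \ge (T/n) I(\alpha,p)(1+\smallO{1})$. In fact, up to its $\smallO{T/n}$ correction term this functional is exactly the gap $u_x(i) - u_x(j)$ between the improvement-algorithm objective functions of \refEquation{eqn:Objective_function_for_the_improvement_algorithm} evaluated at the true parameters $p,\alpha$ --- that is, a log-likelihood ratio of the observations restricted to transitions incident to $x$ --- and $I_{i,j}$ is its $\mathrm{KL}$-type leading coefficient. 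Hence $x$ violates (H1) for this $j$ only if this log-likelihood ratio undershoots at least half its mean, a deviation of order $T/n$.

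The step I expect to be the main obstacle is bounding this undershoot by $\exp( -C (T/n) I(\alpha,p) )$ with $C = \alpha_{\min}^2 / ( 720 \eta^3 \alpha_{\max}^2 )$, and in particular getting an exponent \emph{linear} in $I(\alpha,p)$. A Bernstein/variance bound would only give an exponent of order $(T/n) I(\alpha,p)^2$ (the variance of the functional is $\bigO{T/n}$, as in the proof of \refProposition{prop:Variance_is_small}), so one should instead use a Chernoff bound on the log-likelihood ratio itself: exponentially tilt by an optimal $\theta^{\ast} \in (0,1)$, factorize the moment generating function $\expectationWrt{ \exp( -\theta \sum_t g_{x,i,j}(X_t,X_{t+1}) ) }{\Phi}$ along the trajectory as a product of tilted transition operators, and bound it by $\exp( -(T/n) \Lambda(\theta^{\ast})(1+\smallO{1}) )$ via the $\bigO{1}$ mixing time of \refProposition{prop:Mixing_times_of_Markov_chains_with_transition_matrices_P_and_Q} --- the underlying perturbation estimates being of the same kind as those behind $\expectationWrt{L}{\Psi}$ in \refProposition{prop:Leading_order_behavior_of_expectation_of_L_wrt_Q}. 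Because $I_{i,j}$ is a $\mathrm{KL}$-type quantity, $\Lambda(\theta^{\ast})$ is proportional to $I_{i,j} \ge I(\alpha,p)$, which yields the linear exponent; the explicit constant $720 \eta^3 \alpha_{\max}^2 / \alpha_{\min}^2$ emerges from carrying the universal Chernoff-information constant through the $\ln\eta$ bounds on the log-weights of $g_{x,i,j}$ and the $\alpha_{\max}/\alpha_{\min}$ bounds on the $\pi/\alpha$ weights relating the number $\tau_x$ of trajectory transitions incident to $x$ (with $\tau_x \asymp_{\mathbb{P}} (T/n)$) to $T/n$. The BMC concentration inequalities of \refAppendixSection{sec:Concentration_inequalities_for_BMCs} supply the control of the dependent increments needed to make this rigorous. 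A union bound over the $K-1$ choices of $j$ then gives $\probabilityWrt{ x \text{ violates (H1)} }{\Phi} \le \exp( -C (T/n) I(\alpha,p) )$ for every $x$, so by linearity of expectation and Markov's inequality $\cardinality{ \{ x : x \text{ violates (H1)} \} } = \bigOPbig{ n \exp( -C (T/n) I(\alpha,p) ) }$.

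For case~(c), observe first that the union of two subsets of $\Gamma$ each satisfying (H2) again satisfies (H2) (enlarging the set only shrinks $\mathcal{V} \setminus \mathcal{H}$), so the largest set $\mathcal{H}$ in the definition is well-defined and equals $\mathcal{B}_\infty^{\mathrm{c}}$, where $\mathcal{B}_0 \triangleq \Gamma^{\mathrm{c}} \cup \{ x : x \text{ violates (H1)} \}$ and $\mathcal{B}_{s+1} \triangleq \mathcal{B}_s \cup \{ x \in \Gamma \setminus \mathcal{B}_s : \hat{N}_{x,\mathcal{B}_s} + \hat{N}_{\mathcal{B}_s,x} > 2\ln( (T/n)^2 ) \}$ (the usual peeling identity: any state excluded from $\mathcal{H}$ must be peeled, and conversely every peeled $x$ has more than $2\ln((T/n)^2)$ incident transitions into $\mathcal{B}_\infty \subseteq \mathcal{V} \setminus \mathcal{H}$). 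By case~(b), $\cardinality{ \mathcal{B}_0 } = \bigOP{ n \exp( -C (T/n) I(\alpha,p) ) } = \smallOP{ n^2 \ln(T/n)/T }$ under $T = \omega(n)$; hence for a fixed set $\mathcal{B}$ of that size, $\hat{N}_{x,\mathcal{B}} + \hat{N}_{\mathcal{B},x}$ has mean $\bigO{ (T/n) \cardinality{\mathcal{B}}/n } = \smallO{1}$, so the event $\hat{N}_{x,\mathcal{B}} + \hat{N}_{\mathcal{B},x} > 2\ln((T/n)^2)$ is a large deviation whose probability is $\exp( -\Theta( (T/n) \ln(T/n) ) )$ for each of the $n$ choices of $x$ --- far below $\exp(-C(T/n)I(\alpha,p))$. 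A self-bounding argument --- a union bound over the $e^{\smallO{n}}$ candidate sets $\mathcal{B}$ of size $\bigO{ \cardinality{\mathcal{B}_0} }$, applied at the first step (if any) at which $\cardinality{ \mathcal{B}_s }$ would exceed $2 \cardinality{ \mathcal{B}_0 }$ --- then shows no such step occurs, so $\cardinality{ \mathcal{B}_\infty } = \bigOP{ \cardinality{ \mathcal{B}_0 } }$. Therefore $\cardinality{ \mathcal{H}^{\mathrm{c}} } = \cardinality{ \mathcal{B}_\infty } = \bigOPbig{ n \exp( -C (T/n) I(\alpha,p) ) }$, which is the claim.
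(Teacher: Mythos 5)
Your overall decomposition mirrors the paper's exactly: (a) trimmed states are negligible by comparing $(T/n)\ln(T/n)$ to $C(T/n)I(\alpha,p)$; (b) states violating (H1) are controlled state-by-state by a large-deviation estimate and then Markov's inequality; (c) states swept up by the self-referential (H2) condition are handled by a peeling recursion whose termination is forced by a union bound over candidate sets, which is the content of the paper's Lemma~\ref{lem:ssizecond}. Your remarks on (a) and (c) are essentially the paper's argument in looser form (your assertion $\mathcal{H}^{\mathrm{c}} = \mathcal{B}_\infty$ should only be $\mathcal{H}^{\mathrm{c}} \subseteq \mathcal{B}_\infty$, but that inclusion is all you use).

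The genuine problem is in step (b), and it is not merely that your alternative is left unfinished. You assert that ``a Bernstein/variance bound would only give an exponent of order $(T/n)I(\alpha,p)^2$, so one should instead use a Chernoff bound,'' and you justify this by quoting the crude variance estimate $V_f = O(1/n)$ per step, as in Proposition~\ref{prop:Variance_is_small}. That dismissal is incorrect, and it causes you to miss the actual key structural fact in the paper's proof of the concentration inequality \refEquation{eq:KLsim}: Lemma~\ref{lem:KL-var}, which shows that the log-square expression
\begin{equation}
\sum_{x} p_x \Bigl( \ln \frac{p_x}{q_x} \Bigr)^2 \;\leq\; 2 \Bigl( \max_x \frac{p_x \vee q_x}{p_x \wedge q_x} \Bigr)^2 \mathrm{KL}(p\|q)
\end{equation}
is bounded \emph{linearly} by the KL divergence. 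Applied to the per-step variance of the functional underlying $\hat{I}_{i,j}(x)$, this yields $V_f \leq (\eta\,\alpha_{\max}/\alpha_{\min})^2\, I_{i,j}(\alpha,p)/n + O(1/n^2)$, not just $O(1/n)$. With $z = (T/2n)I_{i,j}$ and both $V_f$ and the one-step bound $C$ controlled proportionally to $I_{i,j}/n$ and $O(1)$ respectively, Paulin's Bernstein inequality (Theorem~\ref{thm:Paulins_concentration_result}) delivers an exponent
\begin{equation}
\frac{z^2 \gamma_{\mathrm{ps}}}{8(T + 1/\gamma_{\mathrm{ps}}) V_f + 20 z C}
\;=\; \Theta\Bigl( \frac{T}{n}\, I_{i,j}(\alpha,p) \Bigr),
\end{equation}
which is linear, and tracking $\eta$, $\alpha_{\max}/\alpha_{\min}$, and $\gamma_{\mathrm{ps}} \geq 1/(2(4\eta+1))$ through the Paulin constant produces exactly $C = \alpha_{\min}^2/(720\eta^3\alpha_{\max}^2)$. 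Your Chernoff/tilted-operator route is a legitimate alternative in principle, but as written it postpones the entire derivation of the constant to a sentence beginning ``emerges from carrying \ldots,'' which is the step the proposition actually turns on; meanwhile the approach you discarded is the one that works cleanly. You should either supply the tilted-operator computation in full, or, more economically, replace the Chernoff digression with an application of Lemma~\ref{lem:KL-var} inside Paulin's bound, as the paper does.
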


We now sketch the proof of \refProposition{prop:Set_of_forlorn_states_shrinks}. First note that the number of states not in $\Gamma$ (obtained after the trimming process) is negligible, i.e., $n\exp{ (- \frac{T}{n} \ln{ \frac{T}{n} } ) }$. We then upper bound the number of states that do not satisfy (H1). Let $x\in \mathcal{V}_i$. If $x$ does not satisfy (H1), there exists $j\neq i$ such that $\hat{I}_{i,j}(x)<{T\over 2n}I(\alpha,p)$, where 
\begin{equation}
\hat{I}_{i,j}(x) \triangleq \sum_{k=1}^K \Bigl( \hat{N}_{x, \mathcal{V}_k} \ln{ \frac{p_{i,k}}{p_{j,k}} } + \hat{N}_{\mathcal{V}_k, x} \ln{ \frac{p_{k,i} \alpha_j}{p_{k,j} \alpha_{i}} } \Bigr) 
+  \Bigl(\frac{\hat{N}_{ \mathcal{V}_j, \mathcal{V}}}{\alpha_j n} - \frac{\hat{N}_{ \mathcal{V}_{i}, \mathcal{V}}}{\alpha_{i} n} \Bigr).
\end{equation}
Observe that $\expectation{ \hat{I}_{i,j}(x) } = \frac{T}{n} I_{i,j}(\alpha,p)$ where $I_{i,j}(\alpha,p)$ is the quantity involved in the definition of $I(\alpha,p)$; see \refEquation{eqn:Ialphabetap}. In particular, $\expectation{ \hat{I}_{i,j}(x) } \geq {T\over n}I(\alpha,p)$. Hence, $x$ does not satisfy (H1) implies that for some $j\neq i$, $\hat{I}_{i,j}(x)<{T\over 2n}I(\alpha,p)$ and $\expectation{ \hat{I}_{i,j}(x) } \geq \frac{T}{n} I(\alpha,p)$. Using concentration results for the \gls{BMC} \refAppendixSection{sec:Concentration_inequalities_for_BMCs}, this event happens with probability at most $\exp{} ( - C \frac{T}{n} I(\alpha,p) )$. We next deduce a bound on the expected number of states not satisfying (H1). From there, using Markov's inequality, we obtain that the number of states not satisfying (H1) does not exceed $n \exp{}( - C \frac{T}{n} I(\alpha, p) )$ with high probability. Note that the constant $C = {\alpha_{\min}^2} / { ( 720 \eta^3 \alpha_{\max}^2 ) }$ stems from the precise application of concentration results. 

We then complete the proof through the following argument. Consider the following iterative construction: start with the set $Z(0)$ of all states that do not satisfy (H1). The $t$-th iteration consists of adding to $Z(t-1)$ a state $v$ not satisfying (H2) written w.r.t. $Z(t-1)$, i.e., $\hat{N}_{v,Z(t-1)} +\hat{N}_{Z(t-1),v} > 2 \ln{ ( (T/n)^2 ) }$. If such a state does not exist, the construction stops. Let $Z(t^*)$ be the final set: $t^*$ is the number of iterations before the construction stops. Then $Z(t^*)$ is such that for all $x\notin Z(t^*)$, $x$ satisfies (H1) and (H2) written w.r.t. $Z(t^*)$. Hence by definition of ${\cal H}$, the size of ${\cal V}\setminus Z(t^*)$ is smaller than that of ${\cal H}$, and thus $|{\cal H}^{\mathrm{c}}| \le |Z(t^*)|$. To get an upper bound on $|Z(t^*)|$, we just establish an upper bound on $t^*$ using concentration results (at each iteration $t$, a \emph{large} number (specifically $2\ln{( (T/n)^2 )}$) of observed transitions inside $Z(t)$ is added, which rapidly becomes impossible). In summary: $|{\cal H}^{\mathrm{c}}| \le |Z(t^*)|\le t^* + |Z(0)|\le t^* + n \exp{} ( - C \frac{T}{n} I(\alpha, p) )$ with high probability.

\paragraph{Iterating the bound} 
If we initiate the Cluster Improvement Algorithm using the cluster assignment provided by the Spectral Clustering Algorithm when $T = \omega( n )$, from Theorem~\ref{thm:Upper_bound_on_SVDCAs_performance}, we satisfy the initial condition $\cardinality{ \itr{\EHset}{0} } = \smallOP{\frac{n}{\ln (T/n)}}$ of Propositions~\ref{prop:Error_on_the_set_of_misclassified_wellbehaved_states}, \ref{prop:Set_of_forlorn_states_shrinks}. Furthermore, since $\cardinality{ \itr{\mathcal{E}}{t} } = \cardinality{ \itr{\EHset}{t} } + \cardinality{ \itr{ \mathcal{E}_{\mathcal{H}^{\mathrm{c}}} }{t} }$, we conclude by iterating the bound in \refEquation{eqn:Improvement_of_misclassification_statistics_per_time_step} that after $t \in \naturalNumbersZero$ improvement steps the Cluster Improvement Algorithm misclassifies at most
\begin{align}
\cardinality{ \itr{\mathcal{E}}{t} }
&
= \bigOPbig{ 
\e{ \ln{n} - t \bigl( \ln{ \frac{T}{n} } - \ln{ \ln{ \frac{T}{n} } } \bigr) } 
+ \e{ \ln{n} - \frac{\alpha_{\min}^2}{720 \eta^3 \alpha_{\max}^2} \frac{T}{n} I(\alpha,p) } 
+ \e{ \ln{n} - \frac{T}{n} \ln{ \frac{T}{n} } } 
}
\end{align}
states. This completes the proof of \refTheorem{thm:Conditions_for_improvement_algorithm}.

\revisedPartEnd

\chapter{Acknowledgments} 

\jaron{We would like to thank our anonymous referees: their careful reading and suggestions have led to improved revisions of this work. We also} thank Pascal Lagerweij for having conducted the numerical experiment in \refSection{sec:Numerical_feasibility_region}.

\chapter*{Bibliography}

\AtNextBibliography{\normalsize} 
\setlength\bibitemsep{0pt}
\printbibliography[heading=none,notcategory=fullcited]

\newpage

\begin{supplement}[id=suppA]
\slink[url]{This concludes the supplementary material}
\setcounter{section}{0}
\renewcommand{\thesection}{SM\arabic{section}}

\chapter{Concentration inequalities for \glspl{BMC}}
\label{sec:Concentration_inequalities_for_BMCs}

Recall the notation $\hat{N}_{\mathcal{A},\mathcal{B}} = \sum_{ x \in \mathcal{A} } \sum_{ y \in \mathcal{B} } \hat{N}_{x,y}$ for any subsets $\mathcal{A}, \mathcal{B} \subseteq \mathcal{V}$. 

\begin{proposition}
\label{prop:Collection_of_relevant_concentration_inequalities_for_BMCs}
The following concentration inequalities hold for \glspl{BMC} for $T/n$ large enough (larger than a constant that does not depend on $n$, but on the BMC parameters):

\noindent
$-$ There exists an absolute constant $c_1>0$ such that for $k = 1, \ldots, K$,
\begin{equation}
\probabilityBig{ | \hat{N}_{\mathcal{V},\mathcal{V}_k} - N_{\mathcal{V},\mathcal{V}_k} | \ge c_{1} \sqrt{ T \ln{ \frac{T}{n} } } } 
\leq \frac{n^2}{T^2}.
\label{eq:Nseti}
\end{equation}

\noindent
$-$ There exists an absolute constant $c_2>0$ such that  for any $x \in \mathcal{V}$,
\begin{gather}
\probabilityBig{ \Bigl| \hat{N}_{\mathcal{V},x} - N_{\mathcal{V},x} \Bigr| \ge c_2 \frac{T}{n}\ln \frac{T}{n} } 
\leq \e{ - 2\frac{T}{n} \ln{ \frac{T}{n} } }, 
\label{eq:prop10-2}
\\
\probabilityBig{ \Bigl| \hat{N}_{\mathcal{V},x} - N_{\mathcal{V},x} \Bigr| \ge c_2 \sqrt{\frac{T}{n}} \ln{n} } 
\leq \frac{1}{n^2}. \label{eq:trim}
\end{gather}

\noindent
$-$ 
There exists an absolute constant $d_3 > 0$ such that for any $c_3 \geq 1$ and any subset $\mathcal{S} \subset \mathcal{V}$ of size $\cardinality{ \mathcal{S} } = \lfloor n \exp{ \bigl( - (T/n) \ln{(T/n)} \bigr) } \rfloor$
\begin{equation}
\probability{ | \hat{N}_{\mathcal{V},\mathcal{S}} - N_{\mathcal{V},\mathcal{S}} | \geq c_3 n } 
\leq \e{ - d_3 c_3 n }. 
\label{eq:trim2}
\end{equation}

\noindent
$-$ There exists an absolute constant $c_4>0$ such that for any $i, j \in \{ 1, \ldots, K \}$, and $x\in {\cal V}_i$,
\begin{equation}
\probabilityBig{  \sum_{t=1}^T f(\tilde{X}_{t})  \leq \frac{T}{2n}I(\alpha,p) } 
\leq \e{ - c_4 \frac{T}{n}I(\alpha,p) } 
\label{eq:KLsim}
\end{equation}
where $\tilde{X}_t = (X_t,X_{t+1})$ and
\begin{align*}
f(\tilde{X}_{t}) 
= & \sum_{k=1}^K \bigl( \indicator{ X_{t-1} = x , X_t \in \mathcal{V}_k } \ln{ \frac{ p_{i,k} }{ p_{j,k} } } + \indicator{ X_{t-1} \in \mathcal{V}_k, X_{t} = x } \ln{ \frac{ p_{k,i} \alpha_j }{ p_{k,j} \alpha_i } } \bigr)\\
& + {1\over n} \Bigl( \frac{ \indicator{ X_{t-1} \in \mathcal{V}_j } }{ \alpha_j } - \frac{ \indicator{ X_{t-1} \in \mathcal{V}_i } }{ \alpha_i } \Bigr).
\end{align*}
The constant $c_4$ can be chosen as $c_4 = \alpha_{\min}^2/ (720 \eta^3\alpha_{\max}^2)$. 

\noindent
$-$ 
There exists an absolute constant $c_5>0$ such that
\begin{equation}
\max_{ y \in \Gamma } \bigl\{ \hat{N}_{\Gamma,y} \vee \hat{N}_{y,\Gamma} \bigr\}
\leq c_5 \frac{T}{n} \ln{ \frac{T}{n} }
\quad
\textrm{with probability}
\quad
1 - n \e{ - \frac{T}{n} \ln{ \frac{T}{n} } }.
\label{eqn:Concentration_result_on_the_maximum_row_and_column_sum_over_Nhat_Gamma}
\end{equation}

\noindent
$-$ There exists an absolute constant $c_6>0$ such that 
\begin{equation}
\max_{\mathcal{A},\mathcal{B}\subset {\cal V}} \bigl| \hat{N}_{\mathcal{A},\mathcal{B}} - N_{\mathcal{A},\mathcal{B}} \bigr| 
< c_6 \sqrt{nT} 
\quad
\textrm{with probability}
\quad
1 - \e{-(4 - \ln{2} ) n}.
\label{eq:maxAB}
\end{equation}
\end{proposition}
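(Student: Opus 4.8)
The plan is to read each of the listed inequalities as a deviation bound for an additive functional of the \gls{BMC} $\process{X_t}{t}$, or of its edge chain $\tilde{X}_t=(X_t,X_{t+1})$ (which is again Markov with the same qualitative mixing behaviour), and to derive it from the Bernstein- and bounded-difference-type concentration inequalities for Markov chains of \cite{paulin_concentration_2015}. The key structural input is \refProposition{prop:Mixing_times_of_Markov_chains_with_transition_matrices_P_and_Q}: it shows $t_{\mathrm{mix}}=\bigO{1}$, equivalently a pseudo-spectral gap $\gamma_{\mathrm{ps}}$ bounded below by a quantity of order $1/\eta$, and $\gamma_{\mathrm{ps}}$ is precisely the parameter multiplying the variance term in Paulin's exponents. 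Means are read off from $\expectation{\hat{N}_{x,y}}=T\Pi_x P_{x,y}$ together with the equilibrium asymptotics of \refProposition{prop:Equilibrium_behavior_of_pi}, so that e.g. $\expectation{\hat{N}_{\mathcal{V},x}}=T\Pi_x=\Theta(T/n)$ and $\expectation{\hat{N}_{\mathcal{V},\mathcal{V}_k}}=\Theta(T)$. Whenever a bound must hold uniformly over a family of subsets of $\mathcal{V}$, the remaining ingredient is a union bound whose logarithmic cost is checked to be negligible against the per-subset exponent.

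For \refEquation{eq:Nseti}, \refEquation{eq:prop10-2} and \refEquation{eq:trim}: $\hat{N}_{\mathcal{V},\mathcal{V}_k}=\sum_t\indicator{X_{t+1}\in\mathcal{V}_k}$ and $\hat{N}_{\mathcal{V},x}=\sum_t\indicator{X_{t+1}=x}$ are additive functionals with range $[0,1]$ whose stationary variances, because $t_{\mathrm{mix}}=\bigO{1}$, are of the same order as their means; Paulin's Bernstein inequality is then sub-Gaussian for deviations up to the standard deviation and sub-exponential beyond, and inserting the deviation levels $\sqrt{T\ln(T/n)}$, $(T/n)\ln(T/n)$ and $\sqrt{T/n}\,\ln n$ yields exactly the tails $n^2/T^2$, $\e{-2(T/n)\ln(T/n)}$ and $n^{-2}$. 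Equation \refEquation{eq:trim2} is the small-mean instance of the same computation: for $\cardinality{\mathcal{S}}=\lfloor n\e{-(T/n)\ln(T/n)}\rfloor$ one has $N_{\mathcal{V},\mathcal{S}}=\Theta(T\cardinality{\mathcal{S}}/n)=\smallO{n}$, so for each fixed $\mathcal{S}$ the Bernstein bound gives $\probability{\cardinality{\hat{N}_{\mathcal{V},\mathcal{S}}-N_{\mathcal{V},\mathcal{S}}}\geq c_3 n}\leq\e{-d_3 c_3 n}$, and a union bound over the $\binom{n}{\cardinality{\mathcal{S}}}\leq n^{\cardinality{\mathcal{S}}}=\e{\smallO{n}}$ candidate sets (which is what makes the estimate usable for the random trimming set $\Gamma^{\mathrm{c}}$) only rescales $d_3$. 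For \refEquation{eqn:Concentration_result_on_the_maximum_row_and_column_sum_over_Nhat_Gamma} one uses $\hat{N}_{\Gamma,y}\leq\hat{N}_{\mathcal{V},y}$ and $\hat{N}_{y,\Gamma}\leq\hat{N}_{y,\mathcal{V}}$, applies the per-state estimate behind \refEquation{eq:prop10-2} (mean $\Theta(T/n)$, deviation $\Theta((T/n)\ln(T/n))$), and takes a union bound over the $n$ states. For \refEquation{eq:maxAB}, $\hat{N}_{\mathcal{A},\mathcal{B}}=\sum_t\indicator{X_t\in\mathcal{A},X_{t+1}\in\mathcal{B}}$ changes by at most $2$ if one coordinate of the trajectory is altered, so a bounded-differences inequality for Markov chains gives $\probability{\cardinality{\hat{N}_{\mathcal{A},\mathcal{B}}-N_{\mathcal{A},\mathcal{B}}}\geq c_6\sqrt{nT}}\leq 2\e{-c' c_6^2 n}$ for each fixed pair and some constant $c'>0$; a union bound over the $4^n$ pairs $(\mathcal{A},\mathcal{B})$ then fixes how large $c_6$ must be for the net exponent to reach $-(4-\ln 2)n$.

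The substantive part, which I expect to be the main obstacle, is \refEquation{eq:KLsim}. Write $S=\sum_{t=1}^T f(\tilde{X}_t)$; for each $t$ only the summand with $k=\sigma(X_t)$ contributes, so $\cardinality{f}$ is bounded by a constant $C_0$ of the form $\ln\eta+\ln(\alpha_{\max}/\alpha_{\min})+\bigO{1/n}$ (using the separability assumption $\max p_{a,b}/p_{a,c}\leq\eta$). Expanding $\expectation{f(\tilde{X}_t)}$ via $\expectation{\indicator{X_{t-1}=x,X_t\in\mathcal{V}_k}}=\Pi_x p_{i,k}$, $\expectation{\indicator{X_{t-1}\in\mathcal{V}_k,X_t=x}}=\sum_{y\in\mathcal{V}_k}\Pi_y P_{y,x}$ and $\expectation{\indicator{X_{t-1}\in\mathcal{V}_j}}=\sum_{y\in\mathcal{V}_j}\Pi_y$, and substituting the equilibrium asymptotics, gives $\expectation{S}=(T/n)I_{i,j}(\alpha,p)(1+\smallO{1})$, which by \refEquation{eqn:Ialphabetap} is at least $(T/n)I(\alpha,p)$ for $n$ large. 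The crucial quantitative estimate is $\variance{S}=\bigO{(T/n)I_{i,j}(\alpha,p)}$ with an explicit $(\eta,\alpha)$-dependent constant. This reflects the fact that $S$ is, to leading order, a log-likelihood ratio between two close \gls{BMC} models (the model with $x\in\mathcal{V}_i$ versus the model with $x\in\mathcal{V}_j$), so that by the Fisher-information/KL-variance identity its variance is comparable to its mean; making this precise requires pairing the genuinely random ``out-of-$x$'' and ``into-$x$'' parts with the nearly deterministic normalization-shift terms $\hat{N}_{\mathcal{V}_i,\mathcal{V}}/(\alpha_i n)$ and $\hat{N}_{\mathcal{V}_j,\mathcal{V}}/(\alpha_j n)$, after which the combination collapses to a $\chi^2$-type quantity that is itself $\Theta(I_{i,j})$, followed by bounding all log-ratios by $C_0$. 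Feeding $\expectation{S}$, the bound $C_0$, this variance estimate, and the lower bound of order $1/\eta$ on $\gamma_{\mathrm{ps}}$ from \refProposition{prop:Mixing_times_of_Markov_chains_with_transition_matrices_P_and_Q} into Paulin's Bernstein inequality for the lower tail $\probability{S\leq\expectation{S}-(T/2n)I(\alpha,p)}$ produces, since the required deviation is at least $\expectation{S}/2$, an exponent of order $\expectation{S}$, which is of order at least $(T/n)I(\alpha,p)$. The genuine difficulty is the honest bookkeeping: keeping the asymptotic evaluation of $\expectation{S}$, the $\chi^2$-comparison for $\variance{S}$, and the mixing-time factor all simultaneously tight enough to land on $c_4=\alpha_{\min}^2/(720\eta^3\alpha_{\max}^2)$, into which each of $\eta$, $\alpha_{\min}$ and $\alpha_{\max}$ enters through several places at once.
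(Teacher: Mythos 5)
Your plan matches the paper's proof almost exactly: every estimate except the last is obtained by feeding the indicated additive functional of $\process{X_t}{t}$ or of the edge chain $\process{\tilde{X}_t}{t}$ into Paulin's Bernstein inequality (Theorem~\ref{thm:Paulins_concentration_result}), with $\gamma_{\mathrm{ps}}$ bounded below via \refProposition{prop:Mixing_times_of_Markov_chains_with_transition_matrices_P_and_Q}, and for \refEquation{eq:KLsim} the variance-to-mean comparison is made precise exactly as you describe, through a KL-vs.-$\chi^2$ type bound (\refLemma{lem:KL-var}) applied after rewriting $I_{i,j}$ as a sum of KL divergences. Two small discrepancies are worth noting. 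First, the paper's statement of \refEquation{eq:trim2} is for each \emph{fixed} $\mathcal{S}$; the union bound over the $2^n$ candidate sets that you mention is not part of this proposition but is applied downstream (in the spectral argument for $F_1$), so your aside is correct but not needed here. Second, for \refEquation{eq:maxAB} you propose a McDiarmid/bounded-differences inequality for the trajectory, whereas the paper stays with Bernstein applied to $f(\tilde{X}_t)=\indicator{X_{t-1}\in\mathcal{A},\,X_t\in\mathcal{B}}$ with $C\le 1$ and $V_f=\bigO{1}$; both routes give the same $\e{-\Theta(c_6^2 n)}$ exponent (the bounded-differences version is arguably simpler as it avoids estimating $V_f$, while the Bernstein version keeps the whole proposition under one tool), and the paper's union bound is then over the $2^n$ subsets, picking $c_6$ large enough so the net exponent beats $(4-\ln 2)n$. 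Everything else, including the centering identity $\expectation{S}=(T/n)I_{i,j}(\alpha,p)$, the bound on $|f-\expectation{f}|$ by $\ln(\eta\alpha_{\max}/\alpha_{\min})+\bigO{1/n}$, and the mechanism by which $\eta$, $\alpha_{\min}$, $\alpha_{\max}$ assemble into $c_4$, tracks the paper's argument.
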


The concentration inequalities in \refProposition{prop:Collection_of_relevant_concentration_inequalities_for_BMCs} can all be shown using \cite[Thm.~3.4]{paulin_concentration_2015}, which we reproduce here for your convenience. \refTheorem{thm:Paulins_concentration_result} concerns a (non-reversible) stationary Markov chain $\{ X_t\}_{t\ge 0}$ with state space $\Omega$ and stationary distribution $\Pi$. Its pseudo spectral gap $\gamma_{\mathrm{ps}}$ is defined in \cite[Eq.~(3.3)]{paulin_concentration_2015}, and we give it below the theorem. 

\begin{theorem}[Paulin, 2015]
\label{thm:Paulins_concentration_result}
Let $f\in L^2 (\Pi)$ with $|f(x) - \mathbb{E}_\Pi (f)| \le C$ for every $x\in \Omega$ (for some constant $C>0$). Let $V_f$ be the variance of $f(X)$ when $X$ follows the stationary distribution $\Pi$. Then, for any $z>0$,
\begin{equation}
\probabilityBigWrt{ \Bigl| \sum_{t=1}^T f(X_t) - \expectationBigWrt{ \sum_{t=1}^T f(X_t) }{\Pi} \Bigr| \geq z }{\Pi}
\leq 2 \exp{ \Bigl( -\frac{z^2 \gamma_{ps}}{8(T + 1/\gamma_{ps})V_f + 20z C} \Bigr) }.
\label{thm:bernstein}
\end{equation}
\end{theorem}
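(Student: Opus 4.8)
Replacing $f$ by $f-\mathbb{E}_\Pi(f)$ changes neither $V_f$ nor the random variable $\sum_{t=1}^T f(X_t)-\mathbb{E}_\Pi[\sum_{t=1}^T f(X_t)]$, so I may assume $\mathbb{E}_\Pi f=0$, $\|f\|_\infty\le C$, and write $S_T=\sum_{t=1}^T f(X_t)$. The plan is the standard Chernoff route: establish a Bernstein-type bound on the log–moment generating function $\psi(\theta):=\log\mathbb{E}_\Pi[e^{\theta S_T}]$, of the form $\psi(\theta)\le c_1\theta^2(T+1/\gamma_{ps})V_f\big/\big(\gamma_{ps}(1-c_2 C\theta)\big)$ for $\theta\in(0,1/(c_2 C))$ and absolute constants $c_1,c_2$; then plug into $\mathbb{P}(S_T\ge z)\le\exp(-\theta z+\psi(\theta))$, optimize over $\theta$, and repeat the argument with $-f$ together with a union bound to get the two-sided statement and the prefactor $2$. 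The numerical constants $8$ and $20$ in the denominator are exactly what this optimization yields.

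The heart of the matter is the bound on $\psi$. Because $X_1\sim\Pi$, one has the operator identity $\mathbb{E}_\Pi[e^{\theta S_T}]=\langle D_\theta\mathbb{1},(PD_\theta)^{T-1}\mathbb{1}\rangle_\Pi$ with $D_\theta=\operatorname{diag}(e^{\theta f})$, so $\psi$ is governed by the growth of the perturbed operator $D_\theta^{1/2}PD_\theta^{1/2}$ on $L^2(\Pi)$. For a \emph{reversible} $P$ this operator is self-adjoint, and a Kato/Lezaud second-order perturbation expansion in $\theta$, with the spectral gap of $P$ controlling the resolvent, yields precisely such a Bernstein bound. The obstruction in the present (non-reversible) setting is that $P$ need not have a spectral gap in $L^2(\Pi)$ at all, which is exactly why the pseudo-spectral gap is introduced.

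To get around this I would invoke the pseudo-spectral gap as in its definition: let $k\ge 1$ attain $\gamma_{ps}=\gamma\big((P^*)^kP^k\big)/k$, where $P^*$ is the time reversal of $P$. The key lemma is the contraction estimate $\|P^k\|_{L^2_0(\Pi)\to L^2_0(\Pi)}\le\sqrt{1-k\gamma_{ps}}$, which follows from $\|P^kg\|_2^2=\langle g,(P^*)^kP^kg\rangle\le\lambda_2\big((P^*)^kP^k\big)\,\|g\|_2^2$ for $g\perp\mathbb{1}$. Next I would split $\{1,\dots,T\}$ into the $k$ residue classes modulo $k$, apply Hölder's inequality to dominate $\mathbb{E}_\Pi[e^{\theta S_T}]$ by a product over these classes of the MGFs of the lag-$k$ sub-sums, each generated by the reversibilizable kernel $P^k$ with effective gap $k\gamma_{ps}$; on each sub-sum I would run the self-adjoint second-order perturbation argument, using $\|f\|_\infty\le C$ to linearize $e^{k\theta f}$ and the contraction estimate to keep the error series summable. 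Accounting for the transient (non-stationary) part of the reversibilized chain produces the additive correction $1/\gamma_{ps}$, and reassembling the residue classes yields the claimed bound on $\psi$.

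The main obstacle is precisely this MGF/perturbation step: carrying out the reversibilization $(P^*)^kP^k$ without losing the right dependence on $V_f$, controlling the correlations between the $k$ (non-independent) residue classes, correctly isolating the transient term that becomes $1/\gamma_{ps}$, and doing all of this with explicit, moderate numerical constants over a usable range of $\theta$. Everything after the $\psi$-bound — the Chernoff step, the optimization over $\theta$, and the symmetrization and union bound — is routine.
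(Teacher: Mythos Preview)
The paper does not prove this theorem: it is quoted verbatim from \cite[Thm.~3.4]{paulin_concentration_2015} and used as a black box to derive the concentration inequalities in Proposition~\ref{prop:Collection_of_relevant_concentration_inequalities_for_BMCs}. There is therefore no ``paper's own proof'' to compare against.

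That said, your sketch is a faithful outline of how Paulin actually proves the result in the cited reference. The key ingredients you identify --- centering and the Chernoff method, the obstruction that a non-reversible $P$ need not have an $L^2(\Pi)$ spectral gap, the passage to the self-adjoint multiplicative reversibilization $(P^*)^kP^k$ at the $k$ achieving the supremum in the definition of $\gamma_{\mathrm{ps}}$, the contraction estimate on $L^2_0(\Pi)$, and a Lezaud-type second-order perturbation of the tilted operator --- are exactly the ones Paulin uses. You are also right that the difficulty lies in the MGF bound (controlling the interaction between residue classes and extracting the additive $1/\gamma_{\mathrm{ps}}$ transient term with the stated constants); the rest is routine. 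If you want to see the details, the relevant argument is in \S3 of Paulin's paper, building on Lezaud's perturbation bound for reversible chains.
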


In order to apply \refTheorem{thm:Paulins_concentration_result}, we also use \cite[Prop.~3.4]{paulin_concentration_2015}, which relates the pseudo spectral gap to the mixing time of the Markov chain: for any $\varepsilon \in [0,1)$,
\begin{equation}
\gamma_{\mathrm{ps}} 
\triangleq \max_{ i \geq 1 } \frac{ 1 - \lambda \bigl( ( P^* )^i P^i \bigr) }{i}
\geq \frac{1 - \varepsilon}{ t_{\mathrm{mix}}(\varepsilon/2) }
\quad
\textrm{with}
\quad 
P^*(x,y) 
\triangleq \frac{P(x,y)}{ \Pi(x) } \Pi(y).
\end{equation}
Next it is important to note that \refProposition{prop:Mixing_times_of_Markov_chains_with_transition_matrices_P_and_Q} also holds for the Markov chain induced by the \emph{transitions} of the \gls{BMC}. To see this, define $\tilde{X}_t \triangleq (X_{t-1}, X_{t})$ such that $\process{ \tilde{X}_t }{ t \geq 0}$ denotes a Markov process describing the transitions of $\process{ X_t }{ t \geq 0 }$. Let $\tilde{P}$ and $\tilde{\Pi}$ be the transition kernel and the stationary distribution of $\tilde{X}_t$, respectively. Note now that $d_{\mathrm{TV}}(P_{x,\cdot}^t, \Pi) = d_{\mathrm{TV}}(\tilde{P}_{(x,y),\cdot}^{t+1}, \tilde{\Pi})$ for all $x,y \in \mathcal{V}$, $t \in \naturalNumbersPlus$. The mixing time of $\process{ \tilde{X}_t }{ t \geq 0 }$, therefore, requires one more transition than the mixing time of  $\process{ X_t }{ t \geq 0 }$. As a consequence of \refProposition{prop:Mixing_times_of_Markov_chains_with_transition_matrices_P_and_Q}, more precisely from \eqref{eqn:Upper_bound_on_dTV_Pt_Pi_mixing}, we have for both of our Markov processes $\process{ X_t }{ t \geq 0 }$, $\process{ \tilde{X}_t }{ t \geq 0 }$ that 
\begin{align}
\gamma_{\mathrm{ps}} \ge \frac{1}{2 (t_{\mathrm{mix}}(1/4) +1)} \ge \frac{1}{2(4\eta+1)}. \label{eq:gammaps}
\end{align}
The last inequality is obtained by observing that $t_{\mathrm{mix}}(1/4)\le - \ln(4)/\ln(1-1/2\eta)\le 2\eta\ln(4)\le 4\eta$.

\revisedPartBegin

\paragraph{Proof of \refEquation{eq:Nseti}} Apply \refTheorem{thm:Paulins_concentration_result} with $f(x) = \indicator{ x \in \mathcal{V}_k }$. Hence $\sum_{t=1}^T f(X_{t}) \allowbreak = \hat{N}_{\mathcal{V},\mathcal{V}_k}$, $C \leq 1$, and $V_f \le \pi_k$. Then, for any constant $c>0$,
\begin{align}
\probability{ | \hat{N}_{\mathcal{V},\mathcal{V}_k} - N_{\mathcal{V},\mathcal{V}_k} | \geq c \sqrt{ T \ln{{T\over n}} } } 
\leq & 2\exp \left( -\frac{c^2}{16 (4\eta+1)}\ln \frac{T}{n} (1+o(1))\right).
\end{align}
The desired inequality is obtained by choosing, in the above inequality, $c=c_1$ such that $c_1^2 \ge 32(4\eta+1)$.

\paragraph{Proof of \refEquation{eq:prop10-2}} Let $f(\cdot) = \indicator{ \cdot  = x }$ such that $\sum_{t=1}^T f(X_{t}) = \hat{N}_{\mathcal{V},x}$, $C \leq 1$, and $V_f \le \pi_k / \alpha_k n$. From \refTheorem{thm:Paulins_concentration_result}, it follows that for any constant $c>0$ 
\begin{equation}
\probabilityBig{ \Bigl| \hat{N}_{\mathcal{V},x} - N_{\mathcal{V},x} \Bigr| \ge c \frac{T}{n}\ln \frac{T}{n} } 
\leq 2 \exp{ \Bigl( - \frac{c}{40(4\eta +1)}\frac{T}{n} \ln{ \frac{T}{n} } (1+o(1))\Bigr) }.
\end{equation}
The desired inequality is obtained by choosing, in the above inequality, $c=c_2$ such that $c_2 \ge 40(4\eta+1)$.

\paragraph{Proof of \refEquation{eq:trim}} We use the same function $f$ as that used in the proof of \refEquation{eq:prop10-2}. \refTheorem{thm:Paulins_concentration_result} yields: for any $c>0$,
\begin{equation}
\probabilityBig{ \Bigl| \hat{N}_{\mathcal{V},x} - N_{\mathcal{V},x} \Bigr| \ge c \sqrt{\frac{T}{n}} \ln{n} } 
\leq 2\exp{ \Bigl( -\frac{c^2 {T\over n}(\ln{n})^2\gamma_{\mathrm{ps}}}{{8T\over \alpha_{\min}n}+20c \sqrt{\frac{T}{n}} \ln{n} }(1+o(1)) \Bigr) }.
\label{eqn:Proof_of_Eq_trim}
\end{equation}
If $8 T / ( n \alpha_{\min} ) \geq 20c \sqrt{T/n} \ln{n}$, the r.h.s.\ in \refEquation{eqn:Proof_of_Eq_trim} is less than $2\exp{} \bigl( -\frac{c^2 \alpha_{\min}}{32(4\eta+1)} \allowbreak ( \ln{n} )^2( 1 + \smallO{1} )\bigr)$. If $8 T / (n \alpha_{\min} ) < 20c \sqrt{T/n} \ln{n}$, the r.h.s.\ in \refEquation{eqn:Proof_of_Eq_trim} is smaller than $2\exp{} \bigl( -\frac{c}{80(4\eta+1)} ( \ln{n} ) (1+ \smallO{1} \bigr)$. The desired inequality is obtained by choosing $c = c_2 > \max \{ \sqrt{32(4\eta+1)/\alpha_{\min}}, \allowbreak 160(4\eta+1) \}$.

\paragraph{Proof of \refEquation{eq:trim2}} Let $\mathcal{S} \subset \mathcal{V}$ be such that $\cardinality{S} = \lfloor n \exp(-\frac{T}{n} \ln{ \frac{T}{n}) } \rfloor$. Let $f(\cdot) = \indicator{ \cdot \in \mathcal{S} }$ such that $\sum_{t=1}^T f(X_{t}) = \hat{N}_{\mathcal{V},S}$, $C\leq 1$, and  $V_f = \bigO{ \exp{ ( - \frac{T}{n} \ln{ \frac{T}{n} } ) } }$. \refTheorem{thm:Paulins_concentration_result} implies that for any constant $c_3> 0$,
\begin{equation}
\probabilityBig{ \Bigl| \hat{N}_{\mathcal{V},\mathcal{S}} - N_{\mathcal{V},\mathcal{S}} \Bigr| \geq c_3 n } 
\leq 2 \exp{ \Bigl( - \frac{c_3}{40(4\eta +1)} n(1+o(1)) \Bigr) } .
\end{equation}
The result is proved for $d_3=1/(40(4\eta+1))$.

\paragraph{Proof of \refEquation{eq:KLsim}} We apply \refTheorem{thm:Paulins_concentration_result} to the Markov chain $\process{ \tilde{X}_t }{ t \geq 0}$ and the function $f$ defined in \refEquation{eq:KLsim} for some $i,j$ and $x\in {\cal V}_i$. Observe that we have $\mathbb{E}_{\Pi}[f(\tilde{X}_t)]= (1/n) I_{i,j}(\alpha,p)$ by definition of $f$, where $I_{i,j}(\alpha,p)$ has been introduced when defining $I(\alpha,p)$ in \refEquation{eqn:Ialphabetap}.

When applying \refTheorem{thm:Paulins_concentration_result}, one can easily check that 
\begin{equation}
C \le \frac{1}{n}I_{i,j}(\alpha,p) + \ln \frac{\eta \alpha_{\max}}{\alpha_{\min}}  + \frac{1}{\alpha_{\min}n}. \label{eq:Clower}
\end{equation}

Let us analyze $V_f$. Since $\mathrm{Var}( f(\tilde{X}_{t}) ) \le \expectation{f(\tilde{X}_{t})^2}$,
\begin{equation}
V_f 
\leq \sum_{k=1}^K \frac{1}{\alpha_i n } \Bigl( \pi_i p_{i,k} \Bigl( \ln{ \frac{ p_{i,k} }{ p_{j,k} } } \Bigr)^2 + \pi_k p_{k,i} \Bigl( \ln{ \frac{ p_{k,i} \alpha_j }{ p_{k,j} \alpha_i } } \Bigr)^2 \Bigr) + \bigObig{ \frac{1}{n^2} }.  \label{eq:Vfexp1} 
\end{equation}
Now we can express $I_{i,j}(\alpha,p)$ using KL divergences, and show that:
\begin{equation}
I_{i,j}(\alpha,p) = \frac{\pi_i}{\alpha_i} \mathrm{KL}(p_{i,\cdot } \| p_{j,\cdot }) +  n \mathrm{KL}( \tilde{p}_{\cdot ,i} \| \tilde{p}_{\cdot ,j} ) + \bigObig{ \frac{1}{n} }, \label{eq:Iab}
\end{equation}
where $\tilde{p}_{\cdot ,i} \triangleq [\pi_1\frac{ p_{1,i} }{\alpha_i n},\pi_1 (1- \frac{ p_{1,i} }{\alpha_i n} ),\dots ,\pi_K\frac{ p_{K,i} }{\alpha_i n},\pi_K (1- \frac{ p_{K,i} }{\alpha_i n} )]$ and we have used the fact that $| \ln{(1+x)} - x | \leq x^2$ for $x \in [-1/2,1/2]$. Using $\tilde{p}_{\cdot ,i}$, \eqref{eq:Vfexp1} is reformulated as follows:
\begin{equation}
V_f 
\leq \frac{\pi_i}{\alpha_i n}\sum_{k=1}^K p_{i,k} \Bigl( \ln{ \frac{ p_{i,k} }{ p_{j,k} } } \Bigr)^2 + \sum_{k'=1}^{2K} \tilde{p}_{k',i} \Bigl( \ln{ \frac{ \tilde{p}_{k',i} }{ \tilde{p}_{k',j} } } \Bigr)^2 + \bigObig{ \frac{1}{n^2} }. 
\label{eqn:Intermediate_equation_1_in_the_proof_of_Eq_KLsim}
\end{equation}
Next, in view of \refEquation{eq:Iab}, \refLemma{lem:KL-var} in \refSupplementaryMaterial{sec:The_KL_divergence_and_the_log_square_expression} allows us to upper bound the r.h.s. of \refEquation{eqn:Intermediate_equation_1_in_the_proof_of_Eq_KLsim} using $I_{i,j}(\alpha,p)$. Specifically, we deduce that:
\begin{equation}
V_f 
\leq \Bigl( \eta\frac{\alpha_{\max}}{\alpha_{\min}} \Bigr)^2 \frac{1}{n}I_{i,j}(\alpha,p) + \bigObig{ \frac{1}{n^2} }. \label{eq:Vflower}
\end{equation}

Putting \eqref{eq:gammaps}, \eqref{eq:Clower}, and \eqref{eq:Vflower} into Theorem~\ref{thm:Paulins_concentration_result}, we have
\begin{align}
& \probabilityBig{ \Bigl| \sum_{t=1}^T f(\tilde{X}_{t}) - \mathbb{E}_\pi (f(\tilde{X}_{t})) \Bigr| \geq \frac{T}{2n}I_{i,j}(\alpha,p) } 
\nonumber \\ &  
\leq 2 \exp{ \Bigl( - \frac{1}{ 16(4\eta+1) \bigl( 4\bigl({\eta \alpha_{\max}\over \alpha_{\min}}\bigr)^2 + 5 \ln{ \frac{ \eta \alpha_{\max} }{ \alpha_{\min} } } \bigr) } \frac{T}{n}I_{i,j}(\alpha,p)(1+o(1)) \Bigr) }.
\label{eqn:Intermediate_equation_2_in_the_proof_of_Eq_KLsim}
\end{align}

Note that \refEquation{eq:KLsim} is directly deduced from \refEquation{eqn:Intermediate_equation_2_in_the_proof_of_Eq_KLsim} and the fact that $I_{i,j}(\alpha,p)\ge I(\alpha,p)$. The constant $c_4$ can be chosen as $c_4 = \alpha_{\min}^2/ (720 \eta^3\alpha_{\max}^2)$ (observe that since $\eta\ge 1$, $\ln(\eta\alpha_{\max}/\alpha_{\min}) \le (\eta\alpha_{\max}/\alpha_{\min})^2$).

\revisedPartEnd

\paragraph{Proof of \refEquation{eqn:Concentration_result_on_the_maximum_row_and_column_sum_over_Nhat_Gamma}} This concentration result is a direct consequence of \refEquation{eq:prop10-2}.

\paragraph{Proof of \refEquation{eq:maxAB}} Consider any sets $\mathcal{A}, \mathcal{B} \subset \mathcal{V}$ and let $f( \tilde{X}_t ) = \indicator{ X_{t-1} \in \mathcal{A}, X_t \in \mathcal{B} }$. For this function, $C \leq 1$ and $V_f =  \bigO{ 1  }$. It follows from \refTheorem{thm:Paulins_concentration_result} that for any $c>0$:
\begin{equation*}
\probabilityBig{ \Bigl| \hat{N}_{\mathcal{A},\mathcal{B}} - N_{\mathcal{A},\mathcal{B}} \Bigr| \geq c \sqrt{nT} } 
\leq 2\exp{ ( -{c^2\over 16(4\eta+1)} n(1+o(1)) ) }.
\end{equation*}
The result is obtained by selecting $c=c_6$ such that ${c_6^2\over 16(4\eta+1)}\ge 4$, and by applying a union bound over all possible subsets of ${\cal V}$ (there are $2^n$ such subsets).

\chapter{Proofs of \refChapter{sec:The_block_Markov_chain}}

\section{Proof of Proposition~\ref{prop:Equilibrium_behavior_of_pi}}
\label{suppl:Asymptotic_equilibrium_behavior_of_a_BMC}

\begin{proof}
We first prove that $\vect{\pi}$ is a probability distribution. This follows by (i) definition of $\vect{\pi}$, (ii) symmetry of all states in the same cluster, and (iii) because $\vect{\Pi}$ is a probability distribution:
\begin{equation}
\sum_{k=1}^K \pi_k
\eqcom{i}= \sum_{k=1}^K \lim_{n \to \infty} \bar{\Pi}_k \cardinality{ \mathcal{V}_k }
\eqcom{ii}= \lim_{n \to \infty} \sum_{k=1}^K \sum_{ x \in \mathcal{V}_k } \Pi_x
= \lim_{n \to \infty} \sum_{ x \in \mathcal{V} } \Pi_x
\eqcom{iii}= 1.
\end{equation}

Next, we show that the balance equations hold. For $k = 1, \ldots, K$ it follows by symmetry of any two states $x,z \in \mathcal{V}_k$ that $\Pi_x = \Pi_z = \bar{\Pi}_k$. Hence for any $y \in \mathcal{V}_l$, by (iv) global balance
\begin{equation}
\Pi_y
= \bar{\Pi}_{l} 
\eqcom{iv}= \sum_{k=1}^K \sum_{ x \in \mathcal{V}_k } \Pi_x P_{x,y}
= \sum_{k=1}^K \bar{\Pi}_k ( \cardinality{ \mathcal{V}_k } - \indicator{ k = l } ) \frac{ p_{k,l} }{ \cardinality{ \mathcal{V}_{l} } - \indicator{ k = l } }. 
\end{equation}
Letting $n \to \infty$, we find that $\pi_{l} = \sum_{k=1}^K \pi_k p_{k,l}$ for all $k,l$. This completes the proof.
\end{proof}

\section{Proof of Proposition~\ref{prop:Mixing_times_of_Markov_chains_with_transition_matrices_P_and_Q}}
\label{sec:Proof_of_Mixing_times_of_Markov_chains_with_transition_matrices_P_and_Q}

\revisedPartBegin
\begin{proof}
We will use Dobrushin's ergodic coefficient, which is defined for any stochastic matrix $P$ by \cite[Definition~7.1]{bremaud_markov_1999}
\begin{equation}
\delta(P) 
\triangleq \tfrac{1}{2} \sup_{ x,y \in \mathcal{V} } \sum_{ z \in \mathcal{V} } | P_{x,z} - P_{y,z} |.
\end{equation}
Moreover, Dobrushin's coefficient satisfies 
$
\delta(P)
= 1 - \inf_{ x,y \in \mathcal{V} } \sum_{ z \in \mathcal{V} } \bigl( P_{x,z} \wedge P_{y,z} \bigr)
$ \cite[Eq.~(7.3)]{bremaud_markov_1999}. Now recall our assumption $\exists_{0 < \eta \neq 1} : \max_{a,b,c} \{ p_{b,a} / p_{c,a}, \allowbreak p_{a,b} / p_{a,c} \} \leq \eta$, which implies that 
\begin{align}
\delta(P) 
&
< 1 - \inf_{ x,y \in \mathcal{V} } \sum_{ z \in \mathcal{V} } \bigl( P_{x,z} \wedge P_{y,z} \bigr)
\nonumber \\ &
\leq 1 - \inf_{ x \in \mathcal{V} }  \sum_{ z \in \mathcal{V} } \frac{1}{\eta} P_{x,z}   \frac{\alpha_{\min}n -2}{\alpha_{\min}n}
= 1 - \frac{1}{\eta}\frac{\alpha_{\min}n -2}{\alpha_{\min}n}.
\end{align}
We deduce that when $n\ge 4/\alpha_{\min}$, $\delta(P) < 1 - 1 / (2\eta)$ (recall that $\eta > 1$ by definition).

Next \cite[Thm.~7.2]{bremaud_markov_1999} gives us the convergence rate in terms of Dobrushin's coefficient. Specifically,
\begin{equation}
d_{\mathrm{TV}}( P_{x,\cdot}^t, \Pi )
\leq \bigr( \delta(P) \bigr)^t d_{\mathrm{TV}}( P_{x,\cdot}^0, \Pi )
\quad
\textrm{for}
\quad
x \in \mathcal{V}.
\label{eqn:Upper_bound_on_dTV_Pt_Pi_distance_measure_as_function_of_t}
\end{equation}
As a consequence
\begin{equation}
d_{\mathrm{TV}}( {P}^t_{x,\cdot}, \vect{\Pi} )
\leq \varepsilon
\quad
\textrm{whenever}
\quad 
t \geq \frac{ \ln{\varepsilon} }{ \ln{ \bigl(1-\frac{1}{2\eta} \bigr) } }.
\label{eqn:Upper_bound_on_dTV_Pt_Pi_distance_measure_as_function_of_varepsilon}
\end{equation}
Therefore,
\begin{equation}
t_{\mathrm{mix}}(\varepsilon) \le \frac{ \ln{\varepsilon} }{ \ln{ \bigl(1-\frac{1}{2\eta} \bigr) } }.
\label{eqn:Upper_bound_on_dTV_Pt_Pi_mixing}
\end{equation}
This completes the proof.
\end{proof}
\revisedPartEnd

\chapter{Proofs of \refChapter{sec:The_information_bound_and_the_change_of_measure}}\label{ch5:proofs}

\section{$Q$ is a stochastic matrix}
\label{sec:Q_is_a_stochastic_matrix}

Observe that for $x \in \mathcal{V} \backslash \{ \Vstar \}$,
\begin{align}
\sum_{y \in \mathcal{V}} Q_{x,y} 
&
= \frac{ q_{\omega(x),\star} }{ n } + \sum_{ y \in \mathcal{W}_{\omega(x)} \backslash \{ x \} } \frac{ q_{\omega(x),\omega(x)} }{ \cardinality{ \mathcal{W}_{ \omega(x) } } - 1 } + \sum_{k=1}^K \indicator{ k \neq \omega(x) } \sum_{ y \in \mathcal{W}_{k} } \frac{ q_{\omega(x),k} }{ \cardinality{ \mathcal{W}_{k} } } 
\nonumber \\ &
= \frac{ q_{\omega(x),\star} }{ n } + \sum_{k=1}^K q_{\omega(x),k}
\eqcom{\ref{eqn:Coupling_of_the_parameters_within_q_to_p}}
= \frac{ q_{\omega(x),\star} }{ n } + \sum_{k=1}^K \Bigl( p_{\omega(x),k} - \frac{q_{\omega(x),\star}}{ K n } \Bigr)
= 1.
\end{align}
Similarly for $x = \Vstar$:
$
\sum_{y \in \mathcal{V}} Q_{\Vstar,y} 
= \sum_{k=1}^K \sum_{ y \in \mathcal{W}_{k} } \frac{ q_{\star,k} }{ \cardinality{ \mathcal{W}_{k} } } 
= \sum_{k=1}^K q_{\star,k}= 1
$.

\section{Proof of Proposition~\ref{prop:Equilibrium_behavior_of_piQ}}
\label{sec:Proof_of_Equilibrium_behavior_of_piQ}

We first show that $(\itr{\gamma_\star}{0}, \itr{\gamma_1}{0}, \ldots, \itr{\gamma_K}{0} )$ is a probability distribution. We have: 
\begin{align}
\itr{\gamma_{\star}}{0}+\sum_{k=1}^K \itr{\gamma_k}{0} 
&
= \lim_{n \to \infty} \Bigl( \sum_{k=1}^K \cardinality{ \mathcal{W}_k } \bar{\Pi}_k^{(Q)} + \Pi_{\Vstar}^{(Q)} \Bigr)
= \lim_{n \to \infty} \Bigl( \sum_{k=1}^K \sum_{ x \in \mathcal{W}_k } \Pi_{x}^{(Q)} + \Pi_{\Vstar}^{(Q)} \Bigr)
\nonumber \\ &
= \lim_{n \to \infty} \sum_{ x \in \mathcal{V} } \Pi_x^{(Q)} = 1,
\end{align}
where the last equality stems from the fact that $\Pi^{(Q)}$ is a probability distribution. Next, we have:
\begin{align}
\itr{\gamma_{\star}}{0}
&
= \lim_{n \to \infty} \Pi_{\Vstar}^{(Q)} = \lim_{n \to \infty} \sum_{ x \in \mathcal{V} } \Pi_{x}^{(Q)} Q_{x,\Vstar}
\eqcom{\ref{eqn:Definition_of_Qs_entries}}= \lim_{n \to \infty} \sum_{k=1}^K \sum_{ x \in \mathcal{W}_k } \bar{\Pi}_{k}^{(Q)} \frac{ q_{k,\star} }{ n }
\nonumber \\ &
= \lim_{n \to \infty} \sum_{k=1}^K \itr{\gamma_k}{0} \frac{ q_{k,\star} }{ n } 
= 0,
\label{eqn:Limit_of_gamma_star_0}
\end{align}
where the second equality stems from the global balance equations for $\Pi^{(Q)}$. Now we establish that the vector $\transpose{ ( \itr{\gamma_1}{0}, \ldots, \itr{\gamma_K}{0} ) }$ satisfies the balance equations $\vectInLineTransposed{ \itr{\gamma_1}{0}, \ldots, \itr{\gamma_K}{0} } p = \vectInLineTransposed{ \itr{\gamma_1}{0}, \ldots, \itr{\gamma_K}{0} }$. For $l = 1, \ldots, K$
\begin{align}
\itr{\gamma_l}{0}
&
= \lim_{n \to \infty} \cardinality{ \mathcal{W}_l } \bar{\Pi}_l^{(Q)}
= \lim_{n \to \infty} \sum_{ y \in \mathcal{W}_l } \Pi_y^{(Q)}
\eqcom{ii}= \lim_{n \to \infty} \sum_{ y \in \mathcal{W}_l } \sum_{ x \in \mathcal{V} } \Pi_x^{(Q)} Q_{x,y}
\label{eqn:Global_balance_for_gammal0}
\\ &
\eqcom{\ref{eqn:Definition_of_Qs_entries}}= \lim_{n \to \infty} \sum_{ y \in \mathcal{W}_l } \Bigl( \sum_{k = 1}^K \sum_{ x \in \mathcal{W}_k \backslash \{ y \} } \bar{\Pi}_k^{(Q)} \frac{ q_{k,l} }{ \cardinality{ \mathcal{W}_l } - \indicator{ k = l } } + \Pi_{\Vstar}^{(Q)} \frac{ q_{\star,l} }{ \cardinality{ \mathcal{W}_l } } \Bigr)
\nonumber \\ &
= \lim_{n \to \infty} \Bigl( \sum_{k = 1}^K \bigl( \cardinality{ \mathcal{W}_k} - \indicator{ k = l } \bigr) \bar{\Pi}_k^{(Q)} \frac{ \cardinality{ \mathcal{W}_l } }{ \cardinality{ \mathcal{W}_l } - \indicator{ k = l } } q_{k,l} + \Pi_{\Vstar}^{(Q)} q_{\star,l} \Bigr)
\nonumber \\ &
\eqcom{\ref{eqn:Coupling_of_the_parameters_within_q_to_p}}= \sum_{k=1}^K \itr{\gamma_k}{0} p_{k,l}.
\nonumber
\end{align}
This proves the first two assertions of the proposition. The proof of the third assertion is similar to that of \refEquation{eqn:Limit_of_gamma_star_0}. More precisely: 
\begin{align}
\itr{\gamma_{\star}}{1}
&
= \lim_{n \to \infty} n\Pi_{\Vstar}^{(Q)} = \lim_{n \to \infty} n\sum_{ x \in \mathcal{V} } \Pi_{x}^{(Q)} Q_{x,\Vstar}
\eqcom{\ref{eqn:Definition_of_Qs_entries}}= \lim_{n \to \infty} n \sum_{k=1}^K \sum_{ x \in \mathcal{W}_k } \bar{\Pi}_{k}^{(Q)} \frac{ q_{k,\star} }{ n }
\nonumber \\ &
=\sum_{k=1}^K \itr{\gamma_k}{0} q_{k,\star}.
\label{eqn:Global_balance_for_gammaStar1}
\end{align}
Together with the first assertion, this completes the proof.

\section{Proof of Lemma~\ref{lem:Existence_of_point_qbar}}
\label{suppl:Existence_of_point_qbar}

Define for $c \in \{ 1, \ldots, K \}$:
$
q_c 
= \bigl( 0, {p_{1,c}} / {\alpha_c}, \allowbreak \ldots, {p_{K,c}} / {\alpha_c}; \allowbreak 0, p_{c,1}, \ldots, p_{c,K} \bigr) 
\in \mathcal{Q}
$. Let $a \neq b$. The points $q_a$, $q_b$ satisfy: $I_a( q_a || p ) = I_b( q_b || p ) = 0$, and $0< I(\alpha,p) \leq I_a( q_b || p ) < \infty$ and $0< I(\alpha,p) \leq I_b( q_a || p ) < \infty$ (by definition of $I(\alpha,p)$). Consider the function $g:[0,1]\mapsto \mathbb{R}$ defined by 
$
g(\lambda) = I_a( \lambda q_a+(1-\lambda) q_b  || p ) - I_b( \lambda q_a+(1-\lambda) q_b  || p )
$.
This function $g$ is continuous and from the aforementioned properties of $q_a$ and $q_b$, we have $g(0)>0$ and $g(1)<0$. The intermediate value theorem implies that there exists $\lambda \in (0,1)$ such that $I_a( \lambda q_a + (1-\lambda) q_b || p )
= I_b( \lambda q_a + (1-\lambda) q_b || p )$.
Hence ${\cal Q}(a,b)\neq \emptyset$.

\section{Proof of Proposition~\ref{prop:Lower_bound_on_the_number_of_misclassifications_for_a_class_of_Q_transition_matrices}}
\label{suppl:Proof__Lower_bound_on_the_number_of_misclassifications_for_a_class_of_change_of_measures}

\revisedPartBegin

\paragraph{Proof of Proposition \ref{prop:Lower_bound_on_the_number_of_misclassifications_for_a_class_of_Q_transition_matrices} (i)}
Consider a fixed $n$. We have a \gls{BMC} with true parameters $\{ p_{k,l} \}$, and $\{ \cardinality{ \mathcal{V}_k } \}$. We pick $V^*$ uniformly at random from clusters $a \neq b$ and construct $Q$. Recall that $Q$ is therefore random and depends on $V^*$, $\{ p_{k,l} \}$, and $\{ \cardinality{ \mathcal{V}_k } \}$. We now give a lower bound on the probability that a $(\varepsilon,1)$-locally good algorithm at $(\alpha,p)$ that does not have access to  $\{ p_{k,l} \}$ and $\{ \cardinality{ \mathcal{V}_k } \}$ misclassifies $V^*$.

Consider such algorithm. Suppose we give the algorithm an infinitely long sample path generated under $Q$, i.e., $T \to \infty$. As a consequence, this algorithm has access to the elements $Q_{x,y}$ for all $x,y \in \mathcal{V}$ (from this it can determine which state is $V^*$). We furthermore give the algorithm the information that the two clusters from which we randomly picked are $a, b$. The algorithm must now consider the following two hypotheses:
\begin{enumerate}
\item[--] $\mathcal{H}_a = 
\bigl\{ 
\hat{\sigma}(V^*) = a, 
\hat{p}_{\sigma(x),\sigma(y)} = ( \cardinality{ \mathcal{V}_{\sigma(y)} } - \indicator{ \sigma(y) = a } - \indicator{ x \neq V^*, \sigma(x) = \sigma(y) } ) Q_{x,y} + {Q_{x,V^*}} / {K} \, \forall_{ x \neq y, y \neq V^* }
\bigr\}$, 
\item[--] $\mathcal{H}_b = 
\bigl\{ 
\hat{\sigma}(V^*) = b, 
\hat{p}_{\sigma(x),\sigma(y)} = ( \cardinality{ \mathcal{V}_{\sigma(y)} } - \indicator{ \sigma(y) = b } - \indicator{ x \neq V^*, \sigma(x) = \sigma(y) } ) Q_{x,y} + {Q_{x,V^*}} / {K} \, \forall_{ x \neq y, y \neq V^* }
\bigr\}$.
\end{enumerate}
Recall that because the algorithm is $(\varepsilon,1)$-locally good at $(\alpha,p)$, it is able to cluster well in both (slightly different) \glspl{BMC} in hypotheses $\mathcal{H}_a$ and $\mathcal{H}_b$. Note furthermore that both of these \glspl{BMC} give the exact same $Q$-matrix, regardless of whether truly $\sigma(V^*) = a$ or $b$. Therefore, since $\mathcal{H}_a$ held w.p.\ $\alpha_a / (\alpha_a + \alpha_b)$ and $\mathcal{H}_b$ otherwise, this $(\varepsilon,1)$-locally good algorithm at $(\alpha,p)$ will misclassify $V^*$ with probability at least
$
\probabilityWrt{ V^* \in \mathcal{E} }{ \Psi }
\geq ( \alpha_a \wedge \alpha_b ) / ( \alpha_a + \alpha_b )
\triangleq \delta 
> 0
$.
\revisedPartEnd

\paragraph{Proof of Proposition \ref{prop:Lower_bound_on_the_number_of_misclassifications_for_a_class_of_Q_transition_matrices} (ii)} 
Select a state $\Vstar$ uniformly at random from any two specific clusters $a, b \in \{ 1, \ldots, K \}$, $a \neq b$. We are going to bound
\begin{equation}
\probabilityWrt{ L \leq f(n,T) }{\Psi}
= \probabilityWrt{ L \leq f(n,T), \Vstar \in \mathcal{E} }{\Psi} + \probabilityWrt{ L \leq f(n,T), \Vstar \not\in \mathcal{E} }{\Psi}.
\label{eqn:Probability_of_L_leq_f_wrt_Q}
\end{equation}
for any function $f : \naturalNumbersPlus^2 \to \realNumbers$.

The first term of \refEquation{eqn:Probability_of_L_leq_f_wrt_Q} can be bounded using our change of measure formula \refEquation{eqn:Definition_of_log_likelihood_L}. Namely,
\begin{align}
\probabilityWrt{ L \leq f(n,T), \Vstar \in \mathcal{E} }{\Psi}
&
\eqcom{\ref{eqn:Definition_of_log_likelihood_L}}\leq \e{f(n,T)} \probabilityWrt{ L \leq f(n,T), \Vstar \in \mathcal{E} }{\Phi}
\nonumber \\ &
\leq \e{f(n,T)} \probabilityWrt{ \Vstar \in \mathcal{E} }{\Phi}.
\label{eqn:Intermediate_bound_on_event_L_less_f_and_vStar_being_classified_wrt_Q}
\end{align}
Because $\Vstar$ is selected from $\mathcal{V}_a \cup \mathcal{V}_b$ uniformly at random, we have by \refLemma{lem:Relation_between_unif_selection_from_two_clusters_vs_all_clusters}, see \refAppendixSection{sec:Properties_of_uniform_vertex_selection}, that for any $V$ selected uniformly at random from \emph{all} vertices $\mathcal{V}$,
\begin{equation}
\probabilityWrt{ \Vstar \in \mathcal{E} }{\Phi}
= \probabilityWrt{ V \in \mathcal{E} | V \in \mathcal{V}_a \cup \mathcal{V}_b }{\Phi}
= \frac{ \probabilityWrt{ V \in \mathcal{E}, V \in \mathcal{V}_a \cup \mathcal{V}_b }{\Phi} }{ \probabilityWrt{ V \in \mathcal{V}_a \cup \mathcal{V}_b }{\Phi} }
\leq \frac{ \probabilityWrt{ V \in \mathcal{E} }{\Phi} }{ \alpha_a + \alpha_b }.
\end{equation}
Subsequently by \refLemma{lem:Relation_between_expected_nr_of_misclassified_vertices_and_probability_of_a_uniformly_selected_vertex_being_misclassified}, see \refAppendixSection{sec:Properties_of_uniform_vertex_selection},
\begin{equation}
\probabilityWrt{ \Vstar \in \mathcal{E} }{\Phi} 
\leq \frac{ \expectationWrt{ \cardinality{\mathcal{E}} }{\Phi} }{ ( \alpha_a + \alpha_b ) n }.
\label{eqn:Bound_on_vStar_being_classified_wrt_P}
\end{equation}
Substituting \refEquation{eqn:Bound_on_vStar_being_classified_wrt_P} into \refEquation{eqn:Intermediate_bound_on_event_L_less_f_and_vStar_being_classified_wrt_Q}, we obtain
\begin{equation}
\probabilityWrt{ L \leq f(n,T), \Vstar \in \mathcal{E} }{\Psi}
\leq \e{f(n,T)} \frac{ \expectationWrt{ \cardinality{\mathcal{E}} }{\Phi} }{ ( \alpha_a + \alpha_b ) n }.
\label{eqn:Intermediate_bound_on_the_first_term}
\end{equation}
The second term of \refEquation{eqn:Probability_of_L_leq_f_wrt_Q} can be bounded using Proposition \ref{prop:Lower_bound_on_the_number_of_misclassifications_for_a_class_of_Q_transition_matrices} (i):
\begin{equation}
\probabilityWrt{ L \leq f(n,T), \Vstar \not\in \mathcal{E} }{\Psi}
\leq \probabilityWrt{ \Vstar \not\in \mathcal{E} }{\Psi}
= 1 - \probabilityWrt{ \Vstar \in \mathcal{E} }{\Psi} 
\leq 1 - \delta < 1. 
\label{eqn:Intermediate_bound_on_the_second_term} 
\end{equation}
Now using \refEquation{eqn:Intermediate_bound_on_the_first_term} and \refEquation{eqn:Intermediate_bound_on_the_second_term} to bound \refEquation{eqn:Probability_of_L_leq_f_wrt_Q}, we arrive at
\begin{equation}
\probabilityWrt{ L \leq f(n,T) }{\Psi}
\leq \e{f(n,T)} \frac{ \expectationWrt{ \cardinality{\mathcal{E}} }{\Phi} }{ ( \alpha_a + \alpha_b ) n } + 1 - \delta.
\label{eqn:Bound_on_probability_of_L_leq_f_wrt_Q}
\end{equation}

We now prepare for an application of Chebyshev's inequality. First note using \refEquation{eqn:Bound_on_probability_of_L_leq_f_wrt_Q} that
\begin{equation}
\probabilityWrt{ L \geq f(n,T) }{\Psi}
= 1 - \probabilityWrt{ L \leq f(n,T) }{\Psi}
\geq \delta - \e{f(n,T)} \frac{ \expectationWrt{ \cardinality{\mathcal{E}} }{\Phi} }{ ( \alpha_a + \alpha_b ) n }.
\label{eqn:Lower_bound_on_the_probability_of_L_geq_f_wrt_Q}
\end{equation}
Specify $f(n,T) = \ln{\bigl( \delta/2 \bigr)} + \ln{ \bigl( (\alpha_a + \alpha_b) n / \expectationWrt{ \cardinality{\mathcal{E}} }{\Phi} \bigr) }$, so that
\begin{equation}
\probabilityBigWrt{ L \geq \ln{ \frac{ \delta }{2} } + \ln{ \frac{(\alpha_a + \alpha_b) n }{ \expectationWrt{ \cardinality{\mathcal{E}} }{\Phi} } } }{\Psi}
\geq \frac{\delta}{2}.
\end{equation}
Since $\delta > 0$, we can apply Chebyshev's inequality and \refEquation{eqn:Lower_bound_on_the_probability_of_L_geq_f_wrt_Q} to conclude
\begin{equation}
\probabilityBigWrt{ L \geq \expectationWrt{L}{\Psi} + \sqrt{ \frac{2}{\delta} } \sqrt{ \varianceWrt{L}{\Psi} } }{\Psi}
\leq \frac{\delta}{2} 
\leq \probabilityBigWrt{ L \geq \ln{ \frac{ \delta }{2} } + \ln{ \frac{(\alpha_a + \alpha_b) n }{ \expectationWrt{ \cardinality{\mathcal{E}} }{\Phi} } } }{\Psi}.
\end{equation}
Comparing the events in the l.h.s. and r.h.s. of the above inequalities, we then must have
\begin{equation}
\ln{ \frac{ \delta }{2} } + \ln{ \frac{(\alpha_a + \alpha_b) n }{ \expectationWrt{ \cardinality{\mathcal{E}} }{\Phi} } }
\leq \expectationWrt{L}{\Psi} + \sqrt{ \frac{2}{\delta} } \sqrt{ \varianceWrt{L}{\Psi} }.
\end{equation}
Rearranging gives \refEquation{eqn:Change_of_measure_lower_bound} with $C = (\alpha_a + \alpha_b ) \delta / 2 > 0$. This completes the proof.

\section{Proof of Proposition~\ref{prop:Leading_order_behavior_of_expectation_of_L_wrt_Q}}
\label{suppl:Proof__Leading_order_behavior_of_expectationLsigmaVstarWrtQ}

Define $R_{x,y} \triangleq \ln{ ( Q_{x,y} / P_{x,y} ) }$ for notational convenience: we refer to \refAppendixSection{sec:Asymptotic_comparisons_between_P_and_Qs_entries} for its asymptotic behavior. Since the Markov chain is started at equilibrium,
\begin{equation}
\expectationWrt{ L | \sigma(\Vstar) }{\Psi}
= T \sum_{x \in \mathcal{V}} \sum_{y \in \mathcal{V}} \Pi_{x}^{(Q)} Q_{x,y} \ln{R_{x,y}}.
\label{eqn:Expectation_of_L_in_terms_of_equilibrium_jumps}
\end{equation}
The largest individual contributions to the expectation in \refEquation{eqn:Expectation_of_L_in_terms_of_equilibrium_jumps} are by jumps to and from $\Vstar$, since this is where the change of measure is modified most. Jumps not involving $\Vstar$ contribute less individually, but there are many of such jumps. We therefore separate out the jumps to and from $\Vstar$, i.e.,
\begin{align}
\frac{ \expectationWrt{L | \sigma(\Vstar) }{\Psi} }{T}
&
= \sum_{ y \neq \Vstar } \Pi^{(Q)}_{\Vstar} Q_{\Vstar,y} \ln{ R_{\Vstar,y} }
+ \sum_{ x \neq \Vstar } \Pi^{(Q)}_x Q_{x,\Vstar} \ln{ R_{x,\Vstar} } 
\nonumber \\ &
\phantom{=} + \sum_{ x,y \neq \Vstar } \Pi_x^{(Q)} Q_{x,y} \ln{ R_{x,y} }.
\label{eqn:EQL_given_Vstar_asymptotic_calculation__Termsplit}
\end{align}
We now calculate the leading order behavior of each term.

For the first term in \refEquation{eqn:EQL_given_Vstar_asymptotic_calculation__Termsplit} we have by (i) \refLemma{lem:Leading_order_behavior_of_Qxy_Pxy} in \refAppendixSection{sec:Asymptotic_comparisons_between_P_and_Qs_entries} and $Q$'s definition, see \refEquation{eqn:Definition_of_Qs_entries}, and (ii) \refProposition{prop:Equilibrium_behavior_of_piQ},
\begin{align}
\sum_{ y \neq \Vstar } \Pi^{(Q)}_{\Vstar} Q_{\Vstar,y} \ln{ R_{\Vstar,y} }
&
\eqcom{i}\sim \sum_{k=1}^K \sum_{ y \in \mathcal{W}_k } \Pi^{(Q)}_{\Vstar} \frac{ q_{\star,k} }{ \cardinality{ \mathcal{W}_k } } \ln{ \frac{ q_{\star,\omega(y)} }{ p_{\sigma(\Vstar),\omega(y)} } }
\nonumber \\ &
\eqcom{ii}\sim \frac{1}{n} \sum_{k=1}^K \itr{\gamma_\star}{1} q_{\star,k} \ln{ \frac{ q_{\star,k} }{ p_{\sigma(\Vstar),k} } }.
\label{eqn:EQL_given_Vstar_asymptotic_calculation__Term1}
\end{align}
The second term in \refEquation{eqn:EQL_given_Vstar_asymptotic_calculation__Termsplit} handles similarly:
\begin{align}
\sum_{ x \neq \Vstar } \Pi^{(Q)}_x Q_{x,\Vstar} \ln{ R_{x,\Vstar} } 
&
\eqcom{i}\sim \sum_{k=1}^K \sum_{ x \in \mathcal{W}_k \backslash \{ \Vstar \} } \bar{\Pi}_k^{(Q)} \frac{ q_{k,\star} }{ n } \ln{ \frac{ q_{k,\star} \alpha_{\sigma(\Vstar)} }{ p_{k,\sigma(\Vstar)} } }
\nonumber \\ &
\eqcom{ii}\sim \frac{1}{n} \sum_{k=1}^K \itr{\gamma_k}{0} q_{k,\star} \ln{ \frac{ q_{k,\star} \alpha_{\sigma(\Vstar)} }{ p_{k,\sigma(\Vstar)} } }.
\label{eqn:EQL_given_Vstar_asymptotic_calculation__Term2}
\end{align}
The third term in \refEquation{eqn:EQL_given_Vstar_asymptotic_calculation__Termsplit} requires (iii) a Taylor expansion of $\ln{(1+x)} = x + \bigO{x^2}$ for $x \approx 0$ and (iv) the balance equations \refEquation{eqn:Global_balance_for_gammal0}--\refEquation{eqn:Global_balance_for_gammaStar1}, so that
\begin{align}
\sum_{ x,y \neq \Vstar } \Pi_x^{(Q)} Q_{x,y} \ln{ R_{x,y} }
&
\eqcom{iii}\sim \sum_{k,l \neq \star} \sum_{ x \in \mathcal{W}_k } \sum_{ y \in \mathcal{W}_l \backslash \{ x \} } \bar{\Pi}_k^{(Q)} \frac{ q_{k,l} }{ \cardinality{ \mathcal{W}_{l} } - \indicator{ k = l } } \times
\nonumber \\
\cdots \times \frac{1}{n} \Bigl( \frac{ \indicator{ l = \sigma(\Vstar) } }{ \alpha_{l} } - \frac{ q_{k,\star} }{ p_{k,l} K } \Bigr)
&
\eqcom{\ref{eqn:Coupling_of_the_parameters_within_q_to_p}} \sim \frac{1}{n} \sum_{k=1}^K \itr{\gamma_k}{0} \Bigl( \frac{ q_{k,\sigma(\Vstar)} }{ \alpha_{\sigma(\Vstar)} } - \sum_{l=1}^K \frac{1}{K} q_{k,\star} \Bigr)
\nonumber \\ &
\eqcom{iv}= \frac{1}{n} \Bigl( \frac{ \itr{\gamma_{\sigma(\Vstar)}}{0} }{ \alpha_{\sigma(\Vstar)} } - \itr{\gamma_\star}{1} \Bigr).
\label{eqn:EQL_given_Vstar_asymptotic_calculation__Term3}
\end{align}

Substituting \refEquation{eqn:EQL_given_Vstar_asymptotic_calculation__Term1}--\refEquation{eqn:EQL_given_Vstar_asymptotic_calculation__Term3} into \refEquation{eqn:EQL_given_Vstar_asymptotic_calculation__Termsplit} gives
\begin{align}
\expectationWrt{ L | \sigma(\Vstar) }{\Psi}
&
\sim \frac{T}{n} \sum_{k=1}^K \Bigl( \itr{\gamma_\star}{1} q_{\star,k} \ln{ \frac{ q_{\star,k} }{ p_{\sigma(\Vstar),k} } } + \itr{\gamma_k}{0} q_{k,\star} \ln{ \frac{ q_{k,\star} \alpha_{\sigma(\Vstar)} }{ p_{k,\sigma(\Vstar)} } } \Bigr) 
\nonumber \\ &
\phantom{\sim} + \frac{T}{n} \Bigl( \frac{ \itr{\gamma_{\sigma(\Vstar)}}{0} }{ \alpha_{\sigma(\Vstar)} } - \itr{\gamma_\star}{1} \Bigr).
\end{align}
By now applying \refProposition{prop:Equilibrium_behavior_of_piQ}, we complete the proof.

\section{Proof of Proposition~\ref{prop:Variance_is_small}}
\label{suppl:Proof__Asymptotic_negligibilit_of_VarLwrtQ}

Define $L_t \triangleq \ln{ ( Q_{X_{t-1},X_t} / P_{X_{t-1},X_t} ) }$. Expanding, we obtain 
\begin{equation}
\varianceWrt{L | \sigma(\Vstar) }{\Psi} 
= \varianceBigWrt{ \sum_{t=1}^T L_t \Big| \sigma(\Vstar) }{\Psi}
= \sum_{t=1}^T \sum_{s=1}^T \covarianceWrt{L_t}{L_s | \sigma(\Vstar) }{\Psi} .
\label{eqn:Variance_of_L_wrt_VstarQ_split_into_time_covariances}
\end{equation}

We now consider the cases $|t-s| \geq 2$ and $|t-s| \leq 1$, in that order. Since there are only $\bigO{T}$ terms corresponding to $|t-s| \leq 1$ that contribute to the sum, we only provide crude bounds on these terms. On the contrary, there are as many as $\bigO{T^2}$ terms corresponding to $|t-s| \geq 2$, we will need sharper bounds for these terms. As we will show for the  cases where $|t-s| \geq 2$, we can derive a sharper bound when $|t-s| \gg t_{\mathrm{mix}}(\varepsilon)$ because \refProposition{prop:Mixing_times_of_Markov_chains_with_transition_matrices_P_and_Q} implies that the covariances decay quickly.

First note that since (i) the process is started from equilibrium, we have for any $t, s \in \{ 1, \ldots, T \}$ that
\begin{align}
\covarianceWrt{ L_t }{ L_s | \sigma(\Vstar) }{\Psi} 
&
= \expectationWrt{L_t L_s | \sigma(\Vstar) }{\Psi} - \expectationWrt{L_t | \sigma(\Vstar) }{\Psi} \expectationWrt{L_s | \sigma(\Vstar) }{\Psi}
\nonumber \\ &
\eqcom{i}= \expectationWrt{ L_t L_s | \sigma(\Vstar) }{\Psi} - \expectationWrt{ L_t | \sigma(\Vstar) }{\Psi}^2.
\label{eqn:Covariance_of_LtLs_when_the_process_is_started_from_equilibrium}
\end{align}

Consider the case $|t-s| \geq 2$. Define $S_{x,y,u,v} \triangleq ( \ln{ R_{x,y} } ) ( \ln{ R_{u,v} } )$ for notational convenience. Refer to \refAppendixSection{sec:Asymptotic_comparisons_between_P_and_Qs_entries} for its asymptotic behavior. In this case the first term of \refEquation{eqn:Covariance_of_LtLs_when_the_process_is_started_from_equilibrium} evaluates as
\begin{align}
&
\expectationWrt{ L_t L_s | \sigma(\Vstar) }{\Psi}
\label{eqn:Expectation_of_LtLs_when_the_process_is_started_from_equilibrium}
\\
= & \sum_{x,y,u,v} \probabilityWrt{ X_{t \wedge s - 1} = x, X_{t \wedge s} = y, X_{t \vee s - 1 } = u, X_{t \vee s} = v | \sigma(\Vstar) }{\Psi} S_{x,y,u,v}
\nonumber \\
= & \sum_{x,y,u,v} \Pi_x^{(Q)} Q_{x,y} \Bigl( \sum_{ z_{ t \wedge s + 1 }, \ldots, z_{ t \vee s - 2 } } Q_{ y, z_{ t \wedge s + 1 } } Q_{ z_{ t \wedge s + 2 }, z_{ t \wedge s + 2} } \cdots Q_{z_{t \vee s - 2},u} \Bigr) Q_{u,v} S_{x,y,u,v}
\nonumber \\
= & \sum_{x,y,u,v} \Pi_x^{(Q)} Q_{x,y} Q^{ |t-s| - 1 }_{y,u} Q_{u,v} S_{x,y,u,v}.
\end{align}
The second term of \refEquation{eqn:Covariance_of_LtLs_when_the_process_is_started_from_equilibrium} expands as
\begin{equation}
\expectationWrt{ L_t | \sigma(\Vstar) }{\Psi}^2
= \Bigl( \sum_{x,y} \Pi_x^{(Q)} Q_{x,y} \ln{ \frac{ Q_{x,y} }{ P_{x,y} } } \Bigr)^2
= \sum_{x,y,u,v} \Pi_x^{(Q)} Q_{x,y} \Pi_u^{(Q)} Q_{u,v} S_{x,y,u,v}.
\label{eqn:Squared_expectation_of_Lt__when_the_process_is_started_from_equilibrium}
\end{equation}
Substituting \refEquation{eqn:Expectation_of_LtLs_when_the_process_is_started_from_equilibrium} and \refEquation{eqn:Squared_expectation_of_Lt__when_the_process_is_started_from_equilibrium} into the last member of \refEquation{eqn:Covariance_of_LtLs_when_the_process_is_started_from_equilibrium} gives
\begin{equation}
\covarianceWrt{L_t}{L_s | \sigma(\Vstar) }{\Psi}
= \sum_{x,y,u,v} \Pi_x^{(Q)} Q_{x,y} \bigl( Q^{ |t-s| - 1 }_{y,u} - \Pi_u^{(Q)} \bigr) Q_{u,v} S_{x,y,u,v}.
\label{eqn:Centered_covariance_term}
\end{equation}

In order to bound \refEquation{eqn:Centered_covariance_term}, we need to take two effects into consideration: a filter effect that happens because the transition matrix $Q$ is similar to the transition matrix $P$, and a concentration effect because the Markov chain moves closer to equilibrium as time progresses. The filter effect is quantified by \refCorollary{cor:Leading_order_behavior_of_ln_Qxy_Pxy_products} in \refAppendixSection{sec:Asymptotic_comparisons_between_P_and_Qs_entries}. The latter implies that $\sum_{x,y,u,v} S_{x,y,u,v} \leq c_1 n^2$ for some absolute constant $c_1$ (even though $\sum_{x,y,u,v} 1 = n^4$). We can use the effect by for example bounding $\Pi_x^{(Q)} Q_{x,y} \bigl( Q^{m}_{y,u} - \Pi_u^{(Q)} \bigr) Q_{u,v} \leq c_2 / n^4$ uniformly using another absolute constant, and then concluding that $\covarianceWrt{L_t}{L_s | \sigma(\Vstar) }{\Psi} \leq c_2 (T^2/n^4) \sum_{x,y,u,v} S_{x,y,u,v} \leq c_1 c_2 T^2 / n^2$. However, this bound is not sufficiently sharp for our purposes: we need to provide a bound that is at least $\smallO{ T^2 / n^2 }$.

To arrive at a sharper bound, we use the concentration of the Markov chain. Apply the triangle inequality first, and then bound $\Pi_x^{(Q)} Q_{x,y} Q_{u,v} \leq c_1 / n^3$ uniformly using an absolute constant $c_1$ to obtain
\begin{align}
&
\Bigl| \sum_{t=1}^T \sum_{s=1}^T \indicator{ |t-s| \geq 2 } \covarianceWrt{L_t}{L_s | \sigma(\Vstar) }{\Psi} \Bigr|
\nonumber \\ &
\leq \frac{2c_1}{n^3} \sum_{t=1}^T \sum_{s=t+2}^T \sum_{x,y,u,v} \bigl| Q^{ |t-s| - 1 }_{y,u} - \Pi_u^{(Q)} \bigr| | S_{x,y,u,v} |.
\label{eqn:First_bound_on_sum_st_of_covariances}
\end{align}
Now let $m \in \naturalNumbersPlus$. By nonnegativity of the summands and \refEquation{eqn:Definition_of_total_variation_distance}, $| Q^m_{x,y} - \Pi_y^{(Q)} | \leq \sum_{y} | Q^m_{x,y} - \Pi_y^{(Q)} | = 2 d_{\mathrm{TV}}( {Q}^m_{x,\cdot}, \vect{\Pi}^{(Q)} )$. 
Recall furthermore from \refEquation{eqn:Upper_bound_on_dTV_Pt_Pi_distance_measure_as_function_of_t} and \refEquation{eqn:Upper_bound_on_dTV_Pt_Pi_distance_measure_as_function_of_varepsilon} combined that there exists a $\delta(Q) \in (0,1)$ such that $d_{\mathrm{TV}}( Q_{u,\cdot}^m, \Pi ) \leq ( \delta(Q) )^{m} d_{\mathrm{TV}}( Q_{u,\cdot}^0, \Pi )$ for $u \in \mathcal{V}$. We therefore have that there exists an absolute constant $c_3$ such that
\begin{align}
&
\Bigl| \sum_{t=1}^T \sum_{s=1}^T \indicator{ |t-s| \geq 2 } \covarianceWrt{L_t}{L_s | \sigma(\Vstar) }{\Psi} \Bigr|
\nonumber \\ &
\leq \frac{4 c_1 c_2 \max_{ u \in \mathcal{V} } \{ d_{\mathrm{TV}}( Q_{u,\cdot}^0, \Pi ) \} }{n^3} \sum_{t=1}^T \sum_{s=t+2}^T \bigl( \delta(Q) \bigr)^{ |t-s| - 1 } \sum_{x,y,u,v} | S_{x,y,u,v} |
\nonumber \\ &
\eqcom{i}\leq \frac{c_3 }{n} \sum_{t=1}^T \sum_{s=t+2}^T \bigl( \delta(Q) \bigr)^{ |t-s| - 1 },
\label{eqn:Second_bound_on_sum_st_of_covariances}
\end{align}
due to (i) the filter effect. By a continuous extension of the sum,
\begin{align}
\sum_{t=1}^T \sum_{s=t+2}^T \bigl( \delta(Q) \bigr)^{ |t-s| - 1 }
&
\leq \int_0^T \int_{t+1}^T \bigl( \delta(Q) \bigr)^{ |t-s| - 1 } \d{s} \d{t}
\sim - \frac{T}{ \ln{ \bigl( \delta(Q) \bigr) } }.
\end{align}
Since $\delta(Q) \in (0,1)$, there thus exists an absolute constant $c_4 > 0$ such that
\begin{equation}
\Bigl| \sum_{t=1}^T \sum_{s=1}^T \indicator{ |t-s| \geq 2 } \covarianceWrt{L_t}{L_s | \sigma(\Vstar) }{\Psi} \Bigr|
\leq c_4 \frac{T}{n}.
\label{eqn:Obound_on_covariance_terms_case_t_min_s_geq_2}
\end{equation}

Finally we deal with the cases $|t-s| \leq 1$. When $|t-s| = 0$, or equivalently $t=s$, we have that (iv) because of \refLemma{lem:Leading_order_behavior_of_Qxy_Pxy} and \refCorollary{cor:Leading_order_behavior_of_ln_Qxy_Pxy_products} that there exist absolute constants $c_5, \ldots, c_8$ such that
\begin{align}
&
\covarianceWrt{L_t}{L_t | \sigma(\Vstar) }{\Psi}
\eqcom{\ref{eqn:Covariance_of_LtLs_when_the_process_is_started_from_equilibrium}}\leq \expectationWrt{L_t^2 | \sigma(\Vstar) }{\Psi}
= \sum_{x \in \mathcal{V}} \sum_{y \in \mathcal{V}} \Pi_x^{(Q)} Q_{x,y} ( \ln{R_{x,y}} )^2
\\ &
\eqcom{iv}\leq \Pi_{\Vstar}^{(Q)} \sum_{ y \neq \Vstar } Q_{\Vstar,y} c_5 + \sum_{ x \neq \Vstar } \Pi_x^{(Q)} Q_{x,\Vstar} c_6 + \sum_{x \neq \Vstar} \sum_{y \neq \Vstar} \Pi_x^{(Q)} Q_{x,y} \frac{c_7}{n^2}
\leq \frac{c_8}{n}
\nonumber
\end{align}
for all $t = 1, \ldots, T$. Therefore
\begin{equation}
\Bigl| \sum_{t=1}^T \sum_{s=1}^T \indicator{ |t-s| = 0 } \covarianceWrt{L_t}{L_s | \sigma(\Vstar) }{\Psi} \Bigr| 
= \sum_{t=1}^T \varianceWrt{L_t | \sigma(\Vstar) }{\Psi} 
= \bigObig{ \frac{T}{n} }.
\label{eqn:Obound_on_covariance_terms_case_t_min_s_eq_0}
\end{equation}
When $|t-s|=1$, there exists an absolute constant $c_{9} > 0$ such that
\begin{align}
&
\covarianceWrt{ L_{ t \wedge s } }{ L_{ t \wedge s + 1 } | \sigma(\Vstar) }{\Psi}
\eqcom{\ref{eqn:Covariance_of_LtLs_when_the_process_is_started_from_equilibrium}}\leq \expectationWrt{ L_{ t \wedge s } L_{ t \wedge s + 1 } | \sigma(\Vstar) }{\Psi}
\nonumber \\ &
\leq \sum_{x,y,z} \Pi_x^{(Q)} Q_{x,y} Q_{y,z} S_{x,y,y,z}
\leq \frac{ c_{9} }{n^3} \sum_{x,y,z} S_{x,y,y,z}.
\end{align}
Invoking \refCorollary{cor:Leading_order_behavior_of_ln_Qxy_Pxy_products}'s filter effect implies that $\sum_{x,y,z} S_{x,y,y,z} = \bigO{n^2}$. Therefore
\begin{equation}
\Bigl| \sum_{s,t=1}^T \indicator{ |t-s| = 1 } \covarianceWrt{L_t}{L_s | \sigma(\Vstar) }{\Psi} \Bigr| 
\leq 2 \sum_{t=1}^T | \covarianceWrt{L_t}{ L_{t+1} | \sigma(\Vstar) }{\Psi} |
= \bigObig{ \frac{T}{n} }.
\label{eqn:Obound_on_covariance_terms_case_t_min_s_eq_1}
\end{equation}

Splitting \refEquation{eqn:Variance_of_L_wrt_VstarQ_split_into_time_covariances} into the respective cases and then (v) substituting \refEquation{eqn:Obound_on_covariance_terms_case_t_min_s_geq_2}, \refEquation{eqn:Obound_on_covariance_terms_case_t_min_s_eq_0}, and \refEquation{eqn:Obound_on_covariance_terms_case_t_min_s_eq_1} gives
\begin{align}
\varianceWrt{ L | \sigma(\Vstar) }{\Psi} 
= \sum_{t=1}^T \sum_{s=1}^T \bigl( & \indicator{ t=s } + \indicator{ |t-s| = 1 } 
\nonumber \\ &
+ \indicator{ |t-s| \geq 2 } \bigr) \covarianceWrt{L_t}{L_s | \sigma(\Vstar) }{\Psi}
\eqcom{v}= \jaron{ \bigObig{ \frac{T}{n} } },
\end{align}
which completes the proof.

\section{Proof of Corollary~\ref{lem:Specification_of_lower_bound_before_bound_optimization}}
\label{suppl:Proof__Deconditioning}

Let $a \neq b$ be any two distinct clusters, and let $q \in Q(a,b)$ (i.e., such that $I_a({q}||p) = I_b({q}||p)$. Select $\Vstar$ uniformly at random in $\mathcal{V}_a \cup \mathcal{V}_b$. Then we have: 
\begin{align}
\expectationWrt{L}{\Psi} 
&
= \frac{\alpha_a}{ \alpha_a + \alpha_b } \expectationWrt{L | \sigma(\Vstar) = a }{\Psi} + \frac{\alpha_b}{ \alpha_a + \alpha_b } \expectationWrt{L | \sigma(\Vstar) = b }{\Psi}
\nonumber \\ &
= \frac{T}{n} I_{a,b}(p) + \smallObig{ \frac{T}{n} },
\end{align}
where the last equality results from \refProposition{prop:Leading_order_behavior_of_expectation_of_L_wrt_Q}.
Moreover, since for any random variables $X$ and $Y$, $\variance{X} = \expectationWrt{ \variance{X|Y} }{Y} + \varianceWrt{ \expectation{X|Y} }{Y}$, we also have:
\begin{align}
\varianceWrt{L}{\Psi}
= & \sum_{k \in \{a,b\}} \frac{\alpha_k}{ \alpha_a + \alpha_b } \Bigl( \varianceWrt{L | \sigma(\Vstar) = k }{\Psi} + \bigl( \expectationWrt{ L | \sigma(\Vstar) = k }{\Psi} - \expectationWrt{L}{\Psi} \bigr)^2 \Bigr).
\nonumber
\end{align}

We deduce that $\varianceWrt{L}{\Psi}= \smallO{ T^2 / n^2 }$ from  \refProposition{prop:Variance_is_small} for the first two terms, and from the fact that ${q} \in Q(a,b)$ for the last two terms.

\section{Proof of Lemma~\ref{lem:Jalphap_is_bounded_by_Ialphap}}
\label{sec:Proof__Upper_bounding_Jalphap_by_Ialphap}

\def\sigmaVstar{*}
\let\star\sigmaVstar

Recall the definition of $q_c$ for $c = 1, \ldots, K$:
$$
q_c  = \Bigl( 0,\frac{p_{1,c}}{\alpha_c}, \ldots, \frac{p_{K,c}}{\alpha_c}; 0,p_{c,1}, \ldots, p_{c,K} \Bigr) 
\in \mathcal{Q}.
$$

Let $a^\star$ and $b^\star$ be the cluster indices such that
\begin{equation}
I(\alpha,p) 
= \sum_{k=1}^K \frac{1}{\alpha_{a^\star}} \Bigl( \pi_{a^\star} p_{a^\star , k}\ln \frac{p_{a^\star , k}}{p_{b^\star , k}} + \pi_k p_{k, a^\star } \ln{ \frac{p_{k, a^\star }}{p_{k,b^\star }} } 
+ \Bigl( \frac{\pi_{b^\star}}{\alpha_{b^\star}} - \frac{\pi_{a^\star}}{\alpha_{a^\star}} \Bigr) \Bigr).
\end{equation}
From the definitions of $I(\alpha, p)$, $I_c(q\|p)$, and $q_c$, we have that (i) $I_{a^\star} (q_{a^\star} \| p) = 0$, $I_{a^\star}(q_{b^\star}\| p) = I(\alpha,p)$, and (ii) $I_{b^\star}( q_{b^\star} \| p ) = 0$, $I_{b^\star} (q_{a^\star} \| p) \geq I(\alpha,p)$.

We are now going to show that there exists a path along which $I_{a^\star} (q\| p)$ monotonically decreases from $I(\alpha, p)$ to $0$, while at the same time the $I_{b^\star} (q\| p)$ moves from initially $0$ to eventually $I_{b^\star} (q_{a^\star}\| p) \geq I(\alpha,p)$. Since $I_c(q \| p)$ is continuous in $q$, this implies the existence of atleast one point $\bar{q}$ such that $0\le I_{a^\star} (\bar{q}\| p) = I_{b^\star} (\bar{q}\| p) \leq I(\alpha,p)$.

First, we will walk along the path
\begin{align}
q^{(1)}(\lambda) 
&
= (1-\lambda) \Bigl( \frac{p_{1,b^\star}}{\alpha_{b^\star}},\dots,\frac{p_{K,b^\star}}{\alpha_{b^\star}};p_{b^\star,1},\dots,p_{b^\star,K};0 \Bigr)
\nonumber \\ &
\phantom{=} + \lambda \Bigl( \frac{p_{1,b^\star}}{\alpha_{b^\star}},\dots,\frac{p_{K,b^\star}}{\alpha_{b^\star}};p_{a^\star,1},\dots,p_{a^\star,K};0 \Bigr). 
\end{align} 
parameterized by $\lambda \in [0,1]$. Specifically note that $I_{a^\star} (q^{(1)}(\lambda)\| p)$ is convex and monotonically decreasing in $\lambda$. This is because $\lambda$ only changes the convex summands $(\sum_{k=1}^K \pi_l q_{l,0} ) \mathrm{KL}( q_{0,\cdot} \| p_{c,\cdot})$ in \refEquation{eqn:Definition_of_Icqp}, and additionally, $\mathrm{KL}( q_{0,\cdot} \| p_{c,\cdot})$ is minimized at $\lambda = 1$.

Next, starting from the end point $q^{(1)}(1)$, we will walk along the path
\begin{align*}
q^{(2)}(\eta) 
&
= (1-\eta) \Bigl( \frac{p_{1,b^\star}}{\alpha_{b^\star}},\dots,\frac{p_{K,b^\star}}{\alpha_{b^\star}};p_{a^\star,1},\dots,p_{a^\star,K};0 \Bigr)
\nonumber \\ &
\phantom{=} + \eta \Bigl( \frac{p_{1,a^\star}}{\alpha_{a^\star}},\dots,\frac{p_{K,a^\star}}{\alpha_{a^\star}};p_{a^\star,1},\dots,p_{a^\star,K};0 \Bigr)
\end{align*} 
parameterized by $\eta \in [0,1]$. Similar to before $I_{a^\star} (q^{(2)}(\lambda)\| p)$ is convex and monotonically decreasing in $\eta$, and tends to $0$ as $\eta \to 1$. Note that we have that $I_{b^\star}(q^{(1)}(0)\| p) = 0$ and $I_{b^\star} (q^{(2)}(1)\| p) = I_{b^\star} (q_{a^\star}\| p) \geq I(\alpha,p)$.

\section{Proof of Lemma~\ref{lem:noinformation}}
\label{sec:Proof_lem1}

Recall that
\begin{equation}
\expectationWrt{ L | \sigma(\Vstar) }{\Psi} 
= \sum_{ \textrm{all sample paths } \chi } \probabilityWrt{ \chi | \sigma(\Vstar) }{\Psi} \ln{ \frac{ \probabilityWrt{ \chi | \sigma(\Vstar) }{\Psi} }{ \probabilityWrt{\chi}{P} } }
\end{equation}
is a KL-divergence. As a consequence, $\expectationWrt{L}{\Psi} = 0$ if and only if
\begin{equation}
\probabilityWrt{ \chi | \sigma(\Vstar) }{\Psi} 
= \prod_{t=1}^T Q_{x_{t-1},x_t} 
= \prod_{t=1}^T P_{x_{t-1},x_t} 
= \probabilityWrt{\chi}{P}
\quad
\textrm{for all sample paths }
\chi.
\end{equation}
Equivalently $\expectationWrt{ L | \sigma(\Vstar) }{\Psi} = 0$ if and only if $Q_{x,y} = P_{x,y}$ for all $x,y \in \mathcal{V}$, which can be seen by considering the set of paths that disagree only on the last jump. Since
\begin{equation}
I_{\sigma(\Vstar)}(q||p) 
= \lim_{n \to \infty} \frac{n}{T} \Bigl( \expectationWrt{ L | \sigma(\Vstar) }{\Psi} + \smallO{1} \Bigr),
\end{equation}
we obtain that $I_{\sigma(\Vstar)}(q||p) = 0$ if and only if $q = q_{\sigma(\Vstar)}$. Since there exists $\bar{q}$ such that $I_{a^\star} (\bar{q}\| p) = I_{b^\star} (\bar{q}\| p) = 0$, $q_{a^\star} = q_{b^\star}$. This completes the proof. \qed

\def\sigmaVstar{0}
\let\star\sigmaVstar

\chapter{Proofs of \refChapter{sec:The_SVD_clustering_algorithm}}

\section{Proof of Proposition~\ref{prop:Spectral_concentration_bound_for_BMCs}}
\label{sec:Spectral_analysis_for_BMCs}

Let $\mathrm{diag}(\Pi) \in [0,1]^{n \times n}$ denote the matrix whose diagonal entries correspond to the entries of $\Pi$. Then by \refEquation{eqn:Definitions_of_Nhatxy_and_Nxy}, \jaron{$N \triangleq \expectation{ \hat{N} } = T \mathrm{diag}(\Pi) P$}.

The Markov process $\process{ X_t }{ 0 \leq t \leq T}$ generates $\hat{N}$. We can think of $\hat{N}$ as a sum of random matrices $\hat{N} = \sum_{t=0}^{T-1} \hat{N}(t)$, where all the elements of each $t$-th matrix $\hat{N}(t) \in \{0,1\}^{n \times n}$ are zero except for the one element $( \hat{N}(t) )_{X_{t},X_{t+1}} = 1$. It is important to note that matrices $\hat{N}(t)$ and $\hat{N}(t-1)$ are dependent. In particular, only the $X_{t}$-th row of $\hat{N}(t)$ can contain a nonzero value. These dependencies are what make the analysis challenging.

To circumvent the difficulties associated with these dependencies, we use the following trick. We split $\hat{N}$ into two parts, specifically,
\begin{equation}
\hat{N}^{(\textrm{even})} 
= \sum_{t=0}^{\lceil T/2 \rceil -1} \hat{N}(2t) 
\quad 
\textrm{and}
\quad 
\hat{N}^{(\textrm{odd})} = \sum_{t=0}^{\lfloor T/2 \rfloor -1} \hat{N}(2t+1).
\end{equation}
This particular split ensures that the elements of $\hat{N}^{(\textrm{even})}$ and $\hat{N}^{(\textrm{odd})}$ are almost independent. Consider for instance the dependency between a matrix $\hat{N}(t)$ and the matrix $\hat{N}(t-2)$. Note that $\hat{N}(t)$ can contain a nonzero element almost anywhere when comparing to $\hat{N}(t-2)$, and that the only exceptions are diagonal entries.

Let us formalize this more precisely. Define $\bar{N}(t) = \expectationWrt{ \hat{N}(t) | \allowbreak \hat{N}(t-2) }{P}$ for $t \geq 2$ and $\bar{N}(t) = \mathrm{diag}(\Pi) P$ for $t = 1, 2$. Note that $\expectationWrt{ \hat{N}(t) | \allowbreak \hat{N}(t-2) }{P} = \expectationWrt{ \hat{N}(t) | \hat{N}(t-2), \hat{N}(t-4), \dots }{P}$ for all $t \geq 2$ since $\process{X_t}{0 \leq t \leq T}$ is a Markov chain. Hence, we have $\bar{N}(t) = \mathrm{diag}( \jaron{ P_{X_{t-1},\cdot} } ) P$ \jaron{for $t \geq 2$}. This trick of separating the original process into two processes each of which skips one unit of time ensures that almost all elements of $\bar{N}(t)$ are of order $1/n^2$. Note that there exists an absolute constant $p_{\max} > 0$ such that $( \mathrm{diag}( P_{x,\cdot} ) P )_{y,z} \leq p_{\max} / n^2$ for all $x, y, z \in \mathcal{V}$.

We now explain how to show $\jaron{ \pnorm{ \hat{N}_\Gamma - N }{} = \bigOP{ \sqrt{ (T/n) \ln{(T/n)} } } }$. Using the triangle inequality, it follows that
$
\pnorm{ \hat{N}_\Gamma - N }{} 
\leq \pnorm{ \hat{N}_\Gamma - \sum_{t=0}^{T-1}\bar{N}(t) }{} 
+ \pnorm{ N - \sum_{t=0}^{T-1}\bar{N}(t) }{}
$.
To prove the proposition, we will first show that 
\begin{equation}
\pnormBig{ N - \sum_{t=0}^{T-1} \bar{N}(t) }{} 
= \bigOPbig{ \sqrt{ \frac{T}{n} \ln{ \frac{T}{n} } } }
\label{eq:PFNN2}
\end{equation}
and then that
\begin{equation}
\pnormBig{ \hat{N}_\Gamma - \sum_{t=0}^{T-1} \bar{N}(t) }{} 
= \bigOPbig{ \sqrt{ \frac{T}{n} \ln{ \frac{T}{n} } } }. 
\label{eq:PFNN1}
\end{equation}

\paragraph{\underline{Part 1. Proof of \refEquation{eq:PFNN2}}}
The Frobenius norm provides an upper bound of the spectral norm. Namely,
\begin{equation}
\pnormBig{ N -\sum_{t=0}^{T-1}\bar{N}(t) }{} 
\leq \pnormBig{ \Bigl( \sum_{t=2}^{T-1} \bigl( \mathrm{diag}(\Pi) - \mathrm{diag}( \jaron{ P_{X_{t-1},\cdot} } ) \bigr) \Bigr) P }{\mathrm{F}}.
\end{equation}
\jaron{Observe that for any matrix $A$, $\pnorm{ \mathrm{diag}(A) P }{\mathrm{F}} \leq \max_{x \in \mathcal{V}} \{ \pnorm{ P_{x,\cdot} }{2} \} \pnorm{A}{2}$ by the Cauchy--Schwarz inequality.} Also note that $\pnorm{ P_{x,\cdot} }{2} = \bigO{ 1/\sqrt{n} }$ for all $x \in \mathcal{V}$. Hence to establish (\ref{eq:PFNN2}), it is sufficient to show that
\begin{equation}
\pnormBig{ \sum_{t=2}^{T-1} \bigl( \Pi - \jaron{ P_{X_{t-1},\cdot} } \bigr) }{2}
= \bigOPbig{ \sqrt{ T \ln{ \frac{T}{n} } } }.
\label{eq:PFNN2s}
\end{equation}

\revisedPartBegin
In fact \refEquation{eq:PFNN2s} can readily be verified. Centering, and using the triangle inequality, we have
\begin{align}
\pnormBig{ \sum_{t=2}^{T-1} \bigl( \Pi- P_{X_{t-1},\cdot} \bigr) }{2}
& \leq \pnormBig{ \sum_{t=2}^{T-1}\Pi- \sum_{i=1}^K \frac{\hat{N}_{\mathcal{V},\mathcal{V}_k}}{|\mathcal{V}_k|}\sum_{v \in \mathcal{V}_i}P_{v,\cdot} }{2} 
\nonumber \\ &
\phantom{\leq} + \pnormBig{ \sum_{i=1}^K \frac{\hat{N}_{\mathcal{V},\mathcal{V}_k}}{|\mathcal{V}_k|} \sum_{x \in \mathcal{V}_k} P_{x,\cdot} - \sum_{t=2}^{T-1} P_{X_{t-1},\cdot} }{2}.
\label{eqn:Intermediate_bound_within_proofs_of_Chapter_6}
\end{align}
To bound the first term in the r.h.s. of \refEquation{eqn:Intermediate_bound_within_proofs_of_Chapter_6}, recall first that (i) $\Pi_y = \sum_{x \in \mathcal{V}} \Pi_x P_{x,y} $, $N_{x,y} = T \Pi_x P_{x,y}$, $\forall_{x,y}$, and (ii) $\Pi_x = \bar{\Pi}_{\sigma(x)} \, \forall_x$. Therefore (iii) $T \bar{\Pi}_k \cardinality{ \mathcal{V}_k } = N_{\mathcal{V},V_k}$. Thus
\begin{align}
&
\pnormBig{ \sum_{t=2}^{T-1}\Pi- \sum_{k=1}^K \frac{\hat{N}_{\mathcal{V},\mathcal{V}_k}}{ \cardinality{ \mathcal{V}_k } } \sum_{x \in \mathcal{V}_k} P_{x,\cdot} }{2}^2 
\eqcom{i}= \sum_{y \in \mathcal{V}} \Bigl| (T-2) \Pi_y - \sum_{k=1}^K \frac{\hat{N}_{\mathcal{V},\mathcal{V}_i}}{ \cardinality{ \mathcal{V}_i } } \sum_{x \in \mathcal{V}_k} P_{x,y} \Bigr|^2
\nonumber \\ &
\eqcom{ii}= \sum_{y \in \mathcal{V}} \Bigl| (T-2) \sum_{k=1}^K \sum_{x \in \mathcal{V}_k} \bar{\Pi}_k P_{x,y} - \sum_{k=1}^K \frac{\hat{N}_{\mathcal{V},\mathcal{V}_k}}{ \cardinality{ \mathcal{V}_k } } \sum_{x \in \mathcal{V}_k} P_{x,y} \Bigr|^2
\nonumber \\ &
\eqcom{iii}= \sum_{y \in \mathcal{V}} \Bigl| \sum_{k=1}^K \frac{ \sum_{x \in \mathcal{V}_k} P_{x,y} }{ \cardinality{ \mathcal{V}_k } } \Bigl[ \frac{T-2}{T} N_{\mathcal{V},\mathcal{V}_k} - \hat{N}_{\mathcal{V},\mathcal{V}_k} \Bigr] \Bigr|^2
\nonumber \\ &
= \bigObig{ \frac{1}{n} } \Bigl| \sum_{k=1}^K \Bigl[ \frac{T-2}{T} N_{\mathcal{V},\mathcal{V}_k} - \hat{N}_{\mathcal{V},\mathcal{V}_k} \Bigr] \Bigr|^2
= \bigOPbig{ \frac{T}{n} \ln{ \frac{T}{n} } }
\end{align}
where the last equality follows from \refEquation{eq:Nseti}. To bound the second term in the r.h.s.\ of \refEquation{eqn:Intermediate_bound_within_proofs_of_Chapter_6}, note that (iv) $\hat{N}_{\mathcal{V},x} = \sum_{y \in \mathcal{V}} \hat{N}_{y,x} = \sum_{y \in \mathcal{V}} \sum_{t=0}^{T-1} \indicator{ X_t \allowbreak = y, X_{t+1} = x } = \sum_{t=0}^{T-1} \indicator{ X_{t+1} = x }$ by definition, and write
\begin{align}
&
\pnormBig{ \sum_{k=1}^K \hat{N}_{\mathcal{V},\mathcal{V}_k} \frac{ \sum\limits_{x \in \mathcal{V}_k}P_{x,\cdot} }{ \cardinality{ \mathcal{V}_k } } - \sum_{t=2}^{T-1} P_{X_{t-1},\cdot} }{2}^2 
= \sum_{ y \in \mathcal{V} } \Bigl| \sum_{k=1}^K \hat{N}_{\mathcal{V},\mathcal{V}_k} \frac{ \sum\limits_{x \in \mathcal{V}_k} P_{x,y} }{ \cardinality{ \mathcal{V}_k } } - \sum_{t=2}^{T-1} P_{X_{t-1},y} \Bigr|^2
\nonumber \\ &
= \sum_{ y \in \mathcal{V} } \Bigl| \sum_{k=1}^K \hat{N}_{\mathcal{V},\mathcal{V}_k} \frac{ \sum_{x \in \mathcal{V}_k} P_{x,y} }{ \cardinality{ \mathcal{V}_k } } - \sum_{x \in \mathcal{V}} \sum_{t=2}^{T-1} P_{x,y} \indicator{ X_{t-1} = x } \Bigr|^2
\nonumber \\ &
\eqcom{iv}= \sum_{ y \in \mathcal{V} } \Bigl| \sum_{k=1}^K \Bigl( \hat{N}_{\mathcal{V},\mathcal{V}_k} \frac{ \sum\limits_{x \in \mathcal{V}_k} P_{x,y} }{ \cardinality{ \mathcal{V}_k } } - \sum_{x \in \mathcal{V}_k} P_{x,y} \bigl( \hat{N}_{\mathcal{V},x} - \indicator{ X_{T-1} = x } - \indicator{ X_T = x } \bigr) \Bigr) \Bigr|^2
\nonumber \\ &
\leq 2 \sum_{ y \in \mathcal{V} } \Bigl| \sum_{k=1}^K \hat{N}_{\mathcal{V},\mathcal{V}_k} \frac{ \sum_{x \in \mathcal{V}_k} P_{x,y} }{ \cardinality{ \mathcal{V}_k } } - \sum_{k=1}^K \sum_{x \in \mathcal{V}_k} P_{x,y}  \hat{N}_{\mathcal{V},x}  \Bigr|^2
\nonumber \\ &
\phantom{\leq} + 2\sum_{ y \in \mathcal{V} } \Bigl| \sum_{k=1}^K \sum_{x \in \mathcal{V}_k} P_{x,y} \bigl( \indicator{ X_{T-1} = x } + \indicator{ X_T = x } \bigr) \Bigr|^2\nonumber \\ &
\leq 2 \sum_{ y \in \mathcal{V} } \Bigl| \sum_{k=1}^K \max_{w,z\in \mathcal{V}_k} |\hat{N}_{\mathcal{V},w}-\hat{N}_{\mathcal{V},z}| \frac{ \sum_{x \in \mathcal{V}_k} P_{x,y} }{ \cardinality{ \mathcal{V}_k } } \Bigr|^2
\nonumber \\ &
\phantom{\leq} + 2\sum_{ y \in \mathcal{V} } \Bigl| \sum_{k=1}^K \sum_{x \in \mathcal{V}_k} P_{x,y} \bigl( \indicator{ X_{T-1} = x } + \indicator{ X_T = x } \bigr) \Bigr|^2\nonumber \\ &
\eqcom{v}= 2\sum_{ y \in \mathcal{V} } \Bigl| \sum_{k=1}^K \sum_{x \in \mathcal{V}_k} P_{x,y} \bigl( \indicator{ X_{T-1} = x } + \indicator{ X_T = x } \bigr) \Bigr|^2 + \bigOPbig{ \frac{T}{n^2} \ln{n}^2 }
\nonumber \\ &
= \bigOPbig{ \frac{T}{n} \ln{ \frac{T}{n}}},
\end{align}
where (v) stems from \eqref{eq:trim}.
This proves \refEquation{eq:PFNN2s} after taking the square root. \qed
\revisedPartEnd

\paragraph{\underline{Part 2. Proof of \refEquation{eq:PFNN1}}}
Using the triangle inequality, we obtain
\begin{equation}
\pnormBig{ \hat{N}_\Gamma - \sum_{t=0}^{T-1} \bar{N}(t) }{} 
\leq \pnormBig{ \sum_{t=0}^{ \bigl\lceil \tfrac{T}{2} \bigr\rceil -1} \bigl(\hat{N}_\Gamma(2t) - \bar{N}(2t) \bigr) }{} 
+ \pnormBig{ \sum_{t=0}^{ \bigl\lfloor \tfrac{T}{2} \bigr\rfloor -1} \bigl(\hat{N}_\Gamma(2t+1) - \bar{N}(2t+1) \bigr) }{}. 
\label{eq:NN1}
\end{equation}
We next show that the first term in the r.h.s.\ of (\ref{eq:NN1}) is in fact $\bigOP{ \sqrt{T/n} \allowbreak \ln{ (T/n) } }$. The second term can be bounded using an analogous argument. The proof classically consists in relating the spectral norm to the rectangular quotient, and then combining an $\varepsilon$-net argument with a sufficiently strong concentration inequality.

To simplify notation, define $A \triangleq \sum_{t=0}^{\lceil T/2 \rceil -1} \hat{N}(2t)$, $M \triangleq \sum_{t=0}^{\lceil T/2 \rceil -1} \bar{N}(2t)$, and $f_n=\sqrt{ (T/n) \ln{(T/n)}}$. We wish to bound $\| A_\Gamma -M\|$. To this aim, we use the {\it rectangular quotient relation} \cite{dax_eigenvalues_2013}:
\revisedPartBegin
\begin{equation}
\pnorm{A_\Gamma -M}{}
= \sigma_1 
= \max_{ x, y \in \mathbb{S}^{n-1} } | \transpose{x} (A_\Gamma -M) y |.
\end{equation}
where $\mathbb{S}^{n-1}$ denotes the unit sphere in $\mathbb{R}^n$.

\medskip
\noindent
\emph{Step 1: The $\epsilon$-net argument.} 
Let us first recall the definition of an $\epsilon$-net: let $(X,d)$ be a metric space and let $\epsilon > 0$. A subset $\mathcal{N}_\epsilon$ of $X$ is called an $\epsilon$-net of $X$ if every point $x \in X$ can be approximated to within $\epsilon$ by some point $y \in \mathcal{N}_\epsilon$, i.e., so that $d(x,y) \leq \epsilon$. To bound $\pnorm{A_\Gamma -M}{}$, we use the classical $\epsilon$-net argument formalized in \refLemma{lem:Relation_spectral_norm_and_rectangular_quotient_over_epsilon_net}. 

\begin{lemma}
\label{lem:Relation_spectral_norm_and_rectangular_quotient_over_epsilon_net}
Let $\mathcal{N}_\epsilon$ denote an $\epsilon$-net of $\mathbb{S}^{n-1}$ for some $\epsilon \in [0,1)$. Then
\begin{equation}
\pnorm{A_\Gamma -M}{} 
\leq \frac{1}{ 1 - 3 \epsilon } \max_{ x, y \in \mathcal{N}_\epsilon } | \transpose{x} (A_\Gamma-M) y |.
\end{equation}
\end{lemma}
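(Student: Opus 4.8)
The plan is to run the classical $\epsilon$-net argument in its rectangular (bilinear) form. Write $B \triangleq A_\Gamma - M$ for brevity. The starting point is the rectangular quotient relation already recalled above, namely $\pnorm{B}{} = \max_{x,y \in \mathbb{S}^{n-1}} |\transpose{x} B y|$, so first I would fix a pair $x^\ast, y^\ast \in \mathbb{S}^{n-1}$ attaining this maximum.

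Since $\mathcal{N}_\epsilon$ is an $\epsilon$-net of $\mathbb{S}^{n-1}$, choose $x, y \in \mathcal{N}_\epsilon$ with $\pnorm{x^\ast - x}{2} \le \epsilon$ and $\pnorm{y^\ast - y}{2} \le \epsilon$. Decompose
\begin{equation}
\transpose{(x^\ast)} B y^\ast = \transpose{x} B y + \transpose{(x^\ast - x)} B y^\ast + \transpose{x} B (y^\ast - y),
\end{equation}
and apply the triangle inequality together with the submultiplicative bound $|\transpose{u} B v| \le \pnorm{u}{2} \pnorm{B}{} \pnorm{v}{2}$ to the last two terms; using $\pnorm{x^\ast}{2} = \pnorm{y^\ast}{2} = \pnorm{x}{2} = 1$ this gives $|\transpose{(x^\ast-x)} B y^\ast| \le \epsilon \pnorm{B}{}$ and $|\transpose{x} B (y^\ast - y)| \le \epsilon \pnorm{B}{}$, while the first term is at most $\max_{u,v \in \mathcal{N}_\epsilon} |\transpose{u} B v|$. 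Hence $\pnorm{B}{} \le 2\epsilon \pnorm{B}{} + \max_{u,v \in \mathcal{N}_\epsilon} |\transpose{u} B v|$.

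Rearranging yields $\pnorm{B}{} \le \frac{1}{1-2\epsilon} \max_{u,v \in \mathcal{N}_\epsilon} |\transpose{u} B v|$, which implies the claimed inequality (in fact with a slightly sharper constant, since $\frac{1}{1-2\epsilon} \le \frac{1}{1-3\epsilon}$ whenever $\epsilon < 1/3$, the only range in which the stated bound is meaningful and in which it is subsequently applied). There is no real obstacle here: the argument is entirely standard, and the only points requiring minor care are using the rectangular rather than the symmetric quotient — necessary because $A_\Gamma - M$ is not symmetric — and estimating the two perturbation terms separately so as to land on the correct constant.
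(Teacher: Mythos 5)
Your proof is correct and follows the same classical $\epsilon$-net strategy as the paper, but your decomposition is marginally sharper. The paper expands $\transpose{x}By - \transpose{a}Bb$ into three terms including a cross term $\transpose{(x-a)}B(y-b)$, which contributes $\epsilon^2\pnorm{B}{}$ and leads to the constant $1/(1-2\epsilon-\epsilon^2)$, then relaxed to $1/(1-3\epsilon)$ using $\epsilon^2\le\epsilon$. Your telescoping decomposition $\transpose{(x^\ast)}By^\ast = \transpose{x}By + \transpose{(x^\ast-x)}By^\ast + \transpose{x}B(y^\ast-y)$ avoids the cross term entirely, giving the cleaner constant $1/(1-2\epsilon)$ directly, which you then correctly observe dominates in the relevant range $\epsilon<1/3$. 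Both arguments are valid; yours is a hair tighter but buys nothing downstream since the paper only needs the $1/(1-3\epsilon)$ form, and of course both are just the standard $\epsilon$-net perturbation argument in rectangular form.
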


\begin{proof} We use here the notation $B=A_\Gamma -M$. Adapting the strategy in \cite[Lemma 5.4]{vershynin_introduction_2010}: Choose $a, b \in \mathbb{S}^{n-1}$ such that $\pnorm{B}{} = | \transpose{a} B b |$ and choose $x, y \in \mathcal{N}_\epsilon$ such that $\pnorm{x-a}{2} \leq \epsilon$ and $\pnorm{y-b}{2} \leq \epsilon$. Then by the triangle inequality, 
\begin{align}
&
| \transpose{x} B y - \transpose{a} B b |
\leq \bigl| \transpose{(x-a)} B (y-b) \bigr| + \bigl| \transpose{a} B (y-b) \bigr| + \bigl| \transpose{(x-a)} B b \bigr|
\leq 
\nonumber \\ &
\pnorm{x-a}{2} \pnorm{B}{} \pnorm{y-b}{2} + \pnorm{a}{2} \pnorm{B}{} \pnorm{y-b}{2} + \pnorm{x-a}{2} \pnorm{B}{} \pnorm{b}{2}
= ( 2 \epsilon + \epsilon^2 ) \pnorm{B}{}.
\end{align}
Therefore $| \transpose{x} B y | \geq ( 1 - 2 \epsilon - \epsilon^2 ) \pnorm{B}{}$. By first maximizing over such $x, y$ and next extending the optimization range, we obtain
\begin{equation}
( 1 - 2 \epsilon - \epsilon^2 ) \pnorm{B}{} 
\leq \max_{ \{ x, y \in \mathcal{N}_\epsilon : \pnorm{x-a}{2} \leq \epsilon, \pnorm{y-a}{2} \leq \epsilon \} } | \transpose{x} B y |
\leq \max_{  x, y \in \mathcal{N}_\epsilon } | \transpose{x} B y |.
\end{equation} 
That completes the proof of \refLemma{lem:Relation_spectral_norm_and_rectangular_quotient_over_epsilon_net}.
\end{proof}

By a volume covering argument there exists an $\epsilon$-net of $\mathbb{S}^{n-1}$ satisfying $\cardinality{ \mathcal{N}_\epsilon } \leq \e{ \zeta_\epsilon n }$ with $\zeta_\epsilon = \ln{ ( 1 + 2 / \epsilon ) }$ \cite[Lemma 5.2]{vershynin_introduction_2010}. In the remainder of this proof, we use such an $\epsilon$-net.
\revisedPartEnd

\medskip
\noindent
\emph{Step 2: Splitting between light and heavy couples.} To bound $\max_{ x, y \in \mathcal{N}_\epsilon } | \transpose{x} (A_\Gamma \allowbreak - M) y |$, we adapt the proof strategy used in \cite{feige_spectral_2005}. Let us fix $x, y \in \mathcal{N}_\epsilon$. Define $\mathcal{L} = \bigl\{ (v, w) \in \mathcal{V} \times \mathcal{V} : |x_v y_{w} | \leq (1/n) \sqrt{ T / n } \bigr\}$ to be the set of \emph{light couples}. Its complement $\mathcal{L}^{\mathrm{c}} \triangleq \mathcal{V} \backslash \mathcal{L}$ will be called the set of \emph{heavy couples}. Furthermore define $\mathcal{K} \triangleq (\Gamma^{\mathrm{c}} \times \mathcal{V}) \cup (\mathcal{V} \times \Gamma^{\mathrm{c}})$. Using the triangle inequality, we bound
\begin{equation}
| \transpose{x} \left( A_\Gamma - M \right) y |
\leq F_1(x,y) + F_2(x,y) + F_3(x,y)
\end{equation}
where 
$F_1(x,y) \triangleq | \sum_{ (v, w) \in \mathcal{K} \cap \mathcal{L} } x_v A_{vw} y_{w} |$,
$F_2(x,y) \triangleq | \sum_{(v, w)\in \mathcal{L}} x_v A_{vw}y_{w} - \transpose{x} M y |$, and
$F_3(x,y) \triangleq | \sum_{(v, w)\in \mathcal{L}^{\mathrm{c}} } x_v (A_\Gamma)_{vw} y_{w} |$. Next we bound each of these terms individually.

\medskip
\noindent
\emph{Step 3a: Exponential concentration of $F_1(x,y)$.}

\revisedPartBegin
\begin{lemma}
\label{lem:Bound_for_F1xy}
$
\forall_{x, y \in \mathcal{N}_\epsilon, c \geq 1}, \exists_{ d_3>0, \delta_c, N_c } :$
$$
\probability{ F_1(x,y) \geq \delta_c \sqrt{T/n} }
\leq \e{ ( \ln{2} - d_3 c ) n }, 
\quad  \forall_{ n > N_c }.
$$
\end{lemma}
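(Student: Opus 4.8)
The plan is to bound $F_1(x,y)$, on a single high-probability event not depending on $x,y$, by the number of observed \emph{even} transitions incident to a trimmed state, and then to control that number through \refEquation{eq:trim2}. First I would drop the signs and use the light-couple constraint $|x_v y_w|\le (1/n)\sqrt{T/n}$ valid on $\mathcal{L}$ together with the nonnegativity of the entries of $A=\sum_{t=0}^{\lceil T/2\rceil-1}\hat{N}(2t)$: this gives $F_1(x,y)\le\sum_{(v,w)\in\mathcal{K}\cap\mathcal{L}}|x_vy_w|A_{vw}\le\frac1n\sqrt{T/n}\sum_{(v,w)\in\mathcal{K}}A_{vw}$. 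Because $\mathcal{K}=(\Gamma^{\mathrm{c}}\times\mathcal{V})\cup(\mathcal{V}\times\Gamma^{\mathrm{c}})$ and $A_{vw}\le\hat{N}_{vw}$ entrywise (as $A$ is a partial sum of the matrices $\hat{N}(t)$), the right-hand side is at most $\hat{N}_{\Gamma^{\mathrm{c}},\mathcal{V}}+\hat{N}_{\mathcal{V},\Gamma^{\mathrm{c}}}$; and conservation of flow along the trajectory, $\hat{N}_{\Gamma^{\mathrm{c}},\mathcal{V}}-\hat{N}_{\mathcal{V},\Gamma^{\mathrm{c}}}=\sum_{x\in\Gamma^{\mathrm{c}}}(\indicator{X_0=x}-\indicator{X_T=x})\in\{-1,0,1\}$, reduces the whole problem to bounding $\hat{N}_{\mathcal{V},\Gamma^{\mathrm{c}}}$. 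Note that this estimate is uniform in $x,y$, so it is enough to exhibit one good event.

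Next I would handle the randomness of $\Gamma^{\mathrm{c}}$. Since $\Gamma^{\mathrm{c}}$ is a subset of $\mathcal{V}$ of the deterministic cardinality $m\triangleq\lfloor n\exp(-(T/n)\ln(T/n))\rfloor$, one has $\{|\hat{N}_{\mathcal{V},\Gamma^{\mathrm{c}}}-N_{\mathcal{V},\Gamma^{\mathrm{c}}}|\ge cn\}\subseteq\bigcup_{\mathcal{S}:\,|\mathcal{S}|=m}\{|\hat{N}_{\mathcal{V},\mathcal{S}}-N_{\mathcal{V},\mathcal{S}}|\ge cn\}$. Applying \refEquation{eq:trim2} of \refProposition{prop:Collection_of_relevant_concentration_inequalities_for_BMCs} with $c_3=c\ge1$ to each of the at most $\binom{n}{m}\le2^n$ such sets and taking a union bound, the event that $|\hat{N}_{\mathcal{V},\mathcal{S}}-N_{\mathcal{V},\mathcal{S}}|<cn$ for every $\mathcal{S}$ with $|\mathcal{S}|=m$ has probability at least $1-2^n\e{-d_3cn}=1-\e{(\ln2-d_3c)n}$, with $d_3$ the constant of \refEquation{eq:trim2}. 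Moreover $N_{\mathcal{V},\mathcal{S}}=T\sum_{x\in\mathcal{S}}\Pi_x=\bigO{(T/n)m}$ uniformly over $|\mathcal{S}|=m$ since $\Pi_x=\bigO{1/n}$, and $(T/n)\exp(-(T/n)\ln(T/n))\to0$ as $T/n\to\infty$; hence $N_{\mathcal{V},\Gamma^{\mathrm{c}}}=\smallO{n}$ and, for $n$ past a threshold $N_c$ (also large enough that $T/n$ exceeds the constant required in \refProposition{prop:Collection_of_relevant_concentration_inequalities_for_BMCs}), the good event forces $\hat{N}_{\mathcal{V},\Gamma^{\mathrm{c}}}\le(c+1)n$.

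Combining the two steps, on the good event and for $n>N_c$ one gets $\sum_{(v,w)\in\mathcal{K}}A_{vw}\le2\hat{N}_{\mathcal{V},\Gamma^{\mathrm{c}}}+1\le(2c+3)n$, hence $F_1(x,y)\le(2c+3)\sqrt{T/n}$ simultaneously for all $x,y\in\mathcal{N}_\epsilon$; taking $\delta_c=2c+3$ then yields the claim. I expect the only genuinely delicate point to be the one just exploited: $\Gamma^{\mathrm{c}}$ is data-dependent, so one must pay a $2^n$ union-bound factor, and the argument closes precisely because the trimming level is set so that $m$ is exponentially smaller than $n$ --- which simultaneously makes $2^n\e{-d_3cn}$ decay exponentially and makes the centering term $N_{\mathcal{V},\Gamma^{\mathrm{c}}}$ negligible against $n$. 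Everything else (the light/heavy split, flow conservation, $A\le\hat{N}$) is routine bookkeeping.
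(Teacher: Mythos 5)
Your argument is correct and follows essentially the same route as the paper's own proof: bound $F_1$ via the light-couple constraint and $A\le\hat N$ to reduce to $\hat N_{\mathcal V,\Gamma^{\mathrm{c}}}$, handle the data-dependence of $\Gamma^{\mathrm{c}}$ by a worst case over all subsets of the fixed trimming size, and then union-bound (paying $2^n$) against \refEquation{eq:trim2}, with the centering term $N_{\mathcal V,\mathcal S}=\smallO{n}$ absorbed. The only cosmetic difference is that you make the flow-conservation step $\hat N_{\Gamma^{\mathrm{c}},\mathcal V}\le\hat N_{\mathcal V,\Gamma^{\mathrm{c}}}+1$ explicit, while the paper writes the bound $2\hat N_{\mathcal V,\Gamma^{\mathrm{c}}}$ directly.
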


\begin{proof}
Let $x, y \in \mathcal{N}_\epsilon$. Define $\mathcal{M}_n \triangleq \{ \mathcal{S} \subseteq \mathcal{V} : \cardinality{ \mathcal{S} } = \lfloor n \exp{} ( -T/n \cdot \allowbreak \ln{(T/n)} ) \rfloor \}$, and bound
\begin{equation}
n \sqrt{ \frac{n}{T} } F_1(x,y) 
\eqcom{i}\leq \Bigl| \sum_{ (v, w) \in \mathcal{K} \cap \mathcal{L} } A_{vw} \Bigr|
\leq 2 \hat{N}_{\mathcal{V}, \Gamma^{\mathrm{c}} }
\leq 2 \max_{ \mathcal{S} \in \mathcal{M}_n } \bigl\{ \hat{N}_{\mathcal{V},\mathcal{S}} \bigr\}
\label{eqn:Intermediate_equation_for_the_exponential_concentration_of_F1}
\end{equation}
by using (i) $\mathcal{L}$'s definition. 

We next show that the r.h.s.\ of \refEquation{eqn:Intermediate_equation_for_the_exponential_concentration_of_F1} is $\bigOP{n}$. For some sufficiently large constant $c$, we obtain (i) by Boole's inequality and the fact that $N_{\mathcal{V},\mathcal{S}} = \bigO{ n ( n / T )^{T/n-1} } = \smallO{n}$ since $\mathcal{S} \in \mathcal{M}_n$ and $T = \omega(n)$ and (ii) by the observation that the number of subsets of $\mathcal{V}$ is less than $2^n$ and \refEquation{eq:trim2} that
\begin{equation}
\probability{ \max_{ \mathcal{S} \in \mathcal{M}_n } \bigl\{ \hat{N}_{\mathcal{V},\mathcal{S}} \bigr\} \geq c n }
\eqcom{i}\leq \sum_{ \mathcal{S} \in \mathcal{M}_n } \probability{ | \hat{N}_{\mathcal{V}, \mathcal{S}} - N_{ \mathcal{V}, \mathcal{S} } | \geq c n - \smallO{n} } 
\eqcom{ii}\leq 2^n \e{- d_3 c n } 
\rightarrow 0.
\end{equation}
This proves \refLemma{lem:Bound_for_F1xy}.
\end{proof}

\noindent
\emph{Step 3b: Exponential concentration of $F_2(x,y)$.}

\begin{lemma}
\label{lem:Bound_for_F2xy}
$
\forall_{x, y \in \mathcal{N}_\epsilon, \delta > 0} :
\probability{ F_2(x,y) \geq \delta f_n }
\leq \e{ ( p_{\max} - \frac{\delta}{2} \sqrt{ \ln{ \frac{T}{n} } } ) n }
\ , \forall_{n > 0 }
$.
\end{lemma}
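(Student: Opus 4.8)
The plan is to exhibit $F_2(x,y)$ as the absolute value of a sum of bounded \emph{martingale differences} plus a deterministically negligible correction, and then to invoke a Bernstein-type martingale inequality in its bounded-increments regime. Fix $x,y\in\mathcal{N}_\epsilon$ and, for $t=0,\dots,\lceil T/2\rceil-1$, set $\mathcal{G}_t=\sigma(X_0,\dots,X_{2t+1})$ and $Y_t\triangleq x_{X_{2t}}y_{X_{2t+1}}\indicator{(X_{2t},X_{2t+1})\in\mathcal{L}}$. Since $A_{vw}=\sum_t\indicator{X_{2t}=v,X_{2t+1}=w}$ we have $\sum_{(v,w)\in\mathcal{L}}x_vA_{vw}y_w=\sum_tY_t$; by the Markov property $\expectationWrt{Y_t|\mathcal{G}_{t-1}}{P}=\sum_{(v,w)\in\mathcal{L}}x_vy_w\,\bar N(2t)_{vw}$ (recall that $\bar N(2t)=\mathrm{diag}(P_{X_{2t-1},\cdot})P$ is $\mathcal{G}_{t-1}$-measurable), while $\transpose xMy=\sum_{v,w}x_vy_wM_{vw}$ with $M_{vw}=\sum_t\bar N(2t)_{vw}$. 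Hence, writing $Z_t\triangleq Y_t-\expectationWrt{Y_t|\mathcal{G}_{t-1}}{P}$,
\begin{equation*}
\sum_{(v,w)\in\mathcal{L}}x_vA_{vw}y_w-\transpose xMy=\sum_{t}Z_t-\sum_{(v,w)\in\mathcal{L}^{\mathrm c}}x_vM_{vw}y_w ,
\end{equation*}
and $\bigl\{\sum_{s\le t}Z_s\bigr\}$ is a martingale for $\{\mathcal{G}_t\}$ since $\expectationWrt{Z_t|\mathcal{G}_{t-1}}{P}=0$. I regard this as the crux: the centring must be performed against the \emph{predictable compensator} $M$, not against the stationary mean, for only then does the dependence of the even transitions through $X_{2t-1}$ cancel out and leave a genuine bounded martingale-difference sum.

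Next I would check that the heavy-couple correction is $o(f_n)$. On $\mathcal{L}^{\mathrm c}$ one has $|x_vy_w|\ge(1/n)\sqrt{T/n}$, so $|x_vy_w|\le n\sqrt{n/T}\,x_v^2y_w^2$; together with $M_{vw}\le p_{\max}T/(2n^2)$ (the defining property of $p_{\max}$) and $\sum_{v,w}x_v^2y_w^2=1$, the correction is bounded in modulus by $(p_{\max}/2)\sqrt{T/n}=(p_{\max}/2)f_n/\sqrt{\ln(T/n)}$. Thus $\{F_2(x,y)\ge\delta f_n\}$ forces $\bigl|\sum_tZ_t\bigr|\ge f_n\bigl(\delta-(p_{\max}/2)/\sqrt{\ln(T/n)}\bigr)$; when the right-hand side is nonpositive (which happens precisely when $\ln(T/n)\le(p_{\max}/(2\delta))^2$) the asserted inequality is trivial, because its exponent $\bigl(p_{\max}-\tfrac{\delta}{2}\sqrt{\ln(T/n)}\bigr)n$ is then $\ge0$.

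For the remaining regime I would apply Freedman's inequality. The light-couple definition gives $|Y_t|\le(1/n)\sqrt{T/n}$, hence $|Z_t|\le(2/n)\sqrt{T/n}$, while $\expectationWrt{Z_t^2|\mathcal{G}_{t-1}}{P}\le\max_{v,w}\bar N(2t)_{vw}\le p_{\max}/n^2$, so the predictable quadratic variation satisfies $\sum_t\expectationWrt{Z_t^2|\mathcal{G}_{t-1}}{P}\le p_{\max}T/(2n^2)$. Because $T=\omega(n)$ forces $\ln(T/n)\to\infty$, for $\mu$ of order $f_n$ the term linear in $\mu$ dominates the variance term $p_{\max}T/n^2$ in Freedman's denominator, so one reads off
\begin{equation*}
\probability{\Bigl|\textstyle\sum_tZ_t\Bigr|\ge\mu}\le 2\exp\Bigl(-\bigl(1-o(1)\bigr)\tfrac{3n\mu}{4\sqrt{T/n}}\Bigr).
\end{equation*}
Plugging in $\mu=f_n\bigl(\delta-(p_{\max}/2)/\sqrt{\ln(T/n)}\bigr)$ yields an exponent $-\bigl(1-o(1)\bigr)\tfrac{3n}{4}\bigl(\delta\sqrt{\ln(T/n)}-p_{\max}/2\bigr)$, which for $n$ large is at most $\bigl(p_{\max}-\tfrac{\delta}{2}\sqrt{\ln(T/n)}\bigr)n$; combined with the previous paragraph (which covers the small-$T/n$ case) this proves the lemma. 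The companion bound on $\|\sum_t(\hat N(2t+1)-\bar N(2t+1))\|$ for the odd part follows by the identical argument, and $F_1,F_3$ are treated separately as in the surrounding text. The only genuine difficulty is setting up the decomposition of the first paragraph; once the light-couple sum is an honest bounded martingale-difference sum with predictable quadratic variation $\lesssim T/n^2$, the bounded-increments regime of Freedman's inequality delivers the $\e{-\Omega(n\sqrt{\ln(T/n)})}$ decay with little further work.
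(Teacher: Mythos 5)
Your proposal is correct and takes a genuinely different, though closely related, route to the paper's. The paper proves \refLemma{lem:Bound_for_F2xy} by a direct Chernoff / moment-generating-function computation: it bounds $\expectation{\e{\lambda\sum_{(v,w)\in\mathcal{L}}x_vA_{vw}y_w}}$ from scratch via the tower-property ``folding'' $\expectation{\e{\lambda\sum_t Y_t}}=\expectation{\prod_t(1+\sum_{(v,w)\in\mathcal{L}}\bar N_{vw}(2t)(\e{\lambda x_vy_w}-1))}$, applies the elementary bound $\e{z}\le 1+z+2z^2$ (which is precisely why $\lambda=\tfrac12 n\sqrt{n/T}$ is chosen, to keep $|\lambda x_vy_w|\le\tfrac12$ on $\mathcal{L}$), and absorbs the heavy-couple discrepancy $\sum_{\mathcal{L}^{\mathrm c}}\lambda M_{vw}x_vy_w\le\tfrac{n}{2}p_{\max}$ into the exponent. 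You instead exhibit the sum over light couples as an honest bounded martingale-difference sum against the predictable compensator $M$, peel off the heavy-couple correction as a deterministic term of size $\le\tfrac{p_{\max}}{2}\sqrt{T/n}=o(f_n)$, and invoke Freedman's inequality. The two proofs are essentially the same MGF argument -- Freedman's inequality is proved exactly by the folding-plus-Taylor-bound calculation the paper carries out by hand -- and both identify the same crux you flag: the centering has to be against the predictable compensator $M=\sum_t\mathrm{diag}(P_{X_{2t-1},\cdot})P$, not the stationary mean, and it is precisely the even/odd split that makes $M$ measurable with respect to the relevant filtration. Your version buys modularity; the paper's version buys an exact, non-asymptotic inequality.

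Two small technical points you should tighten. First, Freedman gives a two-sided bound with a prefactor of $2$; the lemma as stated has no such factor, and neither does the paper's derivation (which in fact only controls the \emph{upper} tail of the un-absoluted quantity, a gap the paper itself silently absorbs). This is harmless for the downstream $\bigOP{\cdot}$ usage but worth flagging. Second, your regime split is slightly leaky: the ``trivial'' regime is $\ln(T/n)\le(p_{\max}/(2\delta))^2$, but the Freedman regime requires $\ln(T/n)\to\infty$ for the $(1-o(1))$ dominance of the linear term over the variance proxy $p_{\max}T/(2n^2)$; for values of $T/n$ with $\ln(T/n)$ in between (small but bounded away from $(p_{\max}/(2\delta))^2$) your asymptotic accounting is not tight, so strictly speaking the claimed $\forall n>0$ inequality only holds for $n$ large enough under $T=\omega(n)$. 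The paper's direct Chernoff computation side-steps this by yielding (\ref{eq:xyup}) as an exact inequality, with no $o(1)$ slack. Given that the lemma is only ever used inside Proposition \ref{prop:Spectral_concentration_bound_for_BMCs} under the standing hypothesis $T=\omega(n)$, this does not affect correctness of the downstream results, but it does mean your proof establishes a marginally weaker statement than the lemma literally claims.
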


\begin{proof}
Let $x,y \in \mathcal{N}_\varepsilon$, $a_n > 0$, and $\lambda > 0$. Markov's inequality implies
\begin{equation}
\probabilityBig{ \sum_{ (v, w) \in \mathcal{L} } x_v A_{vw} y_{w} - \transpose{x} M y \geq a_n \sqrt{\frac{T}{n}} }
\leq \frac{ \expectation{ \exp{ \bigl( \lambda \sum_{(v, w)\in \mathcal{L}} x_v A_{vw}y_{w} \bigr) } } }
{ \exp{ ( \lambda a_n\sqrt{T/n} + \transpose{x} M y ) } }.
\end{equation}

We start by bounding the numerator. Prepare for a folding argument by noting that $\hat{N}_{vw}(s) \in \{ 0,1 \}$ and $\sum_{v,w} \hat{N}_{vw}(s) = 1$ for $s \geq 0$. This implies that $\probability{ \hat{N}_{ru}(s) = 1 | \mathcal{F}_{s-2} } = \bar{N}_{ru}(s)$. Hence we calculate for $s \geq 0$ that
\begin{align}
&
\expectationBig{ \e{ \lambda \sum_{ (v, w) \in \mathcal{L} } x_v \hat{N}_{vw}(s) y_w } \Big| \mathcal{F}_{s-2} }
= \sum_{ (r,u) \in \mathcal{V}^2 } \bigl( \indicator{ r,u \not\in \mathcal{L} } + \indicator{ r,u \in \mathcal{L} } \e{ \lambda x_u y_r } \bigr) \bar{N}_{ru}(s)
\nonumber \\ &
= \sum_{ (r,u) \in \mathcal{L}^{\mathrm{c}} } \bar{N}_{ru}(s) + \sum_{ (r,u) \in \mathcal{L}^{\mathrm{c}} } \e{ \lambda x_r y_u } \bar{N}_{ru}(s)
= 1 + \sum_{ (v,w) \in \mathcal{L}^{\mathrm{c}} } \bar{N}_{vw}(s) \bigl( \e{ \lambda x_v y_w } - 1 \bigr).
\end{align}
We are now in place to use the tower property to fold up the numerator backwards through time. Starting from $t =  \lceil T/2 \rceil - 1$, we calculate
\begin{align}
&
\,\,
\expectationBig{ \exp{ \bigl( \lambda \sum_{ (v, w) \in \mathcal{L} } x_v A_{vw}y_w \bigr) } }
= \expectationBig{ \prod_{t=0}^{ \lceil T/2 \rceil - 1 } \exp{ \bigl( \lambda \sum_{ (v, w) \in \mathcal{L} } x_v \hat{N}_{vw}(2t) y_w \bigr) } }
=
\nonumber \\ & 
\expectationBig{ 
\expectationBig{ \e{ \lambda \sum_{ (v, w) \in \mathcal{L} } x_v \hat{N}_{vw}( 2 \lceil T/2 \rceil - 2 ) y_w } \Big| \mathcal{F}_{ 2 \lceil T/2 \rceil - 4 } } 
\prod_{t=0}^{ \lceil T/2 \rceil - 2 } \e{ \lambda \sum_{ (v, w) \in \mathcal{L} } x_v \hat{N}_{vw}(2t) y_w } 
},
\end{align}
and then repeat the argument for $t = \lceil T/2 \rceil - 2, \ldots$, et cetera. This brings us to (i) below, and we now use the elementary bounds (ii) $e^x \leq 1+ x+2x^2$ for $|x| \leq 1/2$ and (iii) $1 + x \leq e^x$ to obtain
\begin{align}
& \e{ \lambda a_n\sqrt{ \frac{T}{n} } } \probabilityBig{ \sum_{ (v, w) \in \mathcal{L} } x_v A_{vw} y_{w} - \transpose{x} M y \geq a_n \sqrt{\frac{T}{n}} }
\nonumber \\
\eqcom{i}\leq &\e{ - \lambda \transpose{x} M y } \expectationBig{ \prod_{t=0}^{ \lceil T/2\rceil - 1 } \Bigr( 1 + \sum_{ (v, w) \in \mathcal{L}}\bar{N}_{vw}(2t) \bigl( \e{ \lambda x_v y_{w} } - 1 \bigr) \Bigr) }
\nonumber \\
\eqcom{ii}\leq &\e{ - \lambda \transpose{x} M y } \expectationBig{ \prod_{t=0}^{ \lceil T/2\rceil - 1 } \Bigl( 1 + \sum_{ (v, w) \in \mathcal{L}}\bar{N}_{vw}(2t) \bigl( \lambda  x_v y_{w} + 2 \lambda^2  x_v^2 y_w^2 ) \bigr) \Bigr) }
\nonumber \\ 
\eqcom{iii}\leq &\e{ - \lambda \transpose{x} M y } \expectationBig{ \exp{ \Bigl( \sum_{ (v,w) \in \mathcal{L} } M_{vw} \bigl( \lambda  x_v y_{w} + 2 \lambda^2 x_v^2 y_w^2 \bigr) \Bigr) } }. \label{eq:step0}
\end{align}

We next bound the r.h.s. of \refEquation{eq:step0}. Use a contradiction argument to note that since $\sum_{v \in \mathcal{V}} \sum_{w \in \mathcal{V}} x_v^2 y_{w}^2 = 1$ and $|x_v y_{w}| > (1/n) \sqrt{ T/n }$ for all $(v,w)\in \mathcal{L}^{\mathrm{c}}$, we must have $\sum_{ (v,w) \in \mathcal{L}^{\mathrm{c}} } |x_v y_{w}| \le n\sqrt{ n/T }$. Specify $\lambda = \frac{1}{2} n \sqrt{ n / T }$. Together with the bound $M_{vw} \leq p_{\max} T / n^2$ for all $v,w$, we obtain
\revisedPartEnd
\begin{align} 
\sum_{ (v,w) \in \mathcal{L} } M_{vw} \lambda x_v y_w - \lambda \transpose{x} M y 
= -\sum_{(v,v)\in \mathcal{L}^{\mathrm{c}} } \lambda M_{vw} x_v y_w
\leq \frac{n}{2}p_{\max}. \label{eq:step1}
\end{align}
Additionally since $\sum_{v \in \mathcal{V}} \sum_{w \in \mathcal{V}} x_v^2 y_{w}^2 = 1$, it follows that
\begin{equation}
\sum_{(v, w)\in \mathcal{L}} M_{vw} 2 \lambda^2  x_v^2 y_w^2 
\leq \frac{n}{2} p_{\max}. \label{eq:step2}
\end{equation}

Finally, by combining \refEquation{eq:step0}--\refEquation{eq:step2}, we obtain
\begin{equation}
\probabilityBig{ \sum_{(v, w)\in \mathcal{L}} x_v A_{vw}y_{w} - \transpose{x} M y \geq a_n\sqrt{\frac{T}{n}} }
\leq \exp{ \Bigl( n p_{\max} - \frac{a_n}{2} n \Bigr) }. \label{eq:xyup}
\end{equation}
This proves \refLemma{lem:Bound_for_F2xy}.
\end{proof}

\noindent
\emph{Step 3c: Discrepancy property for $F_3(x,y)$.} We extend the arguments in \cite{feige_spectral_2005} as follows. First, we introduce the quantity $e(\mathcal{A},\mathcal{B}) \triangleq \sum_{x \in \mathcal{A}} \sum_{y \in \mathcal{B} } ( A_\Gamma )_{x,y}$. Next, we say that the random variable $A_\Gamma$ satisfies the \emph{discrepancy property} if there exist constants $c_2, c_3 > 0$ such that for every $\mathcal{A}, \mathcal{B} \subset \mathcal{V}$ one of the following holds:
\begin{itemize}
\item[(i)] $\frac{e(\mathcal{A},\mathcal{B})n^2}{\cardinality{\mathcal{A}}\cardinality{\mathcal{B}}T} \le c_2 \ln\frac{T}{n}$ 
\item[(ii)] $e(\mathcal{A},\mathcal{B}) \ln \frac{e(\mathcal{A},\mathcal{B})n^2}{\cardinality{\mathcal{A}}\cardinality{\mathcal{B}}T} \le c_3 \max{ \{ \cardinality{\mathcal{A}}, \cardinality{\mathcal{B}} \} } \ln{ \frac{n}{ \max{ \{ \cardinality{\mathcal{A}},\cardinality{\mathcal{B}} \} } } }.$
\end{itemize}
We now prove that the discrepancy property provides an absolute bound on $F_3(x,y)$, and that it holds with high probability.

\begin{lemma}
\label{lem:Discrepancy_property_gives_bound}
If $A_\Gamma$ satisfies the discrepancy property, it holds that
$$
\exists_{c>0} : F_3(x,y) \leq c f_n,\quad \forall_{x,y \in \mathcal{N}_\epsilon}.
$$
\end{lemma}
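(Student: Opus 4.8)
The plan is to adapt the ``heavy couple'' estimate of Feige and Ofek \cite{feige_spectral_2005}, which is precisely designed to show that a bounded-discrepancy property forces the heavy-couple part of a bilinear form to have the order of the square root of the average degree. Fix $x,y\in\mathcal N_\epsilon$. Since $\pnorm x2=\pnorm y2=1$, I would first sort the vertices into dyadic buckets $B_k\triangleq\{v\in\mathcal V:2^{-k}<|x_v|\le 2^{-k+1}\}$, and buckets $C_\ell$ defined the same way from $y$; from $\pnorm x2^2\le 1$ one gets $|B_k|\le 4^{k}\wedge n$ and, more importantly, the \emph{cardinality constraint} $\sum_k|B_k|4^{-k}\le 1$ (likewise for $y$). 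A heavy couple $(v,w)$ has $|x_vy_w|>\tfrac1n\sqrt{T/n}$, so both $|x_v|$ and $|y_w|$ exceed $\tfrac1n\sqrt{T/n}$; hence only $O(\ln n)$ buckets are relevant, the vertices outside them never enter a heavy couple, and a heavy couple inside $B_k\times C_\ell$ forces $2^{k+\ell}\le c\,n/\sqrt{T/n}$ for an absolute constant $c$. Bounding $|x_v|\le 2^{-k+1}$, $|y_w|\le 2^{-\ell+1}$ and using $(A_\Gamma)_{vw}\ge 0$ then gives $F_3(x,y)\le 4\sum_{(k,\ell)}2^{-k-\ell}\,e(B_k,C_\ell)$, summed over the admissible pairs, with $e(\cdot,\cdot)$ as in the discrepancy property.

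Next I would split the admissible bucket pairs according to which alternative of the discrepancy property holds for $(\mathcal A,\mathcal B)=(B_k,C_\ell)$. For the pairs in alternative~(i) I would use $e(B_k,C_\ell)\le c_2\ln(T/n)\cdot\frac{|B_k||C_\ell|T}{n^2}$ and sum \emph{globally}: writing $|B_k|=\beta_k4^k$, $|C_\ell|=\gamma_\ell4^\ell$ with $\beta_k,\gamma_\ell\in[0,1]$, $\sum_k\beta_k\le 1$, $\sum_\ell\gamma_\ell\le 1$, one has $2^{-k-\ell}|B_k||C_\ell|=2^{k+\ell}\beta_k\gamma_\ell$, so the heaviness cap $2^{k+\ell}=O(n/\sqrt{T/n})$ together with the decoupling of the $\beta$'s and $\gamma$'s bounds the alternative-(i) contribution by a quantity of order $\ln(T/n)\cdot(T/n)/\sqrt{T/n}$, i.e.\ $O\!\big(\sqrt{T/n}\,\ln(T/n)\big)$; this exceeds $f_n$ by a $\sqrt{\ln(T/n)}$ factor that has to be recovered by a finer partition of the alternative-(i) pairs according to how close $e(B_k,C_\ell)$ actually is to the threshold density, and this is the delicate step. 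For the pairs in alternative~(ii), being in~(ii) means the density exceeds $c_2\ln(T/n)$ times the average, so the logarithm in $e\ln\!\big(\tfrac{en^2}{|\mathcal A||\mathcal B|T}\big)\le c_3\max\{|\mathcal A|,|\mathcal B|\}\ln\tfrac{n}{\max\{|\mathcal A|,|\mathcal B|\}}$ is bounded below by a constant; inverting gives $e(B_k,C_\ell)=O\!\big(\max\{|B_k|,|C_\ell|\}\ln\tfrac{n}{\max\{|B_k|,|C_\ell|\}}\big)$, and combining this with $|B_k|\le 4^k\wedge n$, $2^{-k-\ell}\le 1$, the heaviness cap, and $\sum_k|B_k|4^{-k}\le 1$ yields a geometric series in $k+\ell$ that sums to $O(\sqrt{T/n})=o(f_n)$.

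Adding the two contributions, taking a maximum over the finitely many admissible bucket pairs, and noting that the sub-threshold vertices contribute nothing, gives $F_3(x,y)\le c\,f_n$ uniformly over $\mathcal N_\epsilon$. The main obstacle is squarely the case-split bookkeeping: ensuring the geometric series in alternative~(ii) has ratio bounded away from $1$ over the whole admissible range of bucket sizes, that the inversion of the logarithmic inequality is legitimate there, and above all that alternative~(i) does not leave a spurious logarithmic factor beyond the $\ln(T/n)$ already sitting inside $f_n$ — this last point being a balancing act between the heaviness threshold $\tfrac1n\sqrt{T/n}$, the $\ln(T/n)$ appearing in discrepancy alternative~(i), and the dyadic cardinality constraint $\sum_k|B_k|4^{-k}\le 1$. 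Everything else — the bucketing, the reduction to a bilinear sum over bucket pairs, and the summation of the resulting series — is routine once the case-split is set up.
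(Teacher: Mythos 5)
The paper's ``proof'' of this lemma is only a pair of citations (``This is explained in [Feige--Ofek]; this is Remark~4.5 of [Keshavan--Montanari--Oh]''), so there is nothing to compare your proposal against directly. Your reconstruction does use the correct framework --- the dyadic bucketing, the reduction of $F_3(x,y)$ to a weighted sum $\sum_{(k,\ell)} 2^{-k-\ell} e(B_k,C_\ell)$ over admissible bucket pairs, the heaviness constraint $2^{k+\ell} \lesssim n/\sqrt{T/n}$, and the split by which discrepancy alternative applies. That is exactly the Feige--Ofek route.

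However, the concern you flag in the alternative-(i) branch is not a bookkeeping nuisance that a ``finer partition'' rescues; it is a genuine gap. With the paper's heaviness threshold $|x_v y_w| > \tfrac1n\sqrt{T/n}$ and discrepancy-(i) being density $\le c_2\ln(T/n)$, one can construct a matrix that satisfies the discrepancy property yet has $F_3(x,y)=\Theta\!\bigl(\sqrt{T/n}\,\ln(T/n)\bigr) = \Theta\!\bigl(\sqrt{\ln(T/n)}\cdot f_n\bigr)$. Take $\mathcal A=\mathcal B$ of size $\cardinality{\mathcal A}=n/(2\sqrt{T/n})$, concentrate $e(\mathcal A,\mathcal B)=(c_2/4)\ln(T/n)\,n$ mass uniformly on $\mathcal A\times\mathcal B$ (so that every pair $(\mathcal A',\mathcal B')$ still has density $\le c_2\ln(T/n)$, and the per-row mass $\approx (c_2/2)\ln(T/n)\sqrt{T/n}$ is well below the trimming cap $c_5\frac{T}{n}\ln\frac{T}{n}$), and take $x,y$ uniform on $\mathcal A$; then $|x_vy_w|=2\sqrt{T/n}/n$ (heavy) and $F_3=(c_2/2)\sqrt{T/n}\,\ln(T/n)$. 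The discrepancy property gives no information beyond density $\le c_2\ln(T/n)$ for such pairs, so no finer partition of the alternative-(i) set can recover the $\sqrt{\ln(T/n)}$; the Feige--Ofek and Keshavan--Montanari--Oh arguments dodge this only because their alternative~(i) has a \emph{constant} density cap, which this paper cannot use since its trimming removes only $\lfloor n\,\e{-(T/n)\ln(T/n)}\rfloor$ states and therefore leaves max row sums of order $(T/n)\ln(T/n)$.

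The correct fix is not inside the discrepancy argument but upstream: raise the light/heavy threshold from $\tfrac1n\sqrt{T/n}$ to $\tfrac1n f_n = \tfrac1n\sqrt{(T/n)\ln(T/n)}$. With that threshold the admissibility constraint becomes $2^{k+\ell}\lesssim n/f_n$, and the alternative-(i) contribution is bounded by
$4\sum 2^{-k-\ell}\mu_{k\ell} \le 16\,c_2\ln\tfrac{T}{n}\cdot\frac{T}{n}\cdot\frac{1}{f_n}\cdot\sum_{k,\ell}\alpha_k\beta_\ell \le 16\,c_2\,f_n$,
exactly $O(f_n)$ with no overshoot, while the moment-generating-function argument for $F_2$ (Lemma~\ref{lem:Bound_for_F2xy}) still goes through verbatim --- one checks that with $\lambda=\tfrac12 n/f_n$ the two terms \eqref{eq:step1}--\eqref{eq:step2} are $O(n/\ln(T/n))=o(n)$. (Alternatively, one can keep the paper's threshold and weaken the conclusion to $F_3\le c\sqrt{T/n}\,\ln(T/n)$; the extra $\sqrt{\ln(T/n)}$ propagates harmlessly through Step~4 of Theorem~\ref{thm:Upper_bound_on_SVDCAs_performance} since all downstream statements tolerate polylogarithmic slack.) Your alternative-(ii) analysis, on the other hand, is on track --- once $\lambda > c_2\ln(T/n)$ the logarithmic inequality in (ii) can be safely inverted, and the resulting geometric sum is $O(\sqrt{T/n})=o(f_n)$, exactly as you sketch.
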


\begin{proof}
This is explained in \cite{feige_spectral_2005}. This is Remark 4.5 of \cite{keshavan2010matrix}.
\end{proof}

\begin{lemma}
\label{lem:Discrepancy_property_holds_whp}
The random variable $A_\Gamma$ satisfies the discrepancy property with probability of at least $1 - 1 / n$.
\end{lemma}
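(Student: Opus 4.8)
\textbf{Proof proposal for Lemma~\ref{lem:Discrepancy_property_holds_whp}.}

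The plan is to follow the strategy of \cite{feige_spectral_2005} (and its adaptation in \cite{keshavan2010matrix}), showing that each of conditions (i) and (ii) in the discrepancy property fails only with exponentially small probability, and then taking a union bound over all pairs $\mathcal{A}, \mathcal{B} \subseteq \mathcal{V}$ together with the (few) choices of the sizes $|\mathcal{A}|, |\mathcal{B}|$. Fix $\mathcal{A}, \mathcal{B} \subseteq \mathcal{V}$ and write $a = |\mathcal{A}|$, $b = |\mathcal{B}|$, with w.l.o.g.\ $a \le b$. The quantity $e(\mathcal{A},\mathcal{B}) = \sum_{x \in \mathcal{A}} \sum_{y \in \mathcal{B}} (A_\Gamma)_{x,y}$ counts (a subset of) the transitions of the even sub-chain from $\mathcal{A}$ to $\mathcal{B}$, and its mean satisfies $\expectationWrt{ e(\mathcal{A},\mathcal{B}) }{\Phi} = \bigO{ (T/n^2) \, ab }$ because each of the $\lceil T/2 \rceil$ summand matrices has mean entries of order $1/n^2$ on $\mathcal{A} \times \mathcal{B}$ (recall $( \mathrm{diag}(P_{x,\cdot}) P )_{y,z} \le p_{\max}/n^2$ from the proof of \refProposition{prop:Spectral_concentration_bound_for_BMCs}). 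Set $\mu_{\mathcal{A},\mathcal{B}} \triangleq (T/n^2)\, ab$, so $\expectationWrt{ e(\mathcal{A},\mathcal{B}) }{\Phi} \le p_{\max}\mu_{\mathcal{A},\mathcal{B}}$.

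First I would handle the ``large deviation'' regime. Using a Bernstein-type concentration inequality for the dependent sum $e(\mathcal{A},\mathcal{B})$ — obtainable either from \refTheorem{thm:Paulins_concentration_result} applied to $\process{\tilde{X}_t}{t\ge 0}$ with $f(\tilde{X}_t) = \indicator{X_{t-1}\in\mathcal{A}, X_t\in\mathcal{B}}$ (so $C \le 1$, $V_f = \bigO{ab/n^2}$), or directly from the folding/martingale argument already used in \refLemma{lem:Bound_for_F2xy} — one gets for any threshold $\lambda \ge 1$,
\begin{equation}
\probabilityWrt{ e(\mathcal{A},\mathcal{B}) \ge \lambda \max\{ \mu_{\mathcal{A},\mathcal{B}}, \, c_2' \, b \ln(n/b) \} }{\Phi} \le \e{ - c\, \lambda\, \max\{ \mu_{\mathcal{A},\mathcal{B}}, \, c_2'\, b\ln(n/b) \} }.
\end{equation}
If $e(\mathcal{A},\mathcal{B})$ never exceeds the first argument of the max, then condition (i) holds (with a suitable constant $c_2$, using $\mu_{\mathcal{A},\mathcal{B}}/(ab T/n^2) = 1$ and $b\ln(n/b) \le b \ln n \le$ the required multiple of $(ab T/n^2)\ln(T/n)$ once $a \ge 1$ and $T = \omega(n)$). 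Otherwise, we are in the regime where $e(\mathcal{A},\mathcal{B})$ is at least a constant times $b\ln(n/b)$; there the standard calculation from \cite{feige_spectral_2005} shows that on the event $e(\mathcal{A},\mathcal{B}) \le \lambda \cdot (\text{max})$, the function $t \mapsto t\ln(t n^2/(abT))$ is increasing, so condition (ii) is satisfied for an appropriate $c_3$.

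The union bound is the routine-but-delicate part. There are at most $\binom{n}{a}\binom{n}{b} \le \e{ a\ln(en/a) + b\ln(en/b) }$ pairs with sizes $(a,b)$, and the failure probability of \emph{both} (i) and (ii) for such a pair is bounded, by the display above with $\lambda$ a large enough absolute constant $c_0$, by $\e{ - c c_0 \max\{ \mu_{\mathcal{A},\mathcal{B}}, c_2' b \ln(n/b)\} } \le \e{ - c c_0 c_2' b \ln(n/b) }$. Choosing $c_0$ large enough that $c c_0 c_2' \ge 4$ (say), this is $\le \e{ - 4 b \ln(n/b) } \le \e{ - 2(a\ln(en/a) + b\ln(en/b)) }$ for $n$ large (using $a \le b$ and $\ln(en/b) \ge \ln e = 1$), which dominates the entropy term with room to spare. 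Summing over $1 \le a \le b \le n$ — at most $n^2$ choices, contributing a factor $\e{2\ln n}$ that is absorbed — yields that the discrepancy property fails with probability at most $\sum_{a,b} \e{ - (a\ln(en/a)+b\ln(en/b)) } \le 1/n$ for $n$ large enough. \textbf{The main obstacle} I anticipate is not the concentration step (which mirrors \refLemma{lem:Bound_for_F2xy}) but getting the bookkeeping in the union bound tight enough: one must verify that the exponential decay rate $\max\{\mu_{\mathcal{A},\mathcal{B}}, b\ln(n/b)\}$ genuinely beats the combinatorial entropy $a\ln(en/a)+b\ln(en/b)$ \emph{uniformly} over all size pairs, including the delicate corner $a = b = \Theta(n)$ where $\mu_{\mathcal{A},\mathcal{B}} = \Theta(T) \gg n \gg b\ln(n/b)$, so condition (i) carries the day there, versus the opposite corner $a = O(1)$ where (ii) and the $b\ln(n/b)$ term must do the work; this case split, together with pinning down the absolute constants $c_2, c_3$ so that they are consistent with those implicitly used in \refLemma{lem:Discrepancy_property_gives_bound}, is where the care is needed.
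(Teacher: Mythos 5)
Your high-level plan is the right one — a Chernoff-style tail bound on $e(\mathcal{A},\mathcal{B})$, then a union bound over pairs — but the specific threshold you use does not actually imply the discrepancy property, and this is not a bookkeeping issue; it is the heart of the argument.

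Concretely, you set the threshold at $\lambda\max\{\mu_{\mathcal{A},\mathcal{B}},\,c_2'\,b\ln(n/b)\}$ and claim that on the complementary (high-probability) event condition (ii) follows. Take $a=b=1$ and $T=n\ln n$, so $\mu_{\mathcal{A},\mathcal{B}}=T/n^2=o(1)$ and your threshold is $\lambda c_2'\ln n$. On the event $e(\mathcal{A},\mathcal{B})=\Theta(\ln n)$ (which is allowed by your tail bound), condition (i) requires $e\le c_2(T/n^2)\ln(T/n)=o(1)$ and fails, while condition (ii) evaluates to $e\ln(en^2/T)\approx\Theta(\ln n)\cdot\Theta(\ln n)=\Theta((\ln n)^2)$, which is $\gg c_3\ln n$, so (ii) also fails. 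Thus the high-probability event you construct is not contained in the discrepancy event. What the paper does instead — and what you cannot avoid — is to define the threshold \emph{implicitly} as $\eta^\ast$ solving $\eta^\ast\ln(\eta^\ast n^2/(abT))=c_3 b\ln(n/b)$, so that the cutoff is precisely the level at which (ii) saturates; in the example above $\eta^\ast=\Theta(1)$, which is far below your $c_2'\ln n$. The reason this still closes the union bound is that the correct tail is the Poisson-type bound $\probability{e>\eta}\lesssim\exp(-\eta(\ln(\eta n^2/(abT))-1))$ from the folded MGF calculation \eqref{eq:c2pre}, whose decay rate at $\eta=\eta^\ast$ is exactly $c_3 b\ln(n/b)$ by construction. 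Your Bernstein-style $\exp(-c\eta)$ tail discards the crucial $\ln(\eta/\mu)$ factor, and no constant $c_2'$ can repair this: either the threshold is large enough for the linear tail to beat the entropy but then fails to imply (ii), or it is small enough to imply (ii) but then the linear tail is too weak.

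A second, independent gap: you have no separate treatment of the regime $b\ge n/5$. The paper handles it deterministically using the trimming — since every row/column of $\hat N_\Gamma$ has $e(v,\mathcal{V})=\bigO{T/n}$ after trimming, one gets $e(\mathcal{A},\mathcal{B})\le a\cdot\bigO{T/n}\le c_2 abT/n^2$ as soon as $b\gtrsim n$, establishing (i) with no randomness at all. Without this, the union bound over e.g.\ $a=1$, $b=\Theta(n)$ requires $\mu_{\mathcal{A},\mathcal{B}}=bT/n^2\gg b\approx$ entropy, i.e.\ $T\gg n^2$, which is far stronger than the paper's standing assumption $T=\omega(n)$. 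So the case split $b\ge n/5$ vs.\ $b<n/5$ is not cosmetic: it is what lets the probabilistic estimate be confined to the regime $b\le n/5$ where $(en/b)^{2b}$ counting and the Poisson tail cooperate.
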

\revisedPartEnd

\begin{proof}
Let $\mathcal{A}, \mathcal{B} \subset \mathcal{V}$ be two subsets such that $\cardinality{\mathcal{A}} \leq b$ w.l.o.g. \jaron{In these next two paragraphs, we temporarily let $a = \cardinality{\mathcal{A}}$, $b = b$ to declutter notation.} Also let $c_2, c_3$ be two large constants (how large will be sufficient will become clear in a moment). We can now distinguish two cases:

\noindent
\emph{Case 1:} $b \geq n /5$. The trimming step ensures that $e(v,\mathcal{V}) = \bigO{ T/n }$ for all $v\in \mathcal{V}$. We therefore have in this case that $e(\mathcal{A},\mathcal{B}) \leq c_2 abT / n^2$ \jaron{for a sufficiently large constant $c_2$}.

\noindent
\emph{Case 2:} $b \leq n/5$. For this case, define the quantity $\eta( a, b ) = \max \{ \eta^{\star}, \allowbreak (c_2 a b T \ln{ (T/n) } ) / n^2 \}$ with $\eta^{\star}$ the constant that satisfies the relation $\eta^\star \cdot \ln{ \bigl( ( \eta^\star n^2 ) / \allowbreak ( a b T ) \bigr) } = c_3 b \ln{ ( n / b ) }$. If all pairs of subsets $\mathcal{A}, \mathcal{B} \subset \mathcal{V}$ satisfy $e(\mathcal{A},\mathcal{B}) \leq \eta( a, b )$, the discrepancy property holds. It therefore suffices to show that $e(\mathcal{A},\mathcal{B}) \le \eta( a, b )$ with high probability for all \jaron{$\mathcal{A}, \mathcal{B} \subset \mathcal{V}$}.

We will first quantify the probability that $e(\mathcal{A},\mathcal{B}) \leq \eta( a, b )$ for any arbitrary subsets \jaron{$\mathcal{A}, \mathcal{B} \subset \mathcal{V}$}. Using Markov's inequality, we obtain
\begin{align}
&
\probability{ e(\mathcal{A},\mathcal{B}) > \eta(a,b) }
\leq \inf_{ h \geq 0 } \frac{ \expectation{ \exp{ ( h \cdot e(\mathcal{A},\mathcal{B}) ) } } }{ \exp{ ( h \cdot \eta(a, b ) ) } } 
\nonumber \\ &
\jaron{
\leq \inf_{ h \geq 0 } \frac{ \prod_{t=1}^{\lceil T/2\rceil - 1} \bigl( 1 + \frac{ a b p_{\max} }{ n^2 } \e{h} \bigr) }{ \exp{ ( h \cdot \eta( a, b ) ) } } 
\leq \inf_{ h \geq 0 } \frac{ \prod_{t=1}^{\lceil T/2\rceil - 1} \exp{ \bigl( \frac{ a b p_{\max} }{ n^2 } \e{h} \bigr) } }{ \exp{ ( h \cdot \eta( a, b ) ) } } 
}
\nonumber \\ &
\leq \inf_{ h \geq 0 } \exp{ \Bigl(  \frac{ a b p_{\max} T }{ 2 n^2 } \e{h} - h \eta( a, b ) \Bigr) } 
\nonumber \\ &
\leq \exp{ \Bigl( - \eta( a,b ) \Bigl(\ln \frac{ 2 n^2 \eta( a, b ) }{ a b p_{\max} T } - 1 \Bigr) \Bigr) },\label{eq:c2pre}
\end{align}
where, for the last inequality, we specify $h = \ln{ \bigl( 2 n^2 \eta( a, b ) ) / ( a b p_{\max} T ) \bigr) }.$

As a last step, we compute the expected number of pairs $\mathcal{A}, \mathcal{B} \subset \mathcal{V}$ such that $e(\mathcal{A},\mathcal{B}) > \eta( \cardinality{\mathcal{A}}, \cardinality{\mathcal{B}} )$. The number of possible pairs of sets $\mathcal{A}$ and $\mathcal{B}$ such that $\cardinality{\mathcal{A}} = a$ and $\cardinality{\mathcal{B}} = b$ is ${{n}\choose{a}} {{n}\choose{b}}$. Hence using \refEquation{eq:c2pre}, 
\begin{align}
&\expectationBig{ \Bigl| \Bigl\{ (\mathcal{A},\mathcal{B}) \Big| e(\mathcal{A},\mathcal{B}) > \eta( \cardinality{\mathcal{A}}, \cardinality{\mathcal{B}} ), \cardinality{\mathcal{A}}=a, \cardinality{\mathcal{B}}=b, \mathcal{A}, \mathcal{B} \subset \mathcal{V} \Bigr\} \Bigr| } 
\nonumber \\ &  
\leq \binom{n}{a} \binom{n}{b} \max_{ \mathcal{A}, \mathcal{B} \subset \mathcal{V} \textrm{ s.t. } \cardinality{\mathcal{A}} = a, \cardinality{\mathcal{B}} = b } \probability{ e(\mathcal{A},\mathcal{B}) > \eta(a,b) } 
\nonumber \\ &
\eqcom{i}\leq \Bigl( \frac{n \e{} }{b}\Bigr)^{2b} \max_{ \mathcal{A}, \mathcal{B} \subset \mathcal{V} \textrm{ s.t. } \cardinality{\mathcal{A}} = a, \cardinality{\mathcal{B}} = b } \probability{ e(\mathcal{A},\mathcal{B}) > \eta(a,b) } 
\nonumber \\ &
\eqcom{ii}\leq \exp{ \Bigl( 4b \ln{ \frac{n}{b} } - \eta(a,b) \Bigl( \ln{ \frac{ 2 n^2 \eta(a,b) }{ ab p_{\max} T} } - 1 \Bigr) \Bigr) } 
\nonumber \\ &
\eqcom{iii}\leq \exp{ \Bigl( - 3 \ln n + 7b \ln{ \frac{n}{b} } - \eta(a,b) \Bigl( \ln{ \frac{ 2 n^2 \eta(a,b) }{ a b p_{\max} T } } - 1 ) \Bigr) }
\nonumber \\ &
\eqcom{iv}\leq \exp{ \Bigl( - 3 \ln n + 7 b \ln{ \frac{n}{b} } - \frac{\eta(a,b)}{2} \ln{ \frac{ 2 n^2 \eta(a,b) }{ a b p_{\max} T} } \Bigr) } 
\eqcom{v}\leq \frac{1}{n^3}.\label{eq:etabnd}
\end{align}
\revisedPartBegin
Here, we have used that (i,ii) $a \leq b \leq n/5$, and (iii) that $-3b \ln{(n/b)} \allowbreak \leq -3 \ln{n}$ on the interval $b \in [1,n/5]$. Inequality (iv) follows from $n^2 \eta(a,b) / ( a b T ) \allowbreak \geq c_2 \ln{ T/n }$ and the fact that $\ln{x} - 1 \geq \tfrac{1}{2} \ln{x}$ for sufficiently large $x$, and (v) follows from the definition of $\eta(a,b)$ since
\begin{equation}
7 b \ln{ \frac{n}{b} } - \frac{\eta(a,b)}{2} \ln{ \frac{ 2 n^2 \eta(a,b) }{ a b p_{\max} T} }
\leq 7 b \ln{ \frac{n}{b} } - \frac{c_3 b}{2}  \ln{ \frac{n}{b} } - \frac{\eta^0}{2} \ln{ \frac{2}{p_{\max}} }.
\end{equation}
Here, we have used specifically that $\eta(a,b) \geq \eta^0 \geq 0$, that $\eta^0$ satisfies $\eta^0 \ln{ ( \eta^0 n2 / (abT) ) } = c_3 b \ln{(n/b)}$, and that $2 / p_{\max} \geq 1$. We have therefore shown that for sufficiently large $c_3$, when we sum the above inequality for all possible cardinalities $a,b$,
\revisedPartEnd
\begin{equation}
\expectationBig{ \Bigl| \Bigl\{ (\mathcal{A},\mathcal{B}) \Big| e(\mathcal{A},\mathcal{B})> \eta(\cardinality{\mathcal{A}},\cardinality{\mathcal{B}}), \mathcal{A}, \mathcal{B} \subset V_1 \cap \Gamma \Bigr\} \Bigr| } 
\leq \frac{1}{n}.
\end{equation}
We can thus conclude that the discrepancy property holds with probability $1 - 1/n$. 
\end{proof}

\revisedPartBegin
\noindent
\emph{Step 4: Summary.}
Before putting the results obtained in the previous steps together, we make the following observation. For generic positive random variables $X, X_1, \ldots, X_m$ with $m < \infty$ satisfying $X \leq \sum_{i=1}^m X_i$,
$
\probability{ X \geq x } \leq \sum_{i=1}^m \probability{ X_i \geq x/m }
$
since
\begin{align}
\probability{ X < x }
&
\geq \probability{ \sum_{i=1}^m X_i < x }
\geq \probability{ \cap_{i=1}^m \{ X_i < x/m \} }
\nonumber \\ &
\geq 1 - \probability{ \cup_{i=1}^m \{ X_i \geq x/m \} }
\geq 1 - \sum_{i=1}^m \probability{ X_i \geq x/m }.
\end{align}
Therefore for any $\epsilon, \delta > 0$
\begin{align}
\probability{ \pnorm{ A_\Gamma - M }{} \geq \delta f_n }
&
\eqcom{i}\leq \probability{ \max_{ x,y \in \mathcal{N}_\epsilon } | \transpose{x} (A_\Gamma - M) y | \geq \delta ( 1 - 3\epsilon ) f_n }
\nonumber \\ &
\leq \sum_{i=1}^3 \probabilityBig{ \max_{ x,y \in \mathcal{N}_\epsilon } F_i(x,y) \geq \frac{ \delta ( 1 - 3\epsilon ) f_n }{3} }
\label{eqn:Intermediate_bound_for_spectral_norm_before_using_epsilon_net_and_discrepancy}
\end{align}
where (i) we have used \refLemma{lem:Relation_spectral_norm_and_rectangular_quotient_over_epsilon_net}. We now bound the r.h.s.\ of \refEquation{eqn:Intermediate_bound_for_spectral_norm_before_using_epsilon_net_and_discrepancy}:

\noindent
\emph{First and second terms (corresponding to $F_1$ and $F_2$).} Since we have exponential concentration, we can use the union bound on the $\epsilon$-net for these terms. Applying \refLemma{lem:Bound_for_F1xy} with $c > ( \zeta_\epsilon + \ln{2} ) / d_3$ when $\delta > 3 \delta_c / ( 1 - 3 \epsilon )$, we obtain
\begin{enumerate}
\item[(i)] $\e{ \zeta_\epsilon n } \probability{ F_1(x,y) \geq (\delta / 3) (1 - 3 \epsilon) f_n } \leq \e{ \zeta_\epsilon n } \probability{ F_1(x,y) \geq \delta_c \sqrt{T/n} } = \bigO{ \e{ ( \zeta_\epsilon + \ln{2} - d_3 c )n } } \to 0$ as $n \to \infty$.
\end{enumerate}
Using \refLemma{lem:Bound_for_F2xy}, we find
\begin{enumerate}
\item[(ii)] $\e{ \zeta_\epsilon n } \probability{ F_2(x,y) \geq (\delta / 3) (1 - 3 \epsilon) f_n } \leq \e{ (\zeta_\epsilon + p_{\max} - \frac{ \delta }{6} \sqrt{\ln{ \frac{T}{n} } } ) n } \to 0$ as $n \to \infty$.
\end{enumerate}

\noindent
\emph{Third term (corresponding to $F_3$).} The third term cannot be bounded in a sufficiently tight manner using the union bound on the $\epsilon$-net. We instead rely on the discrepancy property. Write $\mathcal{D}$ for the event that $A_\Gamma$ satisfies the discrepancy property. Using \refLemma{lem:Discrepancy_property_gives_bound} for sufficiently large $C$, and \refLemma{lem:Discrepancy_property_holds_whp}, we arrive at
\begin{enumerate}
\item[(iii)] $
\probability{ \max_{ x,y \in \mathcal{N}_\epsilon } F_3(x,y) \geq C f_n }
= \probability{ \max_{ x,y \in \mathcal{N}_\epsilon } F_3(x,y) \geq C f_n | \mathcal{D} } \probability{ \mathcal{D} } 
+ \probability{  \max_{ x,y \in \mathcal{N}_\epsilon } F_3(x,y) \geq C f_n | \mathcal{D}^{\mathrm{c}} } \probability{ \mathcal{D}^{\mathrm{c}} }
\leq 
0 \cdot \probability{ \mathcal{D} } + 1 \cdot \probability{ \mathcal{D}^{\mathrm{c}} } 
\leq \frac{1}{n}
$.
\end{enumerate}

\noindent
Finally, by bounding \refEquation{eqn:Intermediate_bound_for_spectral_norm_before_using_epsilon_net_and_discrepancy} with (i)--(iii), we obtain the desired conclusion
\begin{equation}
\textrm{if } T = \omega(n) \textrm{, then }
\exists_{ \delta } : 
\lim_{n \to \infty}
\probabilityBig{ \pnormBig{ \sum_{t=0}^{\lceil T/2 \rceil -1} \bigl(\hat{N}_\Gamma(2t) - \bar{N}(2t) \bigr) }{} \geq \delta f_n }
= 0.
\end{equation}
Together with \refEquation{eq:NN1} this implies \refEquation{eq:PFNN1}.\qed
\revisedPartEnd

\section{Proof of Lemma~\ref{lem:N_vector_error_scales_minimally_with_this_bound_if_x_y_differ_clusters}}
\label{suppl:Proof__Separability_property_of_Nhat}

Because (i) $N_{x,y} = T \Pi_x P_{x,y}$, and by $P_{x,y}$'s definition in \refEquation{eqn:Definition_of_P},
\begin{align}
&
\pnorm{{N}_{x,\cdot} - {N}_{y,\cdot} }{2}^2
= \sum_{ z \in \mathcal{V} } | N_{x,z} - N_{y,z} |^2
\eqcom{i}= \sum_{ z \in \mathcal{V} } | T \Pi_x P_{x,z} - T \Pi_y P_{y,z} |^2
\nonumber \\ &
\eqcom{\ref{eqn:Definition_of_P}}= T^2 \sum_{k=1}^K \sum_{z \in \mathcal{V}_k} \Bigl| \bar{\Pi}_{\sigma(x)} \frac{ p_{\sigma(x),k} }{ \cardinality{\mathcal{V}_k} - \indicator{ \sigma(x) = k } } - \bar{\Pi}_{\sigma(y)} \frac{ p_{\sigma(y),k} }{ \cardinality{\mathcal{V}_k}  - \indicator{ \sigma(y) = k } } \Bigr|^2
\end{align}
and
\begin{align}
&
\pnorm{{N}_{\cdot,x} - {N}_{\cdot,y} }{2}^2
= \sum_{ z \in \mathcal{V} } | N_{z,x} - N_{z,y} |^2
\eqcom{i}= \sum_{ z \in \mathcal{V} } | T \Pi_z P_{z,x} - T \Pi_z P_{z,y} |^2
\nonumber \\ &
\eqcom{\ref{eqn:Definition_of_P}}= T^2 \sum_{k=1}^K \sum_{z \in \mathcal{V}_k} \Bigl| \bar{\Pi}_{k} \frac{ p_{k,\sigma(x)} }{ \cardinality{\mathcal{V}_{\sigma(x)}} - \indicator{ \sigma(x) = k } } - \bar{\Pi}_{k} \frac{ p_{k,\sigma(y)} }{ \cardinality{\mathcal{V}_{\sigma(y)}}  - \indicator{ \sigma(y) = k } } \Bigr|^2.
\end{align}
Then,
we have that
\begin{align}
&\pnorm{ {N}^\star_{x,\cdot} - {N}^\star_{y,\cdot} }{2}^2 =  \pnorm{ {N}_{x,\cdot} - {N}_{y,\cdot} }{2}^2 +\pnorm{ {N}_{\cdot,x} - {N}_{\cdot,y} }{2}^2 \cr
&\sim \frac{T^2}{n^3} \sum_{k=1}^K \Bigl( \Bigl( \frac{ \pi_{\sigma(x)} p_{\sigma(x),k} }{\alpha_k \alpha_{\sigma(x)} } - \frac{ \pi_{\sigma(y)} p_{\sigma(y),k} }{\alpha_k \alpha_{\sigma(y)} } \Bigr)^2 +\Bigl( \frac{ \pi_{k} p_{k,\sigma(x)} }{\alpha_k \alpha_{\sigma(x)} } - \frac{ \pi_{k} p_{k,\sigma(y)} }{\alpha_k \alpha_{\sigma(y)} } \Bigr)^2 \Bigr)\cr
&\geq \frac{T^2}{n^3} D(\alpha,p).
\end{align}
That completes this proof. \qed

\section{Proof of Lemma~\ref{lem:Concentration_between_Rhat_and_P}}
\label{suppl:Proof__Concentration_bound_relating_Rhat_and_P}

Recall that for any matrix $A \in \realNumbers^{n \times n}$ that $\pnorm{ A }{\mathrm{F}}^2 = \sum_{i=1}^n \sigma_i^2(A)$, and that for the spectral norm $\pnorm{A}{} = \max_{i=1,\ldots,n} \{ \sigma_i(A) \}$. Because both $\hat{R}$ and $N$ are of rank $K$, the matrix $\hat{R} - N$ is of rank at most $2K$, and therefore
\begin{equation}
\pnorm{ \hat{R}^\star - N^\star }{\mathrm{F}}^2  = 2 \pnorm{ \hat{R} - N }{\mathrm{F}}^2 
\leq 4K \pnorm{ \hat{R} - N }{}^2.
\end{equation}
By the triangle inequality it then follows that
\begin{equation}
\pnorm{ \hat{R}^\star - N^\star }{\mathrm{F}} 
\leq 2\sqrt{K} \bigl( \pnorm{ \hat{R} - \hat{N}_\Gamma }{} + \pnorm{ \hat{N}_\Gamma - N }{} \bigr).
\label{eqn:Upper_bound_on__Error_Rhat_P_after_triangle_inequality}
\end{equation}
Since $K$ is independent of $n$, we just need to bound $\pnorm{ \hat{R} - \hat{N}_\Gamma }{}$ using $\pnorm{ \hat{N}_\Gamma - N }{}$. From the definition of $\hat{R}$,
\begin{equation}
\pnorm{ \hat{N}_\Gamma - \hat{R} }{}
=  \sigma_{K+1}(\hat{N}_\Gamma).
\label{eqn:Upper_bound_on__Error_Rhat_Phat}
\end{equation}
Since the rank of $N$ is at most $K$, Weyl's theorem gives
\begin{align}\label{eqn:Upper_bound_on__Next_singular_value_of_Phat}
&
\sigma_{K+1}( \hat{N}_\Gamma )
\leq \pnorm{ \hat{N}_\Gamma - N }{}.
\end{align}
The proof is completed after bounding \refEquation{eqn:Upper_bound_on__Error_Rhat_P_after_triangle_inequality} by \refEquation{eqn:Upper_bound_on__Error_Rhat_Phat} and \refEquation{eqn:Upper_bound_on__Next_singular_value_of_Phat}. \qed

\section{Proof of Lemma~\ref{lem:Misclassification_separation_in_Rhat_and_P}}
\label{suppl:Proof__Separability_property_of_Rhat}

\revisedPartBegin
\paragraph{Preliminaries}
For notational convenience, let ${\bar{N}}^\star_k \triangleq ( 1 / \cardinality{ \mathcal{V}_k } ) \sum_{ z \in \mathcal{V}_k } {N}^\star_{z,\cdot}$ for $k = 1, \ldots, K$. Let $0 < a < 1/2$, $1 + a < b < \infty$ be two constants. Also recall the definitions of \emph{neighborhoods} in \refEquation{eqn:Definition_of_neighborhoods}:
\begin{equation}
\mathcal{N}_x 
\triangleq \Bigl\{ y \in \mathcal{V} \Big| \sqrt{ \pnorm{ {\hat{R}}_{x,\cdot} - {\hat{R}}_{y,\cdot} }{2}^2 + \pnorm{ {\hat{R}}_{\cdot,x} - {\hat{R}}_{\cdot,y} }{2}^2} \leq h_n \Bigr\}
\quad
\textrm{for}
\quad
x \in \mathcal{V}.
\end{equation}
Note that in \refEquation{eqn:Definition_of_neighborhoods} we specified $h_n = (1/n) \cdot ( T/n )^{3/2} \bigl( \ln{ (T/n) } \bigr)^{4/3}$, while in this proof we assume instead that $h_n$ satisfies $\omega( f_n^2 / n ) = h_n^2 = \smallO{ T^2 / n^3 }$. 

\paragraph{Approach}
We show that for any $0 < a < 1/2$ the recursive algorithm in \refEquation{eqn:SVD_centers} will (for sufficiently large $n,T$) give centers $z_1^\ast, \ldots, z_K^\ast$ satisfying
\begin{equation}
\pnorm{ {\hat{R}}^\star_{z_k^\ast,\cdot} - {\bar{N}}^\star_{\gamma(k)} }{2} 
< a h_n
\quad
\textrm{for}
\quad
k = 1, \ldots, K.
\label{eqn:Distance_of_the_centers_from_the_truths}
\end{equation}
for some permutation $\gamma$. Assuming \refEquation{eqn:Distance_of_the_centers_from_the_truths} holds, one finishes the proof by case checking: let $x \in \mathcal{E}$ be a misclassified state (necessarily $x \not\in \mathcal{N}_{z_{\sigma(x)}^\ast}$). 
\revisedPartEnd

\noindent
{Case 1:} If $x \in \mathcal{N}_{z_c^\ast}$ for some $c \neq \sigma(x)$, we have $\pnorm{ {\hat{R}}_{x,\cdot} - {\bar{N}}^\star_{c} }{2} \leq (1+a) h_n$ by \refEquation{eqn:Definition_of_neighborhoods} and \refEquation{eqn:Distance_of_the_centers_from_the_truths}. Together with \refLemma{lem:N_vector_error_scales_minimally_with_this_bound_if_x_y_differ_clusters}, this gives the lower bound
\begin{equation}
\pnorm{ {\hat{R}}^\star_{x,\cdot} - {\bar{N}}^\star_{\sigma(x)} }{2}
\eqcom{i}\geq \bigl| \pnorm{ {\hat{R}}^\star_{x,\cdot} - {\bar{N}}^\star_c }{2} - \pnorm{ {\bar{N}}^\star_c - {\bar{N}}^\star_{\sigma(x)} }{2} \bigr|
\geq \frac{T D^{1/2}(\alpha,p)}{n^{3/2}} - (1+a) h_n.
\end{equation}
By assumption $h_n = \smallO{ T / n^{3/2} }$, and the result in \refLemma{lem:Misclassification_separation_in_Rhat_and_P} follows. 

\noindent
{Case 2:} Otherwise $x \in ( \cup_{k=1}^K \mathcal{N}_{z_k^\ast} )^{\mathrm{c}}$, and the algorithm has associated $x$ to the closest (but incorrect) center via \refEquation{eqn:SVD_remainders}, i.e., to some cluster $c \neq \sigma(x)$ satisfying $\pnorm{ {\hat{R}}^\star_{z_c^\ast,\cdot} - {\hat{R}}^\star_{x,\cdot} }{2} \leq \pnorm{ {\hat{R}}^\star_{z_{\sigma(x)}^\ast,\cdot} - {\hat{R}}^\star_{x,\cdot} }{2}$. Because each center $z_k^\ast$ is $a h_n$ close to its truth ${\bar{N}}^\star_k$, which themselves are $\Omega( T / n^{3/2} )$ apart, it must be that $\pnorm{ {\hat{R}}^\star_{x,\cdot} - {\bar{N}}^\star_{\sigma(x)} }{2} = \Omega( T / n^{3/2} )$.

\revisedPartBegin
\paragraph{Proof of \refEquation{eqn:Distance_of_the_centers_from_the_truths}}
To prove \refEquation{eqn:Distance_of_the_centers_from_the_truths}, we will construct $K$ disjoint sets $\mathcal{C}_1, \ldots, \mathcal{C}_K$ that satisfy
\begin{equation}
\exists z \in \bigl( \cup_{l=1}^K \mathcal{C}_l \bigr) \backslash \cup_{l=0}^{k-1} C^{(l)}
: \cardinality{ \mathcal{N}_z } 
\geq \cardinality{ \mathcal{C}^{(k)} } 
\geq n \alpha_k ( 1 - \smallOP{1} ).
\label{eqn:Recursive_selection_property_of_the_core_sets}
\end{equation}
Here, the $\cardinality{ \mathcal{C}^{(1)} } \geq \ldots \geq \cardinality{ \mathcal{C}^{(K)} }$ denote the order statistic of the sets $\mathcal{C}_1, \ldots, \mathcal{C}_K$ based on their cardinalities, and $\alpha^{(1)} \geq \ldots \geq \alpha^{(K)}$ denote the order statistic for the cluster concentrations. The existence of sets $\mathcal{C}_1, \ldots, \mathcal{C}_K$ for which property \refEquation{eqn:Recursive_selection_property_of_the_core_sets} holds namely implies that it is impossible that any one of the centers $z_1^\ast, \ldots, z_K^\ast$ provided by the recursion in \refEquation{eqn:SVD_centers} is an \emph{outlier} when $n,T$ are sufficiently large.
\revisedPartEnd
Specifically, we define the sets of \emph{cores}:
\begin{equation}
\mathcal{C}_k
\triangleq \bigl\{ x \in \mathcal{V}_k \big| \pnorm{ {\hat{R}}^\star_{x,\cdot} - {\bar{N}}^\star_k }{2} \leq a h_n \bigr\}
\quad
\textrm{for}
\quad
k = 1, \ldots, K,
\label{eqn:Definition_of_cores_wrt_hn}
\end{equation}
i.e., states from cluster $k$ for which ${\hat{R}}^\star_{x,\cdot}$ is correctly close to cluster $k$'s center. We also define the set of \emph{outliers}:
\begin{equation}
\mathcal{O}
\triangleq \bigl\{ x \in \mathcal{V} \big| \pnorm{ {\hat{R}}^\star_{x,\cdot} - {\bar{N}}^\star_k }{2} \geq b h_n \textrm{ for all } k = 1, \ldots, K \bigr\},
\label{eqn:Definition_of_outliers_wrt_hn}
\end{equation}
so states for which ${\hat{R}}^\star_{x,\cdot}$ is far from \emph{any} cluster's center.  

Let $x \in \mathcal{O}$, $k \in \{ 1, \ldots, K \}$ and $y \in \mathcal{C}_k$. \jaron{The situation is schematically depicted in \refFigure{fig:Schematic_depiction_of_the_sets_of_cores_outliers_and_neighborhoods}.} By centering and then applying the reverse triangle inequality, we find
\begin{align}
\pnorm{ {\hat{R}}^\star_{x,\cdot} - {\hat{R}}^\star_{y,\cdot} }{2}
&
\geq \bigl| \pnorm{ {\hat{R}}^\star_{x,\cdot} - {\bar{N}}^\star_k }{2} - \pnorm{ {\hat{R}}^\star_{y,\cdot} - {\bar{N}}^\star_k }{2} \bigr|.
\label{eqn:Reverse_triangleq_ineqality_on_the_difference_between_rows_of_Rhat}
\end{align}
Since $x \in \mathcal{O}$ and $y \in \mathcal{C}_k$, it follows that $\pnorm{ {\hat{R}}^\star_{x,\cdot} - {\hat{R}}^\star_{y,\cdot} }{2} \geq (b-a) h_n$. Furthermore $b - a > 1$, implying that $y \not\in \mathcal{N}_x$. We have shown that $\mathcal{N}_x \cap \bigl( \cup_{k=1}^K \mathcal{C}_k \bigr) = \emptyset$ for all $x \in \mathcal{O}$. Consequentially:

\revisedPartBegin
\noindent
(a) for any $x \in \mathcal{O}$, $\cardinality{ \mathcal{N}_x } \leq \cardinality{ \bigl( \cup_{k=1}^K \mathcal{C}_k \bigr)^{\mathrm{c}} }$ since $\mathcal{N}_x \subseteq \bigl( \cup_{k=1}^K \mathcal{C}_k \bigr)^{\mathrm{c}}$.

\noindent
Furthermore:

\noindent
(b) for any $y \in \cup_{k=1}^K \mathcal{C}_k$, $\mathcal{C}_{\sigma(y)} \subseteq \mathcal{N}_y$ since $a < 1/2$, 

\noindent
(c) for $k \neq l$ and sufficiently large $n,T$, $\mathcal{C}_k \cap \mathcal{C}_l = \emptyset$ since $h_n = \smallO{ T / n^{3/2} }$.
\revisedPartEnd

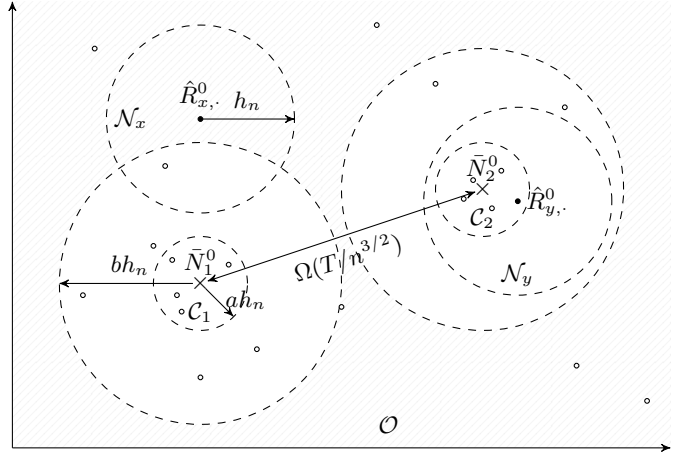
\begin{figure}[!hbtp]
\centering
\def\setCA{(-2cm,0cm) circle (0.5cm)}
\def\perCA{(-2cm,0cm) circle (1.5cm)}
\def\setCB{(1cm,1cm) circle (0.5cm)}
\def\perCB{(1cm,1cm) circle (1.5cm)}
\def\neighborhoodX{(-2cm,1.75cm) circle (1cm)}
\def\neighborhoodY{(1.375cm,0.875cm) circle (1cm)}
\def\canvas{(-4cm,-1.75cm) rectangle (3cm,3cm)}
\begin{tikzpicture}[scale=1.25]
    \begin{scope}
		\fill[draw=none, pattern=north east lines, opacity=0.2] \canvas;
    	\draw[->, thin, >=stealth'] (-4cm,-1.75cm) -- node [below] {} (3cm,-1.75cm);
    	\draw[->, thin, >=stealth'] (-4cm,-1.75cm) -- node [below] {} (-4cm,3cm);    		        	
    	
	    
	    \draw[dashed, fill=white] \perCA node [above] {};
		\draw[dashed, fill=white] \setCA node [above] {$\bar{N}^\star_1$};	    
	    \draw (-2cm,0cm) node {$\times$};	
    	\draw[->, thin, >=stealth', shorten <=0.1cm] (-2cm,0cm) -- node [right] {$a h_n$} (-1.65cm,-0.35cm);		
  
	    \draw[dashed, fill=white] \perCB node [above] {};
	    \draw[dashed, fill=white] \setCB node [above] {$\bar{N}^\star_2$};
	    \draw (1cm,1cm) node {$\times$};	    
    	\draw[->, thin, >=stealth', shorten <=0.1cm] (-2cm,0cm) -- node [above] {$b h_n$} (-3.5cm,0cm);	    	    
    
		\draw[dashed] \neighborhoodX node [above] {$\hat{R}^\star_{x,\cdot}$};    
	    \filldraw[draw=black, fill=black] (-2cm,1.75cm) circle (0.025cm);
    	\draw[->, thin, >=stealth'] (-2cm,1.75cm) -- node [above] {$h_n$} (-1cm,1.75cm);	    
		
    	\draw[<->, thin, >=stealth', shorten >=0.1cm, shorten <=0.1cm] (-2cm,0cm) -- node [below, rotate=17] {$\Omega(T/n^{3/2})$} (1cm,1cm);	    		
		
     	\draw (0cm,-1.5cm) node [] {$\mathcal{O}$};  		
     	\draw (-2cm,-0.5cm) node [above] {$\mathcal{C}_1$};  
     	\draw (1cm,0.5cm) node [above] {$\mathcal{C}_2$};    
     	\draw (-3cm,1.75cm) node [right] {$\mathcal{N}_x$};  	
     	\draw (1.375cm,-0.125cm) node [above] {$\mathcal{N}_y$};
     	
	    \filldraw[draw=black, fill=white] (-1.7cm,0.2cm) circle (0.025cm);
	    \filldraw[draw=black, fill=white] (-2.5cm,0.4cm) circle (0.025cm);
	    \filldraw[draw=black, fill=white] (-1.4cm,-0.7cm) circle (0.025cm);
	    \filldraw[draw=black, fill=white] (-2.3cm,0.25cm) circle (0.025cm);
	    \filldraw[draw=black, fill=white] (-2.2cm,-0.3cm) circle (0.025cm);	    	    	    

		\draw[dashed] \neighborhoodY;    	    
	    \filldraw[draw=black, fill=black] (1.375cm,0.875cm) circle (0.025cm) node[right] {$\hat{R}^\star_{y,\cdot}$}; 	    
	    \filldraw[draw=black, fill=white] (1.1cm,0.8cm) circle (0.025cm);
	    \filldraw[draw=black, fill=white] (1.2cm,1.2cm) circle (0.025cm);
	    \filldraw[draw=black, fill=white] (0.9cm,1.1cm) circle (0.025cm);
	    \filldraw[draw=black, fill=white] (0.8cm,0.9cm) circle (0.025cm);	    	    	    
		
	    \filldraw[draw=black, fill=white] (-2.25cm,-0.125cm) circle (0.025cm);  
	    \filldraw[draw=black, fill=white] (1.875cm,1.875cm) circle (0.025cm);
	    \filldraw[draw=black, fill=white] (0.5cm,2.125cm) circle (0.025cm);	        	     
	    \filldraw[draw=black, fill=white] (-0.5cm,-0.25cm) circle (0.025cm);
	    \filldraw[draw=black, fill=white] (-2.375cm,1.25cm) circle (0.025cm); 
	    \filldraw[draw=black, fill=white] (-3.125cm,2.5cm) circle (0.025cm);
	    \filldraw[draw=black, fill=white] (-0.125cm,2.75cm) circle (0.025cm);	    
	    \filldraw[draw=black, fill=white] (-2cm,-1cm) circle (0.025cm);
	    \filldraw[draw=black, fill=white] (2.75cm,-1.25cm) circle (0.025cm); 
	    \filldraw[draw=black, fill=white] (-3.25cm,-0.125cm) circle (0.025cm);
	    \filldraw[draw=black, fill=white] (2cm,-0.875cm) circle (0.025cm); 	    
    \end{scope}
\end{tikzpicture}
\caption{Schematic depictions of cores $\mathcal{C}_1, \mathcal{C}_2$ for $K=2$, the set of outliers $\mathcal{O}$ (shaded area), and neighborhoods $\mathcal{N}_x,\mathcal{N}_y$ for some $x \in \mathcal{O}$ and $y \in \mathcal{C}_2$.}
\label{fig:Schematic_depiction_of_the_sets_of_cores_outliers_and_neighborhoods}
\end{figure}

\jaron{We now provide estimates for the sizes of the sets involved in (a--c).} By (ii) \refLemma{lem:Concentration_between_Rhat_and_P}, and \refEquation{eqn:Definition_of_cores_wrt_hn},
\begin{align}
&
16 K \pnorm{ \hat{N}_\Gamma - N }{}^2
\eqcom{ii}\geq \pnorm{ \hat{R}^\star - N^\star }{\mathrm{F}}^2
= \sum_{ x \in \mathcal{V} } \pnorm{ {\hat{R}}^\star_{x,\cdot} - {\bar{N}}^\star_{\sigma(x)} }{2}^2
\\ &
\geq \cardinality{ \bigl( \cup_{k=1}^K \mathcal{C}_k \bigr)^{\mathrm{c}} } \min_{ x \in ( \cup_{k=1}^K \mathcal{C}_k )^{\mathrm{c}} } \bigl\{ \pnorm{ {\hat{R}}^\star_{x,\cdot} - {\bar{N}}^\star_{\sigma(x)} }{2}^2 \bigr\}
\eqcom{\ref{eqn:Definition_of_cores_wrt_hn}}\geq \cardinality{ \bigl( \cup_{k=1}^K \mathcal{C}_k \bigr)^{\mathrm{c}} } a^2 h_n^2.
\nonumber 
\end{align}
Rearrange to conclude that $\cardinality{ \bigl( \cup_{k=1}^K \mathcal{C}_k \bigr)^{\mathrm{c}} } = \bigOP{ f_n^2 / h_n^2 } = \smallOP{n}$. Similarly for any $k \in \{ 1, \ldots, K \}$,
$
16K  \pnorm{ \hat{N}_\Gamma - N }{}^2
\geq \cardinality{ \mathcal{C}_k^{\mathrm{c}} \cap \mathcal{V}_k } a^2 h_n^2 
$
such that $\cardinality{ \mathcal{C}_k } = \cardinality{ \mathcal{V}_k } - \cardinality{ \mathcal{C}_k^{\mathrm{c}} \cap \mathcal{V}_k } \geq n \alpha_k - \bigOP{ f_n^2 / h_n^2 } = n \alpha_k ( 1 - \smallOP{1} )$. \jaron{Together with (a--c), this establishes the existence of $\mathcal{C}_1, \ldots, \mathcal{C}_K$ such that \refEquation{eqn:Recursive_selection_property_of_the_core_sets} holds.} \qed

\revisedPartEnd

\chapter{Proofs of \refChapter{sec:The_cluster_improvement_algorithm}}

\section{Bounding the size of $\mathcal{H}^{\mathrm{c}} = \mathcal{V} \backslash \mathcal{H}$ (Proof of  \refProposition{prop:Set_of_forlorn_states_shrinks})}
\label{suppl:Proof__Size_of_complement_of_H}

\revisedPartBegin
First note that the number of states not in $\Gamma$ (obtained after the trimming process) is negligible, i.e., $n\exp(-{T\over n}\ln({T\over n}))$. We then upper bound the number of states that do not satisfy (H1). Let $x\in \mathcal{V}_i$. If $x$ does not satisfy (H1), there exists $j\neq i$ such that $\hat{I}_{i,j}(x)<{T\over 2n}I(\alpha,p)$, where 
$$
\hat{I}_{i,j}(x) \triangleq \sum_{k=1}^K \Bigl( \hat{N}_{x, \mathcal{V}_k} \ln{ \frac{p_{i,k}}{p_{j,k}} } + \hat{N}_{\mathcal{V}_k, x} \ln{ \frac{p_{k,i} \alpha_j}{p_{k,j} \alpha_{i}} } \Bigr) 
+  \Bigl(\frac{\hat{N}_{ \mathcal{V}_j, \mathcal{V}}}{\alpha_j n} - \frac{\hat{N}_{ \mathcal{V}_{i}, \mathcal{V}}}{\alpha_{i} n} \Bigr).
$$
We have $\mathbb{E}[\hat{I}_{i,j}(x) ] = {T\over n}I_{i,j}(\alpha,p)$ where $I_{i,j}(\alpha,p)$ is the quantity involved in the definition of $I(\alpha,p)$ (\ref{eqn:Ialphabetap}). In particular, $\mathbb{E}[\hat{I}_{i,j}(x) ] \ge {T\over n}I(\alpha,p)$. Hence, $x$ does not satisfy (H1) implies that for some $j\neq i$, $\hat{I}_{i,j}(x) < \frac{T}{2n} I(\alpha,p)$ and $\mathbb{E}[\hat{I}_{i,j}(x) ] \ge {T\over n}I(\alpha,p)$. Using concentration results for the BMC \refAppendixSection{sec:Concentration_inequalities_for_BMCs} and more precisely (\ref{eq:KLsim}), this event happens with probability at most $\exp{} \bigl( - C \allowbreak {T\over n} I(\alpha,p) \bigr)$ with $C = \alpha_{\min}^2 / ( 720 \eta^3 \alpha_{\max}^2 )$. Hence, the expected number of states not satisfying (H1) is bounded by $n \exp{} \bigl( - C \frac{T}{n} I(\alpha,p) \bigr)$. From there, using Markov inequality, we obtain that the number of states not satisfying (H1) does not exceed $n \exp{} \bigl( - C \frac{T}{n} I(\alpha, p) \bigr)$ with high probability.

Next, we prove the following intermediate claim:

\begin{lemma}\label{lem:ssizecond} Define 
$
s 
\triangleq \bigl\lfloor 2 n \exp{ \bigl( - \frac{\alpha_{\min}^2}{720 \eta^3 \alpha_{\max}^2} (T/n) I(\alpha, p) \bigr) } \bigr\rfloor
$. 
If $s \geq 1$, then with high probability there does not exist a subset $\mathcal{S}\subset \mathcal{V}$ of size $\cardinality{ \mathcal{S} } = s$ such that $\hat{N}_{\mathcal{S},\mathcal{S}} \geq s \ln (T/n)^2$.
\end{lemma}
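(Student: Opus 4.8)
The plan is to prove Lemma~\ref{lem:ssizecond} by a first-moment (union bound) argument over the $\binom{n}{s}$ subsets of cardinality $s$, bounding for each fixed $\mathcal{S}$ the upper tail of $\hat{N}_{\mathcal{S},\mathcal{S}}$ with exactly the moment-generating-function estimate already developed in the proof of \refLemma{lem:Discrepancy_property_holds_whp}. The case $s = 1$ is trivial, since $P$ has no self-loops and hence $\hat{N}_{\{v\},\{v\}} \equiv 0 < \ln(T/n)^2$; so I assume $s \geq 2$, in which case $\lfloor 2ne^{-cT/n}\rfloor \geq 2$ forces $ne^{-cT/n} \geq 1$ and therefore $ne^{-cT/n} \leq s \leq 2ne^{-cT/n}$, with $c \triangleq \alpha_{\min}^2/(720\eta^3\alpha_{\max}^2)$.

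\textbf{Per-set tail bound.} First I would fix $\mathcal{S}\subset\mathcal{V}$ with $\cardinality{\mathcal{S}}=s$ and write $\hat{N}_{\mathcal{S},\mathcal{S}} = \sum_{t=1}^{T}\indicator{X_{t-1}\in\mathcal{S},X_t\in\mathcal{S}}$. Splitting this sum into its even- and odd-indexed parts (the folding trick used in \refEquation{eq:c2pre}) makes the increments conditionally tame, and the one-step conditional means obey $\bar{N}_{vw}(t)\leq p_{\max}/n^2$ for the \emph{untrimmed} chain as well; hence the computation in \refEquation{eq:c2pre} applies to $\hat{N}_{\mathcal{S},\mathcal{S}}$ verbatim (trimming only decreases the relevant sums). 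With $\mathcal{A}=\mathcal{B}=\mathcal{S}$, $a=b=s$, and threshold $\eta = s\ln(T/n)^2$, this yields
\begin{equation}
\probabilityBig{ \hat{N}_{\mathcal{S},\mathcal{S}} \geq s\ln(T/n)^2 }
\leq \exp\Bigl( - c_0\, s\ln(T/n)^2 \Bigl( \ln\tfrac{ n^2 \ln(T/n)^2 }{ s\, p_{\max}\, T } - 1 \Bigr) \Bigr)
\label{eqn:plan_per_set}
\end{equation}
for an absolute constant $c_0 > 0$, provided the optimizing exponent $h = \ln(n^2\eta/(s^2 p_{\max}T))$ is nonnegative. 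Writing $u \triangleq T/n\to\infty$, the hypothesis $s\leq 2ne^{-cu}$ gives $s p_{\max}T/n^2 = O(u\,e^{-cu}) = o(\ln(u)^2)$, which simultaneously confirms $h\geq 0$ and shows $\mathbb{E}[\hat{N}_{\mathcal{S},\mathcal{S}}] = O(Ts^2/n^2) = o(s\ln(T/n)^2)$, so \refEquation{eqn:plan_per_set} is an honest upper-tail deviation. Moreover, still using $s\leq 2ne^{-cu}$ and $u\to\infty$, one has $\ln\tfrac{n^2\ln(T/n)^2}{s p_{\max}T} - 1 \geq \tfrac{c}{2}u$ for $n$ large.

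\textbf{Union bound and conclusion.} Next I would union bound \refEquation{eqn:plan_per_set} over the $\binom{n}{s}\leq (ne/s)^s = \e{s\ln(ne/s)}$ subsets of size $s$, giving that the probability in the lemma is at most $\exp\bigl( s\ln(ne/s) - \tfrac{c_0 c}{2}\,s\,\tfrac{T}{n}\ln(T/n)^2 \bigr)$. Since $s\geq ne^{-cT/n}$ we have $\ln(ne/s)\leq 1 + cT/n$, which is $o\bigl(\tfrac{T}{n}\ln(T/n)^2\bigr)$ because $\ln(T/n)^2\to\infty$; hence the exponent is at most $-\tfrac12 s\ln(ne/s)$ for $n$ large. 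As $x\mapsto x\ln(ne/x)$ is increasing on $[1,n]$ with value $\geq\ln n$ at $x=1$, we get $s\ln(ne/s)\geq\ln n\to\infty$, so the whole bound vanishes. (Equivalently, this last step can be phrased via Markov's inequality on the expected number of size-$s$ subsets violating the bound, exactly as in \refEquation{eq:etabnd}.)

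\textbf{Main obstacle.} The only genuine difficulty is the single-set estimate \refEquation{eqn:plan_per_set}: controlling $\hat{N}_{\mathcal{S},\mathcal{S}}$ in spite of the stochastic dependence between consecutive transitions $(X_{t-1},X_t)$. This is precisely what the even/odd folding argument behind \refEquation{eq:c2pre} was designed to handle, so here the work reduces to observing that that argument transfers from the trimmed $A_\Gamma$ to the untrimmed $\hat{N}$ — which is immediate. Everything else is routine asymptotics in $u = T/n$, using crucially that $s\geq 1$ forces $u = O(\ln n)$ while still $u\to\infty$, and that the target level $s\ln(T/n)^2$ sits a factor $\to\infty$ above the mean $O(Ts^2/n^2)$.
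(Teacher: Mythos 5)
Your proposal follows the paper's own route essentially exactly: split $\hat{N}_{\mathcal{S},\mathcal{S}}$ into even- and odd-indexed halves to tame the Markov dependence, bound each half by the same Chernoff/MGF estimate resting on $\bar{N}_{vw}(t)\le p_{\max}/n^2$, union over the $\binom{n}{s}\le(en/s)^s$ subsets, and finish with Markov's inequality. The only (inconsequential) difference is the Chernoff parameter: you reuse the near-optimal $h$ from \refEquation{eq:c2pre}, whereas the paper's proof of this lemma plugs in the suboptimal but algebraically convenient $\lambda=(T/n)/\ln(T/n)$ and invokes $n/s\ge e^{(T/n)/\ln(T/n)}$ to absorb $e^\lambda$ — the resulting per-set exponents differ by logarithmic factors, but both comfortably dominate the $s\ln(en/s)$ entropy term, so the conclusion is identical.
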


\begin{proof}
Let $\mathcal{S} \subset \mathcal{V}$ be such that $|\mathcal{S}| = s$ with $s$ as above. We decompose $\hat{N}_{\mathcal{S},\mathcal{S}}$ as $\hat{N}_{\mathcal{S},\mathcal{S}} = \hat{N}^{(\textrm{even})}_{\mathcal{S},\mathcal{S}} + \hat{N}^{(\textrm{odd})}_{\mathcal{S},\mathcal{S}}$ where
$
\hat{N}^{(\textrm{even})}_{\mathcal{S},\mathcal{S}} 
\triangleq \sum_{t=0}^{\lceil T/2\rceil -1} \indicator{ X_{2t}\in {\cal S}, X_{2t+1}\in {\cal S}}
$,
and similarly
$
\hat{N}^{(\textrm{odd})}_{\mathcal{S},\mathcal{S}} 
\triangleq \sum_{t=0}^{\lfloor T/2\rfloor -1} \indicator{ X_{2t+1}\in {\cal S}, X_{2t+2}\in {\cal S}}
$.
This decomposition has the same purpose as that in the proofs presented in \textsection\ref{sec:Spectral_analysis_for_BMCs}, i.e., to get tight bounds on $\hat{N}_{\mathcal{S},\mathcal{S}}$. More precisely, we have
\begin{align} 
\mathbb{P}\Bigl[ \hat{N}^{(\textrm{even})}_{\mathcal{S},\mathcal{S}} & \geq \frac{s}{2} \ln{ \Bigl( \frac{T}{n} \Bigr)^2 } \Bigr]
\eqcom{i}\leq \inf_{\lambda \ge  0} \frac{ \expectation{ \e{ \lambda \hat{N}^{(\textrm{even})}_{\mathcal{S},\mathcal{S}} } } }{ \e{ (s/2) \lambda \ln ( T/n)^2 } }
\eqcom{ii}\leq  \inf_{\lambda \ge 0} \prod_{i=1}^{\lceil T/2 \rceil -1} \frac{ 1+ (s/n)^2 p_{\max} \e{\lambda} }{ \e{ (s/2)\lambda \ln ( T/n)^2 } }
\nonumber \\ &
\leq \inf_{\lambda \ge 0} \exp{ \Bigl( \frac{s^2 p_{\max} T}{n^2} \e{\lambda} - \frac{s}{2} \lambda \ln{ \Bigl( \frac{T}{n} \Bigr)^2 } \Bigr) }
\nonumber \\ &
\eqcom{iii}\leq \exp{ \Bigl( - \frac{T}{n} s \Bigl( \frac{1}{2}\ln{ \frac{T}{n} } - \frac{s p_{\max} }{n} \e{ \frac{T/n}{ \ln{ (T/n) } } } \Bigr) \Bigr) }
\eqcom{iv}\leq \e{ - \frac{ T s \ln{ (T/n) } }{4n} }, \label{eq:bndss}
\end{align}
\jaron{for sufficiently large $n,T$.} 
\revisedPartEnd
Here, (i) is obtained by applying Markov's inequality. (ii) by directly bounding the transition probabilities -- recall that $p_{\max}$ is defined so that $( \mathrm{diag}( P_{x,\cdot} ) P )_{y,z} \leq p_{\max} / n^2$ for all $x, y, z \in \mathcal{V}$, see \textsection\ref{sec:Spectral_analysis_for_BMCs} for details. (iii) is obtained by specifying $\lambda = ( T/n ) / ( \ln{(T/n)} )$. Finally to get (iv), we used the fact that $n/s \geq \exp{ \bigl( (T/n) / ( \ln{(T/n)} ) }$. Analogously, one can prove that
\begin{equation} 
\probabilityBig{ \hat{N}^{(\textrm{odd})}_{\mathcal{S},\mathcal{S}} \geq \frac{s}{2} \ln{ \Bigl( \frac{T}{n} \Bigr)^2 } } 
\leq \exp{ \Bigl( - \frac{ T s \ln{(T/n)} }{4n} \Bigr) }. \label{eq:bndss2}
\end{equation}

Because the number of subsets $\mathcal{S} \subset \mathcal{V}$ of size $s$ satisfies
${{n}\choose{s}} \leq ( \e{} n / s )^{s}$ we deduce using \refEquation{eq:bndss} and \refEquation{eq:bndss2} that
\begin{align} 
&
\expectationBig{ \Bigl| \Bigl\{ \mathcal{S} : \hat{N}_{\mathcal{S},\mathcal{S}} \geq s \ln{ \Bigl( \frac{T}{n} \Bigr)^2 }, |\mathcal{S}|= s \Bigl\} \Bigr| } 
\nonumber \\ &
\leq \expectationBig{ \Bigl| \Bigl\{ \mathcal{S} : \hat{N}^{(\textrm{even})}_{\mathcal{S},\mathcal{S}} \geq \frac{s}{2} \ln{ \Bigl( \frac{T}{n} \Bigr)^2 }, |\mathcal{S}| = s \Bigr\} \Bigr| } 
\nonumber \\ &
\phantom{=} + \expectationBig{ \Bigl| \Bigl\{ \mathcal{S} : \hat{N}^{(\textrm{odd})}_{\mathcal{S},\mathcal{S}} \geq \frac{s}{2} \ln{ \Bigl( \frac{T}{n} \Bigr)^2 }, |\mathcal{S}|= s \Bigr\} \Bigr| } 
\nonumber \\ &
\leq 2 \Bigl( \frac{ \e{}n }{s} \Bigr)^s \e{ - \frac{T s \ln (T/n)}{4n} } 
= 2 \e{ -s \bigl( \frac{T \ln (T/n)}{4n} - \ln{ \frac{ \e{} n }{s} } \bigr) }
\leq 2 \e{ - \frac{Ts \ln (T/n)}{8n} }
\end{align}
\jaron{for sufficiently large $n,T$.} Using Markov's inequality, we can now conclude that with high probability there does not exist a subset $\mathcal{S} \subset \mathcal{V}$ of size $|\mathcal{S}|= s$  such that \jaron{$\hat{N}_{\mathcal{S},\mathcal{S}} \ge s \ln{ (T/n)^2 }$}.
\end{proof}

\revisedPartBegin
We then complete the proof of \refProposition{prop:Set_of_forlorn_states_shrinks} applying the following argument. Consider this iterative construction: start with the set $Z(0)$ of all states in $\Gamma$ that do not satisfy (H1). The $t$-th iteration consists of adding to $Z(t-1)$ a state $v$ not satisfying (H2) written w.r.t. $Z(t-1)$, i.e., $\hat{N}_{v,Z(t-1)} +\hat{N}_{Z(t-1),v} > 2 \ln{ ( (T/n)^2 ) }$. If such a state does not exist, the construction ends. Let $Z(t^*)$ be the final set: $t^*$ is the number of iterations before the construction stops. By definition of ${\cal H}$, the size of ${\cal V}\setminus Z(t^*)$ is smaller than that of ${\cal H}$, and thus $|{\cal H}^{\mathrm{c}}| \le |Z(t^*)|= |Z(0)| + t^*$. 
Note that with high probability $|Z(0)| \le s/2$. 

Finally, we show that $|Z(0)| + t^* \le s$ with high probability when $|Z(0)| \le s/2$.
We first consider the case when $s=0$, i.e., $Z(0) = \emptyset$. When $Z(0) = \emptyset$, $t^* = 0$ since  $\hat{N}_{x,\emptyset}+\hat{N}_{\emptyset,x}=0 <2\ln((T/n)^2)$ for every $x \in \mathcal{V}$. Therefore, $|{\cal H}^{\mathrm{c}}| =  s$. 
We then consider the case when $s\ge 1$. By construction, we have $\hat{N}_{Z(t) ,Z(t)} \allowbreak \geq 2t \ln (T/n)^2$ and $\cardinality{Z(t)} \le s/2 + t$ because of the sequence generating rule. Further observe that $Z(t^*)$ is such that for all $x\notin Z(t^*)$, $x$ satisfies (H1) and (H2) written w.r.t. $Z(t^*)$.  Assume that $t^* \ge s/2$. When $t = s/2$, the set $Z(s/2)$ satisfies that $\hat{N}_{Z(s/2) ,Z(s/2)} \ge  s \ln (T/n)^2$ and $\cardinality{Z(s/2)} \le s$. From Lemma~\ref{lem:ssizecond}, however, there does not exist a subset $\mathcal{S} \subset \mathcal{V}$ of size $|\mathcal{S}|= s$ such that $\hat{N}_{\mathcal{S},\mathcal{S}} \ge  s \ln{ (T/n)^2 }$ with high probability. Therefore,  $|{\cal H}^{\mathrm{c}}| \le |Z(t^*)|\le s/2 + t^*<s$ with high probability.
\revisedPartEnd

\section{Proof of Lemma~\ref{lem:Leading_behavior_of_E1_E2_E3_E4}}
\label{supple:Proof_EEE}

\subsection{Leading order behavior of $E_1$}
\label{suppl:Proof__Leading_order_behavior_of_E1}

The statement $- E_1 = \Omega_{\mathbb{P}}\bigl( I(\alpha,p) \allowbreak (T/n) \itr{e_n}{t+1} \bigr)$ is a direct consequence of condition (H1). Specifically, $x \in \itr{\EHset}{t+1}$ implies that $x \in \mathcal{H}$, and therefore that condition (H1) is satisfied for $x$. Summing condition (H1) over all $x \in \itr{\EHset}{t+1}$ yields the desired result.

\subsection{Leading order behavior of $E_2$}
\label{suppl:Proof__Leading_order_behavior_of_E2}

By assumption, we can bound all the ratios of the type ${p_{a,k}\over p_{b,k}}$ or ${p_{k,a}\over p_{k,b}}$ involved in $E_2$ by $\eta$. The triangle inequality then yields:

%
%

\begin{equation}
| E_2 |
\leq \ln{\eta}  \Bigl( \sum_{ x \in \itr{\EHset}{t+1} } \sum_{k=1}^K \bigl| \hat{N}_{x,\itr{\hat{\mathcal{V}}}{t}_k} - \hat{N}_{x,\mathcal{V}_k} \bigr| + \sum_{ x \in \itr{\EHset}{t+1} } \sum_{k=1}^K \bigl| \hat{N}_{\itr{\hat{\mathcal{V}}}{t}_k,x} - \hat{N}_{\mathcal{V}_k,x} \bigr| \Bigr).
\label{eqn:Intermediate_bound_on_E2}
\end{equation}

Let us upper bound the first summation in the r.h.s.\ of \refEquation{eqn:Intermediate_bound_on_E2}. Using the triangle inequality and the nonnegativity of entries of $\hat{N}_{x,y}$, we get
\begin{align}
&
\sum_{ x \in \itr{\EHset}{t+1} } \sum_{k=1}^K \bigl| \hat{N}_{x,\itr{\hat{\mathcal{V}}}{t}_k} - \hat{N}_{x,\mathcal{V}_k} \bigr|
\leq \sum_{ x \in \itr{\EHset}{t+1} } \sum_{k=1}^K \Bigl| \hat{N}_{x, \itr{\hat{\mathcal{V}}}{t}_k \cap \mathcal{H} } - \hat{N}_{x, \mathcal{V}_k \cap \mathcal{H} } \Bigr|
\nonumber \\ &
\phantom{\sum_{ x \in \itr{\EHset}{t+1} } \sum_{k=1}^K \bigl| \hat{N}_{x,\itr{\hat{\mathcal{V}}}{t}_k} - \hat{N}_{x,\mathcal{V}_k} \bigr| \leq} + \sum_{ x \in \itr{\EHset}{t+1} } \sum_{k=1}^K \Bigl( \hat{N}_{x, \itr{\hat{\mathcal{V}}}{t}_k \cap \mathcal{H}^{\mathrm{c}} } + \hat{N}_{x, \mathcal{V}_k \cap \mathcal{H}^{\mathrm{c}} } \Bigr)
\nonumber \\ &
= \sum_{ x \in \itr{\EHset}{t+1} } \sum_{k=1}^K \Bigl| \hat{N}_{x, \itr{\hat{\mathcal{V}}}{t}_k \cap \mathcal{H} } - \hat{N}_{x, \mathcal{V}_k \cap \mathcal{H} } \Bigr| 
+ 2 \sum_{ x \in \itr{\EHset}{t+1} } \hat{N}_{x, \mathcal{V} \backslash \mathcal{H} }. 
\label{eqn:Intermediate_equation_1_for_bounding_E2}
\end{align}
To further upper bound the first summation in \refEquation{eqn:Intermediate_equation_1_for_bounding_E2}, observe that 
\begin{align}
&
\sum_{ x \in \itr{\EHset}{t+1} } \sum_{k=1}^K \bigl| \hat{N}_{x,\itr{\hat{\mathcal{V}}}{t}_k \cap \mathcal{H}} - \hat{N}_{x,\mathcal{V}_k \cap \mathcal{H}} \bigr|
= \sum_{ x \in \itr{\EHset}{t+1} } \sum_{k=1}^K \Bigl| \sum_{ y \in \itr{\hat{\mathcal{V}}}{t}_k \cap \mathcal{H} } \hat{N}_{x,y} - \sum_{ y \in \mathcal{V}_k \cap \mathcal{H} } \hat{N}_{x,y} \Bigr|
\\ &
= \sum_{ x \in \itr{\EHset}{t+1} } \sum_{k=1}^K \Bigl| \sum_{ y \in ( \itr{\hat{\mathcal{V}}}{t}_k \cap \mathcal{H} ) \backslash ( \mathcal{V}_k \cap \mathcal{H} ) } \hat{N}_{x,y} - \sum_{ y \in ( \mathcal{V}_k \cap \mathcal{H} ) \backslash ( \itr{\hat{\mathcal{V}}}{t}_k \cap \mathcal{H} ) } \hat{N}_{x,y} \Bigr|
\nonumber \\ &
\leq \sum_{ x \in \itr{\EHset}{t+1} } \sum_{k=1}^K \sum_{ y \in ( \itr{\hat{\mathcal{V}}}{t}_k \cap \mathcal{H} ) \Delta ( \mathcal{V}_k \cap \mathcal{H} ) } \hat{N}_{x,y}
= 2 \sum_{ x \in \itr{\EHset}{t+1} } \sum_{ y \in \itr{\EHset}{t} } \hat{N}_{x,y}.
\nonumber 
\end{align}
For the second summation in \refEquation{eqn:Intermediate_equation_1_for_bounding_E2}, note that since $x \in \itr{\EHset}{t+1}$ implies that $x \in \mathcal{H}$, it follows from condition (H2) that $\sum_{ x \in \itr{\EHset}{t+1} } \hat{N}_{x, \mathcal{V} \backslash \mathcal{H} } \leq 2 \cardinality{ \itr{\EHset}{t+1} } \jaron{ \ln{ ( (T/n)^2 ) } }$.

Now, aside from swapping the indices, the conclusion holds similarly for the second summation in \refEquation{eqn:Intermediate_bound_on_E2}. We thus conclude that
\begin{align}
| E_2 |
&
\leq \jaron{ 2 \ln{ \eta }  } \Bigl( \hat{N}_{ \itr{\EHset}{t+1}, \itr{\EHset}{t} } + \hat{N}_{ \itr{\EHset}{t}, \itr{\EHset}{t+1} } + 2 \cardinality{ \itr{\EHset}{t+1} } \ln{( (T/n)^2 )} \Bigr).
\label{eqn:Intermediate_equation_2_for_bounding_E2}
\end{align}

Next to bound the two first terms of the r.h.s.\ of \refEquation{eqn:Intermediate_equation_2_for_bounding_E2}, we center both terms around their means. Since the Markov chain is in equilibrium by assumption, it holds for the first term that
\begin{align}
\hat{N}_{ \itr{\EHset}{t+1}, \itr{\EHset}{t} }
&
= N_{ \itr{\EHset}{t+1}, \itr{\EHset}{t} } + \hat{N}_{ \itr{\EHset}{t+1}, \itr{\EHset}{t} } - N_{ \itr{\EHset}{t+1}, \itr{\EHset}{t} } 
\\ &
\leq \max_{x,y} \{ T \Pi_x P_{x,y} \} \cardinality{ \itr{\EHset}{t} } \cardinality{ \itr{\EHset}{t+1} } + \hat{N}_{ \itr{\EHset}{t+1}, \itr{\EHset}{t} } - N_{ \itr{\EHset}{t+1}, \itr{\EHset}{t} }.
\nonumber
\end{align}
Applying \refLemma{lem:Summation_to_innerproduct_matrix_notation} presented in \refAppendixSection{sec:Spectral_norm_bound_for_sums_of_elements_of_matrices}, we find that
\begin{equation}
\hat{N}_{ \itr{\EHset}{t+1}, \itr{\EHset}{t} } - N_{ \itr{\EHset}{t+1}, \itr{\EHset}{t} }
= \transpose{ \vectOnes{ \itr{\EHset}{t+1} } } ( \hat{N} - N ) \vectOnes{ \itr{\EHset}{t} }
\leq \pnorm{ \hat{N} - N }{} \sqrt{ \cardinality{ \itr{\EHset}{t} } \cardinality{ \itr{\EHset}{t+1} } }. 
\label{eqn:Nhat_Etp1_Et_minus_N_Etp1_Et_spectral_bound}
\end{equation}
The same conclusion holds for $\hat{N}_{ \itr{\EHset}{t}, \itr{\EHset}{t+1} } - N_{ \itr{\EHset}{t}, \itr{\EHset}{t+1} }$. 

Summarizing, we have shown that
\begin{align}
| E_2 |
\leq 4 \ln{\eta} \Bigl( & \underbrace{\max_{x,y} \{ T \Pi_x P_{x,y} \} \cardinality{ \itr{\EHset}{t} } \cardinality{ \itr{\EHset}{t+1} } }_{\triangleq F_1}
\nonumber \\ &
+ \underbrace{ \pnorm{ \hat{N} - N }{} \sqrt{ \cardinality{ \itr{\EHset}{t} } \cardinality{ \itr{\EHset}{t+1} } } }_{\triangleq F_2}
+ \underbrace{ \cardinality{ \itr{\EHset}{t+1} }  \ln{ ((T/n)^2) } }_{\triangleq F_3} \Bigr).
\end{align}

Recall that $\Pi_x P_{x,y} = \bigO{ 1 / n^2 }$, and hence $F_1=\bigOP{ (T/n) (\itr{e_n}{t}/n) \itr{e_n}{t+1} }$. Next in view of Proposition \ref{prop:Spectral_concentration_bound_for_BMCs}, $\pnorm{\hat{N}-N}{} = \bigOP{ \sqrt{ (T/n) \ln{(T/n)}} }$ so that $F_2 = \bigOP{ \sqrt{{(T/n)}\ln{(T/n)} \itr{e_n}{t} \itr{e_n}{t+1} }}$. $F_3 =\bigOP{  \ln{(T^2/n^2)} \itr{e_n}{t+1}}$ is immediate. This completes the analysis of $E_2$.

%
%
%
%
%
%

\subsection{Leading order behavior of $U$}

We write $U = E_3 + E_4$ where $E_3 = E_3^{\mathrm{out}} + E_3^{\mathrm{in}}$,
\begin{gather}
E_3^{\mathrm{out}}
= \sum_{ x \in \itr{\EHset}{t+1} } \sum_{k=1}^K  
\hat{N}_{x,\itr{\hat{\mathcal{V}}}{t}_k} \Bigl( \ln{ \frac{ \hat{p}_{ \itr{\sigma}{t+1}(x), k } }{ \hat{p}_{ \sigma(x), k } } } - \ln{ \frac{ p_{ \itr{\sigma}{t+1}(x), k } }{ p_{ \sigma(x), k } } } \Bigr),
\nonumber \\
E_3^{\mathrm{in}}
= \sum_{ x \in \itr{\EHset}{t+1} } \sum_{k=1}^K  
\hat{N}_{\itr{\hat{\mathcal{V}}}{t}_k,x} \Bigl( \ln{ \frac{ \hat{p}_{ k, \itr{\sigma}{t+1}(x) } }{ \hat{p}_{ k, \sigma(x) } } } - \ln{ \frac{ p_{ k, \itr{\sigma}{t+1}(x) } }{ p_{ k, \sigma(x) } } } \Bigr).
\end{gather}
and
\begin{equation}
E_4
= \sum_{ x \in \itr{\EHset}{t+1} } \Bigl( \frac{ \hat{N}_{ \itr{\hat{\mathcal{V}}_{\sigma(x)}}{t}, \mathcal{V} } }{ \cardinality{ \itr{\hat{\mathcal{V}}_{\sigma(x)}}{t} } } - \frac{ \hat{N}_{ \mathcal{V}_{\sigma(x)}, \mathcal{V} } }{ \cardinality{ \mathcal{V}_{\sigma(x)} } } \Bigr) 
- \sum_{ x \in \itr{\EHset}{t+1} } \Bigl( \frac{ \hat{N}_{ \itr{\hat{\mathcal{V}}_{ \itr{\sigma}{t+1}(x) }}{t}, \mathcal{V} } }{ \cardinality{ \itr{\hat{\mathcal{V}}_{ \itr{\sigma}{t+1}(x) }}{t} } } - \frac{ \hat{N}_{ \mathcal{V}_{ \itr{\sigma}{t+1}(x) }, \mathcal{V} } }{ \cardinality{ \mathcal{V}_{ \itr{\sigma}{t+1}(x) } } } \Bigr).
\label{eqn:Expression_for_E4}
\end{equation}


\begin{lemma}\label{lemE3}
We have $| E_3 |
=  \bigOPbig{  \itr{e_n}{t+1}  \sqrt{\frac{T}{n}}\ln{ \frac{T}{n} } + \itr{e_n}{t+1} \frac{ \itr{e_n}{t} }{n}\frac{T}{n} \ln \frac{T}{n}}
$.
\end{lemma}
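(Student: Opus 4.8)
The plan is to bound $E_3 = E_3^{\mathrm{out}} + E_3^{\mathrm{in}}$ separately, with the two terms handled symmetrically, so I will focus on $E_3^{\mathrm{out}}$. The core idea is that $E_3^{\mathrm{out}}$ measures the discrepancy caused by using the \emph{estimated} parameters $\hat{p}$ in place of the \emph{true} parameters $p$; since $x \in \itr{\EHset}{t+1}$ means $x$ is well-behaved, and since $\cardinality{ \itr{\EHset}{t} } = \bigOP{ \itr{e_n}{t} }$ controls how far the current cluster estimate is from the truth, the estimates $\hat{p}_{a,b}$ should be close to $p_{a,b}$. First I would write $\hat{p}_{a,b} = \hat N_{\itr{\hat{\mathcal V}}{t}_a,\itr{\hat{\mathcal V}}{t}_b}/\hat N_{\itr{\hat{\mathcal V}}{t}_a,\mathcal V}$ and compare it to $p_{a,b} = N_{\mathcal V_a,\mathcal V_b}/N_{\mathcal V_a,\mathcal V}$, decomposing the difference into (i) the error from replacing $\itr{\hat{\mathcal V}}{t}_a,\itr{\hat{\mathcal V}}{t}_b$ by $\mathcal V_a,\mathcal V_b$ (which is controlled by $\cardinality{ \itr{\EHset}{t} }$ via the same symmetric-difference argument used for $E_2$, and by the concentration inequality \refEquation{eq:maxAB} for $\hat N_{\mathcal A,\mathcal B} - N_{\mathcal A,\mathcal B}$), and (ii) the statistical error $\hat N_{\mathcal V_a,\mathcal V_b} - N_{\mathcal V_a,\mathcal V_b}$ etc.\ from the concentration results in \refProposition{prop:Collection_of_relevant_concentration_inequalities_for_BMCs}. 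This yields a bound of the shape $|\hat p_{a,b} - p_{a,b}| = \bigOP{ \sqrt{ (n/T)\ln(T/n) } + \itr{e_n}{t}/n }$, uniformly in $a,b$.

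Next I would Taylor-expand the logarithm: since $\ln(\hat p_{a,k}/\hat p_{b,k}) - \ln(p_{a,k}/p_{b,k}) = \ln(\hat p_{a,k}/p_{a,k}) - \ln(\hat p_{b,k}/p_{b,k})$, and each ratio $\hat p/p = 1 + \smallOP{1}$, we have $|\ln(\hat p_{a,k}/p_{a,k})| = \bigOP{ |\hat p_{a,k}-p_{a,k}|/p_{a,k} }$, which is of the same order as the parameter-error bound above (the $p_{a,k}$ in the denominator is bounded below by a constant by the separability assumption on the BMC). Therefore
\begin{equation}
| E_3^{\mathrm{out}} | \leq \bigl( \max_{a,k} \bigl| \ln(\hat p_{a,k}/p_{a,k}) \bigr| \cdot 2 \bigr) \sum_{ x \in \itr{\EHset}{t+1} } \sum_{k=1}^K \hat N_{x,\itr{\hat{\mathcal V}}{t}_k} = \bigOP{ \Bigl( \sqrt{\tfrac{n}{T}\ln\tfrac{T}{n}} + \tfrac{\itr{e_n}{t}}{n} \Bigr) } \cdot \sum_{ x \in \itr{\EHset}{t+1} } \hat N_{x,\mathcal V}.
\end{equation}
Now $\sum_{ x \in \itr{\EHset}{t+1} } \hat N_{x,\mathcal V} = \hat N_{\itr{\EHset}{t+1},\mathcal V}$, which by \refEquation{eq:prop10-2} (summed over the $\asymp_{\mathbb P} \itr{e_n}{t+1}$ states in $\itr{\EHset}{t+1}$, or via \refEquation{eqn:Concentration_result_on_the_maximum_row_and_column_sum_over_Nhat_Gamma}) concentrates around $N_{\itr{\EHset}{t+1},\mathcal V} = \bigOP{ \itr{e_n}{t+1} \tfrac{T}{n} }$. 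Multiplying through gives $| E_3^{\mathrm{out}} | = \bigOP{ \itr{e_n}{t+1}\sqrt{\tfrac{T}{n}\ln\tfrac{T}{n}} \cdot \sqrt{\ln\tfrac{T}{n}}/\sqrt{\ln\tfrac{T}{n}} + \itr{e_n}{t+1}\tfrac{\itr{e_n}{t}}{n}\tfrac{T}{n} }$; being careful with the $\ln(T/n)$ factors, the first term is $\itr{e_n}{t+1}\sqrt{(T/n)\ln(T/n)}\cdot\sqrt{(n/T)\ln(T/n)} = \itr{e_n}{t+1}\ln(T/n)$, but the sharper route keeps $\hat N_{x,\mathcal V} = \bigOP{(T/n)\ln(T/n)}$ per well-behaved state via \refEquation{eqn:Concentration_result_on_the_maximum_row_and_column_sum_over_Nhat_Gamma}, producing the stated $\itr{e_n}{t+1}\sqrt{T/n}\ln(T/n)$ and $\itr{e_n}{t+1}\tfrac{\itr{e_n}{t}}{n}\tfrac{T}{n}\ln\tfrac{T}{n}$ terms. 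The term $E_3^{\mathrm{in}}$ is bounded identically after swapping the roles of rows and columns, using the column analogues of the concentration inequalities.

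The main obstacle I anticipate is bookkeeping the $\ln(T/n)$ powers correctly so that the two stated error terms come out exactly, rather than a slightly looser bound: one must decide at each step whether to use the "typical" bound $\hat N_{x,\mathcal V} \approx (T/n)\ln(T/n)$ (valid uniformly over $\Gamma$ after trimming, \refEquation{eqn:Concentration_result_on_the_maximum_row_and_column_sum_over_Nhat_Gamma}) or the "aggregate" bound $\hat N_{\itr{\EHset}{t+1},\mathcal V} \approx \itr{e_n}{t+1}(T/n)$, and to combine the parameter-estimation error (which itself splits into a $\sqrt{(n/T)\ln(T/n)}$ statistical piece and an $\itr{e_n}{t}/n$ structural piece) consistently. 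A secondary subtlety is ensuring $\hat p_{a,b}$ is bounded away from $0$ with high probability so that $\ln \hat p_{a,b}$ is well-defined and the Taylor expansion is legitimate — this follows since $\itr{e_n}{t} = \smallO{n/\ln(T/n)}$ forces the estimated cluster sizes and transition counts to be within a constant factor of the truth with high probability, which in turn follows from \refEquation{eq:Nseti} and the symmetric-difference control of $\itr{\hat{\mathcal V}}{t}_a \triangle \mathcal V_a$. With these ingredients in place the lemma follows by collecting terms.
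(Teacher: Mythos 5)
Your plan follows essentially the same route as the paper: write $E_3 = E_3^{\mathrm{out}} + E_3^{\mathrm{in}}$, bound the log-ratio of estimated to true $p$-parameters, multiply by $\hat N_{x,\mathcal V}$ (controlled via the trimming property \refEquation{eqn:Concentration_result_on_the_maximum_row_and_column_sum_over_Nhat_Gamma}) and sum over $\itr{\EHset}{t+1}$. The structure is right.

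There is, however, a concrete bookkeeping gap, and it is precisely the one you flag as ``the main obstacle'' without actually resolving it. You assert $|\hat p_{a,b}-p_{a,b}| = \bigOP{\sqrt{(n/T)\ln(T/n)} + \itr{e_n}{t}/n}$. The correct sharp bound, which the paper obtains, has no extra $\ln(T/n)$ factor: it is $\bigOP{\sqrt{n/T} + \itr{e_n}{t}/n}$. The source of the sharp rate is precisely \refEquation{eq:maxAB}, which is uniform over all pairs of subsets and gives $|\hat N_{\mathcal A,\mathcal B} - N_{\mathcal A,\mathcal B}| = \bigOP{\sqrt{nT}}$; dividing by the denominator $N_{\hat{\mathcal V}_a^{[t]},\mathcal V} = \Theta(T)$ gives a relative error $\sqrt{nT}/T = \sqrt{n/T}$. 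You cite \refEquation{eq:maxAB} but then inject a spurious $\ln(T/n)$ factor. Carrying your version through the multiplication by $\hat N_{x,\mathcal V} = \bigOP{(T/n)\ln(T/n)}$ and summing over $\itr{e_n}{t+1}$ states gives
\begin{equation}
\itr{e_n}{t+1}\cdot\frac{T}{n}\ln\frac{T}{n}\cdot\sqrt{\frac{n}{T}\ln\frac{T}{n}} = \itr{e_n}{t+1}\sqrt{\frac{T}{n}}\Bigl(\ln\frac{T}{n}\Bigr)^{3/2},
\end{equation}
a factor $\sqrt{\ln(T/n)}$ worse than the stated bound $\itr{e_n}{t+1}\sqrt{T/n}\ln(T/n)$. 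The parenthetical in your write-up (``$\sqrt{\ln(T/n)}/\sqrt{\ln(T/n)}$'') reads like an attempt to cancel this factor by hand, but with the $\ln$ in the parameter error it simply does not cancel. Dropping that extra $\ln$ (which you are entitled to do by \refEquation{eq:maxAB}) makes the arithmetic come out to the lemma's statement exactly.

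It is worth noting that the weaker bound you would actually obtain still suffices for the downstream use in the proof of \refProposition{prop:Error_on_the_set_of_misclassified_wellbehaved_states}, since $\sqrt{T/n}(\ln(T/n))^{3/2} = o(T/n)$ when $T = \omega(n)$. So your argument would prove a slightly different (weaker) statement than the lemma as written, not the lemma itself, even though the weaker statement would still serve. The fix is simply to use the correct rate $\sqrt{n/T}$ from \refEquation{eq:maxAB}. Your treatment of the secondary subtlety (keeping $\hat p$ bounded away from zero so the log is well-defined) is correct and matches the paper's implicit reliance on $\itr{e_n}{t} = \smallO{n/\ln(T/n)}$ together with the concentration inequalities.
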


\begin{proof}
By the triangle inequality, we have $ | E_3 | \leq | E_3^{\mathrm{out}} | + | E_3^{\mathrm{in}} |$ with
\begin{align}
| E_3^{\mathrm{out}} | 
&
\leq \sum_{ x \in \itr{\EHset}{t+1} } \sum_{k=1}^K  
\hat{N}_{\itr{\hat{\mathcal{V}}}{t}_k,x} \Bigl( \Bigl| \ln{ \frac{ \hat{p}_{ k,\itr{\sigma}{t+1}(x) } }{ p_{ k,\itr{\sigma}{t+1}(x) } } } \Bigr| + \Bigl| \ln{ \frac{ \hat{p}_{ k,\sigma(x) } }{ p_{ k,\sigma(x) } } } \Bigr| \Bigr),
\nonumber \\
| E_3^{\mathrm{in}} |
&
\leq
\sum_{ x \in \itr{\EHset}{t+1} } \sum_{k=1}^K  
\hat{N}_{x,\itr{\hat{\mathcal{V}}}{t}_k} \Bigl( \Bigl| \ln{ \frac{ \hat{p}_{ \itr{\sigma}{t+1}(x), k } }{ p_{ \itr{\sigma}{t+1}(x), k } } } \Bigr| + \Bigl| \ln{ \frac{ \hat{p}_{ \sigma(x), k } }{ p_{ \sigma(x), k } } } \Bigr| \Bigr).
\label{eqn:Abs_bounds_on_E3in_and_E3out}
\end{align}

\revisedPartBegin
We first bound the summands. From the inequalities $x / ( 1 + x ) \leq \ln{ (1+x) } \allowbreak \leq x$ for $x > -1$, it follows that for $a,b = 1, \ldots, K$,
\begin{align}
\Bigl| \ln{ \frac{ \hat{p}_{a,b} }{ p_{a,b} } } \Bigr|
&
= \Bigl| \ln{ \Bigl( 1 + \frac{ \hat{p}_{a,b} - p_{a,b} }{ p_{a,b} } \Bigr) } \Bigr|
\leq \Bigl| \frac{ \hat{p}_{a,b} - p_{a,b} }{ p_{a,b} } \Bigr|
\nonumber \\ &
\leq \Bigl| \frac{ 1 }{ p_{a,b} } \frac{\hat{N}_{\hat{\mathcal{V}}_a^{[t]}, \hat{\mathcal{V}}_b^{[t]}} }{\hat{N}_{\hat{\mathcal{V}}_a^{[t]},\mathcal{V}}}- 1 \Bigr| =\Bigl| \frac{ 1 }{ p_{a,b} } \frac{N_{\hat{\mathcal{V}}_a^{[t]}, \hat{\mathcal{V}}_b^{[t]}} }{N_{\hat{\mathcal{V}}_a^{[t]},\mathcal{V}}} \frac{\hat{N}_{\hat{\mathcal{V}}_a^{[t]}, \hat{\mathcal{V}}_b^{[t]}} }{N_{\hat{\mathcal{V}}_a^{[t]}, \hat{\mathcal{V}}_b^{[t]}}} \frac{N_{\hat{\mathcal{V}}_a^{[t]},\mathcal{V}} }{\hat{N}_{\hat{\mathcal{V}}_a^{[t]},\mathcal{V}}}- 1 \Bigr| 
\nonumber \\ &
\leq \Bigl| \frac{ N_{\hat{\mathcal{V}}_a^{[t]}, \hat{\mathcal{V}}_b^{[t]}} }{ N_{\mathcal{V}_a, \mathcal{V}_b} } \frac{N_{\mathcal{V}_a,\mathcal{V}} }{N_{\hat{\mathcal{V}}_a^{[t]},\mathcal{V}}} \frac{\hat{N}_{\hat{\mathcal{V}}_a^{[t]}, \hat{\mathcal{V}}_b^{[t]}} }{N_{\hat{\mathcal{V}}_a^{[t]}, \hat{\mathcal{V}}_b^{[t]}}} \frac{N_{\hat{\mathcal{V}}_a^{[t]},\mathcal{V}} }{\hat{N}_{\hat{\mathcal{V}}_a^{[t]},\mathcal{V}}}- 1 \Bigr| 
= \bigOPbig{\frac{\itr{e_n}{t} }{n} + \sqrt{\frac{n}{T}}},
\label{eqn:Taylor_bound_on_ln_phat_over_p}
\end{align}
where the last equality is obtained from the following observations: the four ratios involved in the last inequality are all close to 1. The two first ratios capture the error due to the fact that our estimated clusters are not the true clusters, i.e., ${\cal V}_a^{[t]}\neq {\cal V}_a$. However, by assumption, we have 
$
\bigl| { N_{\hat{\mathcal{V}}_a^{[t]}, \hat{\mathcal{V}}_b^{[t]}} } / { N_{\mathcal{V}_a, \mathcal{V}_b} } - 1 \bigr| 
= \bigOP{ {\itr{e_n}{t}} / {n} }
$.
The same inequality holds for the second ratio. To control the two last ratios, we use the concentration result \eqref{eq:maxAB} (this result is uniform over all subsets of states) to get for example:
$
\bigl| {\hat{N}_{\hat{\mathcal{V}}_a^{[t]}, \hat{\mathcal{V}}_b^{[t]}} } / {N_{\hat{\mathcal{V}}_a^{[t]}, \hat{\mathcal{V}}_b^{[t]}}} - 1 \big| 
= \bigOP{\sqrt{n/T}}
$.
\revisedPartEnd

Next, since $x \in \itr{ \EHset }{t+1}$ implies that $x \in \Gamma$, it follows from \refEquation{eqn:Concentration_result_on_the_maximum_row_and_column_sum_over_Nhat_Gamma} that
\begin{equation}
\hat{N}_{x,\mathcal{V}} + \hat{N}_{\mathcal{V},x} 
= \bigObig{ \frac{T}{n} \ln \frac{T}{n} }. \label{eq:E3numedge}
\end{equation}
with high probability. Then from \refEquation{eqn:Taylor_bound_on_ln_phat_over_p} and \refEquation{eq:E3numedge} it follows that
\begin{align}
| E_3 | = 
& \bigOPbig{ \itr{e_n}{t+1} \sqrt{\frac{T}{n}}\ln{ \frac{T}{n} } +\itr{e_n}{t+1} \frac{\itr{e_n}{t}}{n}\frac{T}{n} \ln \frac{T}{n}}.
\end{align}
This completes the proof.
\end{proof}


\begin{lemma}
We have
$
| E_4 |
= \bigOPbig{ \sqrt{\frac{T}{n}} \itr{e_n}{t+1} +\frac{\itr{e_n}{t} }{n}\frac{T}{n}\itr{e_n}{t+1} }
$.
\end{lemma}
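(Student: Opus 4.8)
The plan is to reduce $E_4$ to a single per-cluster discrepancy. For a cluster index $a$, set
\begin{equation}
\Delta_a \triangleq \frac{ \hat{N}_{ \itr{\hat{\mathcal{V}}_a}{t}, \mathcal{V} } }{ \cardinality{ \itr{\hat{\mathcal{V}}_a}{t} } } - \frac{ \hat{N}_{ \mathcal{V}_a, \mathcal{V} } }{ \cardinality{ \mathcal{V}_a } } .
\end{equation}
Then \refEquation{eqn:Expression_for_E4} reads $E_4 = \sum_{ x \in \itr{\EHset}{t+1} } ( \Delta_{\sigma(x)} - \Delta_{ \itr{\sigma}{t+1}(x) } )$, so $| E_4 | \leq 2 \cardinality{ \itr{\EHset}{t+1} } \max_{a} | \Delta_a | = \bigOPbig{ \itr{e_n}{t+1} \max_a | \Delta_a | }$ by the hypothesis $\cardinality{ \itr{\EHset}{t+1} } \asymp_{\mathbb{P}} \itr{e_n}{t+1}$. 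It thus suffices to prove $\max_a | \Delta_a | = \bigOPbig{ \sqrt{ T/n } + \tfrac{T}{n}\tfrac{ \itr{e_n}{t} }{n} }$.

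To bound $\Delta_a$, I would put the two fractions over the common denominator $\cardinality{ \itr{\hat{\mathcal{V}}_a}{t} } \cardinality{ \mathcal{V}_a }$, which is $\asymp_{\mathbb{P}} n^2$: indeed $\cardinality{\mathcal{V}_a} \sim \alpha_a n$ with $\alpha_{\min}>0$, and since any state on which $\itr{\hat{\mathcal{V}}_a}{t}$ and $\mathcal{V}_a$ disagree is misclassified after iteration $t$, $\cardinality{ \itr{\hat{\mathcal{V}}_a}{t} \triangle \mathcal{V}_a } \leq 2\cardinality{ \itr{\mathcal{E}}{t} } \leq 2( \cardinality{ \itr{\EHset}{t} } + \cardinality{ \mathcal{H}^{\mathrm{c}} } ) = \bigOP{ \itr{e_n}{t} + \cardinality{ \mathcal{H}^{\mathrm{c}} } } = \smallOP{n}$ by the hypothesis and \refProposition{prop:Set_of_forlorn_states_shrinks}, so $\cardinality{ \itr{\hat{\mathcal{V}}_a}{t} }$ is also $\asymp_{\mathbb{P}} n$. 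The numerator splits as
\begin{equation}
\cardinality{\mathcal{V}_a} \hat{N}_{ \itr{\hat{\mathcal{V}}_a}{t}, \mathcal{V} } - \cardinality{ \itr{\hat{\mathcal{V}}_a}{t} } \hat{N}_{ \mathcal{V}_a, \mathcal{V} }
= \cardinality{ \itr{\hat{\mathcal{V}}_a}{t} } ( \hat{N}_{ \itr{\hat{\mathcal{V}}_a}{t}, \mathcal{V} } - \hat{N}_{ \mathcal{V}_a, \mathcal{V} } )
+ ( \cardinality{\mathcal{V}_a} - \cardinality{ \itr{\hat{\mathcal{V}}_a}{t} } ) \hat{N}_{ \itr{\hat{\mathcal{V}}_a}{t}, \mathcal{V} } ,
\end{equation}
and I would estimate the two summands in turn.

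For the first summand, $\hat{N}_{ \itr{\hat{\mathcal{V}}_a}{t}, \mathcal{V} } - \hat{N}_{ \mathcal{V}_a, \mathcal{V} } = \hat{N}_{ \itr{\hat{\mathcal{V}}_a}{t} \setminus \mathcal{V}_a, \mathcal{V} } - \hat{N}_{ \mathcal{V}_a \setminus \itr{\hat{\mathcal{V}}_a}{t}, \mathcal{V} }$; for any $\mathcal{S} \subseteq \mathcal{V}$ one has $N_{\mathcal{S},\mathcal{V}} = \sum_{x\in\mathcal{S}} T\Pi_x = \bigO{ (T/n)\cardinality{\mathcal{S}} }$ (because $\sum_y P_{x,y}=1$ and $\Pi_x = \bigO{1/n}$) while $| \hat{N}_{\mathcal{S},\mathcal{V}} - N_{\mathcal{S},\mathcal{V}} | < c_6\sqrt{nT}$ with high probability \emph{uniformly} over all subsets $\mathcal{S}$, by \refEquation{eq:maxAB}. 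Applying this to $\mathcal{S} = \itr{\hat{\mathcal{V}}_a}{t} \setminus \mathcal{V}_a$ and $\mathcal{S} = \mathcal{V}_a \setminus \itr{\hat{\mathcal{V}}_a}{t}$ (each of size $\bigOP{ \itr{e_n}{t} + \cardinality{\mathcal{H}^{\mathrm{c}}} }$) gives $\hat{N}_{ \itr{\hat{\mathcal{V}}_a}{t}, \mathcal{V} } - \hat{N}_{ \mathcal{V}_a, \mathcal{V} } = \bigOP{ \tfrac{T}{n}( \itr{e_n}{t} + \cardinality{\mathcal{H}^{\mathrm{c}}} ) + \sqrt{nT} }$; multiplying by $\cardinality{ \itr{\hat{\mathcal{V}}_a}{t} } = \bigOP{n}$ and dividing by the $\asymp_{\mathbb{P}} n^2$ denominator bounds its contribution to $\Delta_a$ by $\bigOP{ \tfrac{T}{n}\tfrac{ \itr{e_n}{t} }{n} + \tfrac{T}{n}\tfrac{ \cardinality{\mathcal{H}^{\mathrm{c}}} }{n} + \sqrt{T/n} }$. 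For the second summand, $| \cardinality{\mathcal{V}_a} - \cardinality{ \itr{\hat{\mathcal{V}}_a}{t} } | \leq \cardinality{ \itr{\hat{\mathcal{V}}_a}{t} \triangle \mathcal{V}_a } = \bigOP{ \itr{e_n}{t} + \cardinality{\mathcal{H}^{\mathrm{c}}} }$ and $\hat{N}_{ \itr{\hat{\mathcal{V}}_a}{t}, \mathcal{V} } \leq \hat{N}_{\mathcal{V},\mathcal{V}} = T$, so its contribution is $\bigOP{ \tfrac{T}{n}\tfrac{ \itr{e_n}{t} }{n} + \tfrac{T}{n}\tfrac{ \cardinality{\mathcal{H}^{\mathrm{c}}} }{n} }$. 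Finally, $\cardinality{\mathcal{H}^{\mathrm{c}}} = \bigOP{ n\e{ -C (T/n) I(\alpha,p) } }$ by \refProposition{prop:Set_of_forlorn_states_shrinks} and $\tfrac{T}{n}\e{ -C (T/n) I(\alpha,p) } = \smallO{ \sqrt{T/n} }$, so the $\cardinality{\mathcal{H}^{\mathrm{c}}}$ terms are absorbed, yielding $\max_a | \Delta_a | = \bigOP{ \sqrt{T/n} + \tfrac{T}{n}\tfrac{ \itr{e_n}{t} }{n} }$; combined with the reduction this gives $| E_4 | = \bigOPbig{ \sqrt{\tfrac{T}{n}}\, \itr{e_n}{t+1} + \tfrac{ \itr{e_n}{t} }{n}\tfrac{T}{n} \itr{e_n}{t+1} }$.

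The computation itself is routine; the point that needs care is the bookkeeping around the forlorn states --- namely that $\cardinality{ \itr{\hat{\mathcal{V}}_a}{t} \triangle \mathcal{V}_a }$ is controlled by $\itr{e_n}{t}$ only up to $\cardinality{\mathcal{H}^{\mathrm{c}}}$, whose magnitude from \refProposition{prop:Set_of_forlorn_states_shrinks} makes $\tfrac{T}{n}\tfrac{ \cardinality{\mathcal{H}^{\mathrm{c}}} }{n} = \bigOP{ \tfrac{T}{n}\e{ -C (T/n) I(\alpha,p) } } = \smallOP{ \sqrt{T/n} }$ and hence swallowed by the $\sqrt{T/n}\, \itr{e_n}{t+1}$ term --- together with the observation that \refEquation{eq:maxAB} is a statement uniform over all subsets of $\mathcal{V}$, which is exactly what allows it to be applied to the data-dependent sets $\itr{\hat{\mathcal{V}}_a}{t} \triangle \mathcal{V}_a$.
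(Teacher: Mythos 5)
Your proof is correct and follows essentially the same route as the paper's: both arguments reduce $|E_4|$ to bounding the per-cluster quantity $\Delta_a$ and then invoke the uniform-over-subsets concentration \refEquation{eq:maxAB} together with the symmetric-difference bound $\cardinality{\itr{\hat{\mathcal{V}}_a}{t}\triangle\mathcal{V}_a}=\bigOP{\itr{e_n}{t}+\cardinality{\mathcal{H}^{\mathrm{c}}}}$. The paper triangulates through the mean $N$ rather than clearing denominators, but that is a cosmetic difference, and your explicit justification that the $\cardinality{\mathcal{H}^{\mathrm{c}}}$ contribution is swallowed by the $\sqrt{T/n}$ term (via \refProposition{prop:Set_of_forlorn_states_shrinks}) makes a step explicit that the paper leaves implicit when it defers to the proof of the $E_3$ lemma.
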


\begin{proof}
Let $k \in \{ 1, \ldots, K \}$ to examine any one of the summands in $E_4$. We (i) center and use the triangle inequality to bound all summands as
\begin{align}
\Bigl| \frac{ \hat{N}_{ \itr{\hat{\mathcal{V}}_k}{t}, \mathcal{V} } }{ \cardinality{ \itr{\hat{\mathcal{V}}_k}{t} } } - \frac{ \hat{N}_{ \mathcal{V}_k, \mathcal{V} } }{ \cardinality{ \mathcal{V}_k } } \Bigr|
\eqcom{i}\leq & 
\Bigl| \frac{ \hat{N}_{ \itr{\hat{\mathcal{V}}_k}{t}, \mathcal{V} } -N_{ \itr{\hat{\mathcal{V}}_k}{t}, \mathcal{V} } }{ \cardinality{ \itr{\hat{\mathcal{V}}_k}{t} } } \Bigr| 
+ \Bigl|\frac{ \hat{N}_{ \mathcal{V}_k, \mathcal{V} } -N_{ \mathcal{V}_k, \mathcal{V} } }{ \cardinality{ \mathcal{V}_k } } \Bigr| 
\nonumber \\ &
+ \Bigl| \frac{ N_{ \itr{\hat{\mathcal{V}}_k}{t}, \mathcal{V} } }{ \cardinality{ \itr{\hat{\mathcal{V}}_k}{t} } } - \frac{ N_{ \mathcal{V}_k, \mathcal{V} } }{ \cardinality{ \mathcal{V}_k } } \Bigr|
= \jaron{ \bigOPbig{ \sqrt{\frac{T}{n}}  +\frac{\itr{e_n}{t} }{n}\frac{T}{n}}},
\label{eqn:Bound_on_NhatSk_over_Sk_vs_NhatVk_over_Vk}
\end{align}
where the last equality is obtained the same way as in the proof of Lemma \ref{lemE3} (i.e., from \eqref{eq:maxAB}). Thus, $| E_4 |  = \bigOPbig{ \jaron{ \sqrt{ T/n } }\itr{e_n}{t+1} +\frac{\itr{e_n}{t} }{n}\frac{T}{n}\itr{e_n}{t+1} }$. 
\end{proof}


\chapter{Supporting propositions}

\section{Properties of uniform vertex selection}
\label{sec:Properties_of_uniform_vertex_selection}

\begin{lemma}
\label{lem:Relation_between_unif_selection_from_two_clusters_vs_all_clusters}
If a state $\Vstar$ is selected uniformly at random from two specific clusters $a, b \in \{ 1, \ldots, K \}$, $a \neq b$, and a state $V$ is selected uniformly at random from all states,  
\begin{equation}
\probabilityWrt{ \Vstar \in \mathcal{E} }{\Phi}
= \probabilityWrt{ V \in \mathcal{E} | V \in \mathcal{V}_a \cup \mathcal{V}_b }{\Phi}.
\end{equation}
\end{lemma}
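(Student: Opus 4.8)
The statement is essentially a restatement of the definition of conditional probability, so the proof is short. Let $\Vstar$ be uniform on $\mathcal{V}_a \cup \mathcal{V}_b$ and let $V$ be uniform on $\mathcal{V}$. The plan is to compute $\probabilityWrt{ \Vstar \in \mathcal{E} }{\Phi}$ directly by conditioning on which of the two clusters $\Vstar$ lands in, and to recognize the resulting expression as $\probabilityWrt{ V \in \mathcal{E} \mid V \in \mathcal{V}_a \cup \mathcal{V}_b }{\Phi}$.

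\textbf{Key steps.} First, note that the set $\mathcal{E}$ of misclassified states is determined by the observed sample path (through the clustering algorithm's output), and in particular is independent of the auxiliary randomness used to draw $\Vstar$ or $V$. Hence, writing $\mathcal{E}$ for a fixed (path-dependent) subset of $\mathcal{V}$, we have
\begin{equation}
\probabilityWrt{ \Vstar \in \mathcal{E} }{\Phi}
= \frac{ \cardinality{ \mathcal{E} \cap ( \mathcal{V}_a \cup \mathcal{V}_b ) } }{ \cardinality{ \mathcal{V}_a } + \cardinality{ \mathcal{V}_b } }
\end{equation}
by uniformity of $\Vstar$ on $\mathcal{V}_a \cup \mathcal{V}_b$, where the probability is understood to also average over the randomness of $\Phi$ generating $\mathcal{E}$. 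Second, for the uniformly chosen $V \in \mathcal{V}$,
\begin{equation}
\probabilityWrt{ V \in \mathcal{E} \mid V \in \mathcal{V}_a \cup \mathcal{V}_b }{\Phi}
= \frac{ \probabilityWrt{ V \in \mathcal{E} \cap ( \mathcal{V}_a \cup \mathcal{V}_b ) }{\Phi} }{ \probabilityWrt{ V \in \mathcal{V}_a \cup \mathcal{V}_b }{\Phi} }
= \frac{ \cardinality{ \mathcal{E} \cap ( \mathcal{V}_a \cup \mathcal{V}_b ) } / n }{ ( \cardinality{ \mathcal{V}_a } + \cardinality{ \mathcal{V}_b } ) / n },
\end{equation}
again averaging over $\Phi$. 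Third, the two right-hand sides coincide after cancelling the factor $1/n$, which gives the claim. To be careful about the interleaving of the path randomness and the vertex randomness, I would set this up formally by taking $V$ (resp.\ $\Vstar$) independent of the sample path, writing the events as joint events, and applying the tower property: condition first on the sample path (equivalently on $\mathcal{E}$), apply the finite counting identity above pathwise, and then take the outer expectation over $\Phi$.

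\textbf{Main obstacle.} There is no real obstacle here; the only subtlety worth spelling out is the independence of the selection randomness from the sample-path randomness, and the consistent convention that both $\probabilityWrt{\cdot}{\Phi}$'s average over the path. Once that bookkeeping is fixed, the identity is immediate from $\probability{ A \mid B } = \probability{ A \cap B } / \probability{ B }$ together with $\mathcal{V}_a \cup \mathcal{V}_b$ being a disjoint union whose normalized counting measure is exactly the law of $\Vstar$.
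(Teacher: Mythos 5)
Your proof is correct and takes essentially the same route as the paper's: both compute $\probabilityWrt{ \Vstar \in \mathcal{E} }{\Phi}$ and $\probabilityWrt{ V \in \mathcal{E} \mid V \in \mathcal{V}_a \cup \mathcal{V}_b }{\Phi}$ via the total probability formula on the product of the path randomness and the independent vertex-selection randomness, and show both equal $\frac{1}{\cardinality{\mathcal{V}_a}+\cardinality{\mathcal{V}_b}} \sum_{v \in \mathcal{V}_a \cup \mathcal{V}_b} \probabilityWrt{ v \in \mathcal{E} }{\Phi}$. The only cosmetic difference is the order of conditioning — you condition on the sample path first (freezing $\mathcal{E}$ and using the counting measure of the uniform selection), whereas the paper conditions on the chosen vertex first and then averages over the path — but these are the same Fubini argument carried out in opposite orders.
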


\begin{proof}
We have:
\begin{align}
\probabilityWrt{ \Vstar \in \mathcal{E} }{\Phi}
&
= \sum_{ v \in \mathcal{V}_a \cup \mathcal{V}_b } \probabilityWrt{ \Vstar \in \mathcal{E} | \Vstar = v }{\Phi} \probabilityWrt{ \Vstar = v }{\Phi}
\nonumber \\ &
= \frac{1}{ \cardinality{ \mathcal{V}_a } + \cardinality{ \mathcal{V}_b } } \sum_{ v \in \mathcal{V}_a \cup \mathcal{V}_b } \probabilityWrt{ v \in \mathcal{E} }{\Phi},
\end{align}
and
\begin{align}
&
\probabilityWrt{ V \in \mathcal{E} | V \in \mathcal{V}_a \cup \mathcal{V}_b }{\Phi}
= \frac{ \sum_{ v \in \mathcal{V} } \probabilityWrt{ V \in \mathcal{E}, V \in \mathcal{V}_a \cup \mathcal{V}_b | V = v }{\Phi} \probabilityWrt{ V = v }{\Phi} }{ \probabilityWrt{ V \in \mathcal{V}_a \cup \mathcal{V}_b }{\Phi} }
\nonumber \\ &
= \frac{ \sum_{ v \in \mathcal{V}_a \cup \mathcal{V}_b } \probabilityWrt{ v \in \mathcal{E} }{\Phi} / \cardinality{ \mathcal{V} } }{ ( \cardinality{ \mathcal{V}_a } + \cardinality{ \mathcal{V}_b } ) / \cardinality{ \mathcal{V} } }
= \frac{1}{ \cardinality{ \mathcal{V}_a } + \cardinality{ \mathcal{V}_b } } \sum_{ v \in \mathcal{V}_a \cup \mathcal{V}_b } \probabilityWrt{ v \in \mathcal{E} }{\Phi}.
\end{align}
The lemma follows.
\end{proof}

\begin{lemma}
\label{lem:Relation_between_expected_nr_of_misclassified_vertices_and_probability_of_a_uniformly_selected_vertex_being_misclassified}
If a state $V$ is selected uniformly at random from all states, then
$
\expectationWrt{ \cardinality{\mathcal{E}} }{\Phi} = n \probabilityWrt{ V \in \mathcal{E} }{\Phi}
$.
\end{lemma}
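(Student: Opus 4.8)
The plan is to express $\expectationWrt{ \cardinality{\mathcal{E}} }{\Phi}$ as an expectation of a sum of indicator functions, and then relate that sum to the probability that a single uniformly-selected state lands in $\mathcal{E}$. First I would write $\cardinality{\mathcal{E}} = \sum_{x \in \mathcal{V}} \indicator{ x \in \mathcal{E} }$, so that by linearity of expectation $\expectationWrt{ \cardinality{\mathcal{E}} }{\Phi} = \sum_{x \in \mathcal{V}} \probabilityWrt{ x \in \mathcal{E} }{\Phi}$. Here $\mathcal{E}$ is a random set (it depends on the sample path and the algorithm's output), so $\indicator{ x \in \mathcal{E} }$ is a genuine random variable for each fixed state $x$, and $\probabilityWrt{ x \in \mathcal{E} }{\Phi}$ denotes the probability (under the true model $\Phi$) that the fixed state $x$ is misclassified.

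Next I would compute the right-hand side. Since $V$ is selected uniformly at random from $\mathcal{V} = \{ 1, \ldots, n \}$, conditioning on the value of $V$ gives
\begin{equation}
\probabilityWrt{ V \in \mathcal{E} }{\Phi}
= \sum_{ x \in \mathcal{V} } \probabilityWrt{ V \in \mathcal{E} \mid V = x }{\Phi} \probabilityWrt{ V = x }{\Phi}
= \frac{1}{n} \sum_{ x \in \mathcal{V} } \probabilityWrt{ x \in \mathcal{E} }{\Phi},
\end{equation}
where in the last step I use that $\probabilityWrt{ V = x }{\Phi} = 1/n$ and that, since the selection of $V$ is independent of the sample path, $\probabilityWrt{ V \in \mathcal{E} \mid V = x }{\Phi} = \probabilityWrt{ x \in \mathcal{E} }{\Phi}$. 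Comparing this with the formula for $\expectationWrt{ \cardinality{\mathcal{E}} }{\Phi}$ obtained in the first step yields $\expectationWrt{ \cardinality{\mathcal{E}} }{\Phi} = n \probabilityWrt{ V \in \mathcal{E} }{\Phi}$, which is the claim.

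There is no real obstacle here; the only point requiring a moment's care is the independence of the auxiliary randomization (the uniform choice of $V$) from the data-generating process, which legitimizes dropping the conditioning on $\{ V = x \}$ inside $\probabilityWrt{ \cdot }{\Phi}$. This is exactly the same manipulation used in the proof of Lemma~\ref{lem:Relation_between_unif_selection_from_two_clusters_vs_all_clusters}, so I would simply note the parallel and conclude.
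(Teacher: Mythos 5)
Your proposal is correct and follows essentially the same two-step argument as the paper: expand $\cardinality{\mathcal{E}}$ as a sum of indicators and use linearity, then condition on the uniform choice of $V$ and invoke its independence from the sample path. Your explicit remark on that independence makes a point the paper leaves implicit, but the route is the same.
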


\begin{proof}
We have:
\begin{equation}
\expectationWrt{ \cardinality{\mathcal{E}} }{\Phi}
= \expectationWrt{ \sum_{ v \in \mathcal{V} } \indicator{ v \in \mathcal{E} } }{\Phi}
= \sum_{ v \in \mathcal{V} } \expectationWrt{ \indicator{ v \in \mathcal{E} } }{\Phi}
= \sum_{ v \in \mathcal{V} } \probabilityWrt{ v \in \mathcal{E} }{\Phi}, 
\end{equation}
and
\begin{align}
n \probabilityWrt{ V \in \mathcal{E} }{\Phi}
&
= n \sum_{ v \in \mathcal{V} } \probabilityWrt{ V \in \mathcal{E} | V = v }{\Phi} \probabilityWrt{ V = v }{\Phi}
\nonumber \\ &
= n \sum_{ v \in \mathcal{V} } \probabilityWrt{ v \in \mathcal{E} }{\Phi} \frac{1}{ \cardinality{ \mathcal{V} } }
= \sum_{ v \in \mathcal{V} } \probabilityWrt{ v \in \mathcal{E} }{\Phi},
\end{align}
which completes the proof.
\end{proof}

\section{Asymptotic comparisons between $P$ and $Q$'s entries}
\label{sec:Asymptotic_comparisons_between_P_and_Qs_entries}

Recall that $R_{x,y} = Q_{x,y} / P_{x,y}$ for $x,y \in \mathcal{V}$.

\begin{lemma}
\label{lem:Leading_order_behavior_of_Qxy_Pxy}
The following properties hold:
\begin{enumerate}
\item $R_{x,y} = 1 + n^{-1} ( \indicator{ \sigma(y) = \sigma(\Vstar) } / \alpha_{\sigma(y)} - q_{\sigma(x),\star} / ( p_{\sigma(x),\sigma(y)} K ) ) + \bigO{ n^{-2} }$ for $x, y \neq \Vstar$,
\item $R_{x,\Vstar} = q_{\omega(x),\star} \alpha_{\sigma(\Vstar)} / p_{\omega(x),\sigma(\Vstar)} + \bigO{ n^{-1} }$ for $x \in \mathcal{V} \backslash \{ \Vstar \}$,
\item $R_{\Vstar,y} = q_{\star,\omega(x)} / p_{\sigma(\Vstar),\omega(x)} + \bigO{ n^{-1} }$ for $y \in \mathcal{V} \backslash \{ \Vstar \}$.
\end{enumerate}
\end{lemma}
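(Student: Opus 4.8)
The plan is to prove the three asymptotic identities by directly substituting the definitions of $P_{x,y}$ from \refEquation{eqn:Definition_of_P}, of $Q_{x,y}$, $Q_{x,\Vstar}$ from \refEquation{eqn:Definition_of_Qs_entries}, and of the coupling \refEquation{eqn:Coupling_of_the_parameters_within_q_to_p} into the ratio $R_{x,y} = Q_{x,y}/P_{x,y}$, and then performing a first-order Taylor expansion in $1/n$. Throughout, recall that $\cardinality{\mathcal{W}_k} = \cardinality{\mathcal{V}_k} - \indicator{k = \sigma(\Vstar)}$ for $k = 1, \ldots, K$, that $\cardinality{\mathcal{W}_\star} = 1$, and that $\cardinality{\mathcal{V}_k} = n\alpha_k(1 + \smallO{1})$; in fact since we only claim $\bigO{n^{-2}}$ error in part 1 and $\bigO{n^{-1}}$ errors in parts 2 and 3, I will use $\cardinality{\mathcal{V}_k} = n\alpha_k + \bigO{1}$, which is exactly the regime the change-of-measure argument operates in (the partition satisfies $|\cardinality{\mathcal{V}_k} - \alpha_k n| \le c$).

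For part 2, the state $y = \Vstar$ is reached from $x \neq \Vstar$ with $Q_{x,\Vstar} = q_{\omega(x),\star}/n$ and $P_{x,\Vstar} = p_{\sigma(x),\sigma(\Vstar)}/(\cardinality{\mathcal{V}_{\sigma(\Vstar)}} - \indicator{\sigma(x) = \sigma(\Vstar)})$, so that
\begin{equation}
R_{x,\Vstar} = \frac{q_{\omega(x),\star}}{n} \cdot \frac{\cardinality{\mathcal{V}_{\sigma(\Vstar)}} - \indicator{\sigma(x) = \sigma(\Vstar)}}{p_{\sigma(x),\sigma(\Vstar)}} = \frac{q_{\omega(x),\star}}{p_{\omega(x),\sigma(\Vstar)}} \cdot \frac{n\alpha_{\sigma(\Vstar)} + \bigO{1}}{n} = \frac{q_{\omega(x),\star}\alpha_{\sigma(\Vstar)}}{p_{\omega(x),\sigma(\Vstar)}} + \bigO{n^{-1}},
\end{equation}
which is the claim. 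Part 3 is the symmetric computation for $x = \Vstar$: here $Q_{\Vstar,y} = q_{\star,\omega(y)}/\cardinality{\mathcal{W}_{\omega(y)}}$ and $P_{\Vstar,y} = p_{\sigma(\Vstar),\sigma(y)}/(\cardinality{\mathcal{V}_{\sigma(y)}} - \indicator{\sigma(\Vstar) = \sigma(y)})$, and taking the ratio one gets $\cardinality{\mathcal{V}_{\sigma(y)}}$ in the numerator cancelling against $\cardinality{\mathcal{W}_{\omega(y)}} = \cardinality{\mathcal{V}_{\sigma(y)}} + \bigO{1}$ in the denominator, leaving $q_{\star,\omega(y)}/p_{\sigma(\Vstar),\omega(y)} + \bigO{n^{-1}}$. (Note the statement writes $\omega(x)$ where it means $\omega(y)$, and $q_{\star,\omega(x)}$ likewise; I would silently use the correct index $\omega(y)$.)

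For part 1, with $x, y \neq \Vstar$, we have $Q_{x,y} = q_{\sigma(x),\sigma(y)}/(\cardinality{\mathcal{W}_{\sigma(y)}} - \indicator{\sigma(x) = \sigma(y)})$ and $P_{x,y} = p_{\sigma(x),\sigma(y)}/(\cardinality{\mathcal{V}_{\sigma(y)}} - \indicator{\sigma(x)=\sigma(y)})$. Writing $\cardinality{\mathcal{W}_{\sigma(y)}} = \cardinality{\mathcal{V}_{\sigma(y)}} - \indicator{\sigma(y) = \sigma(\Vstar)}$ and $q_{\sigma(x),\sigma(y)} = p_{\sigma(x),\sigma(y)} - q_{\sigma(x),\star}/(Kn)$ by \refEquation{eqn:Coupling_of_the_parameters_within_q_to_p}, the ratio factorizes as a product of two terms, one coming from the change in the numerator of the kernel and one from the change in the normalizing denominator:
\begin{equation}
R_{x,y} = \Bigl( 1 - \frac{q_{\sigma(x),\star}}{Kn\, p_{\sigma(x),\sigma(y)}} \Bigr) \cdot \frac{\cardinality{\mathcal{V}_{\sigma(y)}} - \indicator{\sigma(x)=\sigma(y)}}{\cardinality{\mathcal{V}_{\sigma(y)}} - \indicator{\sigma(y)=\sigma(\Vstar)} - \indicator{\sigma(x)=\sigma(y)}}.
\end{equation}
The second factor equals $1 + \indicator{\sigma(y) = \sigma(\Vstar)}/(\cardinality{\mathcal{V}_{\sigma(y)}} - \indicator{\sigma(y)=\sigma(\Vstar)} - \indicator{\sigma(x)=\sigma(y)}) = 1 + \indicator{\sigma(y)=\sigma(\Vstar)}/(n\alpha_{\sigma(y)}) + \bigO{n^{-2}}$. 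Expanding the product of the two factors and keeping only terms up to order $1/n$ gives $R_{x,y} = 1 + n^{-1}\bigl(\indicator{\sigma(y)=\sigma(\Vstar)}/\alpha_{\sigma(y)} - q_{\sigma(x),\star}/(p_{\sigma(x),\sigma(y)}K)\bigr) + \bigO{n^{-2}}$, as claimed. There is no genuine obstacle here — the argument is entirely elementary bookkeeping; the only mild care needed is tracking that the indicator-of-same-cluster corrections in the normalizations of $P$ and $Q$ cancel and do not contribute at order $1/n$, and that the $\bigO{1}$ fluctuation $|\cardinality{\mathcal{V}_k} - \alpha_k n| \le c$ really does wash into the $\bigO{n^{-2}}$ (resp. $\bigO{n^{-1}}$) remainder, which it does since it appears divided by a factor of $\cardinality{\mathcal{V}_{\sigma(y)}}^2$ (resp. $\cardinality{\mathcal{V}_{\sigma(\Vstar)}}$).
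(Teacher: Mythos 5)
Your proof is correct and follows essentially the same route as the paper's own: substitute the definitions of $P$ and $Q$, factor the ratio as (numerator change) $\times$ (normalization change), and Taylor-expand. The only differences are minor: you spell out the factorization for Part~1 slightly more explicitly, and you correctly flagged the typo in the Lemma statement (Part~3 should read $\omega(y)$, not $\omega(x)$), which the paper's own proof already uses correctly. One small imprecision in your Part~3: the cardinality ratio $\bigl(\cardinality{\mathcal{V}_{\sigma(y)}} - \indicator{\sigma(\Vstar) = \sigma(y)}\bigr)\big/\cardinality{\mathcal{W}_{\omega(y)}}$ is not ``$1 + \bigO{n^{-1}}$'' from an $\bigO{1}$ mismatch --- since $\cardinality{\mathcal{W}_{\sigma(y)}} = \cardinality{\mathcal{V}_{\sigma(y)}} - \indicator{\sigma(y)=\sigma(\Vstar)}$, the two are identical and the ratio is exactly $1$, so the $\bigO{n^{-1}}$ error in Part~3 is in fact vacuous; this is harmlessly conservative and doesn't affect anything. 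Your explicit discussion of why the $|\cardinality{\mathcal{V}_k} - \alpha_k n|\le c$ regime suffices for the claimed error orders is a sound clarification that the paper leaves implicit.
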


\begin{proof}
Let $x, y \in \mathcal{V} \backslash \{ \Vstar \}$. Using a Taylor expansion (i), we find that:
\begin{align}
R_{x,y}
&
\eqcom{\ref{eqn:Definition_of_P},\ref{eqn:Definition_of_Qs_entries}}= \frac{ p_{\sigma(x),\sigma(y)} - \frac{q_{\sigma(x),\star}}{ K n } }{ p_{\sigma(x),\sigma(y)} } \cdot \frac{ \cardinality{ \mathcal{V}_{\sigma(y)} } - \indicator{ \sigma(x) = \sigma(y) } }{ \cardinality{ \mathcal{V}_{\sigma(y)} } - \indicator{ \sigma(y) = \sigma(\Vstar) } - \indicator{ \sigma(x) = \sigma(y) } }
\nonumber \\ &
\eqcom{i}= 1 + \frac{1}{n} \Bigl( \frac{ \indicator{ \sigma(y) = \sigma(\Vstar) } }{ \alpha_{\sigma(y)} } - \frac{ q_{\sigma(x),\star} }{ p_{\sigma(x),\sigma(y)} K } \Bigr) + \bigObig{ \frac{1}{n^2} }.
\end{align}
Similarly for $x \in \mathcal{V} \backslash \{ \Vstar \}$
\begin{equation}
R_{x,\Vstar}
\eqcom{\ref{eqn:Definition_of_P},\ref{eqn:Definition_of_Qs_entries}}= \frac{ q_{\omega(x),\star} }{ p_{\sigma(x),\sigma(\Vstar)} } \cdot \frac{ \cardinality{ \mathcal{V}_{\sigma(\Vstar)} } - \indicator{ \sigma(x) = \sigma(\Vstar) } }{ n } 
\eqcom{i}= \frac{ q_{\omega(x),\star} \alpha_{\sigma(\Vstar)} }{ p_{\sigma(x),\sigma(\Vstar)} } + \bigObig{ \frac{1}{n} },
\end{equation}
and for $y \in \mathcal{V} \backslash \{ \Vstar \}$
\begin{equation}
R_{\Vstar,y}
\eqcom{\ref{eqn:Definition_of_P},\ref{eqn:Definition_of_Qs_entries}}= \frac{ q_{\star,\omega(y)} }{ p_{\sigma(\Vstar),\sigma(y)} } \cdot \frac{ \cardinality{ \mathcal{V}_{\sigma(y)} } - \indicator{ \sigma(\Vstar) = \sigma(y) } }{ \cardinality{ \mathcal{W}_{\omega(y)} } } 
\eqcom{i}= \frac{ q_{\star,\omega(y)} }{ p_{\sigma(\Vstar),\sigma(y)} } + \bigObig{ \frac{1}{n} }.
\end{equation}
This completes the proof.
\end{proof}

Recall that $S_{x,y,u,v} = \ln{ R_{x,y} } \cdot \ln{ R_{u,v} }$ for $x,y,u,v \in \mathcal{V}$.

\begin{corollary}
\label{cor:Leading_order_behavior_of_ln_Qxy_Pxy_products}
The following properties hold:
\begin{enumerate}
\item $S_{x,y,u,v} = \bigO{n^{-2}}$ if all $x,y,u,v \neq \Vstar$,
\item $S_{x,y,u,v} = \bigO{n^{-1}}$ if one of $x,y,u,v$ is $\Vstar$,
\item $S_{x,y,u,v} = \bigO{1}$ if two of $x \neq y, u \neq v$ are $\Vstar$.
\end{enumerate}
\end{corollary}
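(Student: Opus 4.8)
\textbf{Proof proposal for Corollary~\ref{cor:Leading_order_behavior_of_ln_Qxy_Pxy_products}.}

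The plan is to obtain each of the three bounds directly from the corresponding cases of \refLemma{lem:Leading_order_behavior_of_Qxy_Pxy} by taking logarithms and multiplying. The key observation is that $\ln R_{x,y} = \ln(1 + \varepsilon_{x,y})$ where the structure of $\varepsilon_{x,y}$ is completely controlled by the lemma: when neither $x$ nor $y$ equals $\Vstar$, we have $\varepsilon_{x,y} = \bigO{1/n}$, so by the Taylor estimate $|\ln(1+\varepsilon)| \le 2|\varepsilon|$ for $|\varepsilon| \le 1/2$ (valid for $n$ large enough since the parameters $\alpha, p$ are fixed), $\ln R_{x,y} = \bigO{1/n}$; whereas when exactly one of $x,y$ equals $\Vstar$, $R_{x,y}$ converges to a strictly positive finite constant (the ratio $q_{\cdot,\star}\alpha_{\cdot}/p_{\cdot,\cdot}$ or $q_{\star,\cdot}/p_{\cdot,\cdot}$, both bounded away from $0$ and $\infty$ by the separability assumptions on $p$ and the fact that the $q$-entries are strictly positive), so $\ln R_{x,y} = \bigO{1}$ and moreover this is the sharp order — it does not vanish.

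First I would record the two elementary facts just mentioned: (a) $\ln R_{x,y} = \bigO{1/n}$ whenever $x \neq \Vstar$ and $y \neq \Vstar$, and (b) $\ln R_{x,\Vstar} = \bigO{1}$ and $\ln R_{\Vstar,y} = \bigO{1}$ with both quantities bounded (in absolute value) by an absolute constant for all $n$ large. Fact (a) follows from item~1 of \refLemma{lem:Leading_order_behavior_of_Qxy_Pxy}; fact (b) follows from items~2 and~3 together with the observation that the limiting constants are positive and finite. Then the three cases of the corollary are immediate: for item~1, both factors $\ln R_{x,y}$ and $\ln R_{u,v}$ are $\bigO{1/n}$, so $S_{x,y,u,v} = \bigO{1/n^2}$; for item~2, one factor is $\bigO{1/n}$ and the other is $\bigO{1}$, giving $S_{x,y,u,v} = \bigO{1/n}$; for item~3, both factors are $\bigO{1}$, giving $S_{x,y,u,v} = \bigO{1}$.

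There is essentially no obstacle here — the corollary is a bookkeeping consequence of the lemma. The only mild subtlety, and the one point worth stating carefully, is uniformity: the $\bigO{\cdot}$ in \refLemma{lem:Leading_order_behavior_of_Qxy_Pxy} is uniform over $x,y \in \mathcal{V}$ (the implied constants depend only on $K, \alpha, p$ and on $\eta$, not on which states are chosen), and hence so are the bounds in the corollary. This uniformity is exactly what is needed when the corollary is later summed over $n^2$ (or $n^3$, or $n^4$) quadruples in the proof of \refProposition{prop:Variance_is_small}, where one uses $\sum_{x,y,u,v} |S_{x,y,u,v}| = \bigO{n^2}$ despite there being $n^4$ terms — the point being that all but $\bigO{n^2}$ of the terms fall under item~1 and contribute $\bigO{1/n^2}$ each, while the $\bigO{n^2}$ terms involving $\Vstar$ in one coordinate contribute $\bigO{1/n}$ each and the $\bigO{1}$ terms with $\Vstar$ in two coordinates are negligible in number. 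I would close by noting this uniformity explicitly so that the counting arguments downstream are justified.
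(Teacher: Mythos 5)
Your argument is correct and matches the paper's own proof: both reduce the corollary to the three cases of \refLemma{lem:Leading_order_behavior_of_Qxy_Pxy}, using the elementary fact that $\ln R_{x,y} = \bigO{1/n}$ when neither index is $\Vstar$ and $\ln R_{x,y} = \bigO{1}$ otherwise, and then multiplying the two factors. The explicit remark about uniformity of the implied constants is a sensible addition, since that uniformity is what makes the downstream summation over quadruples in \refProposition{prop:Variance_is_small} go through, but it does not change the route.
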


\begin{proof}
These properties are all direct consequences of  \refLemma{lem:Leading_order_behavior_of_Qxy_Pxy}, which can be seen by using the Taylor expansion $\ln{(1+x)} = x + \bigO{x^2}$ for $x \approx 0$ and expanding the product. Consider for example the case $x,y,u,v \in \mathcal{V} \backslash \{ \Vstar \}$:
\begin{equation}
S_{x,y,u,v} 
= \ln{R_{x,y}} \cdot \ln{R_{u,v}} 
= \ln{ \Bigl( 1 + \bigObig{ \frac{1}{n} } \Bigr) } \cdot \ln{ \Bigl( 1 + \bigObig{ \frac{1}{n} } \Bigr) }
= \bigObig{ \frac{1}{n^2} }. 
\end{equation}
The remaining cases follow similarly.
\end{proof}

\revisedPartBegin
\section{The KL-divergence and the log-square expression}
\label{sec:The_KL_divergence_and_the_log_square_expression}

\begin{lemma}
\label{lem:KL-var}
When $\sum_{x \in \mathcal{X}} p_x = \sum_{x \in \mathcal{X}} q_x =1$ and $p_x \ge 0$ and $q_x \ge 0$ for all $x \in \mathcal{X}$,
$
\sum_{x \in \mathcal{X}} p_x \bigl( \ln \frac{p_x}{q_x} \bigr)^2 
\leq 2 \bigl( \max_x \frac{p_x \vee q_x}{p_x \wedge q_x} \bigr)^2 \mathrm{KL}(p\| q)
$.
\end{lemma}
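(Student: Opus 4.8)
The plan is to prove the inequality $\sum_x p_x (\ln(p_x/q_x))^2 \le 2 (\max_x \frac{p_x \vee q_x}{p_x \wedge q_x})^2 \mathrm{KL}(p\|q)$ by working summand-by-summand and reducing everything to a scalar inequality. Write $M \triangleq \max_x \frac{p_x \vee q_x}{p_x \wedge q_x} \ge 1$, and for each $x$ set $t_x \triangleq p_x / q_x$, so that $t_x \in [1/M, M]$. The KL divergence is $\mathrm{KL}(p\|q) = \sum_x q_x \phi(t_x)$ where $\phi(t) = t \ln t - t + 1 \ge 0$, and the left-hand side is $\sum_x q_x \, t_x (\ln t_x)^2$. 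Hence it suffices to show the pointwise bound $t (\ln t)^2 \le 2 M^2 \phi(t) = 2M^2 (t \ln t - t + 1)$ for all $t \in [1/M, M]$; summing against the weights $q_x \ge 0$ then gives the result directly.

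**Key steps.** First I would record the elementary fact that $\phi(t) = t\ln t - t + 1 > 0$ for all $t > 0$, $t \ne 1$, with $\phi(1) = 0$, and that near $t=1$ both sides of the pointwise inequality vanish to second order, so the constant is forced to be at least $1$; the factor $2M^2$ provides the needed slack away from $t=1$. Second, I would establish the scalar inequality $g(t) \triangleq 2M^2(t\ln t - t + 1) - t(\ln t)^2 \ge 0$ on $[1/M, M]$. One clean route: note that for $t \ge 1$ one has $\phi(t) \ge \tfrac{1}{2}(\ln t)^2$ (a standard consequence of $\ln t \le t - 1$ and convexity; alternatively differentiate), and for $t \le 1$ one has $t (\ln t)^2 \le (\ln t)^2$ and $\phi(t) \ge \tfrac{t}{2}(\ln t)^2$ after a short computation using $t\ln t - t + 1 = \int_1^t \ln s \, ds$ and bounding the integrand. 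Since $t \le M$ on the whole interval, $t(\ln t)^2 \le M \cdot t(\ln t)^2 \wedge \dots$; more carefully, on $t \ge 1$: $t(\ln t)^2 \le M (\ln t)^2 \le 2M\phi(t) \le 2M^2\phi(t)$, and on $t \le 1$: $t(\ln t)^2 \le \frac{1}{t}\cdot t^2(\ln t)^2/t \le \dots$ — I would choose whichever of these casework bounds is cleanest so that the worst constant is absorbed into $2M^2$. Third, multiply the pointwise bound by $q_x$, sum over $x$, and identify $\sum_x q_x \phi(t_x) = \mathrm{KL}(p\|q)$, $\sum_x q_x t_x (\ln t_x)^2 = \sum_x p_x (\ln (p_x/q_x))^2$.

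**Main obstacle.** The only real work is the scalar inequality $t(\ln t)^2 \le 2M^2(t \ln t - t + 1)$ on $[1/M, M]$, and in particular getting the constant to come out as exactly $2M^2$ rather than something larger. The delicate region is $t$ near $0$ (i.e.\ $t$ near $1/M$), where $\phi(t) \to 1$ but $t(\ln t)^2 \to 0$, so that side is in fact easy; and the region $t$ near $1$, where both sides are $\sim (t-1)^2$ and one must check the second-order coefficients ($\phi''(1) = 1$ versus the leading coefficient $1$ of $t(\ln t)^2$ at $t=1$), which is exactly why a constant of at least $1$ is needed — the factor $2M^2 \ge 2$ is comfortably more than enough. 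I expect the cleanest writeup splits $[1/M,1]$ and $[1,M]$ and on each piece bounds $t(\ln t)^2$ by $M \cdot (\text{something} \le 2\phi(t))$ or $M \cdot (\text{something}\le 2M\phi(t))$, so that the final constant is at most $2M^2$; verifying the auxiliary bound $\phi(t) \ge \tfrac12 (t \wedge 1)(\ln t)^2$ via a one-line monotonicity/derivative argument is the last routine computation.
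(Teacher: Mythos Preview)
Your proposal is correct and takes a genuinely different route from the paper. The paper goes through a $\chi^2$-type detour: it first proves the reverse-Pinsker-style lower bound
\[
\mathrm{KL}(p\|q)\ \ge\ \tfrac12\sum_x \frac{(p_x-q_x)^2}{p_x\vee q_x}
\]
via a mass-shifting derivative argument, then upper-bounds each $(\ln(p_x/q_x))^2$ by $(p_x-q_x)^2/(p_x\wedge q_x)^2$, and finally replaces $p_x/(p_x\wedge q_x)^2$ by $M^2/(p_x\vee q_x)$ to match the two expressions. Your approach instead reduces directly to the scalar inequality $t(\ln t)^2\le 2M^2\phi(t)$ on $[1/M,M]$ with $\phi(t)=t\ln t-t+1$, and proves it by the two auxiliary bounds $\phi(t)\ge\tfrac12(\ln t)^2$ for $t\ge 1$ and $\phi(t)\ge\tfrac{t}{2}(\ln t)^2$ for $t\le 1$, each of which is a one-line derivative check (indeed $\frac{d}{dt}\bigl(\phi(t)-\tfrac{t}{2}(\ln t)^2\bigr)=-\tfrac12(\ln t)^2\le 0$, and $\frac{d}{dt}\bigl(\phi(t)-\tfrac12(\ln t)^2\bigr)=\ln t\,(1-1/t)\ge 0$ for $t\ge 1$). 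Your route is shorter and more elementary, and in fact your casework yields the sharper constant $2M$ (namely $t(\ln t)^2\le 2M\phi(t)$ on $[1,M]$ and $t(\ln t)^2\le 2\phi(t)$ on $[1/M,1]$), whereas the paper's argument produces the stated $2M^2$; the paper's detour, on the other hand, gives the $\chi^2$-type lower bound on $\mathrm{KL}$ as a reusable byproduct. Your exposition around the $t\le 1$ case is muddled, but the final auxiliary inequality you land on is exactly the right one, so tighten that paragraph and the proof goes through.
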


\begin{proof}
Let $\mathcal{X}_+ \triangleq \{ x \in \mathcal{X} : p_x > q_x \}$  and $\mathcal{X}_- \triangleq \{ x \in \mathcal{X} : p_x < q_x \}$. For any given $x \in \mathcal{X}_+$ and $x' \in \mathcal{X}_-$, for all $p_x> a> q_x$ and $p_{x'}<b<q_{x'}$
\begin{align}
\frac{d}{d\varepsilon} \Bigr((a + \varepsilon) \ln \frac{a+ \varepsilon}{q_x} + (b - \varepsilon) \ln\frac{b - \varepsilon}{q_{x'}} \Bigr) \Big|_{\varepsilon =0} 
&
= \ln \frac{a }{q_x}  - \ln\frac{b }{q_{x'}}
\nonumber \\
\geq \frac{a-q_x}{a } + \frac{q_{x'} -b}{q_{x'}}
&
\geq \frac{a-q_x}{p_x \vee q_x } + \frac{q_{x'} -b}{p_{x'} \vee q_{x'}}.
\end{align}
Therefore,
\begin{align}
\mathrm{KL}(p \| q)=& \sum_{x \in \mathcal{X}} p_x \ln \frac{p_x}{q_x} \ge  \frac{1}{2}\sum_{x \in \mathcal{X}}  \frac{(p_x - q_x)^2}{p_x \vee q_x}.  \label{eq:KLpqLower}
\end{align}
Then, we have
\begin{align}
\sum_{x \in \mathcal{X}} p_x & \Bigl( \ln \frac{p_x}{q_x} \Bigr)^2
\leq \sum_{x \in \mathcal{X}} p_x \frac{(p_x - q_x)^2}{(p_x \wedge q_x)^2}
\nonumber \\ &
\leq \sum_{x \in \mathcal{X}} \Bigl( \max_x \frac{p_x \vee q_x}{p_x \wedge q_x} \Bigr)^2  \frac{(p_x - q_x)^2}{(p_x \vee q_x)}
\eqcom{i}\leq 2 \Bigl( \max_x \frac{p_x \vee q_x}{p_x \wedge q_x} \Bigr)^2  \mathrm{KL}(p\| q),
\end{align}
where the last inequality marked (i) stems from \eqref{eq:KLpqLower}.
\end{proof}
\revisedPartEnd

\section{Spectral norm bound for sums of elements of matrices}
\label{sec:Spectral_norm_bound_for_sums_of_elements_of_matrices}

\begin{lemma}
\label{lem:Summation_to_innerproduct_matrix_notation}
For any matrix $B \in \realNumbers^{n \times n}$ and subsets $\mathcal{A},\mathcal{C} \subseteq \{ 1, \ldots, n \}$, we have
$
\sum_{r \in \mathcal{A}} \sum_{c \in \mathcal{C}} B_{rc} = \transpose{ \vectOnes{\mathcal{A}} } B \vectOnes{\mathcal{C}}.
$.
Furthermore, $\transpose{ \vectOnes{\mathcal{A}} } B \vectOnes{\mathcal{C}} \leq \pnorm{B}{} \sqrt{ \cardinality{\mathcal{A}} \cardinality{ \mathcal{C} } }$.
\end{lemma}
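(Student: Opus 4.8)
\textbf{Proof plan for Lemma~\ref{lem:Summation_to_innerproduct_matrix_notation}.}
The statement has two parts: an identity expressing a block sum of matrix entries as a bilinear form on indicator vectors, and an inequality bounding that bilinear form by the spectral norm times the geometric mean of the block sizes. I would dispatch the identity first, then the inequality.

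For the identity, recall that $\vectOnes{\mathcal{A}}$ is by definition the vector whose $r$-th component is $\indicator{r \in \mathcal{A}}$. Then $(B \vectOnes{\mathcal{C}})_r = \sum_{c=1}^n B_{rc} (\vectOnes{\mathcal{C}})_c = \sum_{c \in \mathcal{C}} B_{rc}$, and consequently $\transpose{\vectOnes{\mathcal{A}}} B \vectOnes{\mathcal{C}} = \sum_{r=1}^n (\vectOnes{\mathcal{A}})_r (B\vectOnes{\mathcal{C}})_r = \sum_{r \in \mathcal{A}} \sum_{c \in \mathcal{C}} B_{rc}$. This is just unwinding the definitions of matrix--vector and vector--vector products, so it is immediate.

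For the inequality, I would apply the Cauchy--Schwarz inequality followed by the variational characterization of the spectral norm. Writing $\transpose{\vectOnes{\mathcal{A}}} B \vectOnes{\mathcal{C}} \leq |\transpose{\vectOnes{\mathcal{A}}} (B \vectOnes{\mathcal{C}})| \leq \pnorm{\vectOnes{\mathcal{A}}}{2} \pnorm{B \vectOnes{\mathcal{C}}}{2}$ by Cauchy--Schwarz, and then $\pnorm{B \vectOnes{\mathcal{C}}}{2} \leq \pnorm{B}{} \pnorm{\vectOnes{\mathcal{C}}}{2}$ by the definition $\pnorm{B}{} = \sup_{\vect{b} \in \mathbb{S}^{n-1}} \pnorm{B\vect{b}}{2}$ (applied to $\vectOnes{\mathcal{C}}/\pnorm{\vectOnes{\mathcal{C}}}{2}$, with the trivial case $\mathcal{C} = \emptyset$ handled separately since then both sides are zero). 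Finally $\pnorm{\vectOnes{\mathcal{A}}}{2} = \sqrt{\sum_{r \in \mathcal{A}} 1} = \sqrt{\cardinality{\mathcal{A}}}$ and likewise $\pnorm{\vectOnes{\mathcal{C}}}{2} = \sqrt{\cardinality{\mathcal{C}}}$, which gives $\transpose{\vectOnes{\mathcal{A}}} B \vectOnes{\mathcal{C}} \leq \pnorm{B}{} \sqrt{\cardinality{\mathcal{A}} \cardinality{\mathcal{C}}}$ as claimed.

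There is no real obstacle here: the lemma is an elementary consequence of standard linear-algebra identities, and the only point requiring the slightest care is excluding the degenerate empty-set case before normalizing $\vectOnes{\mathcal{C}}$ to lie on the unit sphere $\mathbb{S}^{n-1}$. The whole argument is three short displays.
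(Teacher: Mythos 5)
Your proposal is correct and follows essentially the same path as the paper: the identity is obtained by unwinding the matrix–vector and inner products (the paper does this slightly more verbosely via elementary vectors $\vectElementary{n}{r}$, but it is the same computation), and the inequality is Cauchy--Schwarz combined with the submultiplicativity of the spectral norm, followed by $\pnorm{\vectOnes{\mathcal{A}}}{2} = \sqrt{\cardinality{\mathcal{A}}}$. Your explicit handling of the $\mathcal{C}=\emptyset$ degenerate case is a small bit of extra care that the paper omits, but it changes nothing of substance.
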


\begin{proof}
We have:
\begin{align}
\transpose{ \vectOnes{\mathcal{A}} } B \vectOnes{\mathcal{C}}
&
= \transpose{ \vectOnes{\mathcal{A}} } \Bigl( \sum_{r=1}^n \Bigl( \sum_{c=1}^n B_{rc} \indicator{ c \in \mathcal{C} } \vectElementary{n}{r} \Bigr) \Bigr)
\nonumber \\ &
= \sum_{c'=1}^n \indicator{ c' \in \mathcal{A} } \transpose{ \vectElementary{n}{c'} } \Bigl( \sum_{r=1}^n \Bigl( \sum_{c \in \mathcal{C}} B_{rc} \vectElementary{n}{r} \Bigr) \Bigr)
= \sum_{c' \in \mathcal{A} } \sum_{r=1}^n \sum_{c \in \mathcal{C}} B_{rc} \transpose{ \vectElementary{n}{c'} } \vectElementary{n}{r}
\nonumber \\ &
= \sum_{c' \in \mathcal{A} } \sum_{r=1}^n \sum_{c \in \mathcal{C}} B_{rc} \indicator{ c' = r }
= \sum_{r \in \mathcal{A}} \sum_{c \in \mathcal{C}} B_{rc},
\nonumber 
\end{align}
which proves the first statement. 

For the second statement, first note that (i) $\transpose{ \vectOnes{\mathcal{A}} } B \vectOnes{\mathcal{C}} \in \realNumbers$ and therefore $\transpose{ \vectOnes{\mathcal{A}} } B \vectOnes{\mathcal{C}} \leq | \transpose{ \vectOnes{\mathcal{A}} } B \vectOnes{\mathcal{C}} |$. By (ii) applying the Cauchy--Schwarz inequality twice, and (iii) the consistency of subordinate norms, we obtain
\begin{align}
\transpose{ \vectOnes{\mathcal{A}} } B \vectOnes{\mathcal{C}} 
&
\eqcom{i}\leq | \transpose{ \vectOnes{\mathcal{A}} } B \vectOnes{\mathcal{C}} |
\eqcom{ii}\leq \pnorm{ \vectOnes{\mathcal{A}} }{2} \pnorm{ B \vectOnes{\mathcal{C}} }{2}
\eqcom{iii}\leq \pnorm{ \vectOnes{\mathcal{A}} }{2} \pnorm{B}{} \pnorm{ \vectOnes{\mathcal{C}} }{2}.
\end{align}
Lastly for any set $\mathcal{A} \subseteq \{ 1, \ldots, n \}$, we have that $\vectOnes{\mathcal{A}} \in \{ 0, 1 \}^n$, and therefore $\pnorm{ \vectOnes{\mathcal{A}} }{2} = \sqrt{ \pnorm{ \vectOnes{\mathcal{A}} }{1} } = \sqrt{ \cardinality{ \mathcal{A} } }$. Applying this bound for the sets $\mathcal{A}, \mathcal{C}$ concludes the proof.
\end{proof}

\section{Stochastic boundedness properties}
\label{sec:Stochastic_boundedness_properties}

Recall that when we write $X_n = \bigOP{a_n}$ for a sequence of random variables $\{ X_n \}_{n=1}^\infty$ and some deterministic sequence $\{ a_n \}_{n=1}^\infty$, this is equivalent to saying
\begin{equation}
\forall_{\varepsilon > 0} \exists_{ \delta_\varepsilon, N_\varepsilon } : \probabilityBig{ \Bigl| \frac{X_n}{a_n} \Bigr| \geq \delta_\varepsilon } \leq \varepsilon \, \forall_{n > N_\varepsilon}.
\end{equation}

\begin{lemma}
\label{lem:BigOP_product_X_Y_property}
Let $\cup_{n = 1}^\infty \{ X_n \}_{n \geq 0}$, $\cup_{n = 1}^\infty \{ Y_n \}$ denote two families of random variables with the properties that $X_n, Y_n \geq 0$, $X_n = \bigOP{x_n}$, and $Y_n = \bigOP{y_n}$, where $\{ x_n \}_{n=1}^\infty$, $\{ y_n \}_{n=1}^\infty$ denote two deterministic sequences with $x_n, y_n \in \positiveRealNumbers$. Then $X_n Y_n = \bigOP{x_n y_n}$. Similarly if $X_n = \Omega_{\mathbb{P}}(x_n)$, $Y_n = \Omega_{\mathbb{P}}(y_n)$, then $X_n Y_n = \Omega_{\mathbb{P}}(x_n y_n)$.
\end{lemma}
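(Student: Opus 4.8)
The statement to prove is \refLemma{lem:BigOP_product_X_Y_property}: products of stochastically bounded (resp. stochastically bounded below) sequences remain stochastically bounded (resp. below), with the bounding sequences multiplied. The plan is to unpack the definition of $\bigOP{\cdot}$ recalled just above the lemma, fix an arbitrary $\varepsilon > 0$, and split the failure probability of the product into the failure probabilities of the two factors, using a union bound. The key arithmetic observation is the elementary set inclusion
\begin{equation}
\Bigl\{ \Bigl| \frac{X_n Y_n}{x_n y_n} \Bigr| \geq \delta \Bigr\}
\subseteq \Bigl\{ \Bigl| \frac{X_n}{x_n} \Bigr| \geq \sqrt{\delta} \Bigr\} \cup \Bigl\{ \Bigl| \frac{Y_n}{y_n} \Bigr| \geq \sqrt{\delta} \Bigr\},
\end{equation}
which holds because if both factors are below $\sqrt{\delta}$ in absolute value then their product is below $\delta$. (Here $x_n, y_n$ are strictly positive so the ratios make sense; the degenerate case $x_n = 0$ or $y_n = 0$ can either be excluded by hypothesis or handled by the convention that $X_n = \bigOP{0}$ forces $X_n = 0$ with probability tending to $1$.)

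The steps, in order, are as follows. First, fix $\varepsilon > 0$. Apply the definition of $X_n = \bigOP{x_n}$ with tolerance $\varepsilon/2$ to obtain $\delta_\varepsilon^{(1)}, N_\varepsilon^{(1)}$ such that $\probability{ |X_n/x_n| \geq \delta_\varepsilon^{(1)} } \leq \varepsilon/2$ for $n > N_\varepsilon^{(1)}$, and likewise $\delta_\varepsilon^{(2)}, N_\varepsilon^{(2)}$ for $Y_n$. Second, set $\delta_\varepsilon \triangleq ( \delta_\varepsilon^{(1)} \vee \delta_\varepsilon^{(2)} )^2$ and $N_\varepsilon \triangleq N_\varepsilon^{(1)} \vee N_\varepsilon^{(2)}$. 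Third, for $n > N_\varepsilon$, apply the set inclusion above together with Boole's inequality to get
\begin{equation}
\probabilityBig{ \Bigl| \frac{X_n Y_n}{x_n y_n} \Bigr| \geq \delta_\varepsilon }
\leq \probabilityBig{ \Bigl| \frac{X_n}{x_n} \Bigr| \geq \sqrt{\delta_\varepsilon} }
+ \probabilityBig{ \Bigl| \frac{Y_n}{y_n} \Bigr| \geq \sqrt{\delta_\varepsilon} }
\leq \frac{\varepsilon}{2} + \frac{\varepsilon}{2} = \varepsilon,
\end{equation}
using that $\sqrt{\delta_\varepsilon} \geq \delta_\varepsilon^{(1)}$ and $\sqrt{\delta_\varepsilon} \geq \delta_\varepsilon^{(2)}$. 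Since $\varepsilon$ was arbitrary, this establishes $X_n Y_n = \bigOP{x_n y_n}$.

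For the $\Omega_{\mathbb{P}}$ half, the argument is the mirror image: from the definition of $\Omega_{\mathbb{P}}$ recalled in the Notation chapter, $X_n = \Omega_{\mathbb{P}}(x_n)$ means that for every $\varepsilon$ there are $\delta_\varepsilon, N_\varepsilon$ with $\probability{ |X_n/x_n| \leq \delta_\varepsilon } \leq \varepsilon$ for $n > N_\varepsilon$. One uses instead the inclusion $\{ |X_n Y_n / (x_n y_n)| \leq \delta \} \subseteq \{ |X_n/x_n| \leq \sqrt{\delta} \} \cup \{ |Y_n/y_n| \leq \sqrt{\delta} \}$ (valid because if both factors exceed $\sqrt{\delta}$ then the product exceeds $\delta$), picks $\delta_\varepsilon = (\delta_\varepsilon^{(1)} \wedge \delta_\varepsilon^{(2)})^2$, and runs the same union bound. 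I do not anticipate a genuine obstacle here; the only points requiring a word of care are the positivity of $x_n, y_n$ (so the normalizing ratios are well-defined) and the use of $\varepsilon/2$ in each application of the hypotheses so that the union bound closes at $\varepsilon$. Everything else is a two-line computation.
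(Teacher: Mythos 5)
Your proof is correct and follows essentially the same route as the paper: both proofs reduce the tail of the product to the tails of the factors by observing that both factors must be small for the product to be small. The paper partitions into four cases with $\varepsilon/3$ tolerances and the threshold product $\delta_\varepsilon^X\delta_\varepsilon^Y$; your version phrases it as a union bound with $\varepsilon/2$ tolerances and the symmetric threshold $\sqrt{\delta_\varepsilon}$, which is a little tidier but not a different argument.
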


\begin{proof}
Let $\varepsilon > 0$. Choose $\delta_\varepsilon^X, N_\varepsilon^X$ and $\delta_\varepsilon^Y, N_\varepsilon^Y$ such that $\probability{ X_n \geq \delta_\varepsilon^X x_n } \leq \varepsilon / 3$ \jaron{for $n > N_\varepsilon^X$} and $\probability{ Y_n \geq \delta_\varepsilon^Y y_n } \leq \varepsilon / 3$ \jaron{for $n > N_\varepsilon^Y$}. Pick any $\delta_\varepsilon > \delta_\varepsilon^X \delta_\varepsilon^Y$. With these choices, 
\begin{align}
\probabilityBig{ \Bigl| \frac{ X_n Y_n }{ x_n y_n } \Bigr| \geq \delta_\varepsilon }
= & \probabilityBig{ \Bigl| \frac{ X_n Y_n }{ x_n y_n } \Bigr| \geq \delta_\varepsilon, X_n \geq \delta_\varepsilon^X x_n, Y_n \geq \delta_\varepsilon^Y y_n } 
\nonumber \\ &
+ \probabilityBig{ \Bigl| \frac{ X_n Y_n }{ x_n y_n } \Bigr| \geq \delta_\varepsilon, X_n \geq \delta_\varepsilon^X x_n, Y_n < \delta_\varepsilon^Y y_n }
\nonumber \\ &
+ \probabilityBig{ \Bigl| \frac{ X_n Y_n }{ x_n y_n } \Bigr| \geq \delta_\varepsilon, X_n < \delta_\varepsilon^X x_n, Y_n \geq \delta_\varepsilon^Y y_n }
\nonumber \\ &
+ \probabilityBig{ \Bigl| \frac{ X_n Y_n }{ x_n y_n } \Bigr| \geq \delta_\varepsilon, X_n < \delta_\varepsilon^X x_n, Y_n < \delta_\varepsilon^Y y_n }
\leq \varepsilon.
\end{align}
We have shown that
\begin{equation}
\forall_{\varepsilon > 0} \exists_{ \delta_\varepsilon = \delta_\varepsilon^X \delta_\varepsilon^Y, N_{\varepsilon} = \max \{ N_\varepsilon^X, N_\varepsilon^Y \} } : \probabilityBig{ \Bigl| \frac{ X_n Y_n }{ x_n y_n } \Bigr| \geq \delta_\varepsilon } \leq \varepsilon \, \forall_{ n > N_{\varepsilon} }.
\end{equation}
This completes the proof.
\end{proof}

The following lemma can be proved similarly:

\begin{lemma}
\label{lem:BigOP_ratio}
Let $\cup_{n = 1}^\infty \{ X_n \}_{n \geq 0}$, $\cup_{n = 1}^\infty \{ Y_n \}$ denote two families of random variables with the properties that $X_n, Y_n \geq 0$, $X_n/Y_n =  \Omega_{\mathbb{P}}({y_n})$, and $X_n = \bigOP{x_n}$, where $\{ x_n \}_{n=1}^\infty$, $\{ y_n \}_{n=1}^\infty$ denote two deterministic sequences with $x_n, y_n \in \positiveRealNumbers$. Then $Y_n = \bigOP{x_n/ y_n}$. 
\end{lemma}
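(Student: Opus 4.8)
\textbf{Proof plan for Lemma~\ref{lem:BigOP_ratio}.}
The plan is to reduce the statement to a routine event-decomposition argument analogous to that used in the proof of \refLemma{lem:BigOP_product_X_Y_property}, exploiting the fact that the two hypotheses $X_n/Y_n = \Omega_{\mathbb{P}}(y_n)$ and $X_n = \bigOP{x_n}$ together pin $Y_n$ from above. Unfolding definitions, $X_n/Y_n = \Omega_{\mathbb{P}}(y_n)$ means that for every $\varepsilon > 0$ there exist $\delta_\varepsilon^- > 0$ and $N_\varepsilon^-$ such that $\probability{ X_n / Y_n \leq \delta_\varepsilon^- y_n } \leq \varepsilon/2$ for all $n > N_\varepsilon^-$; and $X_n = \bigOP{x_n}$ means there exist $\delta_\varepsilon^+$ and $N_\varepsilon^+$ with $\probability{ X_n \geq \delta_\varepsilon^+ x_n } \leq \varepsilon/2$ for all $n > N_\varepsilon^+$. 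On the complement of both exceptional events we have simultaneously $X_n > \delta_\varepsilon^- y_n Y_n$ and $X_n < \delta_\varepsilon^+ x_n$, whence $Y_n < (\delta_\varepsilon^+/\delta_\varepsilon^-)(x_n/y_n)$, using $X_n, Y_n \geq 0$ and $x_n, y_n > 0$ so that all divisions are legitimate and inequality directions are preserved.

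The key steps, in order, are: first, state the two hypotheses in their $\varepsilon$–$\delta$ forms; second, fix $\varepsilon > 0$, obtain $\delta_\varepsilon^-, N_\varepsilon^-$ from the $\Omega_{\mathbb{P}}$ hypothesis and $\delta_\varepsilon^+, N_\varepsilon^+$ from the $\bigOP{}$ hypothesis, and set $\delta_\varepsilon \triangleq \delta_\varepsilon^+/\delta_\varepsilon^-$ and $N_\varepsilon \triangleq \max\{N_\varepsilon^-, N_\varepsilon^+\}$; third, for $n > N_\varepsilon$ bound
\begin{align}
\probabilityBig{ \Bigl| \frac{Y_n}{x_n/y_n} \Bigr| \geq \delta_\varepsilon }
&\leq \probabilityBig{ Y_n \geq \delta_\varepsilon \frac{x_n}{y_n}, \, X_n < \delta_\varepsilon^+ x_n, \, \frac{X_n}{Y_n} > \delta_\varepsilon^- y_n } \nonumber \\
&\quad + \probabilityBig{ X_n \geq \delta_\varepsilon^+ x_n } + \probabilityBig{ \frac{X_n}{Y_n} \leq \delta_\varepsilon^- y_n };
\end{align}
fourth, observe that the first probability on the right-hand side is zero, because on that event $\delta_\varepsilon^- y_n Y_n < X_n < \delta_\varepsilon^+ x_n$ forces $Y_n < \delta_\varepsilon (x_n/y_n)$, contradicting $Y_n \geq \delta_\varepsilon (x_n/y_n)$; hence the whole expression is at most $\varepsilon/2 + \varepsilon/2 = \varepsilon$. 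This establishes $\forall_{\varepsilon > 0} \exists_{\delta_\varepsilon, N_\varepsilon} : \probability{ |Y_n / (x_n/y_n)| \geq \delta_\varepsilon } \leq \varepsilon \, \forall_{n > N_\varepsilon}$, i.e.\ $Y_n = \bigOP{x_n/y_n}$.

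There is essentially no hard part here: the statement is a deterministic implication ($X_n > c\, y_n Y_n$ and $X_n < C x_n$ imply $Y_n < (C/c)(x_n/y_n)$) wrapped in a union bound over two small-probability events. The only point demanding a sliver of care is the benign one that $Y_n = 0$ (or $Y_n$ very small) does not cause trouble — but since the event of interest is $\{Y_n \geq \delta_\varepsilon (x_n/y_n)\}$ and $x_n/y_n > 0$, any problematic small values of $Y_n$ lie outside this event automatically, and on the event $X_n/Y_n > \delta_\varepsilon^- y_n$ the ratio $X_n/Y_n$ is well-defined. One subtlety worth flagging in the write-up: the hypothesis is phrased as ``$X_n/Y_n = \Omega_{\mathbb{P}}(y_n)$'', and per the notation section $\Omega_{\mathbb{P}}$ is defined with $\probability{|X_n/a_n| \leq \delta_\varepsilon} \leq \varepsilon$, which is exactly the form used above; I would simply cite that definition rather than re-derive it. The proof is then a two-paragraph mirror of the proof of \refLemma{lem:BigOP_product_X_Y_property}.
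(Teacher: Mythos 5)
Your proof is correct and takes the approach the paper intends: the paper offers no explicit proof of this lemma, saying only that it ``can be proved similarly'' to \refLemma{lem:BigOP_product_X_Y_property}, and your argument is exactly that — a union bound over the two small-probability exceptional events, with a deterministic implication ($X_n > \delta_\varepsilon^- y_n Y_n$ and $X_n < \delta_\varepsilon^+ x_n$ force $Y_n < (\delta_\varepsilon^+/\delta_\varepsilon^-)(x_n/y_n)$) showing the remaining intersection is empty. The observation that $Y_n = 0$ or small values lie outside the target event $\{Y_n \geq \delta_\varepsilon(x_n/y_n)\}$, so the ratio $X_n/Y_n$ is well-defined where it matters, is the right way to close the one potential loophole.
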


\begin{lemma}
\label{lem:BigOP_asymptotic_stochastic_boundedness_of_sums_of_bounded_RV}
Let $\{ s_n \}_{n=1}^\infty$ denote a deterministic sequence with $s_n \in \naturalNumbersPlus$. Let $\cup_{n=1}^\infty \cup_{m=1}^{s_n} \{ X_{m,n} \}$ denote a family of random variables with the properties that $X_{m,n} \geq 0$, and $\exists_{\delta,N} : \expectation{X_{m,n}} \leq \delta x_n \, \forall_{m=1,\ldots,s_n} \forall_{n > N}$. Then $S_n = \sum_{m=1}^{s_n} X_{m,n} = \bigOP{ s_n x_n }$.
\end{lemma}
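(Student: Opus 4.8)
\textbf{Proof plan for Lemma~\ref{lem:BigOP_asymptotic_stochastic_boundedness_of_sums_of_bounded_RV}.}

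The plan is to use Markov's inequality on the sum $S_n = \sum_{m=1}^{s_n} X_{m,n}$ directly. First I would invoke linearity of expectation together with the uniform bound on the individual means: by hypothesis there exist $\delta > 0$ and $N$ so that $\expectation{X_{m,n}} \leq \delta x_n$ for all $m = 1, \ldots, s_n$ and all $n > N$. Hence for $n > N$,
\begin{equation}
\expectation{ S_n } = \sum_{m=1}^{s_n} \expectation{ X_{m,n} } \leq s_n \delta x_n.
\end{equation}
This is the only place the hypothesis is used, and it is where the factor $s_n x_n$ enters.

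Next, fix $\varepsilon > 0$. Since every $X_{m,n} \geq 0$ we have $S_n \geq 0$, so Markov's inequality applies: for any threshold $\delta_\varepsilon > 0$ and any $n > N$,
\begin{equation}
\probabilityBig{ \Bigl| \frac{ S_n }{ s_n x_n } \Bigr| \geq \delta_\varepsilon } = \probabilityBig{ S_n \geq \delta_\varepsilon s_n x_n } \leq \frac{ \expectation{ S_n } }{ \delta_\varepsilon s_n x_n } \leq \frac{ s_n \delta x_n }{ \delta_\varepsilon s_n x_n } = \frac{ \delta }{ \delta_\varepsilon }.
\end{equation}
Choosing $\delta_\varepsilon = \delta / \varepsilon$ makes the right-hand side equal to $\varepsilon$. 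Setting $N_\varepsilon = N$, this shows
\begin{equation}
\forall_{\varepsilon > 0} \exists_{ \delta_\varepsilon = \delta/\varepsilon, N_\varepsilon = N } : \probabilityBig{ \Bigl| \frac{ S_n }{ s_n x_n } \Bigr| \geq \delta_\varepsilon } \leq \varepsilon \, \forall_{ n > N_\varepsilon },
\end{equation}
which is exactly the definition of $S_n = \bigOP{ s_n x_n }$ recalled at the start of \refAppendixSection{sec:Stochastic_boundedness_properties}.

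There is essentially no obstacle here: the argument is a one-line Markov inequality once the expectation of the sum has been controlled via linearity. The only mild subtlety is that $s_n$ and $x_n$ are allowed to depend on $n$, so one must be careful to keep the bound uniform in $m$ (which the hypothesis supplies) and to note that the bound $\expectation{ S_n } \leq s_n \delta x_n$ holds only for $n > N$; this is harmless since the $\bigOP{\cdot}$ statement is itself asymptotic. I would also remark that no independence among the $X_{m,n}$ is needed, which is important because in the applications (e.g.\ bounding $|\mathcal{H}^{\mathrm{c}}|$ in \refAppendixSection{suppl:Proof__Size_of_complement_of_H}) the summands are indicators of dependent events.
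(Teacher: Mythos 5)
Your proof is correct and follows essentially the same route as the paper's: bound $\expectation{S_n}$ by linearity and the uniform hypothesis, apply Markov's inequality to the nonnegative sum, and pick $\delta_\varepsilon = \delta/\varepsilon$, $N_\varepsilon = N$. Your closing remark that no independence among the $X_{m,n}$ is needed is a correct and useful observation, though the paper does not state it explicitly.
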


\begin{proof}
Let $\varepsilon > 0, \delta_\varepsilon^\Sigma > 0$. Since (i) $X_{m,n} > 0$ for all $m,n$, by (ii) Markov's inequality 
\begin{equation}
\probabilityBig{ \Bigl| \frac{ S_n }{ s_n x_n } \Bigr| \geq \delta_\varepsilon^{\Sigma} }
\eqcom{i}= \probabilityBig{ \frac{1}{ s_n x_n } \sum_{m=1}^{s_n} X_{m,n} \geq \delta_\varepsilon^{\Sigma} }
\eqcom{ii}\leq \frac{ \sum_{m=1}^{s_n} \expectation{X_{m,n}} }{ \delta_{\varepsilon}^\Sigma s_n x_n }.
\label{eqn:Intermediate_bound_on_Sn}
\end{equation}
By assumption $\exists_{\delta,N} : \expectation{X_{m,n}} \leq \delta x_n \, \forall_{m=1,\ldots,s_n} \forall_{n > N}$. Choose $\delta, N$ as such. Specify $\delta_\varepsilon^\Sigma = \delta / \varepsilon$. By \refEquation{eqn:Intermediate_bound_on_Sn}, we have thus shown that
\begin{equation}
\forall_{\varepsilon > 0} \exists_{ \delta_\varepsilon^\Sigma = \delta / \varepsilon, N_{\varepsilon} = N} : 
\probabilityBig{ \Bigl| \frac{ S_n }{ s_n x_n } \Bigr| \geq \delta_\varepsilon^{\Sigma} }
\leq \varepsilon 
\, \forall_{ n > N_{\varepsilon} }. 
\end{equation}
Equivalently, $S_n = \bigOP{ s_n x_n }$. This completes the proof.
\end{proof}

\begin{lemma}
\label{lem:BigOP_asymptotic_stochastic_boundedness_of_random_sum_of_bounded_RV}
Let $\cup_{n=1}^\infty \cup_{m=1}^{n} \{ X_{m,n} \}$ denote a family of random variables with the properties that $X_{m,n} \geq 0$, and $\exists_{\delta,N} : \expectation{X_{m,n}} \leq \delta x_n \, \forall_{m=1,\ldots,n} \forall_{n > N}$. If $\{ Y_n \}_{n=1}^\infty$ is a sequence of random variables with the properties that $Y_n \in \{ 1, \ldots, n \}$, and $Y_n = \bigOP{y_n}$ for some deterministic sequence $\{ y_n \}_{n=1}^\infty$ with $y_n \in \naturalNumbersPlus$, then $Z_n = \sum_{m=1}^{ Y_n \wedge n } X_{m,n} = \bigOP{ (y_n \wedge n) x_n }$. 
\end{lemma}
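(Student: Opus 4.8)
\textbf{Proof proposal for Lemma~\ref{lem:BigOP_asymptotic_stochastic_boundedness_of_random_sum_of_bounded_RV}.}

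The plan is to reduce the random number of summands to a deterministic one, and then invoke \refLemma{lem:BigOP_asymptotic_stochastic_boundedness_of_sums_of_bounded_RV}. Fix $\varepsilon > 0$. First I would use the hypothesis $Y_n = \bigOP{y_n}$ to choose $\delta_\varepsilon^Y$ and $N_\varepsilon^Y$ so that $\probability{ Y_n \geq \delta_\varepsilon^Y y_n } \leq \varepsilon / 2$ for all $n > N_\varepsilon^Y$; since $Y_n \le n$ always, this also gives $\probability{ Y_n \wedge n \geq (\delta_\varepsilon^Y y_n) \wedge n } \leq \varepsilon/2$. On the complementary event, $Y_n \wedge n < (\delta_\varepsilon^Y y_n) \wedge n \le \delta_\varepsilon^Y (y_n \wedge n)$, and because the $X_{m,n}$ are nonnegative, $Z_n = \sum_{m=1}^{Y_n \wedge n} X_{m,n} \leq \sum_{m=1}^{ \lceil \delta_\varepsilon^Y (y_n \wedge n) \rceil } X_{m,n} =: \tilde S_n$, a sum with a \emph{deterministic} number of terms $s_n \triangleq \lceil \delta_\varepsilon^Y (y_n \wedge n) \rceil$.

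Next I would apply \refLemma{lem:BigOP_asymptotic_stochastic_boundedness_of_sums_of_bounded_RV} to $\tilde S_n$. The expectation hypothesis $\expectation{ X_{m,n} } \le \delta x_n$ holds for $m = 1, \ldots, n$, hence in particular for $m = 1, \ldots, s_n$ provided $s_n \le n$; and indeed $s_n = \lceil \delta_\varepsilon^Y (y_n \wedge n) \rceil \le n$ for all sufficiently large $n$ (one may, if needed, cap $s_n$ at $n$, which only shrinks $\tilde S_n$ further and is harmless since it still dominates $Z_n$). Thus \refLemma{lem:BigOP_asymptotic_stochastic_boundedness_of_sums_of_bounded_RV} yields $\tilde S_n = \bigOP{ s_n x_n } = \bigOP{ (y_n \wedge n) x_n }$, so there exist $\delta_\varepsilon^S$ and $N_\varepsilon^S$ with $\probability{ \tilde S_n \geq \delta_\varepsilon^S (y_n \wedge n) x_n } \leq \varepsilon/2$ for all $n > N_\varepsilon^S$ (absorbing the constant $\delta_\varepsilon^Y$ into $\delta_\varepsilon^S$).

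Finally I would combine the two events with a union bound. For $n > \max\{ N_\varepsilon^Y, N_\varepsilon^S \}$,
\begin{equation}
\probabilityBig{ \frac{ Z_n }{ (y_n \wedge n) x_n } \geq \delta_\varepsilon^S }
\leq \probability{ Y_n \wedge n \geq \delta_\varepsilon^Y (y_n \wedge n) }
+ \probability{ \tilde S_n \geq \delta_\varepsilon^S (y_n \wedge n) x_n }
\leq \varepsilon,
\end{equation}
where on the first event's complement $Z_n \le \tilde S_n$ is used. This establishes $Z_n = \bigOP{ (y_n \wedge n) x_n }$, completing the proof. The only mildly delicate point is the bookkeeping that the number of terms in the dominating sum $\tilde S_n$ is both deterministic and at most $n$ (so that the expectation bound applies to every summand); everything else is a routine union bound, so I do not anticipate a substantial obstacle here.
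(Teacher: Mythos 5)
Your approach is essentially the same as the paper's: split on whether $Y_n$ exceeds its typical scale, dominate $Z_n$ on the good event by a sum with a \emph{deterministic} number of terms, and then invoke \refLemma{lem:BigOP_asymptotic_stochastic_boundedness_of_sums_of_bounded_RV}. The overall structure, the union bound, and the observation that the number of terms in the dominating sum must (and can) be kept at most $n$ so the expectation hypothesis applies, all match.

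One intermediate claim, however, does not follow as stated. You write that since $Y_n \le n$, the bound $\probability{ Y_n \geq \delta_\varepsilon^Y y_n } \leq \varepsilon/2$ also gives $\probability{ Y_n \wedge n \geq (\delta_\varepsilon^Y y_n) \wedge n } \leq \varepsilon/2$. This is the wrong direction: the threshold $(\delta_\varepsilon^Y y_n) \wedge n$ is \emph{smaller} than $\delta_\varepsilon^Y y_n$, so the event $\{ Y_n \wedge n \geq (\delta_\varepsilon^Y y_n) \wedge n \} = \{ Y_n \geq (\delta_\varepsilon^Y y_n) \wedge n \}$ is \emph{larger} than $\{ Y_n \geq \delta_\varepsilon^Y y_n \}$, and its probability can be close to one (take $Y_n = n$ a.s.\ and $y_n = n$, so with $\delta_\varepsilon^Y > 1$ the second event is empty but the first has probability one). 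The easy fix is to work directly with the event $\{ Y_n < \delta_\varepsilon^Y y_n \}$: on it, using $\delta_\varepsilon^Y \geq 1$ (harmless), $Y_n \wedge n \leq \min\{ \delta_\varepsilon^Y y_n, n \} \leq \delta_\varepsilon^Y (y_n \wedge n)$, which is all you need to dominate $Z_n$ by $\tilde S_n$. Note that your \emph{final} union bound already uses the correct event $\{ Y_n \wedge n \geq \delta_\varepsilon^Y (y_n \wedge n) \}$ — with the scalar factored in, not capped — and for $\delta_\varepsilon^Y > 1$ that event is either $\{ Y_n \geq \delta_\varepsilon^Y y_n \}$ (if $y_n \leq n$) or empty (if $y_n > n$), so it does have probability at most $\varepsilon/2$; the gap is only in the sentence that was meant to justify it. With that repair the argument goes through, and is in fact slightly more careful than the paper's own write-up, which only concludes $Z_n = \bigOP{ y_n x_n }$ even though the manipulations give the stated $\bigOP{ (y_n \wedge n) x_n }$.
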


\begin{proof}
Let $\varepsilon > 0, \delta_\varepsilon^Z > 0$. Then
\begin{align}
\probabilityBig{ \Bigl| \frac{ Z_n }{ y_n x_n } \Bigr| \geq \delta_\varepsilon^Z }
&
= \probabilityBig{ \Bigl| \frac{ Z_n }{ y_n x_n } \Bigr| \geq \delta_\varepsilon^Z, \Bigl| \frac{ Y_{n} }{y_n} \Bigr| \geq \delta_\varepsilon^{Y} } 
+ \probabilityBig{ \Bigl| \frac{ Z_n }{ y_n x_n } \Bigr| \geq \delta_\varepsilon^Z, \Bigl| \frac{ Y_{n} }{y_n} \Bigr| < \delta_\varepsilon^{Y} }
\nonumber \\ &
\leq \probabilityBig{ \Bigl| \frac{ Y_{n} }{y_n} \Bigr| \geq \delta_\varepsilon^{Y} } 
+ \probabilityBig{ \Bigl| \frac{1}{ y_n x_n } \sum_{m=1}^{ ( \delta_\varepsilon^{Y} y_n ) \wedge n } X_{m,n} \Bigr| \geq \delta_\varepsilon^Z }.
\end{align}
By assumption $Y_n = \bigOP{ y_n }$, so we can choose $\delta_\varepsilon^Y \in \naturalNumbersPlus, N_{\varepsilon}^Y > 0$ such that $\probability{ | Y_{n} / y_n | \geq \delta_\varepsilon^{Y} } \leq \varepsilon / 2$ for all $n > N_{\varepsilon}^Y$. Write $\delta_\varepsilon^Z = \delta_\varepsilon^Y \delta_\varepsilon^\Sigma$, and we will specify $\delta_\varepsilon^\Sigma$ in a moment. Presently, we are at
\begin{align}
\probabilityBig{ \Bigl| \frac{ Z_n }{ y_n x_n } \Bigr| \geq \delta_\varepsilon^Z }
&
\leq \frac{\varepsilon}{2} + \probabilityBig{ \Bigl| \frac{1}{ ( \delta_\varepsilon^Y y_n ) x_n } \sum_{m=1}^{ ( \delta_\varepsilon^{Y} y_n ) \wedge n } X_{m,n} \Bigr| \geq \delta_\varepsilon^\Sigma }
\nonumber \\ &
\leq \frac{\varepsilon}{2} + \probabilityBig{ \Bigl| \frac{1}{ ( \delta_\varepsilon^Y y_n \wedge n ) x_n } \sum_{m=1}^{ \delta_\varepsilon^{Y} y_n \wedge n } X_{m,n} \Bigr| \geq \delta_\varepsilon^\Sigma }.
\end{align}
The assumptions on the family $\{ X_{m,n} \}_{m,n=1}^{\infty}$ now allow us to apply \refLemma{lem:BigOP_asymptotic_stochastic_boundedness_of_sums_of_bounded_RV}: specifically, there exist $\delta_\varepsilon^\Sigma, N_\varepsilon^\Sigma$ such that the final term is bounded by $\varepsilon / 2$ for all $n > N_\varepsilon^\Sigma$. Summarizing, we have shown that
\begin{equation}
\forall_{\varepsilon > 0} \exists_{ \delta_\varepsilon^Z = \delta_\varepsilon^Y \delta_\varepsilon^\Sigma, N_\varepsilon^Z = \max{ \{ N_\varepsilon^Y, N_\varepsilon^\Sigma, \} } } :
\probabilityBig{ \Bigl| \frac{ Z_n }{ y_n x_n } \Bigr| \geq \delta_\varepsilon^{\Sigma} } \leq \varepsilon
\, \forall_{ n > N_\varepsilon^Z }.
\end{equation}  
Equivalently, $Z_n = \bigOP{ y_n x_n }$.
\end{proof}

\begin{lemma}
\label{lem:BigOP_consistency_argument}
Let $\cup_{n = 1}^\infty \{ X_n \}_{n \geq 0}$, $\cup_{n = 1}^\infty \{ Y_n \}$ denote two families of random variables with the properties that $\probability{ X_n \leq Y_n } = 1$, $X_n = \Omega_{\mathbb{P}}(x_n)$, and $Y_n = \bigOP{y_n}$, where $\{ x_n \}_{n=1}^\infty$, $\{ y_n \}_{n=1}^\infty$ denote two deterministic sequences with $x_n, y_n \in \realNumbers$. Then, $x_n = \bigO{y_n}$.
\end{lemma}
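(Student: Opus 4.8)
The plan is to convert the two stochastic bounds into a single deterministic bound on $|x_n/y_n|$ by producing, for each large $n$, one sample point at which both bounds are simultaneously active. Fix $\varepsilon = 1/4$. By $X_n = \OmegaP{x_n}$ there are constants $\delta^X > 0$ and $N^X$ with $\probability{ |X_n| \leq \delta^X |x_n| } \leq 1/4$, i.e.\ $\probability{ |X_n| > \delta^X |x_n| } \geq 3/4$, for all $n > N^X$. By $Y_n = \bigOP{y_n}$ there are constants $\delta^Y > 0$ and $N^Y$ with $\probability{ |Y_n| \geq \delta^Y |y_n| } \leq 1/4$, i.e.\ $\probability{ |Y_n| < \delta^Y |y_n| } \geq 3/4$, for all $n > N^Y$.

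Set $N = \max\{N^X, N^Y\}$ and fix $n > N$. Write $A_n = \{ |X_n| > \delta^X |x_n| \}$, $B_n = \{ |Y_n| < \delta^Y |y_n| \}$, and $C_n = \{ X_n \leq Y_n \}$, where $\probability{C_n} = 1$ by hypothesis. A union bound gives $\probability{A_n \cap B_n \cap C_n} \geq 1 - \probability{A_n^{\mathrm c}} - \probability{B_n^{\mathrm c}} - \probability{C_n^{\mathrm c}} \geq 1/2 > 0$, so this intersection is nonempty; pick $\omega$ in it. On $A_n$ we have $X_n(\omega) > \delta^X |x_n| \geq 0$, so $X_n(\omega) > 0$, and combined with $X_n(\omega) \leq Y_n(\omega)$ this yields $0 < X_n(\omega) \leq Y_n(\omega)$, hence $|X_n(\omega)| \leq |Y_n(\omega)|$. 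Chaining the inequalities valid at $\omega$,
\begin{equation}
\delta^X |x_n| < |X_n(\omega)| \leq |Y_n(\omega)| < \delta^Y |y_n|,
\end{equation}
so $|x_n| < (\delta^Y/\delta^X)\,|y_n|$ for every $n > N$.

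Since $\delta^Y/\delta^X$ is a finite constant not depending on $n$, the displayed inequality shows $\limsup_{n \to \infty} |x_n/y_n| \leq \delta^Y/\delta^X < \infty$ (for the indices, if any, with $y_n = 0$ the inequality forces $x_n = 0$ as well, so they do not affect the limit superior), which is precisely $x_n = \bigO{y_n}$.

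I do not expect a genuine obstacle here: the argument is a short union-bound/pigeonhole manipulation off the definitions of $\OmegaP{\cdot}$, $\bigOP{\cdot}$ and $\bigO{\cdot}$. The only two points deserving care are (i) noting that $A_n \cap B_n \cap C_n$ has positive probability so that a simultaneous sample point exists, and (ii) the sign bookkeeping needed to pass from $X_n \leq Y_n$ to $|X_n| \leq |Y_n|$, which is handled by first observing that $X_n$ is strictly positive on $A_n$.
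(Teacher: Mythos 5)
Your proof is essentially correct and uses the same core idea as the paper's: apply union bounds to exhibit, for each large $n$, a sample point at which $|X_n| > \delta^X |x_n|$, $|Y_n| < \delta^Y |y_n|$, and $X_n \leq Y_n$ hold simultaneously, then read off the deterministic bound $\delta^X|x_n| < \delta^Y|y_n|$. The only substantive difference is that you argue directly while the paper argues by contradiction, supposing $x_n = \omega(y_n)$ and deducing $0 \geq 1 - \varepsilon^X - \varepsilon^Y$. Your direct version is arguably a little cleaner: the negation of $x_n = \bigO{y_n}$ is $\limsup_n |x_n/y_n| = \infty$, which is weaker than $x_n = \omega(y_n)$, so the paper's contradiction should technically be run along a subsequence; your formulation sidesteps that.

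One point to flag, though it affects the paper's proof just as much as yours: the step ``On $A_n$ we have $X_n(\omega) > \delta^X |x_n|$'' is not warranted by the definition of $\OmegaP{\cdot}$ as given, which only yields $|X_n(\omega)| > \delta^X |x_n|$; you are silently dropping the absolute value. Without a sign restriction the lemma is false, e.g.\ $X_n = -n$ and $Y_n = 1$ almost surely satisfy $X_n \le Y_n$, $X_n = \OmegaP{n}$, $Y_n = \bigOP{1}$, but $n \neq \bigO{1}$. The paper commits the same sin by writing $\probability{X_n \leq \delta^X x_n}$ in place of $\probability{|X_n/x_n| \leq \delta^X}$, and the sibling lemmas in the same section all explicitly assume $X_n, Y_n \geq 0$, so the hypothesis is evidently intended here too but was dropped from this lemma's statement. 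If you add $X_n \geq 0$ (which is what makes $|X_n| > \delta^X|x_n|$ imply $X_n > \delta^X|x_n| \geq 0$), your chain $\delta^X|x_n| < X_n(\omega) \leq Y_n(\omega) \leq |Y_n(\omega)| < \delta^Y|y_n|$ is airtight.
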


\begin{proof}
We prove the result by contradiction. Recall first that the assumptions imply that for every $\varepsilon^X, \varepsilon^Y > 0$, there exist $\delta_\varepsilon^X, \delta_\varepsilon^Y > 0$ such that 
\begin{equation}
\lim_{n \to \infty} \probability{ X_n \leq \delta_\varepsilon^X x_n } \leq \varepsilon^X,
\quad
\lim_{n \to \infty} \probability{ Y_n \geq \delta_\varepsilon^Y y_n } \leq \varepsilon^Y.
\end{equation}
Also note that by (i) definition of conditional probability, (ii) the De Morgan laws, and (iii) $\probability{ \{ X_n \leq \delta^X x_n \} \cap \{ Y_n \geq \delta^Y y_n \} } \geq 0$, it follows that
\begin{align}
0 
= & \probability{ X_n > Y_n }
\geq \probability{ \{ X_n > Y_n \} \cap \{ X_n > \delta^X x_n \} \cap \{ Y_n < \delta^Y y_n \} }
\nonumber \\ 
\eqcom{i}= & \probability{ X_n > Y_n | \{ X_n > \delta^X x_n \} \cap \{ Y_n < \delta^Y y_n \} } \times \cdots
\nonumber \\ & 
\times \bigl( 1 - \probability{ ( \{ X_n > \delta^X x_n \} \cap \{ Y_n < \delta^Y y_n \} )^{\mathrm{c}} } \bigr)
\nonumber \\ 
\eqcom{ii}= & \probability{ X_n > Y_n | \{ X_n > \delta^X x_n \} \cap \{ Y_n < \delta^Y y_n \} } \times \cdots
\nonumber \\ & 
\times \bigl( 1 - \probability{ \{ X_n \leq \delta^X x_n \} \cup \{ Y_n \geq \delta^Y y_n \} } \bigr)
\nonumber \\ 
\eqcom{iii}\geq & \probability{ X_n > Y_n | \{ X_n > \delta^X x_n \} \cap \{ Y_n < \delta^Y y_n \} } \times \cdots
\nonumber \\ & 
\times \bigl( 1 - \probability{ \{ X_n \leq \delta^X x_n \} } - \probability{ \{ Y_n \geq \delta^Y y_n \} } \bigr).
\end{align}
Now suppose that $x_n = \omega(y_n)$. By then taking the limit $n \to \infty$ both left and right, we obtain the inequality $0 \geq 1 - \varepsilon^X - \varepsilon^Y$, which is a contradiction. Hence it must be that $x_n = \bigO{y_n}$.
\end{proof}

~ 
\end{supplement}

\end{document}